\newtheorem{theorem}{Theorem}[section]
\newtheorem{lemma}[theorem]{Lemma}
\newtheorem{cor}[theorem]{Corollary}
\theoremstyle{definition}
\newtheorem{defn}[theorem]{Definition}
\newtheorem{hypothesis}[theorem]{Hypothesis}
\newtheorem{example}[theorem]{Example}
\newtheorem{remark}[theorem]{Remark}
\numberwithin{equation}{theorem}
\def\CC{\mathbb{C}}
\def\Fp{\mathbb{F}_p}
\def\Qp{\mathbb{Q}_p}
\def\QQ{\mathbb{Q}}
\def\RR{\mathbb{R}}
\def\Zp{\mathbb{Z}_p}
\def\ZZ{\mathbb{Z}}
\def\calH{\mathcal{H}}
\def\calM{\mathcal{M}}
\def\gothm{\mathfrak{m}}
\def\gotho{\mathfrak{o}}
\DeclareMathOperator{\Aut}{Aut}
\DeclareMathOperator{\charac}{char}
\DeclareMathOperator{\Frac}{Frac}
\DeclareMathOperator{\id}{id}
\DeclareMathOperator{\perf}{perf}
\DeclareMathOperator{\spect}{sp}
\title{Nonarchimedean geometry of Witt vectors}
\author{Kiran S. Kedlaya}
\date{February 15, 2012}
\begin{document}

\maketitle

\begin{abstract}
Let $R$ be a perfect $\Fp$-algebra, equipped with the trivial norm.
Let $W(R)$ be the ring of $p$-typical Witt vectors over $R$, equipped with the
$p$-adic norm. At the level of nonarchimedean analytic spaces (in the sense
of Berkovich), we demonstrate a close analogy between $W(R)$ and the
polynomial ring $R[T]$ equipped with the Gauss norm, in which the role of the
structure morphism from $R$ to $R[T]$ is played by the Teichm\"uller map.
For instance, 
we show that
the analytic space associated to $R$ is a strong deformation
retract of the space associated to $W(R)$.
We also show that each fibre forms a tree under the relation of 
pointwise comparison, and classify the points of fibres 
in the manner of Berkovich's classification of points of a nonarchimedean
disc. Some results pertain to
the study of $p$-adic representations of \'etale fundamental
groups of nonarchimedean analytic spaces (i.e., relative $p$-adic Hodge theory).
\end{abstract}

\section*{Introduction}

There now exist several approaches to nonarchimedean analytic geometry, including rigid analytic geometry
(Tate), formal geometry (Raynaud), and adic geometry (Huber, Fujiwara-Kato). However, the approach
exhibiting the closest links with classical topology is that of Berkovich \cite{berkovich1}.
Berkovich overcomes the lack of connectivity of nonarchimedean topologies 
by considering spaces of
multiplicative seminorms, using an analogue of the usual Gel'fand transform
relating commutative Banach algebras to spaces of continuous functions
on compact topological spaces.

Much is known about the topology of Berkovich analytic spaces.
For instance, Berkovich showed that smooth analytic spaces over a field are 
locally contractible
\cite{berkovich-contract1, berkovich-contract2}.
More recently, Payne \cite{payne} showed that the analytification of an algebraic
variety over a field can be viewed as an inverse limit of finite polyhedral complexes;
separately, Hrushovski and Loeser \cite{hrushovski-loeser} have used model-theoretic techniques
to show that such analytifications
are locally contractible and retract onto finite CW-complexes. One can also relate homotopy types of analytic spaces to
degenerations; for instance, the analytification
of a semistable curve over a complete discretely valued field has the same homotopy type as the
graph of the special fibre of a minimal proper regular model over the valuation subring. This relationship,
and its link to mixed Hodge structures, has been further pursued by Berkovich
\cite{berkovich-hodge} and Nicaise \cite{nicaise}.

In this paper, we consider degenerations in mixed characteristic from the point of
view of Witt vectors.  Recall that for $p$ a fixed prime number, the $p$-typical Witt vector functor converts perfect
$\Fp$-algebras into $p$-adically complete algebras. 
Let $R$ be a perfect $\Fp$-algebra equipped with the trivial norm, and 
equip the associated Witt vector ring
$W(R)$ with the $p$-adic norm.
Let $\calM(R)$ and $\calM(W(R))$ denote the resulting Berkovich spaces.
There is a natural multiplicative map $R \to W(R)$ given by Teichm\"uller lifting;
this map is not a ring homomorphism, but it nonetheless induces a restriction map
$\mu: \calM(W(R)) \to \calM(R)$ as if it were a homomorphism. 

We establish several results that liken the relationship between $\calM(R)$
and $\calM(W(R))$ to the relationship between $\calM(R)$ and $\calM(R[T])$
when $R[T]$ carries the Gauss norm (i.e.,
between a base space and a disc bundle over the base). We first 
construct a continuous section
$\lambda: \calM(R) \to \calM(W(R))$ giving 
a maximal lifting of a seminorm on $R$ to $W(R)$;
this identifies $\calM(R)$ as a retract of $\calM(W(R))$.
We then refine this calculation
to show (Theorem~\ref{T:final homotopy}) that $\calM(R)$
is a strong deformation retract of $\calM(W(R))$, 
and that any subset of 
$\calM(R)$
has the same homotopy type as its inverse image under the projection $\mu$.
We finally describe the geometry of the fibres
of the projection map $\mu$: each fibre may be naturally viewed as a tree
in both a topological fashion (as an inverse limit of finite contractible
one-dimensional simplicial complexes)
and a combinatorial fashion 
(as a partially ordered set in which any two elements dominating
a common third element are comparable). 

The motivation for this work is to describe $p$-adic Hodge
theory (the study of continuous representations of Galois groups
of finite extensions of the $p$-adic field $\Qp$) in a fashion
that permits consideration also of \'etale fundamental groups
of analytic spaces. A preview of such a description is given in
\cite{kedlaya-icm}, together with an application to the
construction of local systems on Rapoport-Zink period spaces;
that preview already includes a few of the results given here, including
the definition of the maps $\lambda$ and $\mu$.
A related development is a
reformulation of $p$-adic Hodge theory by Fargues and Fontaine
\cite{fargues-fontaine}, in which one works with
coherent sheaves on an object constructed from a ring of Witt vectors,
which behaves formally like an analytic curve.

\subsection*{Acknowledgments}
Thanks to Laurent Fargues and Jean-Marc Fontaine for providing
a preliminary draft of \cite{fargues-fontaine}, to
Michael Temkin for helpful discussions, and to Peter Scholze and Liang Xiao
for feedback on early drafts of this paper.
The author was supported by
NSF (CAREER grant DMS-0545904), DARPA (grant HR0011-09-1-0048),
MIT (NEC Fund, Cecil and Ida Green professorship), 
and IAS (NSF grant DMS-0635607, James D. Wolfensohn Fund).

\section{Nonarchimedean analytic spaces}

We begin by setting notation and terminology concerning
nonarchimedean analytic spaces, as in \cite{berkovich1}.

\begin{defn}
Consider the following conditions on an abelian group $G$ and a
function $\alpha: G \to [0, +\infty)$.
\begin{enumerate}
\item[(a)]
For all $g,h \in G$, we have $\alpha(g-h) \leq \max\{\alpha(g), \alpha(h)\}$.
\item[(b)]
We have $\alpha(0) = 0$.
\item[(b$'$)]
For all $g \in G$, we have $\alpha(g) = 0$ if and only if $g=0$.
\end{enumerate}
We say $\alpha$ is a \emph{seminorm} if it satisfies 
(a) and (b), and a
\emph{norm} if it satisfies (a) and (b$'$). 
These would typically be called \emph{nonarchimedean} seminorms and norms,
but we will use no other kind in this paper.

If $\alpha, \alpha'$ 
are two seminorms on the same abelian group $G$, 
we say $\alpha$ \emph{dominates} $\alpha'$, and write 
$\alpha \geq \alpha'$ or $\alpha' \leq \alpha$, if there exists
$c \in (0, +\infty)$ for which $\alpha'(g) \leq c \alpha(g)$ for all $g \in G$.
If $\alpha$ and $\alpha'$ dominate each other, we say they are 
\emph{equivalent}; in this case, $\alpha$ is a norm if and only if
$\alpha'$ is.

Let $G,H$ be two abelian groups equipped with seminorms
$\alpha, \beta$.
We say a homomorphism $\phi: G \to H$ is \emph{bounded} if 
$\alpha$ dominates $\beta \circ \phi$.
We say $\phi$ is \emph{isometric} if $\alpha = \beta \circ \phi$.
\end{defn}

\begin{defn}
Let $\alpha$ be a seminorm on an abelian group $G$.
For any subgroup $H$ of $G$, $\alpha$ induces a \emph{quotient
seminorm} on $G/H$ defined by
\[
g + H \mapsto \inf\{\alpha(g+h): h \in H\}. 
\]
This defines a norm if $H$ is closed; for
instance, the quotient seminorm on $G/\ker(\alpha)$ is a norm.

The group of Cauchy sequences in $G$ carries a seminorm defined by
\[
(x_0,x_1,\dots) \mapsto \lim_{i \to \infty} \alpha(x_i).
\]
Passing to
the quotient by the kernel of this seminorm gives the \emph{separated completion} 
$\widehat{G}$ of $G$. The map $G\to \widehat{G}$ given by
$x \mapsto x,x,\dots$ is an isometry, and hence injective if $\alpha$
itself is a norm; in that case, we call $\widehat{G}$ simply the
\emph{completion} of $G$.
\end{defn}

\begin{defn} \label{D:submultiplicative}
Let $A$ be a ring. Consider the following conditions
on a multiplicative seminorm $\alpha$ on the additive group of $A$. 
\begin{enumerate}
\item[(c)]
We have $\alpha(1) \leq 1$, and for all $g,h \in A$, we have $\alpha(gh) \leq \alpha(g) \alpha(h)$.
\item[(c$'$)]
We have (c), and for all $g \in A$ we have $\alpha(g^2) = \alpha(g)^2$.
\item[(c$''$)]
We have (c), and for all $g,h \in A$, we have $\alpha(gh) = \alpha(g) \alpha(h)$.
\end{enumerate}
We say $\alpha$ is 
\emph{submultiplicative} if it satisfies (c),
\emph{power-multiplicative} if it satisfies (c$'$),
and \emph{multiplicative} if it satisfies (c$''$). 
We make some quick observations about these definitions.
\begin{itemize}
\item[(i)] 
For $\alpha$ a submultiplicative seminorm, $\alpha(1) = 1$ unless $\alpha$ is identically zero.
\item[(ii)]
Any power-multiplicative seminorm 
$\alpha$ satisfies $\alpha(g^n) = \alpha(g)^n$ for 
all $g \in A$ and all nonnegative integers $n$.
\item[(iii)]
Any multiplicative seminorm is power-multiplicative.
\item[(iv)]
If $\alpha$ is a submultiplicative seminorm and
$\alpha'$ is a power-multiplicative seminorm,
then $\alpha \geq \alpha'$ 
if and only if $\alpha(a) \geq \alpha'(a)$ for all
$a \in A$.
\end{itemize}
\end{defn}

\begin{example}
For any abelian group $G$, the \emph{trivial norm} on $G$ sends 
$0$ to $0$ and any nonzero $g \in G$ to 1. For any nonzero ring $A$, the trivial
norm on $A$ is submultiplicative in all cases,
power-multiplicative if and only if $A$ is reduced,
and multiplicative if and only if $A$ is  an integral domain.
(The trivial norm on the zero ring is multiplicative by virtue of the fact that we do not
force $\alpha(1) = 1$.)
\end{example}

\begin{defn} \label{D:residue ring}
For $A$ a ring equipped with a submultiplicative seminorm $|\cdot|$,
we write
\begin{align*}
\gotho_A &= \{x \in A: |x| \leq 1\} \\
\gothm_A &= \{x \in A: |x| < 1\} \\
\kappa_A &= \gotho_A/\gothm_A.
\end{align*}
If $A$ is a field and $|\cdot|$ is a multiplicative norm, then 
$\kappa_A$ is also a field. (The field $\kappa_A$ is normally called the
\emph{residue field} of $A$, but we will use this term mostly for something
else; see Definition~\ref{D:spectrum}.)
\end{defn}

\begin{defn}
Let $A$ be a ring equipped with a submultiplicative (semi)norm $|\cdot|$.
For $r \geq 0$, the \emph{$r$-Gauss (semi)norm} $|\cdot|_r$ on $A[T]$
(for the generator $T$, when this needs to be specified)
is the submultiplicative (semi)norm defined by the formula
\begin{equation} \label{eq:Gauss formula}
 \left| \sum_i x_i T^i \right|_r = \max_i \{|x_i| r^i\};
\end{equation}
this is multiplicative if $|\cdot|$ is multiplicative
(see Lemma~\ref{L:Gauss multiplicative}).
We refer to the 1-Gauss (semi)norm also simply as the \emph{Gauss (semi)norm}
(or \emph{Gauss extension}).
\end{defn}

\begin{lemma} \label{L:Gauss multiplicative}
Let $A$ be a ring equipped with a multiplicative seminorm $|\cdot|$.
Then for any $r \geq 0$, the $r$-Gauss seminorm on $A[T]$ is multiplicative.
\end{lemma}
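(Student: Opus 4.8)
The plan is to establish the one nontrivial inclusion, the inequality $|fg|_r \geq |f|_r \, |g|_r$ for all $f, g \in A[T]$; the reverse inequality $|fg|_r \leq |f|_r \, |g|_r$ is already part of the assertion (built into the definition) that $|\cdot|_r$ is a submultiplicative seminorm, and the axioms (a), (b), together with $|1|_r = |1| \leq 1$, are immediate from the corresponding facts for $|\cdot|$. First I would dispose of the degenerate case: if $|f|_r |g|_r = 0$ then $|fg|_r = 0$ by submultiplicativity, so equality holds. It then remains to treat $f, g$ with $|f|_r, |g|_r > 0$ (this automatically forces $r^{a+b} \neq 0$ below, even when $r = 0$, so no separate treatment of $r=0$ is needed, although one could alternatively note that $|\cdot|_0$ is the pullback of $|\cdot|$ along the evaluation $T \mapsto 0$ and invoke the fact that pullbacks of multiplicative seminorms are multiplicative).

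The key step is to isolate a ``Gauss-leading term'' of each factor. Writing $f = \sum_i x_i T^i$ and $g = \sum_j y_j T^j$, let $a$ be the \emph{largest} index with $|x_a| r^a = |f|_r$ and $b$ the \emph{largest} index with $|y_b| r^b = |g|_r$. I would then examine the coefficient $z_{a+b} = \sum_{i+j = a+b} x_i y_j$ of $T^{a+b}$ in $fg$. For any pair $(i,j) \neq (a,b)$ with $i+j = a+b$: if $i > a$ then maximality of $a$ gives $|x_i| r^i < |f|_r$, while $|y_j| r^j \leq |g|_r$; if $i < a$ then $j > b$, so maximality of $b$ gives $|y_j| r^j < |g|_r$, while $|x_i| r^i \leq |f|_r$. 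Either way, using multiplicativity of $|\cdot|$, $|x_i y_j|\, r^{a+b} = (|x_i| r^i)(|y_j| r^j) < |f|_r |g|_r = (|x_a| r^a)(|y_b| r^b) = |x_a y_b|\, r^{a+b}$; since this last quantity is nonzero, $r^{a+b} \neq 0$ and hence $|x_a y_b|$ strictly dominates every other summand of $z_{a+b}$.

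Finally I would invoke the equality case of the ultrametric inequality — a finite sum in which one summand has strictly largest norm has that same norm — to conclude $|z_{a+b}| = |x_a y_b| = |x_a||y_b|$, whence $|z_{a+b}|\, r^{a+b} = |f|_r |g|_r$ and therefore $|fg|_r \geq |z_{a+b}|\, r^{a+b} = |f|_r |g|_r$, as desired. I do not anticipate any genuine obstacle; the only points needing care are (i) selecting the \emph{largest} maximizing indices $a, b$, which is precisely what makes the inequalities above strict, and (ii) the boundary cases where $|f|_r |g|_r = 0$, since the domination argument requires a nonzero leading term $x_a y_b$ to latch onto. A more structural alternative would reduce to the case where $A$ is a complete nonarchimedean field, via the observation that $\ker|\cdot|$ is prime and $|\cdot|_r$ is the pullback of the corresponding Gauss seminorm on $\widehat{\Frac(A/\ker|\cdot|)}[T]$, but this saves nothing, as the leading-term computation is no easier over a field.
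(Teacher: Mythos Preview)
Your proof is correct and follows essentially the same approach as the paper's: isolate an extremal maximizing index in each factor and show that the corresponding coefficient in the product has the expected norm via the strict ultrametric inequality. The only cosmetic differences are that the paper chooses the \emph{smallest} maximizing indices (you chose the largest), and the paper disposes of $r=0$ separately at the outset rather than folding it into the degenerate case $|f|_r|g|_r=0$.
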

\begin{proof}
This is evident in case $r=0$, so assume $r > 0$.
Let $a = \sum_j a_j T^j, b = \sum_k b_k T^k$ be any two elements of $A[T]$.
Choose the smallest indices $j,k$ for which $|a_j| r^j, |b_k| r^k$ are maximized,
and put $i = j+k$. The coefficient of $T^i$ in $ab$ is then equal to $a_j b_k$ plus the sum of
$a_{j'} b_{k'}$ over all pairs $(j', k') \neq (j,k)$ for which $j'+k' = j+k$. For each such
pair, either $j' < j$, in which case
\[
|a_{j'}| r^{j'} < |a_j| r^j, \qquad |b_{k'}| r^{k'} \leq |b_k| r^k, 
\]
or $k' < k$, in which case
\[
|a_{j'}| r^{j'} \leq |a_j| r^j, \qquad |b_{k'}| r^{k'} < |b_k| r^k. 
\]
In both cases, we conclude that $|a_{j'} b_{k'}| < |a_j b_k|$, so the coefficient of $T^i$
in $ab$ has norm $|a_j b_k|$. This forces $|ab|_r = |a|_r |b|_r$, as desired.
\end{proof}

\begin{remark} \label{R:construct Gauss by quotient}
For each $z \in A$, one has a $r$-Gauss seminorm on $A[T]$ for the generator
$T-z$. This seminorm can also be constructed by equipping $A[T]$
with the $s$-Gauss norm for some $s \geq \max\{r,|z|\}$, forming the $r$-Gauss extension
to $A[T][U]$, then passing to the quotient norm on $A[T][U]/(U - T + z) 
\cong A[T]$.
\end{remark}

\begin{defn} \label{D:spectrum}
Let $A$ be a ring equipped with a submultiplicative norm $|\cdot|$. The \emph{Gel'fand spectrum}
$\calM(A)$ of $A$ is the set of multiplicative seminorms $\alpha$ on $A$ 
dominated by $|\cdot|$,
topologized as a closed subspace of the product $\prod_{a \in A} [0, |a|]$
(see observation (iv) of Definition~\ref{D:submultiplicative}).
In particular, $\calM(A)$ is compact by Tikhonov's theorem.
A subbasis of the topology on $\calM(A)$
is given by the sets $\{\alpha \in \calM(A): \alpha(f) \in I\}$
for each $f \in A$ and each open interval $I \subseteq \RR$.
For any bounded homomorphism $\phi: A \to B$ between rings equipped with 
submultiplicative norms, restriction along $\phi$ defines a 
continuous map $\phi^*: \calM(B)
\to \calM(A)$; this map is a homeomorphism when $B = \widehat{A}$.

For $\alpha \in \calM(A)$, the seminorm $\alpha$ induces a multiplicative norm on the integral domain $A/\alpha^{-1}(0)$,
and hence also on $\Frac(A/\alpha^{-1}(0))$. The completion of this latter field is the \emph{residue field}
of $\alpha$, denoted $\calH(\alpha)$. (Note that $\calH(\alpha)$ itself has
a ``residue field'' $\kappa_{\calH(\alpha)}$ in the sense of
Definition~\ref{D:residue ring}.)
\end{defn}

\begin{theorem} \label{T:Gelfand transform1}
Let $A$ be a nonzero ring equipped with a submultiplicative norm $|\cdot|$. 
Then $\calM(A) \neq \emptyset$.
\end{theorem}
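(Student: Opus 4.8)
The plan is to realize a point of $\calM(A)$ as a minimal element, for \emph{pointwise} comparison, of the set $\Sigma$ of all nonzero bounded submultiplicative seminorms on $A$, and then to show that minimality upgrades submultiplicativity all the way to multiplicativity. Partially order $\Sigma$ by declaring $\alpha \preceq \alpha'$ when $\alpha(a) \le \alpha'(a)$ for every $a \in A$. This set is nonempty: since $A \ne 0$ we have $|1| = 1$, so $|\cdot|$ itself lies in $\Sigma$. The first step is a Zorn's lemma argument: given a chain $\{\alpha_i\}$ in $\Sigma$, its pointwise infimum $\gamma(a) = \inf_i \alpha_i(a)$ is a lower bound lying in $\Sigma$. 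Nonvanishing and boundedness of $\gamma$ are immediate ($\gamma(1) = \inf_i 1 = 1$ by observation (i) of Definition~\ref{D:submultiplicative}, and $\gamma \le |\cdot|$), while the ultrametric and submultiplicative inequalities for $\gamma$ follow from those for the $\alpha_i$ together with the key point that, because $\{\alpha_i\}$ is totally ordered, for any $a, b \in A$ and any $\epsilon > 0$ one may choose a \emph{single} index $i$ with $\alpha_i(a) < \gamma(a) + \epsilon$ and $\alpha_i(b) < \gamma(b) + \epsilon$ simultaneously. Thus $\Sigma$ has a minimal element $\beta$.

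Next I would show $\beta$ is power-multiplicative. Form the associated spectral seminorm $\beta_{\spect}(a) = \lim_{n \to \infty} \beta(a^n)^{1/n}$; a routine estimate (Fekete's subadditivity lemma for existence of the limit, and the ultrametric inequality applied to binomial expansions $\beta((a+b)^n) \le \max_k \beta(a^k)\beta(b^{n-k})$ for the seminorm axioms) shows $\beta_{\spect}$ is a bounded power-multiplicative seminorm with $\beta_{\spect} \preceq \beta$ and $\beta_{\spect}(1) = 1$, hence $\beta_{\spect} \in \Sigma$. Minimality of $\beta$ forces $\beta_{\spect} = \beta$, so $\beta$ is power-multiplicative; in particular $\beta(g) > 0$ implies $\beta(g^n) = \beta(g)^n > 0$ for all $n \ge 0$.

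The heart of the proof is then to deduce multiplicativity. Fix $g \in A$. If $\beta(g) = 0$, submultiplicativity already gives $\beta(gx) \le \beta(g)\beta(x) = 0$ for all $x$, so $\beta(gx) = \beta(g)\beta(x)$. If $\beta(g) > 0$, I would introduce the localized seminorm $\beta_g(x) = \lim_{n \to \infty} \beta(xg^n)/\beta(g)^n$; the sequence $\beta(xg^n)/\beta(g)^n$ is non-increasing by submultiplicativity, so the limit exists and is at most $\beta(x)$, while $\beta_g(1) = \lim_n \beta(g^n)/\beta(g)^n = 1$ by the previous step. Passing the seminorm inequalities to the limit shows $\beta_g \in \Sigma$, and $\beta_g \preceq \beta$, so minimality gives $\beta_g = \beta$. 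But a one-line reindexing gives $\beta_g(gx) = \lim_n \beta(xg^{n+1})/\beta(g)^n = \beta(g)\beta_g(x)$ for all $x$; rewriting with $\beta_g = \beta$ yields $\beta(gx) = \beta(g)\beta(x)$. In all cases $\beta$ is multiplicative, and since $\beta \preceq |\cdot|$ is dominated by $|\cdot|$ (observation (iv) of Definition~\ref{D:submultiplicative}) and $\beta \ne 0$, we obtain $\beta \in \calM(A)$, so $\calM(A) \ne \emptyset$.

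I expect the main obstacle to be the Zorn step, specifically the verification that the pointwise infimum of a chain remains submultiplicative: the naive bound $\inf_i \alpha_i(ab) \le (\inf_i \alpha_i(a))(\inf_i \alpha_i(b))$ is false for arbitrary families of seminorms and genuinely requires exploiting the total ordering, and one must likewise be careful that each auxiliary seminorm built along the way ($\gamma$, $\beta_{\spect}$, $\beta_g$) is certified nonzero---which each time reduces to the normalization $\alpha(1) = 1$ from observation (i) of Definition~\ref{D:submultiplicative}. The remaining verifications (existence of the spectral limit, the limit computations for $\beta_g$) are standard and mechanical.
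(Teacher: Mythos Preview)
Your argument is correct and complete. The paper does not actually prove this statement: its entire proof reads ``Replace $A$ by its completion, then apply \cite[Theorem~1.2.1]{berkovich1}.'' What you have written is essentially the classical proof behind that citation---Zorn's lemma on the partially ordered set of bounded nonzero submultiplicative seminorms, followed by the two-step upgrade (first to power-multiplicativity via the spectral seminorm, then to full multiplicativity via the localization $\beta_g$). So there is no meaningful methodological difference to compare: you have simply unpacked the reference. The only cosmetic discrepancy is that the paper first passes to the completion $\widehat{A}$, but this is unnecessary for the argument you give and serves only to match the standing hypotheses in \cite{berkovich1}.
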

\begin{proof}
Replace $A$ by its completion, then apply \cite[Theorem~1.2.1]{berkovich1}.
\end{proof}

\begin{defn} \label{D:transform}
Let $A$ be a ring equipped with a submultiplicative norm $|\cdot|$.
Let $|\cdot|_{\sup}: \prod_{\alpha \in \calM(A)} \calH(\alpha) \to [0, +\infty]$
denote the supremum of the norms on the $\calH(\alpha)$.
Let $P$ be the inverse image of $[0, +\infty)$ under $|\cdot|_{\sup}$;
then $|\cdot|_{\sup}$ defines a power-multiplicative norm on $P$.
The diagonal map $A \to \prod_{\alpha \in \calM(A)} \calH(\alpha)$ then 
factors through a bounded homomorphism $A \to P$, called the
\emph{Gel'fand transform} of $A$.
\end{defn}

\begin{lemma} \label{L:spectral seminorm}
Let $A$ be a ring equipped with a submultiplicative norm $|\cdot|$.
Then the restriction of $|\cdot|_{\sup}$ to $A$ along the Gel'fand
transform computes the \emph{spectral seminorm}
$|a|_{\spect} = \lim_{s \to \infty} |a^s|^{1/s}$ on $A$.
\end{lemma}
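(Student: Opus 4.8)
The plan is to unwind the definition of $|\cdot|_{\sup}$, reduce the assertion to a statement purely about the seminorms in $\calM(A)$, and then split it into an easy inequality and a hard one. First I would observe that, by the construction of $\calH(\alpha)$ in Definition~\ref{D:spectrum}, the multiplicative norm on $\calH(\alpha)$ restricts along $A \to \calH(\alpha)$ to the seminorm $\alpha$ itself; hence the $\calH(\alpha)$-component of the Gel'fand transform of $a \in A$ has norm exactly $\alpha(a)$, so the restriction of $|\cdot|_{\sup}$ to $A$ carries $a$ to $\sup_{\alpha \in \calM(A)} \alpha(a)$. (This supremum is finite, indeed at most $|a|$: each $\alpha \in \calM(A)$ is power-multiplicative and dominated by $|\cdot|$, so by observation~(iv) of Definition~\ref{D:submultiplicative} we get $\alpha(a) \le |a|$; this is exactly what makes the Gel'fand transform land in $P$, as implicitly used in the statement.) I would also record that the limit defining $|a|_{\spect}$ exists and equals $\inf_{s \ge 1} |a^s|^{1/s}$, since submultiplicativity makes $s \mapsto \log|a^s|$ subadditive and Fekete's subadditivity lemma applies (with the convention that the value is $0$ when some $|a^s|$ vanishes). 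Thus the content of the lemma is the equality $\sup_{\alpha \in \calM(A)} \alpha(a) = \inf_{s \ge 1} |a^s|^{1/s}$.

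The inequality $\le$ is immediate: for $\alpha \in \calM(A)$ and $s \ge 1$, power-multiplicativity gives $\alpha(a)^s = \alpha(a^s) \le |a^s|$, whence $\alpha(a) \le |a^s|^{1/s}$; taking the infimum over $s$ and the supremum over $\alpha$ yields the claim. I would also note here that $\alpha \mapsto \alpha(a)$ is continuous on the compact space $\calM(A)$, so this supremum is in fact attained.

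The reverse inequality $\sup_{\alpha} \alpha(a) \ge \rho := |a|_{\spect}$ is the crux: it amounts to exhibiting a point of $\calM(A)$ at which $a$ has norm at least $\rho$. Here I would first reduce to the case that $A$ is complete, since replacing $A$ by $\widehat A$ changes neither side ($A \hookrightarrow \widehat A$ is isometric, so the $|a^s|$ are unchanged) nor the spectrum (by the homeomorphism $\calM(\widehat A) \xrightarrow{\sim} \calM(A)$ noted after Definition~\ref{D:spectrum}); for complete $A$ this is precisely Berkovich's spectral radius theorem \cite[Theorem~1.3.1]{berkovich1}, which I would simply invoke. I expect this to be the only real obstacle, and a genuine one: the naive approach of forcing $a$ to become ``invertible of norm $\ge \rho$'' — e.g. taking the quotient seminorm on $A[T]/(aT-1)$ induced from an $s$-Gauss norm on $A[T]$ with $s$ slightly larger than $\rho^{-1}$, then applying Theorem~\ref{T:Gelfand transform1} to its separated completion — founders on the fact that, for a merely submultiplicative norm, one cannot bound the resulting quotient seminorm away from zero without essentially reproving the spectral radius theorem, so at this point one must appeal to \cite{berkovich1}.
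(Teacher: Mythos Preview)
Your proposal is correct and follows the same route as the paper: the paper's proof consists solely of the citation ``See \cite[Theorem~1.3.1]{berkovich1}'', and your argument ultimately rests on exactly this reference after reducing to the complete case. Your added unpacking (identifying $|\cdot|_{\sup}|_A$ with $\sup_\alpha \alpha(\cdot)$, the Fekete argument for existence of the limit, the easy inequality, and the reduction to $\widehat{A}$) is all sound and makes explicit what the bare citation leaves implicit.
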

\begin{proof}
See \cite[Theorem~1.3.1]{berkovich1}.
\end{proof}

\begin{remark}
Let $A$ be a ring equipped with a submultiplicative norm $|\cdot|$.
Let $I \subset A$ be the kernel of the spectral seminorm.
Choose a closed subset $X$ of $\calM(A)$.
Let $S$ be the multiplicative subset of $a \in A/I$ for which
$\inf\{\alpha(a): \alpha \in X\} > 0$. Put $B = S^{-1} (A/I)$, equipped
with the supremum norm over $X$; then the map $A \to B$ is a 
bounded homomorphism inducing a homeomorphism of $\calM(B)$
with a closed subset of $\calM(A)$ containing $X$. 
In many (but not all) cases, this closed subset equals $X$;
for instance, this occurs for the sets described in Definition~\ref{D:rational}
below. This is related to the nonarchimedean analogues of the
notions of \emph{holomorphically convex} and \emph{meromorphically convex}
sets; see \cite[\S 2.6]{berkovich1}.
\end{remark}

The following examples of the previous construction occur 
when comparing nonarchimedean
analytic geometry to formal geometry or rigid analytic geometry
(as explained in \cite{berkovich2}).
\begin{defn} \label{D:rational}
Let $A$ be a ring equipped with a submultiplicative norm $|\cdot|$.
A \emph{Weierstrass subspace} of $\calM(A)$ is a closed subspace of the form
\[
U = \{\alpha \in \calM(A): \alpha(f_i) \leq p_i \quad
(i=1,\dots,n)\}
\]
for some $f_1,\dots,f_n \in A$ and some
$p_1,\dots,p_n > 0$. 
A \emph{Laurent subspace} of $\calM(A)$ is a closed subspace of the form
\[
U = \{\alpha \in \calM(A): \alpha(f_i) \leq p_i, \quad \alpha(g_j) \geq q_j \quad
(i=1,\dots,m; \, j=1,\dots,n)\}
\]
for some $f_1,\dots,f_m,g_1,\dots,g_n \in A$ and some
$p_1,\dots,p_m,q_1,\dots,q_n > 0$; the Laurent subspaces form a basis
of closed neighborhoods for the topology of $\calM(A)$.
A \emph{rational subspace} of $\calM(A)$ is a closed subspace of the form
\[
U = \{\alpha \in \calM(A): \alpha(f_i) \leq p_i \alpha(g) \quad
(i=1,\dots,n)\}
\]
for some $f_1,\dots,f_n,g \in A$ which generate the unit ideal 
in $\widehat{A}$ and some $p_1,\dots,p_n > 0$;
we may assume without loss of generality that $f_n = g, p_n = 1$.
The intersection of rational subspaces is rational 
\cite[Proposition~7.2.3/7]{bgr}; consequently, any Laurent
subspace is rational.

We will say a continuous
map between Gel'fand spectra
is \emph{strongly continuous} if the inverse image of any 
Weierstrass (resp.\ Laurent, rational) subspace
is a finite union of Weierstrass (resp.\ Laurent, rational) subspaces.
For instance, the restriction map along a bounded homomorphism 
is strongly continuous.
\end{defn}

\begin{remark} \label{R:rational}
It is easy to see that a Weierstrass or
Laurent subspace of $\calM(\widehat{A})$ remains Weierstrass or Laurent
when viewed as a subset of $\calM(A)$. This is also true for rational subspaces,
but the argument is a bit less immediate.
Let
\[
U = \{\alpha \in \calM(\widehat{A}): \alpha(f_i) \leq p_i \alpha(g) \quad
(i=1,\dots,n)\}
\]
be a rational subspace of $\calM(\widehat{A})$
for some $f_1,\dots,f_n,g \in \widehat{A}$ 
which generate the unit ideal and some
$p_1,\dots,p_n > 0$. Choose $u_1,\dots,u_n,v \in \widehat{A}$ for which
$u_1 f_1 + \cdots + u_n f_n + v g = 1$. For $\alpha \in U$, we then have
\[
1 \leq \max\{\alpha(u_1f_1), \dots, \alpha(u_nf_n), \alpha(vg) \} \\
\leq \max\{|u_1| p_1, \dots, |u_n| p_n, |v|\} \alpha(g).
\]
Choose $\epsilon \in (0,1)$ so that $\epsilon \max\{|u_1| p_1, \dots, 
|u_n| p_n, |v|\} < 1$; then $\alpha(g) \geq \epsilon$ for all $\alpha \in U$.
Choose $f'_1,\dots,f'_n,g' \in A$ with
\[
|f_1 - f'_1| < p_1 \epsilon, \dots,
|f_n - f'_n| < p_n \epsilon, |g-g'| < \epsilon.
\]
On one hand,
\[
|u_1 f'_1 + \cdots + u_n f'_n + v g' - 1| \leq
\epsilon \max\{|u_1| p_1, \dots, |u_n| p_n, |v|\} < 1,
\]
so $f'_1,\dots,f'_n,g'$ still generate the unit ideal in $\widehat{A}$.
On the other hand,
\[
U = \{\alpha \in \calM(\widehat{A}): \alpha(f'_i) \leq p_i \alpha(g') \quad
(i=1,\dots,n)\},
\]
so $U$ is a rational subspace of $\calM(A)$.
(Note that we cannot hope to ensure that $f'_1,\dots,f'_n,g'$ generate
the unit ideal in $A$ itself.)
\end{remark}

\begin{defn} \label{D:tensor}
Let $A,B,C$ be rings equipped with submultiplicative norms $|\cdot|_A,
|\cdot|_B, |\cdot|_C$. Let $A \to B$ and $A \to C$ be bounded homomorphisms.
Define the \emph{product seminorm} $|\cdot|_{B \otimes C}$ on $B \otimes_A C$
by taking $|f|_{B \otimes C}$ to be the infimum of $\max_i \{|b_i|_B |c_i|_C\}$
over all presentations $\sum_i b_i \otimes c_i$ of $f$.
Let $B \widehat{\otimes}_A C$ be the separated completion of $B \otimes_A C$
for the product seminorm.

It is sometimes difficult to tell whether or not $B \widehat{\otimes}_A C$ is nonzero;
we get around this using the following definition.
By a \emph{splitting} of $\iota: A \to B$, we will mean
a bounded homomorphism $\pi: B \to A$ of $A$-modules with $\pi \circ \iota = \id_A$.
We say $\iota$ is \emph{split} if it admits a splitting; this implies that $|\cdot|_A$ is equivalent
to the restriction of $|\cdot|_B$.
\end{defn}

\begin{lemma} \label{L:split}
Let $A,B,C$ be rings equipped with submultiplicative norms $|\cdot|_A,
|\cdot|_B, |\cdot|_C$. Let $A \to B$ and $A \to C$ be bounded homomorphisms.
Then $|\cdot|_{B \otimes C}$ induces a 
submultiplicative norm
on $B \widehat{\otimes}_A C$.
In addition, if $\iota: A \to B$ is split, then so is $C \to B \widehat{\otimes}_A C$.
\end{lemma}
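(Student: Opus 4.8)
The plan is to treat the two assertions in turn, with essentially all of the real work residing in a single boundedness estimate.

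For the first assertion, I would check directly that the product seminorm $|\cdot|_{B\otimes C}$ of Definition~\ref{D:tensor} satisfies conditions (a) and (c) of Definition~\ref{D:submultiplicative} on $B\otimes_A C$; since these conditions are preserved upon passing to the quotient by the kernel of the seminorm and then completing, the separated completion $B\widehat\otimes_A C$ then carries an induced submultiplicative norm. The mechanism for both (a) and (c) is that presentations combine: given presentations $f=\sum_i b_i\otimes c_i$ and $g=\sum_j b'_j\otimes c'_j$, the expression $\sum_i b_i\otimes c_i-\sum_j b'_j\otimes c'_j$ is a presentation of $f-g$ whose associated value is $\max\{\max_i|b_i|_B|c_i|_C,\ \max_j|b'_j|_B|c'_j|_C\}$, which upon taking infima over presentations of $f$ and of $g$ gives the ultrametric inequality; likewise $\sum_{i,j}(b_ib'_j)\otimes(c_ic'_j)$ is a presentation of $fg$ whose associated value is at most $(\max_i|b_i|_B|c_i|_C)(\max_j|b'_j|_B|c'_j|_C)$ by submultiplicativity of $|\cdot|_B$ and $|\cdot|_C$, giving $|fg|_{B\otimes C}\le|f|_{B\otimes C}|g|_{B\otimes C}$ after taking infima. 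Finally $|1\otimes1|_{B\otimes C}\le|1|_B|1|_C\le1$ and $|0|_{B\otimes C}\le|0|_B|0|_C=0$.

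For the second assertion, let $\pi\colon B\to A$ be a splitting of $\iota$, and write $\iota_C\colon C\to B\otimes_A C$ for $\iota\otimes_A\id_C$ under the identification $A\otimes_A C=C$, so that the map $C\to B\widehat\otimes_A C$ of the statement is $\iota_C$ followed by the completion map. Since $\pi$ is a homomorphism of $A$-modules, $\pi\otimes_A\id_C$ is a well-defined homomorphism of $C$-modules $B\otimes_A C\to A\otimes_A C=C$, and $\pi\circ\iota=\id_A$ yields $(\pi\otimes_A\id_C)\circ\iota_C=\id_C$. The crux is to show that $\pi\otimes_A\id_C$ is bounded for the product seminorm. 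Given a presentation $f=\sum_i b_i\otimes c_i$ we have $(\pi\otimes_A\id_C)(f)=\sum_i\pi(b_i)c_i$ in $C$, so using the ultrametric inequality, submultiplicativity of $|\cdot|_C$, and the boundedness of $A\to C$ and of $\pi\colon B\to A$,
\[
\big|(\pi\otimes_A\id_C)(f)\big|_C\le\max_i\big|\pi(b_i)c_i\big|_C\le c\max_i|b_i|_B\,|c_i|_C
\]
for a constant $c$ independent of $f$ (the product of the bound constants of the two maps); taking the infimum over presentations of $f$ gives $|(\pi\otimes_A\id_C)(f)|_C\le c\,|f|_{B\otimes C}$. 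A bounded homomorphism extends uniquely by continuity to separated completions, and the resulting extension is the desired splitting of $C\to B\widehat\otimes_A C$; its existence in particular forces $|\cdot|_C$ to be equivalent to the restriction of $|\cdot|_{B\otimes C}$ and $B\widehat\otimes_A C$ to be nonzero whenever $C$ is.

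The only nonformal ingredient is the displayed boundedness estimate, and even that is immediate once the definition of the product seminorm is unwound; the remainder is bookkeeping. The one step deserving a moment's care is the target of the extension to completions: a bounded map $B\otimes_A C\to C$ extends a priori to a map $B\widehat\otimes_A C\to\widehat C$, so to land in $C$ itself one should either invoke completeness of $C$ (harmless in the intended applications) or observe that $\iota_C$ already embeds $C$ into $B\otimes_A C$ with equivalent norm via $\pi\otimes_A\id_C$ and interpret the splitting accordingly; in any case the two consequences drawn above are unaffected.
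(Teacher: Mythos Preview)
Your proof is correct and takes essentially the same approach as the paper's: check submultiplicativity of the product seminorm directly from presentations, then tensor the splitting $\pi$ with $\id_C$ and extend by continuity. You are in fact more careful than the paper on the last step---the paper simply asserts that the continuous extension of $\pi\otimes_A\id_C$ lands in $C$ rather than $\widehat C$, a subtlety you correctly flag and address.
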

\begin{proof}
From the presentation $1 = 1 \otimes 1$, we read off that
$|1|_{B \otimes C} \leq 1$.
For $f = \sum_i b_i \otimes c_i, f' = \sum_j b'_j \otimes c'_j \in B \otimes_A C$,
we may write
$f f' = \sum_{i,j} (b_i b'_j) \otimes (c_i c'_j)$, and deduce that
\[
|ff'|_{B \otimes C} \leq \max_{i,j} \{ |b_i b'_j|_B |c_i c'_j|_C\}
\leq \max_i \{|b_i|_B |c_i|_C\} \max_j \{|b'_j|_B |c'_j|_C\}.
\]
Taking the infimum over all presentations of $f$ and $f'$ yields
$|ff'|_{B \otimes C} \leq |f|_{B \otimes C} |f'|_{B \otimes C}$,
so  $|\cdot|_{B \otimes C}$ is a submultiplicative norm on 
$B \widehat{\otimes}_A C$. 

Suppose further that $\pi: B \to A$ is a splitting of $\iota$.
By tensoring $\pi$ over $A$ with $C$, we obtain a 
bounded projection $B \otimes_A C \to C$
of $C$-modules with $C \to B \otimes_A C \to C$ being the identity.
By continuity, we obtain a projection $B \widehat{\otimes}_A C \to C$
with the same effect, so $C \to B \widehat{\otimes}_A C$ is split.
\end{proof}

\begin{remark}
Keep in mind that $|\cdot|_{B \otimes C}$ need not be multiplicative 
even if $|\cdot|_A, |\cdot|_B, |\cdot|_C$ are
multiplicative. For example, if $K$ is a quadratic extension of the $p$-adic field
$\QQ_p$, then $K \otimes_{\QQ_p} K \cong K \widehat{\otimes}_{\QQ_p} K$ splits as a direct sum
of two copies of $K$.
\end{remark}

\begin{lemma} \label{L:surjective}
Let $A,B$ be rings equipped with submultiplicative norms $|\cdot|_A$, $|\cdot|_B$.
Let $\phi: A \to B$ be a split homomorphism.
Then the map $\phi^*$ is surjective.
\end{lemma}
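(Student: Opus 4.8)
The plan is to realize each point of $\calM(A)$ as a restriction from $\calM(B)$ by base-changing along the residue field of that point. Fix $\alpha \in \calM(A)$ and set $K = \calH(\alpha)$, a complete nonarchimedean field equipped with the multiplicative norm $|\cdot|_K$; recall that the natural homomorphism $A \to K$ (the composite $A \to A/\alpha^{-1}(0) \to \Frac(A/\alpha^{-1}(0)) \to K$) is bounded and pulls $|\cdot|_K$ back to $\alpha$. I would then form $B' = B \widehat{\otimes}_A K$ with its product norm. By Lemma~\ref{L:split} this norm is a submultiplicative norm, and since $\iota = \phi\colon A \to B$ is split, so is the homomorphism $K \to B'$; in particular a splitting $B' \to K$ sends $1 \mapsto 1 \neq 0$, so $B' \neq 0$. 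Hence Theorem~\ref{T:Gelfand transform1} furnishes a point $\gamma \in \calM(B')$, and we may take $\gamma$ to be not identically zero (if every point of $\calM(B')$ vanished at $1$, then by Lemma~\ref{L:spectral seminorm} the spectral seminorm of $B'$ would vanish at $1$, contrary to $|1|_{\spect} = 1$ on the nonzero ring $B'$).

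Next I would restrict $\gamma$ along the canonical bounded homomorphism $B \to B \otimes_A K \to B'$; boundedness is clear since the presentation $b \otimes 1$ shows $|b \otimes 1|_{B \otimes K} \leq |b|_B |1|_K = |b|_B$. This yields a multiplicative seminorm $\beta$ on $B$ dominated by $|\cdot|_B$, i.e.\ a point of $\calM(B)$. It remains to check $\phi^*(\beta) = \alpha$, that is, $\beta(\phi(a)) = \alpha(a)$ for all $a \in A$. Since $\phi(a) \otimes 1 = 1 \otimes \bar a$ in $B \otimes_A K$, where $\bar a$ is the image of $a$ in $K$, we have $\beta(\phi(a)) = \gamma(1 \otimes \bar a)$, so the whole matter reduces to showing that the restriction of $\gamma$ along $K \to B'$ is exactly $|\cdot|_K$.

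The only nontrivial point is that a not-identically-zero multiplicative seminorm on a field, bounded above by a given norm, must equal that norm. Concretely: $\gamma$ restricted to $K$ is multiplicative with $\gamma(1) = 1$; it is dominated by the product norm, so by observation (iv) of Definition~\ref{D:submultiplicative} (using that $\gamma$ is multiplicative, hence power-multiplicative) we get the pointwise bound $\gamma(1 \otimes c) \leq |1 \otimes c|_{B \otimes K} \leq |c|_K$ for all $c \in K$; and for $c \neq 0$ the identity $1 = \gamma(1) = \gamma(1 \otimes c)\,\gamma(1 \otimes c^{-1}) \leq \gamma(1 \otimes c)\,|c^{-1}|_K$ forces $\gamma(1 \otimes c) \geq |c|_K$, whence equality. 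Thus $\beta(\phi(a)) = \gamma(1 \otimes \bar a) = |\bar a|_K = \alpha(a)$, as needed. I expect the main obstacle to be exactly this combination — arranging that the base-changed point $\gamma$ is nonzero and then invoking the field structure of $\calH(\alpha)$ to pin down its restriction precisely — while the remaining work is just assembling Lemma~\ref{L:split}, Theorem~\ref{T:Gelfand transform1}, and the definition of $\calH(\alpha)$.
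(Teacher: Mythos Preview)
Your proof is correct and follows essentially the same approach as the paper: form $B' = B \widehat{\otimes}_A \calH(\alpha)$, use Lemma~\ref{L:split} to see that $\calH(\alpha) \to B'$ is split (hence $B' \neq 0$), invoke Theorem~\ref{T:Gelfand transform1} to produce a point, and restrict back. The paper's proof is terser, simply asserting that the restricted seminorm coincides with $\alpha$; you have made explicit the reason (a nonzero multiplicative seminorm on the field $\calH(\alpha)$ bounded by its norm must equal that norm), which is exactly the implicit content of the paper's final sentence.
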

\begin{proof} 
For any $\alpha \in \calM(A)$, the homomorphism $\calH(\alpha) \to B \widehat{\otimes}_A \calH(\alpha)$ is split by Lemma~\ref{L:split}; in particular, the target is nonzero and carries a submultiplicative
norm. By Theorem~\ref{T:Gelfand transform1},
there exists some $\beta \in \calM(B \widehat{\otimes}_A \calH(\alpha))$.
The seminorms $\alpha$ and $\beta \circ \phi$ on $A$ then coincide.
\end{proof}

\begin{lemma} \label{L:surjective2}
Let $A,B,C$ be rings equipped with submultiplicative norms. Then the map $\calM(B \widehat{\otimes}_A C) 
\to \calM(B) \times_{\calM(A)} \calM(C)$ is surjective.
\end{lemma}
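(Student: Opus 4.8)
The plan is to reduce the general statement to the case in which $A$, $B$, $C$ are complete valued fields and the homomorphisms are isometric embeddings --- where the assertion becomes the nonvanishing of a completed tensor product of fields --- and then to transport a point back along a bounded homomorphism.

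Fix a point $(\beta,\gamma)$ of $\calM(B) \times_{\calM(A)} \calM(C)$ with common image $\alpha \in \calM(A)$, and set $K = \calH(\alpha)$, $L = \calH(\beta)$, $M = \calH(\gamma)$. Since $\beta|_A = \alpha = \gamma|_A$, the canonical maps $A \to L$ and $A \to M$ factor through isometric embeddings $K \hookrightarrow L$ and $K \hookrightarrow M$, so the maps $B \to L$ and $C \to M$ induce a homomorphism $B \otimes_A C \to L \otimes_K M$; using observation~(iv) of Definition~\ref{D:submultiplicative} (which gives $\beta(b) \leq |b|_B$ and $\gamma(c) \leq |c|_C$), this homomorphism is nonexpansive for the product seminorms, hence extends to a bounded homomorphism $\psi\colon B \widehat{\otimes}_A C \to L \widehat{\otimes}_K M$. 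If we can produce any $\delta_0 \in \calM(L \widehat{\otimes}_K M)$, then $\delta := \psi^*(\delta_0) \in \calM(B \widehat{\otimes}_A C)$ will map to $(\beta,\gamma)$: the restriction of $\delta$ to $B$ is the composite of $B \to L$ with the restriction of $\delta_0$ along $L \to L \widehat{\otimes}_K M$, and this restriction of $\delta_0$ is a nonzero multiplicative seminorm on the complete valued field $L$ (nonzero since $\delta_0(1)=1$), hence equals $|\cdot|_L$ by the standard fact that a multiplicative norm on a complete valued field dominated by the valuation must coincide with it; thus $\delta|_B = \beta$, and symmetrically $\delta|_C = \gamma$.

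By Theorem~\ref{T:Gelfand transform1}, producing $\delta_0$ reduces to showing $L \widehat{\otimes}_K M \neq 0$, equivalently that the product seminorm of $1 \otimes 1$ in $L \otimes_K M$ is nonzero (it is then automatically $1$, being a submultiplicative seminorm bounded above by $1$). This is the lemma in the case of complete valued fields, and is where the real content lies. It is standard in Berkovich's framework (\cite[\S 2.1]{berkovich1}); the underlying mechanism is to choose a complete valued field $\Omega$ admitting isometric $K$-embeddings of both $L$ and $M$ --- for instance a sufficiently large spherically complete algebraically closed extension of $L$, into which $M$ embeds isometrically over $K$ by a back-and-forth argument on value groups and residue fields --- so that $L \otimes_K M \to \Omega$ pulls back $|\cdot|_\Omega$ to a multiplicative seminorm bounded above by the product seminorm and carrying $1 \otimes 1$ to $1$, forcing the product seminorm of $1 \otimes 1$ to be at least $1$. (That the algebraic tensor product itself is nonzero is easier: $\gotho_L$ and $\gotho_M$ are faithfully flat over the valuation ring $\gotho_K$, since torsion-free modules over a valuation ring are flat and the valuations restrict compatibly, so $1 \otimes 1 \neq 0$ already in $\gotho_L \otimes_{\gotho_K} \gotho_M$.)

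The main obstacle is precisely the construction of $\Omega$, i.e., the valuation-theoretic embedding statement (or, equivalently, the estimate on the product seminorm, which one can also obtain via a Cartesian basis after passing to a maximally complete extension of $K$); everything else above is formal and follows from Theorem~\ref{T:Gelfand transform1} together with the elementary facts about multiplicative seminorms on fields recorded in Definition~\ref{D:submultiplicative} and its observations.
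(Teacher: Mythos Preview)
Your proof is correct and follows the same overall strategy as the paper: reduce to the residue fields $K=\calH(\alpha)$, $L=\calH(\beta)$, $M=\calH(\gamma)$, show that $L\,\widehat{\otimes}_K\,M\neq 0$, apply Theorem~\ref{T:Gelfand transform1}, and pull back. The only difference is in the justification of the nonvanishing step: the paper invokes Schauder bases for Banach modules over a nonarchimedean field (\cite[Lemma~1.3.11]{kedlaya-course}), which in fact gives the stronger statement that \emph{any} two nonzero Banach $K$-modules have nonzero completed tensor product, whereas you argue via a joint isometric embedding into a large complete field $\Omega$ (or, as you note parenthetically, via Cartesian bases after a maximally complete extension---essentially the same mechanism as the paper's citation).
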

\begin{proof}
Choose $\beta \in \calM(B)$, $\gamma \in \calM(C)$ having the same 
image $\alpha$ in
$\calM(A)$.
Using the existence of Schauder bases for Banach modules over nonarchimedean fields,
it can be shown that the completed tensor product of any two nonzero Banach modules
over $\calH(\alpha)$ is nonzero \cite[Lemma~1.3.11]{kedlaya-course}.
In particular, $D = \calH(\beta) \widehat{\otimes}_{\calH(\alpha)} \calH(\gamma)$
is nonzero, so by Theorem~\ref{T:Gelfand transform1}, there exists some $\delta \in \calM(D)$.
The restrictions of $\delta$ to $\calH(\beta), \calH(\gamma)$ give back
$\beta, \gamma$, so the same is true of the restrictions to $B,C$.
\end{proof}

\section{Nonarchimedean geometry of polynomial rings}
\label{sec:polynomial}

To illustrate the results we have concerning the nonarchimedean geometry of Witt vectors, we
first describe the analogous statements relating the nonarchimedean 
analytic spaces associated to a ring $R$ and the polynomial ring $R[T]$.

\begin{hypothesis}
Throughout \S\ref{sec:polynomial}, let 
$R$ be a ring equipped with a submultiplicative norm $|\cdot|$,
and equip $R[T]$ with the Gauss norm.
\end{hypothesis}

\begin{theorem} \label{T:lift1}
For $\alpha \in \calM(R)$, let $\lambda(\alpha) \in \calM(R[T])$
be the Gauss extension of $\alpha$.
For $\beta \in \calM(R[T])$, let $\mu(\beta) 
\in \calM(R)$ be the
restriction of $\beta$ along $R \to R[T]$. 
\begin{enumerate}
 \item[(a)] The maps  $\lambda$ and $\mu$ are strongly continuous and monotonic.
\item[(b)] For all $\alpha \in \calM(R)$, $(\mu \circ \lambda)(\alpha) = \alpha$.
\item[(c)] For all $\beta \in \calM(R[T])$, $(\lambda \circ \mu)(\beta) \geq \beta$.
\end{enumerate}
\end{theorem}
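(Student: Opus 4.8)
The plan is to verify the three assertions in turn, treating (b) as essentially a definitional check, (c) as the substantive inequality, and (a) as a consequence of the explicit formulas together with the general machinery on strongly continuous maps.

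First I would dispense with (b): given $\alpha \in \calM(R)$, the Gauss extension $\lambda(\alpha)$ is by \eqref{eq:Gauss formula} the seminorm $\sum_i x_i T^i \mapsto \max_i \{\alpha(x_i) r^i\}$ with $r=1$, and restricting this along $R \to R[T]$ sends a constant polynomial $x$ (i.e.\ $i=0$ only) to $\alpha(x)$. So $\mu(\lambda(\alpha)) = \alpha$ on the nose. One should also record here that $\lambda(\alpha)$ really lies in $\calM(R[T])$: it is multiplicative by Lemma~\ref{L:Gauss multiplicative} (noting $\alpha$ itself is multiplicative since $\alpha \in \calM(R)$), and it is dominated by the Gauss norm $|\cdot|_1$ on $R[T]$ because $\alpha \leq |\cdot|$ on $R$ and the Gauss construction is monotonic in the coefficient seminorm.

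The heart of the argument is (c). Fix $\beta \in \calM(R[T])$ and set $\alpha = \mu(\beta)$, so $\alpha(x) = \beta(x)$ for $x \in R$. I must show $\lambda(\alpha)(f) \geq \beta(f)$ for every $f = \sum_i x_i T^i \in R[T]$; here $\lambda(\alpha)(f) = \max_i \{\alpha(x_i)\} = \max_i \{\beta(x_i)\}$. By the ultrametric inequality for $\beta$,
\[
\beta(f) = \beta\Bigl(\sum_i x_i T^i\Bigr) \leq \max_i \{\beta(x_i T^i)\} = \max_i \{\beta(x_i)\,\beta(T)^i\}.
\]
So it suffices to know that $\beta(T) \leq 1$, for then $\beta(x_i)\beta(T)^i \leq \beta(x_i)$ for each $i$ and the maximum on the right is at most $\max_i\{\beta(x_i)\} = \lambda(\alpha)(f)$. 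But $\beta$ is dominated by the Gauss norm, so by observation (iv) of Definition~\ref{D:submultiplicative} (both seminorms being power-multiplicative) we have $\beta(T) \leq |T|_1 = 1$. This is the one place where the hypothesis that $\beta \in \calM(R[T])$—rather than an arbitrary multiplicative seminorm on $R[T]$—is used, and it is the step I expect to be the main (though modest) obstacle, in that one must remember to invoke the domination by the Gauss norm rather than trying to bound $\beta(T)$ directly.

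Finally, for (a): monotonicity of $\mu$ is immediate since restriction of seminorms along a fixed homomorphism preserves the domination relation, and monotonicity of $\lambda$ follows from formula \eqref{eq:Gauss formula}, since $\alpha \geq \alpha'$ on $R$ forces $\max_i\{\alpha(x_i)\} \geq \max_i\{\alpha'(x_i)\}$ termwise. For strong continuity of $\mu$: it is the restriction map $\phi^*$ along the bounded homomorphism $\phi: R \to R[T]$, hence strongly continuous by the remark at the end of Definition~\ref{D:rational}. For strong continuity of $\lambda$, I would compute the preimage of a basic subspace directly from \eqref{eq:Gauss formula}. For a Weierstrass condition $\beta(f) \leq p$ with $f = \sum_i x_i T^i$, the condition $\lambda(\alpha)(f) \leq p$ reads $\max_i\{\alpha(x_i)\} \leq p$, i.e.\ $\alpha(x_i) \leq p$ for all $i$ simultaneously—a finite intersection of Weierstrass conditions on $\calM(R)$, hence a single Weierstrass subspace. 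A Laurent condition $\beta(g) \geq q$ translates to $\max_i\{\alpha(y_i)\} \geq q$ for $g = \sum_i y_i T^i$, which is the \emph{union} over $i$ of the Laurent conditions $\alpha(y_i) \geq q$—a finite union of Laurent subspaces of $\calM(R)$, as allowed. The rational case follows from the Weierstrass and Laurent cases after clearing the inequality $\beta(f_i) \leq p_i\beta(g)$ using that (as in (c)) $\beta(g)$ can be pinned between positive bounds on the relevant subspace; alternatively one reduces to the Weierstrass/Laurent analysis by the standard device in Remark~\ref{R:rational}. This completes the plan.
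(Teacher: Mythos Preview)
Your treatment of (b) and (c) is correct and matches the paper's approach; the key point in (c) is exactly that $\beta(T)\le 1$ because $\beta$ is dominated by the Gauss norm, and you identify this cleanly.

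For (a), your handling of $\mu$, of monotonicity, and of the Weierstrass and Laurent cases for $\lambda$ is fine and agrees with the paper. The gap is the rational case. Neither of your two proposed reductions works: knowing $\lambda(\alpha)(g)\ge\epsilon$ on $\lambda^{-1}(U)$ does not convert the condition $\max_j\alpha(f_{ij})\le p_i\max_j\alpha(g_j)$ into a Laurent condition, since the right-hand side still varies with $\alpha$; and Remark~\ref{R:rational} concerns approximating the defining data of a rational subspace by elements of $A$ rather than $\widehat{A}$, which is a different issue entirely and does not reduce rational subspaces to Laurent ones. The paper instead performs an explicit computation: writing $f_i=\sum_j f_{ij}T^j$ and $g=\sum_j g_jT^j$, it decomposes $\lambda^{-1}(U)$ as the finite union over $l$ of the loci where $\alpha(g_l)=\max_j\alpha(g_j)$, on each of which the defining conditions become $\alpha(f_{ij})\le p_i\alpha(g_l)$ and $\alpha(g_j)\le\alpha(g_l)$. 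Each such piece is a rational subspace of $\calM(R)$ because the coefficients $f_{ij},g_j$ together generate the unit ideal in $\widehat{R}$ (indeed already the constant terms $f_{i0},g_0$ do, via the evaluation $T\mapsto 0$). This decomposition over the index achieving the maximum is the missing idea.

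A minor point: the paper's definition of ``strongly continuous'' presupposes continuity, so you should also record that $\lambda$ is continuous; this follows immediately by running your Weierstrass/Laurent computation with strict inequalities to see that preimages of subbasic open sets are open.
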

\begin{proof}
The map $\mu$ is defined as a restriction, and hence is 
strongly continuous.
For $f = \sum_{i=0}^m f_i T^i \in R[T]$ and $\epsilon > 0$,
we have
\begin{align*}
\{\alpha \in \calM(R): \lambda(\alpha)(f) > \epsilon\} &= \bigcup_{i=0}^{m-1}
\{\alpha \in \calM(R): \alpha(f_i) > \epsilon \} \\
\{\alpha \in \calM(R): \lambda(\alpha)(f) < \epsilon\} &= \bigcap_{i=0}^{m-1}
\{\alpha \in \calM(R): \alpha(f_i) < \epsilon \},
\end{align*}
so $\lambda$ is continuous. Similarly, the inverse image of a Weierstrass
(resp.\ Laurent) subspace of $\calM(R[T])$ is a finite union of
Weierstrass (resp.\ Laurent)
subspaces of $\calM(R)$.
Now let 
\[
U = \{\beta \in \calM(R[T]): \beta(f_i) \leq p_i \beta(g) \quad (i=1,\dots,n)\}
\]
be a rational subspace of $\calM(R[T])$ for some
$f_1,\dots,f_n,g \in R[T]$ generating the unit ideal 
in the completion of $R[T]$ and some $p_1,\dots,p_n > 0$.
Write $f_i = \sum_{j=0}^m f_{ij} T^j$ and $g = \sum_{j=0}^m g_j T^j$; then
the $f_{ij}$ and $g_j$ together must generate the unit ideal
(in fact only the $f_{i0}$ and $g_0$ are needed).
We may write
\begin{align*}
\lambda^{-1}(U) &= 
\{\alpha \in \calM(R): 
 \max_{i,j} \{\alpha(f_{ij})/p_i\} \leq \max_j \{\alpha(g_j)\}\} \\
&= \bigcup_{l=0}^m \{\alpha \in \calM(R): 
\alpha(f_{ij}) \leq p_i \alpha(g_l),
\,
\alpha(g_j) \leq \alpha(g_l) \quad
(i=1,\dots,n; j=0,\dots,m)\},
\end{align*}
which is a finite union of rational subspaces of $\calM(R)$.
Since monotonicity is evident, this yields (a).

Of the remaining assertions, (b) is trivial, while (c) holds because
$(\lambda \circ \mu)(\beta)(f_i T^i) \geq \beta(f_i T^i)$ for any 
$f_i \in R$ and any nonnegative integer $i$.
\end{proof}

The following construction is described by Berkovich \cite[Remark~6.1.3(ii)]{berkovich1}.
\begin{lemma} \label{L:homotopy1}
For any $\beta \in \calM(R[T])$ and any $t \in [0,1]$, the function
$H(\beta,t): R[T] \to [0,+\infty)$ defined by 
\begin{equation} \label{eq:continuity1}
 H(\beta,t)(f) = \max_i \left\{ t^i \beta \left( \frac{1}{i!} \frac{d^i}{dT^i}(f) \right)
\right\}
\end{equation}
is a multiplicative seminorm on $R[T]$ dominated by the Gauss norm.
\end{lemma}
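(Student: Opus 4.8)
The plan is to realize $H(\beta,t)$ as the pullback of a Gauss seminorm along a ring homomorphism obtained by a ``translation'' change of variable. First I would observe that the operator $f \mapsto \frac{1}{i!}\frac{d^i}{dT^i}(f)$ must be read as the $i$-th Hasse (divided-power) derivative $\partial^{[i]}$, characterized by $\partial^{[i]}(T^n) = \binom{n}{i} T^{n-i}$ together with $R$-linearity; this makes sense over an arbitrary ring. The one identity I need about it is the Taylor expansion $f(T+U) = \sum_{i \geq 0} \partial^{[i]}(f)\, U^i$ in $R[T][U]$, which is immediate on monomials $T^n$ by the binomial theorem and extends by $R$-linearity.

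For multiplicativity I would pass to the residue field of $\beta$. Let $\phi\colon R[T] \to \calH(\beta)$ be the canonical homomorphism with $\beta = |\cdot|_{\calH(\beta)} \circ \phi$, and put $\tau = \phi(T) \in \calH(\beta)$. Since $R[T]$ is free as an $R$-algebra on $T$, there is a unique ring homomorphism $\psi\colon R[T] \to \calH(\beta)[U]$ agreeing with $\phi$ on $R$ (followed by the constant inclusion $\calH(\beta) \hookrightarrow \calH(\beta)[U]$) and sending $T$ to $\tau + U$. Applying $\phi$ coefficientwise to the Taylor identity yields $\psi(f) = \sum_i \phi(\partial^{[i]}(f))\, U^i$, hence, equipping $\calH(\beta)[U]$ with the $t$-Gauss norm $|\cdot|_t$ in the generator $U$,
\[
|\psi(f)|_t = \max_i \{ t^i \,|\phi(\partial^{[i]}(f))|_{\calH(\beta)} \} = \max_i \{ t^i\, \beta(\partial^{[i]}(f)) \} = H(\beta,t)(f).
\]
Because $\calH(\beta)$ carries a multiplicative norm, the $t$-Gauss norm on $\calH(\beta)[U]$ is multiplicative by Lemma~\ref{L:Gauss multiplicative} (this includes $t=0$, with the convention $0^0 = 1$), and the composition of a ring homomorphism with a multiplicative seminorm is again a multiplicative seminorm. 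So $H(\beta,t) = |\cdot|_t \circ \psi$ is a multiplicative seminorm on $R[T]$.

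It remains to check domination by the Gauss norm, which I would get either from the same picture or directly: writing $f = \sum_n f_n T^n$ and using $|\binom{n}{i} f_n| \leq |f_n|$ (valid in any nonarchimedean ring since $\binom{n}{i}$ is a nonnegative integer), one has $\beta(\partial^{[i]}(f)) \leq |\partial^{[i]}(f)|_1 \leq |f|_1$, so $H(\beta,t)(f) \leq |f|_1$ since $t \leq 1$; in particular $H(\beta,t)(1) = \beta(1) \leq 1$. I do not expect a genuine obstacle here: the only delicate points are the bookkeeping of reading $\frac{1}{i!}\frac{d^i}{dT^i}$ as a divided-power operation (so the statement is meaningful in positive characteristic) and the clean verification of the single identity $\psi(f) = \sum_i \phi(\partial^{[i]}(f))\,U^i$, together with recording the edge case $t = 0$, where $H(\beta,0) = \beta$, to match the $r=0$ case of Lemma~\ref{L:Gauss multiplicative}.
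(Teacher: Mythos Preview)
Your proof is correct, but it takes a different route from the paper's primary argument. The paper argues directly from the formula: it first checks that $H(\beta,t)$ is a submultiplicative seminorm dominated by the Gauss norm, using the Leibniz rule
\[
\partial^{[i]}(gh) = \sum_{j+k=i} \partial^{[j]}(g)\,\partial^{[k]}(h),
\]
and then upgrades to multiplicativity by choosing the minimal indices $j,k$ at which the maxima for $g,h$ are attained and arguing (exactly as in the proof of Lemma~\ref{L:Gauss multiplicative}) that in the Leibniz sum for $i=j+k$ only the term $(j,k)$ achieves the top $\beta$-value.

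Your approach instead factors $H(\beta,t)$ as the composite of the ring homomorphism $\psi\colon R[T]\to \calH(\beta)[U]$, $T\mapsto \tau+U$, with the $t$-Gauss norm, thereby inheriting multiplicativity from Lemma~\ref{L:Gauss multiplicative} with no further computation. This is precisely the viewpoint the paper records in Remark~\ref{R:Gauss approach}, though there it is phrased inside $R[U][T]$ and routed through Lemma~\ref{L:homotopy2}; your passage to the residue field $\calH(\beta)$ short-circuits that detour. The trade-off: the paper's direct Leibniz argument is entirely self-contained and parallels the later Witt-vector computations (e.g.\ Theorem~\ref{T:construct Gauss}(a)), while your realization as a pullback makes the multiplicativity conceptually transparent and foreshadows the construction of $H$ in the Witt setting, where no closed formula like \eqref{eq:continuity1} is available and one is forced to work by base change and quotient, as in Definition~\ref{D:deformation}.
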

\begin{proof}
It is evident that \eqref{eq:continuity1} defines a seminorm dominated
by the Gauss norm. Submultiplicativity follows
from the Leibniz rule in the form
\begin{equation} \label{eq:continuity2}
\frac{1}{i!} \frac{d^i}{dT^i} (gh) = \sum_{j+k=i} \frac{1}{j!} \frac{d^j}{dT^j} (g)
\frac{1}{k!} \frac{d^k}{dT^k} (h).
\end{equation}
To check multiplicativity, we must check that
for $g,h \in R[T]$, we have $H(\beta,t)(gh) \geq H(\beta,t)(g) H(\beta,t)(h)$.
Choose the minimal indices $j,k$ achieving the maxima
in \eqref{eq:continuity1} for $f=g,h$. Then in \eqref{eq:continuity2} for $i = j+k$,
the maximum $\beta$-norm among the summands on the right side of \eqref{eq:continuity2}
is achieved only by the pair $(j,k)$ (as in the proof of Lemma~\ref{L:Gauss multiplicative}).
Since $\beta$ is multiplicative, we obtain
\[
t^i \beta \left( \frac{1}{i!} \frac{d^i}{dT^i} (gh) 
\right) = t^j \beta \left( \frac{1}{j!} \frac{d^j}{dT^j} (g) \right)
t^k \beta \left(
\frac{1}{k!} \frac{d^k}{dT^k} (h) \right),
\]
proving the desired result.
\end{proof}

When $\beta$ is a Gauss seminorm, we can describe $H(\beta,t)$  explicitly.
\begin{lemma} \label{L:homotopy2}
Let $\beta \in \calM(R[T])$ be the $r$-Gauss seminorm 
for the generator $T-x$ for some $x \in R$ and some $r \in [0,1]$.
Then  for $t \in [0,1]$, $H(\beta,t)$ is the $\max\{t,r\}$-Gauss seminorm 
for the generator $T-x$. In particular, $H(\beta,1)$ is the Gauss norm.
\end{lemma}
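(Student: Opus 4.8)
The plan is to expand everything in powers of the preferred generator $T-x$ and then read off the answer from formula \eqref{eq:continuity1}. Write $f = \sum_j c_j (T-x)^j$ with $c_j \in R$, and set $\alpha = \mu(\beta) \in \calM(R)$; by the definition of the $r$-Gauss seminorm for the generator $T-x$, we have $\beta\bigl(\sum_k d_k (T-x)^k\bigr) = \max_k \{\alpha(d_k) r^k\}$ for any $d_k \in R$. The polynomial identity $\frac{1}{i!}\frac{d^i}{dT^i}(T-x)^j = \binom{j}{i}(T-x)^{j-i}$ is valid over $\ZZ$ (with the term understood to vanish when $j < i$), and hence in $R[T]$ it gives $\frac{1}{i!}\frac{d^i}{dT^i} f = \sum_{k \geq 0} \binom{k+i}{i} c_{k+i} (T-x)^k$.

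For the upper bound, note that $\binom{j}{i}$ is a nonnegative integer, so $\alpha(\binom{j}{i} c_j) \leq \alpha(c_j)$ by the ultrametric inequality; combined with $t^i r^{j-i} \leq \max\{t,r\}^j$ for $0 \leq i \leq j$ this yields
\[
t^i \beta\!\left( \frac{1}{i!}\frac{d^i}{dT^i} f \right) = \max_{j \geq i} \left\{ \alpha\!\left(\tbinom{j}{i} c_j\right) t^i r^{j-i} \right\} \leq \max_{j \geq i} \left\{ \alpha(c_j) \max\{t,r\}^j \right\} \leq \max_j \left\{ \alpha(c_j) \max\{t,r\}^j \right\}.
\]
Taking the maximum over $i$ shows that $H(\beta,t)(f)$ is at most the value at $f$ of the $\max\{t,r\}$-Gauss seminorm for the generator $T-x$.

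For the reverse inequality, the term $i=0$ in \eqref{eq:continuity1} already contributes $\beta(f) = \max_j\{\alpha(c_j) r^j\}$, while for each fixed $j$ the term $i=j$ contributes $t^j \beta(\frac{1}{j!}\frac{d^j}{dT^j} f) \geq t^j \alpha(c_j)$, since the constant term (coefficient of $(T-x)^0$) of $\frac{1}{j!}\frac{d^j}{dT^j} f$ is $\binom{j}{j} c_j = c_j$. Combining these two lower bounds gives $H(\beta,t)(f) \geq \max_j \{\alpha(c_j)\max\{r^j, t^j\}\} = \max_j\{\alpha(c_j)\max\{t,r\}^j\}$, which matches the upper bound and identifies $H(\beta,t)$ with the $\max\{t,r\}$-Gauss seminorm for the generator $T-x$. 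The final sentence of the lemma is then the case $t=1$, where $\max\{t,r\} = 1$ because $r \leq 1$.

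I do not expect a genuine obstacle: the argument is essentially bookkeeping. The three points that require (minor) care are the reduction to the generator $T-x$, so that $\beta$ acts coordinatewise on the expansion; the observation that rational integers have norm at most $1$, so the binomial coefficients appearing in the Hasse derivatives are harmless; and the splitting of the target equality into the contribution of $i=0$, which supplies the powers of $r$, and the contributions of $i=j$, which supply the powers of $t$.
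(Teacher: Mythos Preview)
Your argument is correct and follows essentially the same approach as the paper: expand in powers of $T-x$, compute the Hasse derivatives, bound the binomial coefficients by $1$, and extract the lower bound from the constant term of each derivative. The only cosmetic difference is that the paper splits into the cases $t \geq r$ and $t < r$ (handling the latter via monotonicity $H(\beta,t) \leq H(\beta,r) = \beta$), whereas you treat both cases at once via the inequality $t^i r^{j-i} \leq \max\{t,r\}^j$ and the pair of lower bounds from $i=0$ and $i=j$.
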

\begin{proof}
We first check the claim for $t \geq r$.
Let $\gamma$ be the $t$-Gauss seminorm for the generator $T-x$. 
Write $f \in R[T]$ as $\sum_j f_j(T-x)^j$ with $f_j \in R$,
so that $\gamma(f) = \max_j \{t^j |f_j|\}$.
Since $t \geq r$, we have
\[t^i \beta\left( \frac{1}{i!} \frac{d^i}{dT^i}(f) \right)
= t^i \max_{j \geq i} \left\{ \beta\left( \binom{j}{i} f_j (T-x)^{j-i} \right) \right\}
\leq \max_{j \geq i} \{ t^i r^{j-i} |f_j|\} \leq \gamma(f).
\]
It follows that
$\gamma(f) \geq H(\beta,t)(f)$. On the other hand, for each nonnegative integer $i$,
\begin{equation} \label{eq:continuity3}
t^i \beta \left( \frac{1}{i!} \frac{d^i}{dT^i}(f) \right) \geq t^i |f_i|
\end{equation}
because the constant term of $d^i f/dT^i$ is $i! f_i$.
It follows that $H(\beta,t)(f) = \gamma(f)$.

In case $t < r$, we have on one hand $H(\beta,t) \geq \beta$ by taking $i=0$
on the right side of \eqref{eq:continuity1}, and on the other hand $H(\beta,t) \leq H(\beta,
r)
= \beta$ because the right side of \eqref{eq:continuity1} is monotone in $t$.
Hence $H(\beta,t) = \beta$.
\end{proof}

\begin{theorem} \label{T:homotopy1}
The map $H: \calM(R[T]) \times [0,1] \to \calM(R[T])$ is continuous, and 
has the following additional properties.
\begin{enumerate}
 \item[(a)]
For $\beta \in \calM(R[T])$, $H(\beta, 0) = \beta$.
\item[(b)]
For $\beta \in \calM(R[T])$, $H(\beta, 1) = (\lambda \circ \mu)(\beta)$.
\item[(c)]
For $\beta \in \calM(R[T])$ and $t \in [0,1]$, $\mu(H(\beta, t)) = \mu(\beta)$.
\item[(d)]
For $\beta \in \calM(R[T])$ and $s,t \in [0,1]$, $H(H(\beta,s),t) = H(\beta,\max\{s,t\})$.
\end{enumerate}
\end{theorem}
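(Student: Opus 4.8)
The plan is to reduce everything to bookkeeping with the normalized differential operators $D_i := \frac{1}{i!}\frac{d^i}{dT^i}$ on $R[T]$, in terms of which Lemma~\ref{L:homotopy1} reads $H(\beta,t)(f) = \max_i t^i \beta(D_i f)$ (with the convention $0^0 = 1$), and $H(\beta,t)$ is already known to lie in $\calM(R[T])$. For continuity, fix $f \in R[T]$ of degree $m$ and an open interval $I \subseteq \RR$; the function $(\beta,t) \mapsto H(\beta,t)(f) = \max_{0 \le i \le m} t^i \beta(D_i f)$ is a finite maximum of products of the continuous functions $\beta \mapsto \beta(D_i f)$ (continuous by the definition of the topology on $\calM(R[T])$) and $t \mapsto t^i$, hence continuous as a map $\calM(R[T]) \times [0,1] \to \RR$. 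Its preimage of $I$ is therefore open, and since the sets $\{\gamma : \gamma(f) \in I\}$ form a subbasis of the topology of $\calM(R[T])$, this proves continuity of $H$. Parts (a) and (c) are then immediate: the $i \ge 1$ terms of $H(\beta,0)(f)$ vanish, leaving $\beta(f)$; and for $c \in R$ we have $D_i c = 0$ for $i \ge 1$, so $H(\beta,t)(c) = \beta(c)$, i.e.\ $\mu(H(\beta,t)) = \mu(\beta)$.

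For part (b), write $f = \sum_j f_j T^j$, so that $(\lambda \circ \mu)(\beta)(f) = \max_j \mu(\beta)(f_j) = \max_j \beta(f_j) =: c$. One inequality is elementary: from $D_i f = \sum_{l \ge i} \binom{l}{i} f_l T^{l-i}$, multiplicativity of $\beta$, and the bounds $\beta(n \cdot 1) \le 1$ for $n \in \ZZ$ and $\beta(T) \le 1$, one reads off $\beta(D_i f) \le \max_l \beta(f_l) = c$, hence $H(\beta,1)(f) \le c$. For the reverse inequality we may assume $c > 0$; let $j_0$ be the \emph{largest} index with $\beta(f_{j_0}) = c$. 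Then
\[
D_{j_0} f \;=\; f_{j_0} \;+\; \sum_{l > j_0} \binom{l}{j_0} f_l T^{l - j_0},
\]
and every summand after the first has $\beta$-value at most $\beta(f_l) < c$ (because $l > j_0$), so by the ultrametric inequality $\beta(D_{j_0} f) = \beta(f_{j_0}) = c$, giving $H(\beta,1)(f) \ge c$. (Alternatively one can base-change to $\calH(\beta)$: then $H(\beta,1)(f)$ is the Gauss norm over $\calH(\beta)$ of $f(\bar T + U) \in \calH(\beta)[U]$, which is invariant under the substitution $U \mapsto U - \bar T$ since $|\bar T| \le 1$, and equals $\max_j \beta(f_j)$; compare Lemma~\ref{L:homotopy2}.)

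Finally, for part (d), the identity $D_i D_k = \binom{i+k}{i} D_{i+k}$ together with multiplicativity of $\beta$ gives
\[
H(H(\beta,s),t)(f) = \max_{i,k} t^k s^i \beta(D_i D_k f) \le \max_{i,k} t^k s^i \beta(D_{i+k} f) = \max_n \Big( \max_{i+k=n} t^k s^i \Big) \beta(D_n f),
\]
and $\max_{i+k=n} t^k s^i = \max\{s,t\}^n$, so the left-hand side is at most $H(\beta,\max\{s,t\})(f)$; conversely, for each $n$ the pairs $(i,k) = (n,0)$ and $(0,n)$ already contribute the terms $s^n \beta(D_n f)$ and $t^n \beta(D_n f)$ to the maximum, so $H(H(\beta,s),t)(f) \ge \max\{s,t\}^n \beta(D_n f)$ for every $n$, and equality holds. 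The one step that needs care is the lower bound in (b): it is essential to differentiate $f$ a number of times equal to the \emph{maximal} index realizing $\max_j \beta(f_j)$, so that the correction terms all involve strictly higher-degree coefficients and hence have strictly smaller $\beta$-value — a non-maximal index, in particular the naive choice $i = 1$, can fail when $R$ has residue characteristic $p$.
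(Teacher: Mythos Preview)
Your proof is correct and, for continuity, (a), (c), and (d), essentially identical to the paper's: both arguments rest on the finite formula $H(\beta,t)(f) = \max_{0\le i\le \deg f} t^i \beta(D_i f)$ and the composition identity $D_i D_k = \binom{i+k}{i} D_{i+k}$, together with $\beta(\binom{i+k}{i}) \le 1$ and $\beta(1)=1$ for the boundary pairs $(i,k)=(n,0),(0,n)$.

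Where you genuinely diverge is in (b). The paper proves the upper bound $H(\beta,1)\le(\lambda\circ\mu)(\beta)$ by monotonicity of $H(\cdot,1)$ together with Lemma~\ref{L:homotopy2} (applied at the Gauss point), and for the lower bound it appeals to the inequality \eqref{eq:continuity3}, whose justification ``the constant term of $D_i f$ is $f_i$'' is tailored to Gauss seminorms and is not directly valid for an arbitrary $\beta$ (indeed $\beta(D_0 f)\ge\beta(f_0)$ fails for evaluation seminorms). Your argument sidesteps this entirely: picking the \emph{largest} index $j_0$ realizing $\max_j \beta(f_j)$ forces every correction term in $D_{j_0}f$ to involve a strictly higher coefficient $f_l$ with $\beta(f_l)<c$, so the ultrametric inequality gives $\beta(D_{j_0}f)=c$ on the nose. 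This is a cleaner and fully self-contained route to (b), and your closing remark explaining why a non-maximal index (in particular $i=1$) can fail in residue characteristic $p$ is a useful observation that the paper does not make explicit.
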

\begin{proof}
The continuity of $H$ is evident from the formula \eqref{eq:continuity1}, since the maximum
on the right side only runs over finitely many terms. 
Of the other properties, (a) and (c) are evident from \eqref{eq:continuity1}.
To check (b), let $\gamma \in \calM(R[T])$ be the Gauss norm.
For $\beta \in \calM(R[T])$, $\beta \leq \gamma$ and so
$H(\beta,1) \leq H(\gamma,1) = \gamma$ by Lemma~\ref{L:homotopy2};
on the other hand, taking $t=1$ in \eqref{eq:continuity3} 
yields $H(\beta,1) \geq \gamma$. (We can also deduce (b) from
Lemma~\ref{L:homotopy2} using Remark~\ref{R:Gauss approach} below.)

To check (d), observe that
\begin{align*}
H(H(\beta,s),t) &= 
\max_j \left\{ t^j \max_k \left\{ s^k \beta\left( \frac{1}{k!} \frac{d^k}{dT^k}
\left( \frac{1}{j!} \frac{d^j}{dT^j}(f) \right) \right) \right\} \right\} \\
&= \max_{j,k} \left\{ t^j s^k \beta \left( \binom{j+k}{j} \frac{1}{(j+k)!}
\frac{d^{j+k}}{dT^{j+k}}(f) \right) \right\} \\
&= \max_{i} \left\{ \beta\left(\frac{1}{i!} \frac{d^{i}}{dT^{i}}(f)
\right)
\max_{j+k=i} \left\{ t^j s^k \beta \left( \binom{j+k}{j} \right) \right\} \right\}.
\end{align*}
Since $\beta$ is a norm, 
$t^j s^k \beta (\binom{j+k}{j}) \leq \max\{s,t\}^i$,
with equality if $s \geq t$ and $(j,k) = (0,i)$, or if $s \leq t$ and $(j,k) = (i,0)$.
This proves (d).
\end{proof}
\begin{cor}
Each subset of $\calM(R)$ has the same homotopy type
as its inverse image under $\mu$.
\end{cor}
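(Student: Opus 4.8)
The plan is to exhibit $\lambda$ and $\mu$, suitably restricted, as mutually inverse homotopy equivalences. Fix a subset $S \subseteq \calM(R)$ and write $\widetilde{S} = \mu^{-1}(S) \subseteq \calM(R[T])$, each equipped with the subspace topology. First I would check that the maps restrict appropriately: since $(\mu \circ \lambda)(\alpha) = \alpha$ for all $\alpha \in \calM(R)$ by Theorem~\ref{T:lift1}(b), the map $\lambda$ carries $S$ into $\widetilde{S}$, giving a continuous map $\lambda_S \colon S \to \widetilde{S}$; and $\mu$ visibly carries $\widetilde{S}$ into $S$, giving a continuous map $\mu_S \colon \widetilde{S} \to S$. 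By Theorem~\ref{T:lift1}(b) again, $\mu_S \circ \lambda_S = \id_S$.

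It then remains to produce a homotopy from $\id_{\widetilde{S}}$ to $\lambda_S \circ \mu_S$. For this I would restrict the homotopy $H$ of Theorem~\ref{T:homotopy1}. The key point is that for $\beta \in \widetilde{S}$ and $t \in [0,1]$ we have $\mu(H(\beta,t)) = \mu(\beta) \in S$ by Theorem~\ref{T:homotopy1}(c), so $H(\beta,t) \in \widetilde{S}$; thus $H$ restricts to a continuous map $H_S \colon \widetilde{S} \times [0,1] \to \widetilde{S}$. By parts (a) and (b) of Theorem~\ref{T:homotopy1}, $H_S(\cdot,0) = \id_{\widetilde{S}}$ and $H_S(\cdot,1) = \lambda \circ \mu = \lambda_S \circ \mu_S$ (using $\mu(\beta) \in S$ to identify the restriction). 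Hence $\lambda_S$ and $\mu_S$ are homotopy inverse to one another, so $S$ and $\mu^{-1}(S)$ have the same homotopy type.

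I do not anticipate a genuine obstacle here, as the statement is a formal consequence of the properties already established; the only points requiring care are that all of this works for an \emph{arbitrary} subset $S$ (no closedness hypothesis is used — only that restrictions of continuous maps to subspaces remain continuous) and that $H$ preserves $\widetilde{S}$, which is precisely the content of Theorem~\ref{T:homotopy1}(c). One may note in addition, using Lemma~\ref{L:homotopy2} with $r=1$, that $H_S$ fixes $\lambda_S(S)$ pointwise for every $t$, so that in fact $\lambda_S(S)$ is a strong deformation retract of $\mu^{-1}(S)$; but only the weaker homotopy-equivalence assertion is needed for the corollary.
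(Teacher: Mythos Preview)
Your proposal is correct and matches the paper's intended approach: the paper states the corollary without proof, treating it as an immediate formal consequence of Theorem~\ref{T:homotopy1} (together with Theorem~\ref{T:lift1}(b)), and your write-up spells out precisely this argument. The only minor remark is that the strong deformation retract observation you add at the end is also noted in the paper (just after the corollary) as a consequence of Theorem~\ref{T:homotopy1}(b,d) rather than via Lemma~\ref{L:homotopy2}, but either justification works.
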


\begin{remark}
From Theorem~\ref{T:homotopy1}(b,d), it follows that
for $\alpha \in \calM(R)$ and $t \in [0,1]$, $H(\lambda(\alpha), t) = \lambda(\alpha)$.
This can also be seen more directly: note that $H(\lambda(\alpha),t) \geq \lambda(\alpha)$ from
\eqref{eq:continuity1}, while the reverse inequality follows from 
Theorem~\ref{T:homotopy1}(c) plus
Theorem~\ref{T:lift1}(c). 
\end{remark}

\begin{remark} \label{R:Gauss approach}
One can give an alternate proof of Lemma~\ref{L:homotopy1} using Lemma~\ref{L:homotopy2}, 
as follows. Let $\tilde{\beta} \in 
\calM(R[U][T])$ be the restriction of $\beta$ along
$R[U][T] \to R[T,U]/(U-T) \cong R[T]$.
By Lemma~\ref{L:homotopy2}, $H(\tilde{\beta},t)$ is the $t$-Gauss seminorm
for the generator $T-U$. The restriction of $H(\tilde{\beta},t)$
along $R[T] \to R[U][T]$ is 
$H(\beta,t)$, so the latter is a multiplicative seminorm.

One can go further and take this construction as the definition of 
$H(\beta,t)$, modifying the proof of Theorem~\ref{T:homotopy1} accordingly.
We will not write out the details explicitly, but they will be shadowed in
the context of Witt vectors where no good analogue of
the formula \eqref{eq:continuity1} is available.
(See for instance the proof of Theorem~\ref{T:final homotopy}.)
\end{remark}

\begin{remark} \label{R:contractible}
One may view $\calM(R[T])$ as a closed cylinder of radius 1 over $\calM(R)$,
and $\lambda$ as the section taking each point of $\calM(R)$ to the 
generic point
of its fibre. In this language, Theorem~\ref{T:homotopy1} states that
$\calM(R[T])$ can be uniformly contracted onto the image of $\lambda$;
in particular, each fibre of $\mu$ is contractible.
We may further elucidate the structure of the fibres of $\mu$ by
studying the domination relation; see
Theorem~\ref{T:dominate} and Remark~\ref{R:tree}.
\end{remark}

\begin{defn} \label{D:radius}
For $\beta \in \calM(R[T])$, 
the set of $s \in [0,1]$ for which $H(\beta,s) = \beta$ is nonempty
(because it contains $0$) and closed (by continuity), so it has a greatest
element. This element is called the \emph{radius} of $\beta$,
and is denoted $r(\beta)$; this terminology is justified by the fact
that the $r$-Gauss norm has radius $r$. See also Remark~\ref{R:supremum norm}.
\end{defn}

\begin{theorem} \label{T:dominate}
For $\beta, \gamma \in \calM(R[T])$ satisfying $\mu(\beta) = \mu(\gamma)$
and $\beta \geq \gamma$,
$\beta = H(\gamma,r(\beta))$.
\end{theorem}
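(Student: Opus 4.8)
The plan is to compare both $\beta$ and $H(\gamma, r(\beta))$ term-by-term against each other using the explicit formula \eqref{eq:continuity1} and the hypotheses. First I would establish the inequality $\beta \leq H(\gamma, r(\beta))$. Since $\beta \geq \gamma$ and these two seminorms agree on $R$ (as $\mu(\beta) = \mu(\gamma)$), observation (iv) of Definition~\ref{D:submultiplicative} together with power-multiplicativity gives $\beta(f) \geq \gamma(f)$ for all $f \in R[T]$; I expect one can upgrade this: for each $i$, applying the inequality to $\frac{1}{i!}\frac{d^i}{dT^i}(f)$ and multiplying by $r(\beta)^i$ yields $H(\beta, r(\beta))(f) \geq H(\gamma, r(\beta))(f)$, and by definition of the radius $H(\beta, r(\beta)) = \beta$, so $\beta \geq H(\gamma, r(\beta))$ — wait, this is the wrong direction, so I would instead argue the reverse: I need $\beta \leq H(\gamma, r(\beta))$, which should follow because $\beta = H(\beta, r(\beta))$ and one wants $H(\beta, r(\beta)) \leq H(\gamma, r(\beta))$, i.e. monotonicity of $H(-, t)$ in the first variable in the \emph{opposite} sense. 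Let me reorganize: the cleaner route is to show $\beta \geq H(\gamma, r(\beta))$ directly from \eqref{eq:continuity1} using $\beta \geq \gamma$ and $r(\beta) \leq 1$ (so $r(\beta)^i \beta(\tfrac{1}{i!}\tfrac{d^i}{dT^i}f) \leq \beta(\tfrac{1}{i!}\tfrac{d^i}{dT^i}f)$, but that bounds by $H(\beta,1)$ not $\beta$...).

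Given the subtlety, the honest approach is the following. Set $\delta = H(\gamma, r(\beta))$. One inequality, $\delta \leq \beta$: I claim $H(-, t)$ is monotonic in its seminorm argument, i.e. $\gamma \leq \beta \implies H(\gamma, t) \leq H(\beta, t)$; this is immediate from \eqref{eq:continuity1} applied coefficient-wise, combined with power-multiplicativity (observation (iv)) to pass from ``$\leq$ up to a constant'' to ``$\leq$ pointwise''. Taking $t = r(\beta)$ and using $H(\beta, r(\beta)) = \beta$ (definition of radius) gives $\delta = H(\gamma, r(\beta)) \leq H(\beta, r(\beta)) = \beta$. The reverse inequality $\beta \leq \delta$ is the substantive part: here I would use that $\delta$ and $\beta$ have the same image under $\mu$ (clear from Theorem~\ref{T:homotopy1}(c) and Theorem~\ref{T:lift1}(b), since $\mu(\delta) = \mu(H(\gamma,r(\beta))) = \mu(\gamma) = \mu(\beta)$) and that $\delta \geq \gamma$ with $r(\delta) \geq r(\beta)$. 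Combined with $\delta \leq \beta$ already shown, if I can show $\delta \geq H(\delta, r(\beta)) \geq$ something forcing equality — more precisely, since $r(\beta) \leq r(\delta)$ always when $\delta \geq$ the $r(\beta)$-level object, and $\delta$ is already $H$-invariant at level $r(\beta)$ by Theorem~\ref{T:homotopy1}(d) (as $H(\delta, r(\beta)) = H(H(\gamma, r(\beta)), r(\beta)) = H(\gamma, r(\beta)) = \delta$), the point reduces to: among seminorms $\geq \gamma$, with fixed $\mu$-image, that are $H$-stable at level $r(\beta)$, there is a unique one, namely $H(\gamma, r(\beta))$; and $\beta$ is such a one because $\beta \geq \gamma$, $\mu(\beta) = \mu(\gamma)$, and $H(\beta, r(\beta)) = \beta$.

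So the key step I expect to be the main obstacle is proving this uniqueness / the reverse inequality $\beta \leq H(\gamma, r(\beta))$. The strategy for it: for $f \in R[T]$, expand and use \eqref{eq:continuity1}; since $H(\beta, r(\beta)) = \beta$, for every $f$ there is an index $i$ with $\beta(f) = r(\beta)^i \beta(\tfrac{1}{i!}\tfrac{d^i}{dT^i} f)$. Iterating the Leibniz/derivative structure (as in the manipulation in the proof of Theorem~\ref{T:homotopy1}(d)) and using that $\beta$ restricted to $R$ equals $\gamma$ restricted to $R$, one should be able to rewrite $r(\beta)^i \beta(\tfrac{1}{i!}\tfrac{d^i}{dT^i} f)$ as a quantity bounded above by $H(\gamma, r(\beta))(f)$ — the crucial leverage being that after differentiating enough times the ``leading behavior'' of $f$ with respect to the generator $T - x$ (where $x$ records $\mu(\beta)$'s lift) is governed only by the $R$-values, on which $\beta$ and $\gamma$ agree. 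The cleanest packaging is probably to reduce, via Remark~\ref{R:construct Gauss by quotient} or the base-change trick of Remark~\ref{R:Gauss approach}, to the case where $R$ is a complete field and $\mu(\beta)$ is its norm, where one can write $f = \sum_j f_j (T-x)^j$ literally and check both sides of the desired equality equal $\max_j \{\max\{t, r(\gamma)\}^j |f_j|\}$ with $t = r(\beta)$, invoking Lemma~\ref{L:homotopy2}. I would carry out the field case in detail and then indicate the reduction; that reduction, rather than the field computation, is where I would be most careful.
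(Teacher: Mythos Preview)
Your easy direction is correct: monotonicity of $H(\cdot,t)$ in the first argument is immediate from \eqref{eq:continuity1}, and together with $H(\beta,r(\beta))=\beta$ it yields $H(\gamma,r(\beta))\leq\beta$. The reduction to the case where $R=K$ is a complete field is also the right move, and is exactly how the paper begins.

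The gap is in the reverse inequality. Your ``uniqueness'' reformulation (unique seminorm $\geq\gamma$, same $\mu$-image, $H$-stable at level $r(\beta)$) is just a restatement of the theorem, not a reduction. Your proposed computation in the field case---write $f=\sum_j f_j(T-x)^j$ and check both sides equal $\max_j\{\max\{t,r(\gamma)\}^j|f_j|\}$ via Lemma~\ref{L:homotopy2}---presupposes that $\beta$ and $\gamma$ are Gauss seminorms $\beta_{x,r}$ for some center $x$. That is not given: points of $\calM(K[T])$ need not be of this form (type~(iv) points in Theorem~\ref{T:classify1} are exactly the exceptions), and there is no canonical $x$ to expand around. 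The ``iterating derivatives'' sketch also fails: from $H(\beta,r)=\beta$ you only get $r^i\beta\bigl(\tfrac{1}{i!}\tfrac{d^i}{dT^i}f\bigr)\leq\beta(f)$ for all $i$, and combined with $\gamma\leq\beta$ this again gives $H(\gamma,r)(f)\leq\beta(f)$, the direction you already have.

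The paper supplies the missing idea in two steps. First (Lemma~\ref{L:dominate}), for any $\beta$ and any $s>r(\beta)$, the seminorm $H(\beta,s)$ \emph{is} a Gauss seminorm $\beta_{z,s}$ for some $z$ in the valuation ring of a completed algebraic closure; this requires passing to $\calH(\beta)$ and using factorization of polynomials over $\CC$. Second (Lemma~\ref{L:sup norm same radius}), two Gauss seminorms of the same radius, one dominating the other, must coincide. Together these give $H(\beta,s)=H(\gamma,s)$ for all $s>\max\{r(\beta),r(\gamma)\}$, and letting $s\to r(\beta)^+$ (after ruling out $r(\gamma)>r(\beta)$) yields $\beta=H(\gamma,r(\beta))$. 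Neither of these lemmas is visible in your sketch, and they carry the entire content of the hard direction.
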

\begin{proof}
Put $\alpha = \mu(\beta) = \mu(\gamma)$ 
and $K = \calH(\alpha)$, then identify $\beta, \gamma$
with the corresponding points in $\calM(K[T])$.
These identifications are compatible with the formation of $H(\cdot, t)$;
in particular, they do not change the radius of $\beta$.
It thus suffices to check the case $R = K$, for which we rely on
some analysis of $\calM(K[T])$.
See Lemma~\ref{L:dominate lemma} below.
\end{proof}
\begin{cor} \label{C:dominate}
For $\beta,\gamma \in \calM(R[T])$ satisfying $\mu(\beta) = \mu(\gamma)$
and $\beta \geq \gamma$,
we have  $r(\beta) \geq r(\gamma)$, with equality if and only if
$\beta = \gamma$.
\end{cor}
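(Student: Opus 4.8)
The plan is to derive the corollary formally from Theorem~\ref{T:dominate}, which supplies the rigid identity $\beta = H(\gamma, r(\beta))$, together with the semigroup law $H(H(\cdot,s),t) = H(\cdot,\max\{s,t\})$ of Theorem~\ref{T:homotopy1}(d). The one preliminary observation I need is that the set $\{s \in [0,1] : H(\gamma,s) = \gamma\}$ is downward closed, hence equals the interval $[0, r(\gamma)]$: for $s \leq r(\gamma)$ the semigroup law gives $H(\gamma,s) = H(H(\gamma,r(\gamma)),s) = H(\gamma,\max\{s,r(\gamma)\}) = H(\gamma,r(\gamma)) = \gamma$, using $H(\gamma,r(\gamma)) = \gamma$ from the definition of the radius.

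Granting this, I would split into cases according to whether $r(\beta) \leq r(\gamma)$. If $r(\beta) \leq r(\gamma)$, then by the preceding observation $\beta = H(\gamma, r(\beta)) = \gamma$, and in particular $r(\beta) = r(\gamma)$; otherwise $r(\beta) > r(\gamma)$. In either case $r(\beta) \geq r(\gamma)$, which is the first assertion. For the equality clause, if $\beta = \gamma$ then trivially $r(\beta) = r(\gamma)$; conversely, if $r(\beta) = r(\gamma)$, then $\beta = H(\gamma, r(\beta)) = H(\gamma, r(\gamma)) = \gamma$.

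I do not expect a genuine obstacle: all the substantive geometric input has already been absorbed into Theorem~\ref{T:dominate} (and through it into the disc-theoretic Lemma~\ref{L:dominate lemma}), and the corollary is a purely formal consequence. The only point that warrants a moment's care is verifying that the radius controls the whole initial segment $[0,r(\gamma)]$ of parameters fixing $\gamma$, rather than merely being the supremum of a possibly less structured set; but as indicated above, the semigroup law of Theorem~\ref{T:homotopy1}(d) makes this routine.
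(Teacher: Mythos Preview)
Your proof is correct and takes essentially the same approach as the paper: both rely on Theorem~\ref{T:dominate} and the semigroup law of Theorem~\ref{T:homotopy1}(d), together with the downward closure of $\{s : H(\gamma,s)=\gamma\}$. The paper organizes the argument as a direct chain showing $H(\beta,t)=\beta$ for all $t\le r(\gamma)$, while you do a case split on whether $r(\beta)\le r(\gamma)$, but the content is the same.
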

\begin{proof}
For $t \in [0, r(\gamma)]$, by Theorem~\ref{T:dominate}
and Theorem~\ref{T:homotopy1}(d) we have
\[
H(\beta,t) = H(H(\gamma,r(\beta)),t) = H(H(\gamma,t), r(\beta)) =
H(\gamma,r(\beta)) = \beta,
\]
so $r(\beta) \geq r(\gamma)$. If equality holds, then
$\gamma = H(\gamma,r(\gamma)) = H(\gamma,r(\beta)) = \beta$.
\end{proof}

In order to complete the proof of Theorem~\ref{T:dominate},
we must study $\calM(K[T])$ when $K$ is a complete nonarchimedean field.
In case $K$ is algebraically closed, this was done by
Berkovich \cite[\S 1.4]{berkovich1} 
(see also \cite[Proposition~1.1]{baker-rumely}).
The general case can be found in
\cite[\S 2.2]{kedlaya-semi4}, where it is treated by reduction to
the algebraically closed case. We give here some direct arguments
in terms of the map $H$.

\begin{hypothesis}
For the remainder of \S\ref{sec:polynomial}, 
let $K$ be a field complete for a multiplicative norm $\alpha$,
let $\gotho$ be the valuation subring of a completed
algebraic closure $\CC$ of $K$, and equip both $K[T]$ and
$\CC[T]$ with the Gauss norms.
\end{hypothesis}

\begin{remark} \label{R:quotient}
It is not hard to check that $\calM(K[T])$ is the quotient of
$\calM(\CC[T])$ by the action of the group $\Aut(\CC/K)$ of continuous automorphisms of $\CC$ over $K$; see
\cite[Proposition~1.3.5]{berkovich1}. We will not use this fact 
explicitly, but it is useful to keep in mind.
\end{remark}

\begin{defn}
For $z \in \gotho$ and $r \in [0,1]$, 
let $\tilde{\beta}_{z,r}$ be the $r$-Gauss
norm on $\CC[T]$ for the generator $T-z$,
and let $\beta_{z,r}$ denote the restriction of 
$\tilde{\beta}_{z,r}$ to $K[T]$.
If $z' \in \gotho$ satisfies $\alpha(z' - z) \leq r$,
then $\tilde{\beta}_{z',r} = \tilde{\beta}_{z,r}$;
consequently, if $r > 0$, we always have $\beta_{z,r} = 
\beta_{z',r}$ for some $z' \in \gotho$ which is integral over $K$
(since such $z'$ are dense in $\gotho$).
\end{defn}

\begin{remark} \label{R:supremum norm}
If the norm on $K$ is nontrivial,
then the seminorm $\tilde{\beta}_{z,r}$ can be identified with the supremum
norm over the closed disc in $\CC$ of center $z$ and radius $r$.
Although this fact can be proved directly, it will be convenient for us not to 
deduce it until after making our principal arguments.
See Corollary~\ref{C:disc}.
\end{remark}

\begin{lemma} \label{L:sup norm same center}
For $z \in \gotho$ and $r,s \in [0,1]$, $\beta_{z,r} \geq \beta_{z,s}$
if and only if $r \geq s$.
\end{lemma}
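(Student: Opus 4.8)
The ``if'' direction is essentially a restatement of the Gauss norm formula. For $r\ge s$ and any $g\in K[T]\subseteq\CC[T]$, write $g=\sum_j g_j(T-z)^j$ with $g_j\in\CC$; then \eqref{eq:Gauss formula}, applied to $\CC[T]$ in the generator $T-z$, gives $\beta_{z,s}(g)=\max_j \alpha(g_j)s^j\le\max_j \alpha(g_j)r^j=\beta_{z,r}(g)$, since $s^j\le r^j$ for every $j\ge 0$. Hence $\beta_{z,r}\ge\beta_{z,s}$ with domination constant $1$. (Alternatively, Lemma~\ref{L:homotopy2} applied over $\CC$ gives $\tilde{\beta}_{z,r}=H(\tilde{\beta}_{z,s},r)\ge\tilde{\beta}_{z,s}$, and one restricts to $K[T]$.)

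For the converse I would argue by contraposition: assume $r<s$, and exhibit $g\in K[T]$ with $\beta_{z,r}(g)<\beta_{z,s}(g)$; by observation (iv) of Definition~\ref{D:submultiplicative} (applicable since $\beta_{z,r}$ is multiplicative and $\beta_{z,s}$ is power-multiplicative), this forces $\beta_{z,r}\not\ge\beta_{z,s}$. Because the elements of $\gotho$ integral over $K$ are dense in $\gotho$ and $s>0$, I can choose such an element $w$ with $\alpha(z-w)<s$, taking $w=z$ if $z$ is itself algebraic over $K$. Let $g\in K[T]$ be the minimal polynomial of $w$, and factor it over the algebraically closed field $\CC$ as $g=\prod_k(T-w_k)$, roots listed with multiplicity and with $w_1=w$. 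Rewriting each $T-w_k$ as $(T-z)-(w_k-z)$ and using that the $t$-Gauss norm on $\CC[T]$ is multiplicative (Lemma~\ref{L:Gauss multiplicative}), one computes, for $t\in\{r,s\}$,
\[
\beta_{z,t}(g)=\prod_k\max\{t,\alpha(z-w_k)\}.
\]
For each $k$ we have $\max\{r,\alpha(z-w_k)\}\le\max\{s,\alpha(z-w_k)\}$ since $r<s$, and for $k=1$ this is strict, because $\max\{r,\alpha(z-w)\}<s=\max\{s,\alpha(z-w)\}$ (both $r$ and $\alpha(z-w)$ are $<s$). Since $\tilde{\beta}_{z,s}$ is a genuine norm on $\CC[T]$ and $g\ne 0$, every factor of $\beta_{z,s}(g)$ is positive, so multiplying the factorwise comparison yields $\beta_{z,r}(g)<\beta_{z,s}(g)$, as desired.

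The one genuine subtlety — and the step I would flag as the main obstacle — is the passage from $z\in K$ to a general $z\in\gotho$. If $z\in K$ one could just take $g=T-z$ (then $\beta_{z,t}(g)=t$ and the comparison is immediate), but for general $z$ no such polynomial need lie in $K[T]$: when $z$ is transcendental over $K$ there is no nonzero element of $K[T]$ vanishing at $z$, and $K$ may be far from dense in $\CC$. The remedy is the density of the algebraic elements of $\gotho$, already invoked in the paper just after the definition of $\beta_{z,r}$: an algebraic approximation $w$ with $\alpha(z-w)<s$ furnishes a minimal polynomial whose factor at $w$ does the work that $T-z$ would, while the remaining conjugate factors contribute only inequalities in the same direction and so cannot spoil the strict comparison. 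A routine point to record along the way is that the conjugates $w_k$ satisfy $\alpha(w_k)=\alpha(w)\le 1$, so they lie in $\gotho$; this is the standard fact that $\alpha$ extends uniquely to a finite extension of $K$ and takes a common value on the conjugates of a given element.
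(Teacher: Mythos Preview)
Your proof is correct and follows essentially the same approach as the paper: approximate $z$ by an element $w\in\gotho$ integral over $K$, and use the minimal polynomial of $w$ as the witness for the strict inequality, exploiting multiplicativity of the Gauss seminorm on $\CC[T]$ to compute its value factor by factor. The only cosmetic differences are that the paper shifts the center from $z$ to the approximation (invoking $\tilde{\beta}_{z,t}=\tilde{\beta}_{z',t}$ when $\alpha(z-z')\le t$) before computing, whereas you compute directly at center $z$; and because you argue the contrapositive with $r<s$, the positivity of the larger radius is automatic, so you avoid the paper's separate reduction to $s>0$.
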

\begin{proof}
If $r \geq s$, then evidently $\beta_{z,r} \geq \beta_{z,s}$.
It remains to show that if $r > s$, then $\beta_{z,r} \neq \beta_{z,s}$.
It suffices to do this when $s > 0$, as when $s = 0$
we can argue that $\beta_{z,r} > \beta_{z,r'} \geq \beta_{z,0}$
for any $r' \in (0,r)$.

Suppose then that $s>0$.
Choose $z' \in \gotho$ integral over $K$ with $\alpha(z-z') \leq s$,
so that
$\beta_{z,r} = \beta_{z',r}$, $\beta_{z,s} = \beta_{z',s}$.
Let $P(T) = \prod_{i=1}^m (T-z_i)$ be the minimal polynomial of $z'$ over $K$;
then $\tilde{\beta}_{z',r}(T-z_i) \geq 
\tilde{\beta}_{z',s}(T-z_i)$ for each $i$,
with strict inequality when $z_i = z'$.
Hence $\beta_{z,r}(P) = \tilde{\beta}_{z',r}(P) > 
\tilde{\beta}_{z',s}(P) = \beta_{z,s}(P)$,
so $\beta_{z,r} \neq \beta_{z,s}$ as desired.
\end{proof}
\begin{cor} \label{C:radius}
For $z \in \gotho$ and $r \in [0,1]$, $r(\beta_{z,r}) = r$.
\end{cor}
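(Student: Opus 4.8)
The plan is to reduce everything to Lemma~\ref{L:homotopy2} applied over $\CC$, together with the strict monotonicity recorded in Lemma~\ref{L:sup norm same center}. The first step is to observe that the homotopy $H$ is compatible with restriction along the isometric inclusion $K[T] \hookrightarrow \CC[T]$. This is immediate from the explicit formula \eqref{eq:continuity1}: the operators $\frac{1}{i!}\frac{d^i}{dT^i}$ preserve $K[T]$ and agree whether computed in $K[T]$ or in $\CC[T]$, so for every $f \in K[T]$ we have
\[
H(\beta_{z,r},t)(f) = \max_i\left\{t^i\,\tilde{\beta}_{z,r}\!\left(\tfrac{1}{i!}\tfrac{d^i}{dT^i}f\right)\right\} = H(\tilde{\beta}_{z,r},t)(f);
\]
that is, $H(\beta_{z,r},t)$ is the restriction to $K[T]$ of $H(\tilde{\beta}_{z,r},t)$.

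Next I would apply Lemma~\ref{L:homotopy2} with $R = \CC$ and $x = z \in \gotho$: for $t \in [0,1]$, the seminorm $H(\tilde{\beta}_{z,r},t)$ is the $\max\{t,r\}$-Gauss norm on $\CC[T]$ for the generator $T-z$, i.e.\ $\tilde{\beta}_{z,\max\{t,r\}}$. Restricting to $K[T]$ and using the compatibility just established gives the clean identity $H(\beta_{z,r},t) = \beta_{z,\max\{t,r\}}$ for all $t \in [0,1]$.

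Finally, by Definition~\ref{D:radius}, $r(\beta_{z,r})$ is the greatest $t \in [0,1]$ for which $H(\beta_{z,r},t) = \beta_{z,r}$, equivalently $\beta_{z,\max\{t,r\}} = \beta_{z,r}$. By Lemma~\ref{L:sup norm same center} the map $s \mapsto \beta_{z,s}$ on $[0,1]$ is strictly monotone, hence injective, so this condition holds exactly when $\max\{t,r\} = r$, i.e.\ when $t \le r$; the greatest such $t$ is $r$, whence $r(\beta_{z,r}) = r$. I do not anticipate a genuine obstacle: the only point needing a moment's attention is the compatibility of $H$ with restriction, and it is routing through $\CC$ (rather than trying to invoke Lemma~\ref{L:homotopy2} directly over $K$, where the center $z$ need not lie) that makes even this step routine.
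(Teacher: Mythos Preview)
Your argument is correct and follows the same route as the paper: derive the identity $H(\beta_{z,r},t)=\beta_{z,\max\{t,r\}}$ from Lemma~\ref{L:homotopy2} and then read off $r(\beta_{z,r})=r$ via the strict monotonicity of Lemma~\ref{L:sup norm same center}. The only difference is that you make explicit the passage through $\CC$ (since $z\in\gotho$ need not lie in $K$, so Lemma~\ref{L:homotopy2} must be applied with $R=\CC$ and then restricted), whereas the paper invokes Lemma~\ref{L:homotopy2} directly and leaves this compatibility implicit; your extra care is justified and harmless.
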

\begin{proof}
By Lemma~\ref{L:homotopy2}, we have $H(\beta_{z,r},s) = \beta_{z,\max\{r,s\}}$
for $s \in [0,1]$. The claim then follows from 
Lemma~\ref{L:sup norm same center}.
\end{proof}

\begin{lemma} \label{L:sup norm same radius}
For $z, z' \in \gotho$ and $r \in [0,1]$, the following are equivalent.
\begin{enumerate}
\item[(a)] We have $\beta_{z,r} = \beta_{z',r}$.
\item[(b)] We have $\beta_{z,r} \geq \beta_{z',r}$.
\item[(c)] We have $\beta_{z,r} \geq \beta_{z',0}$.
\item[(d)] There exists $\tau \in \Aut(\CC/K)$ for which 
$\alpha(\tau(z) - z') \leq r$.
\end{enumerate}
\end{lemma}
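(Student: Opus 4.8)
The plan is to prove the four-way equivalence by closing the cycle (a) $\Rightarrow$ (b) $\Rightarrow$ (d) $\Rightarrow$ (a) and separately showing (b) $\Leftrightarrow$ (c). Three of these are short. Implication (a) $\Rightarrow$ (b) is trivial, and (b) $\Rightarrow$ (c) is immediate from Lemma~\ref{L:sup norm same center} (which gives $\beta_{z',r} \geq \beta_{z',0}$). For (d) $\Rightarrow$ (a): a continuous automorphism $\tau$ of $\CC$ over $K$ is isometric, so extending it by $\tau(T) = T$ yields an isometric automorphism of $\CC[T]$ for the Gauss norm that fixes $K[T]$ pointwise and sends $\tilde\beta_{z,r}$ to $\tilde\beta_{\tau(z),r}$ (check this on an expansion $f = \sum_j c_j (T-z)^j$); restricting to $K[T]$ gives $\beta_{z,r} = \beta_{\tau(z),r}$, and since $\alpha(\tau(z) - z') \leq r$ we have $\tilde\beta_{\tau(z),r} = \tilde\beta_{z',r}$, hence $\beta_{z,r} = \beta_{z',r}$.

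For (c) $\Rightarrow$ (b) I would exploit the operator $H$ of Lemma~\ref{L:homotopy1}. From the formula \eqref{eq:continuity1}, $H(\cdot,t)$ is monotone in its first argument, and a direct computation shows $H(\beta_{z',0},r) = \beta_{z',r}$ (writing $f = \sum_j c_j(T-z')^j$, both sides equal $\max_j r^j \alpha(c_j)$). On the other hand $r(\beta_{z,r}) = r$ by Corollary~\ref{C:radius}, so $H(\beta_{z,r},r) = \beta_{z,r}$ by Definition~\ref{D:radius}. Applying $H(\cdot,r)$ to the hypothesis $\beta_{z,r} \geq \beta_{z',0}$ then yields $\beta_{z,r} = H(\beta_{z,r},r) \geq H(\beta_{z',0},r) = \beta_{z',r}$, which is (b). So (b), (c) are equivalent, and it remains to prove (b) $\Rightarrow$ (d).

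The crux is (b) $\Rightarrow$ (d). Assume first $r > 0$. Choose $w \in \gotho$ integral over $K$ with $\alpha(w-z) \leq r$; then $\tilde\beta_{w,r} = \tilde\beta_{z,r}$, so $\beta_{w,r} = \beta_{z,r} \geq \beta_{z',r}$, and it suffices to produce $\sigma \in \Aut(\CC/K)$ with $\alpha(\sigma(w) - z') \leq r$, since then $\sigma(z)$ works for $z$. Thus we may assume $z$ is integral over $K$; let $P = \prod_i (T-z_i)$ be its minimal polynomial. Using (d) $\Rightarrow$ (a), already established, we may further replace $z$ by whichever conjugate $z_j$ minimizes $\alpha(z' - z_i)$, since $\beta_{z_j,r} = \beta_{z,r}$ and $z_j$ has the same minimal polynomial $P$. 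If $\alpha(z - z') \leq r$ we are done; otherwise $\alpha(z' - z_i) > r$ for all $i$, and evaluating the domination $\beta_{z,r} \geq \beta_{z',r}$ at $f = P$ gives $\prod_i \max\{r, \alpha(z-z_i)\} \geq \prod_i \alpha(z'-z_i)$. But $\alpha(z-z_i) \leq \max\{\alpha(z-z'), \alpha(z'-z_i)\} = \alpha(z'-z_i)$ because $z$ is the closest conjugate, so each factor on the left is at most the corresponding factor on the right, while the factor coming from the root $z_i = z$ equals $r < \alpha(z' - z)$; the product inequality is therefore strict in the wrong direction, a contradiction. Hence $\alpha(z-z') \leq r$, which is (d). (Inseparability of $P$ when $\charac K = p$ only introduces harmless multiplicities in the factorization.)

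The main obstacle is the remaining case $r = 0$ of (b) $\Rightarrow$ (d), where the reduction to an integral $z$ fails; here I would argue by a limiting process. From $\beta_{z,0} \geq \beta_{z',0}$ and Lemma~\ref{L:sup norm same center} we get $\beta_{z,r} \geq \beta_{z',0}$ for every $r > 0$, so the already-proved case (via the equivalence (b) $\Leftrightarrow$ (c)) supplies $\tau_r \in \Aut(\CC/K)$ with $\alpha(\tau_r(z) - z') \leq r$; using that $\Aut(\CC/K) \cong \Gal(K_{\mathrm{sep}}/K)$ is compact, a limit point $\tau$ of the $\tau_r$ as $r \to 0$ satisfies $\tau(z) = z'$, giving (d). This compactness input — and more generally the descent of disc statements from $\CC$ to $K$ through conjugacy, including the point that a $K$-embedding of $K(z)$ realizing the desired conjugacy extends to an automorphism of $\CC$ — is the part that requires genuine care; the rest is bookkeeping with the Gauss formula and the ultrametric inequality.
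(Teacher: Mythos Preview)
Your argument is correct and its heart --- the minimal-polynomial contradiction for $r>0$ and the compactness limit in $\Aut(\CC/K)$ for $r=0$ --- is exactly what the paper does. The paper proves (c)$\Rightarrow$(d) directly, comparing $\beta_{z,r}(P)$ with $\beta_{z',0}(P)$; you instead insert an intermediate step (c)$\Rightarrow$(b) via the operator $H$ (using $H(\beta_{z',0},r)=\beta_{z',r}$ and $H(\beta_{z,r},r)=\beta_{z,r}$) before proving (b)$\Rightarrow$(d) by comparing $\beta_{z,r}(P)$ with $\beta_{z',r}(P)$. That detour is valid and non-circular (Corollary~\ref{C:radius} and Lemma~\ref{L:homotopy2} are established earlier), but unnecessary: once (c)$\Rightarrow$(d)$\Rightarrow$(a)$\Rightarrow$(b) is in hand, (c)$\Rightarrow$(b) is automatic. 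Your replacement of $z$ by the conjugate closest to $z'$ is equivalent to the paper's ordering of the roots so that $\alpha(y_i-z')$ is nondecreasing and working with $y_1$.
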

\begin{proof}
It is clear that (d)$\implies$(a)$\implies$(b)$\implies$(c), 
so it suffices to check that (c)$\implies$(d). For this, we may reduce
to the case $r > 0$ (using the completeness of $\CC$
and the compactness of $\Aut(\CC/K)$).
Assume (c), then choose $y \in \gotho$ integral over $K$ with
$\alpha(y-z) \leq r$, so that $\tilde{\beta}_{y,r} = \tilde{\beta}_{z,r}$.
Let $P(T) = \prod_{i=1}^m (T - y_i)$ be the minimal polynomial of $y$ over $K$,
with the roots ordered so that the sequence $\alpha(y_i - z')$ is nondecreasing.

If (d) fails, then $\alpha(y_i - z') > r$ for $i=1,\dots,m$.
Since $\alpha(y_i - z') \geq \alpha(y_1 - z')$,
we have $\alpha(y_i-z') = \max\{\alpha(y_i-z'), \alpha(y_1-z')\} \geq 
\alpha(y_i - y_1)$.
Hence $\max\{r, \alpha(y_1-y_i)\} \leq \alpha(y_i - z')$
with strict inequality for $i=1$, so
\[
\beta_{z,r}(P) = \beta_{y,r}(P) = \tilde{\beta}_{y_1,r}(P) = \prod_{i=1}^m
\max\{r, \alpha(y_1-y_i)\} <
\prod_{i=1}^m \alpha(z' - y_i)
= \beta_{z',0}(P),
\]
contradiction.
Thus (d) holds, as desired.
(See also \cite[Lemma~2.2.5]{kedlaya-semi4}.)
\end{proof}

The key to the proof of Theorem~\ref{T:dominate} is the following calculation
in the spirit of Remark~\ref{R:Gauss approach}.
\begin{lemma} \label{L:dominate}
For $\beta \in \calM(K[T])$ and $s \in (r(\beta),1]$,
there exists $z \in \gotho$ for which $H(\beta,s) = \beta_{z,s}$.
\end{lemma}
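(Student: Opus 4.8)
The plan is to realize $H(\beta,s)$ as a Gauss norm over the residue field $\calH(\beta)$, to locate a center for that Gauss norm inside $\CC$, and then to recognize the result as $\beta_{z,s}$. I would begin by setting $M = \calH(\beta)$ and letting $u \in M$ be the image of $T$, so that $\beta(f) = |f(u)|_M$ for all $f \in K[T]$; observation (iv) of Definition~\ref{D:submultiplicative}, together with the fact that $\alpha$ is a multiplicative norm on the \emph{field} $K$, shows that $\beta$ restricts to $\alpha$ on $K$ and that $|u|_M = \beta(T) \leq 1$. Applying Lemma~\ref{L:surjective2} to the isometric inclusions $K \to M$ and $K \to \CC$ — and recalling that $\calM$ of a complete field is a single point — I obtain $N := \calH(\epsilon)$ for a suitable $\epsilon \in \calM(M \widehat{\otimes}_K \CC)$, a complete field into which $M$ and $\CC$ both embed isometrically over $K$; all subsequent computations take place in $N[T]$. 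For $z \in \CC$ put $d(z) = |u-z|_N$ (so $d(z) = \beta(T-z)$ when $z \in K$), and note $|z|_N \leq \max\{|u|_N, d(z)\}$ always.

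Substituting $\beta(g) = |g(u)|_N$ into \eqref{eq:continuity1} and expanding $f$ in powers of $T - u$ shows that $H(\beta,s)(f)$ is precisely the $s$-Gauss norm on $N[T]$ for the generator $T - u$, evaluated at $f$; similarly $\beta_{z,s}(f)$ is the $s$-Gauss norm on $N[T]$ for the generator $T - z$, evaluated at $f$. Because two $s$-Gauss norms on $N[T]$ whose centers lie at distance $\leq s$ coincide (an easy change-of-center computation in the spirit of Lemma~\ref{L:Gauss multiplicative}), it therefore suffices to produce $z \in \CC$ with $d(z) \leq s$: such a $z$ automatically lies in $\gotho$, since $|z|_N \leq \max\{|u|_N, d(z)\} \leq \max\{1, s\} = 1$.

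The heart of the argument is the claim that \emph{if $d(z) > t$ for every $z \in \CC$, with $t \in [0,1]$, then $H(\beta,t) = \beta$}. To prove it, I would factor a nonzero $f \in K[T]$ over the algebraically closed field $\CC$ as $f = c\prod_{k=1}^m(T - w_k)$, so that $\beta(f) = |c|_N\prod_k d(w_k)$. For each $i \geq 1$, the Leibniz identity $\frac{1}{i!}\frac{d^i}{dT^i}(f) = c\sum_{|S|=i}\prod_{k\notin S}(T - w_k)$ (sum over $i$-element subsets $S \subseteq \{1,\dots,m\}$) gives
\[
t^i \beta\!\left( \frac{1}{i!}\frac{d^i}{dT^i}(f) \right) \leq |c|_N \max_{|S|=i}\left( t^i \prod_{k\notin S} d(w_k) \right) = \max_{|S|=i} \frac{t^i\,\beta(f)}{\prod_{k\in S} d(w_k)} < \beta(f),
\]
since each of the $i$ denominator factors exceeds $t$. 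Hence the maximum in \eqref{eq:continuity1} is attained only at $i=0$, i.e.\ $H(\beta,t)(f) = \beta(f)$ (the constant case being trivial), proving the claim.

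Granting the claim, the lemma follows: if $\inf_{z\in\CC} d(z) > r(\beta)$, choose $t$ strictly between the two; then $d(z) > t$ for all $z \in \CC$, so $H(\beta,t) = \beta$, contradicting $t > r(\beta)$ (by Definition~\ref{D:radius}, $r(\beta)$ is the largest parameter fixing $\beta$). Thus $\inf_{z\in\CC} d(z) \leq r(\beta) < s$, so some $z \in \CC$ has $d(z) < s \leq 1$; by the second paragraph, $z \in \gotho$ and $H(\beta,s) = \beta_{z,s}$. I expect the displayed estimate to be the main obstacle: it is the one point where algebraic closedness of $\CC$ must be used (to split $f$ into linear factors) and where the strict inequality has to be secured \emph{simultaneously} for every $i \geq 1$, which is exactly what forces the maximum back to $i = 0$. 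The supporting pieces — constructing the common overfield $N$ via Lemma~\ref{L:surjective2}, the change-of-center identity for Gauss norms, and the reformulation of $H(\beta,s)$ as a Gauss norm over $\calH(\beta)$ — should all be routine.
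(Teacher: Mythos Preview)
Your proof is correct and follows essentially the same approach as the paper's: both realize $H(\beta,s)$ as an $s$-Gauss seminorm centered at the image of $T$ inside a common overfield of $\calH(\beta)$ and $\CC$, then factor elements of $K[T]$ into linear factors over $\CC$ and show that when no $z \in \CC$ lies within distance $t$ of that center, $H(\beta,t)$ collapses to $\beta$. The differences are cosmetic: the paper builds the overfield as a completed algebraic closure $\CC'$ of $\calH(\beta)$ and routes the distance characterization through Lemma~\ref{L:sup norm same radius} (with its $\Aut(\CC/K)$-action), whereas you build it via Lemma~\ref{L:surjective2} and carry out the key estimate directly with the Leibniz identity; your version is a bit more self-contained, while the paper's packages the comparison of Gauss seminorms into a reusable lemma.
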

\begin{proof}
Let $S$ be the set of $s \in [0,1]$ for which we can find $z \in \gotho$
(depending on $s$) satisfying $H(\beta,s) = \beta_{z,s}$.
The set $S$ is nonempty because $1 \in S$; it is up-closed because
$H(\beta_{z,r},s) = \beta_{z,\max\{r,s\}}$ by Lemma~\ref{L:homotopy2}
and $H(H(\beta,r),s) = H(\beta,\max\{r,s\})$ by Theorem~\ref{T:homotopy1}(d).
Put $r = \inf S$; to prove the lemma, it suffices to check that 
$r(\beta) \geq r$.

Let $\CC'$ be a completed algebraic closure of $\calH(\beta)$,
fix a continuous embedding of $\CC$ into $\CC'$,
and let $x \in \CC'$ be the image of $T$ under
the map $K[T] \to \calH(\beta)$.
For $s \in [0,1]$, let $\gamma_{x,s}$ denote the $s$-Gauss norm on 
$\calH(\beta)[T]$ for the generator $T-x$,
so that $H(\beta,s)$ is the restriction of $\gamma_{x,s}$ to $K[T]$.
By Lemma~\ref{L:sup norm same radius}
and the stability of $\CC$ under $\Aut(\CC'/\calH(\beta))$, 
for $z \in \gotho$, 
$H(\beta,s) = \beta_{z,s}$ if and only if there exists
$\tau \in \Aut(\CC/K)$ for which
$\alpha(\tau(z) - x) \leq s$. It follows that for $s \in [0, r)$,
$\gamma_{x,s}(T-z) = \max\{s, \alpha(z-x)\}$ is independent of $s$.
Since every element of $K[T]$ factors in $\CC[T]$ as a scalar times a product
of linear polynomials, the restriction of
$\gamma_{x,s}$ to $K[T]$ is constant over $s \in [0,r)$.
Hence $r(\beta) \geq r$, as desired.
\end{proof}
From the proof of Lemma~\ref{L:dominate}, we also read off
the following observation.
\begin{cor} \label{C:infimum}
Suppose that $\beta \in \calM(K[T])$ is such that
$\beta \neq \beta_{z,r}$ for all $z \in \gotho$ and all $r \in [0,1]$.
Then for each $y \in K[T]$, for any sufficiently small $s \in (r(\beta),1]$
(depending on $y$),
$H(\beta,s)(y) = \beta(y)$.
\end{cor}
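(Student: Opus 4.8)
The plan is to reuse, essentially verbatim, the apparatus set up in the proof of Lemma~\ref{L:dominate}. We may assume $\beta \neq \beta_{z,r}$ for all $z \in \gotho$ and $r \in [0,1]$, since otherwise there is nothing to prove. Retain the notation from that proof: let $\CC'$ be a completed algebraic closure of $\calH(\beta)$ into which $\CC$ is embedded, let $x \in \calH(\beta) \subseteq \CC'$ be the image of $T$ under $K[T] \to \calH(\beta)$ (so $\alpha(x) = \beta(T) \leq 1$), and recall that for $s \in [0,1]$ the seminorm $H(\beta,s)$ is the restriction to $K[T]$ of the $s$-Gauss norm $\gamma_{x,s}$ on $\CC'[T]$ for the generator $T-x$, which is multiplicative by Lemma~\ref{L:Gauss multiplicative}. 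Fix $y \in K[T]$; if $y$ is constant the assertion is trivial, so assume not and factor $y = c \prod_j (T - w_j)$ over $\CC$, with $c \in K$ and $w_j \in \CC \subseteq \CC'$. Multiplicativity then gives $H(\beta,s)(y) = \alpha(c) \prod_j \max\{s, \alpha(x - w_j)\}$, which coincides with $\beta(y) = H(\beta,0)(y) = \alpha(c) \prod_j \alpha(x - w_j)$ as soon as $s \leq \alpha(x - w_j)$ for every $j$. So the statement reduces to the claim that $\alpha(x - w_j) > r(\beta)$ for each root $w_j$ of $y$; granting this, put $\rho = \min_j \alpha(x - w_j) > r(\beta)$, note that $r(\beta) < 1$ (otherwise $\beta = H(\beta,1) = (\lambda\circ\mu)(\beta) = \beta_{0,1}$, since $\mu(\beta) = \alpha$ is the only point of $\calM(K)$ and $\lambda(\alpha)$ is the Gauss seminorm), and conclude that $H(\beta,s)(y) = \beta(y)$ for every $s$ in the nonempty interval $(r(\beta), \min\{\rho,1\}]$.

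To prove the claim, I would first dispose of any root with $\alpha(w_j) > 1$: since $\alpha(x) \leq 1$, the ultrametric inequality gives $\alpha(x - w_j) = \alpha(w_j) > 1 \geq r(\beta)$. For a root $w_j \in \gotho$, invoke the dichotomy exhibited inside the proof of Lemma~\ref{L:dominate}: for any $z \in \gotho$ and any $s \in [0,1]$, one has $H(\beta,s) = \beta_{z,s}$ if and only if there exists $\tau \in \Aut(\CC/K)$ with $\alpha(\tau(z) - x) \leq s$. Apply this with $s = r(\beta)$: since $H(\beta, r(\beta)) = \beta$ by the definition of the radius and $\beta \neq \beta_{z, r(\beta)}$ by our standing assumption, it follows that for every $z \in \gotho$ there is no $\tau \in \Aut(\CC/K)$ with $\alpha(\tau(z) - x) \leq r(\beta)$. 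Specializing to $\tau = \id$ gives $\alpha(z - x) > r(\beta)$ for all $z \in \gotho$, in particular for $z = w_j$. This proves the claim, and with it the corollary.

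The one step that is conceptual rather than clerical is the recognition that the only way $H(\beta,\cdot)(y)$ can fail to stabilize on a right-neighborhood of $r(\beta)$ is for $y$ to have a root $w_j$ with $\alpha(x - w_j) = r(\beta)$ exactly, and that such a root would force the identification $H(\beta, r(\beta)) = \beta_{w_j, r(\beta)}$, making $\beta$ one of the excluded disc seminorms. Once that is isolated, everything rests on the dichotomy borrowed from Lemma~\ref{L:dominate}, whose own content is just the Gauss-norm identity $\gamma_{x,r} = \gamma_{w,r}$ on $\CC'[T]$ when $\alpha(x - w) \leq r$, restricted to $K[T]$. I anticipate no further obstacle; in particular, continuity of $H$ in the second variable is not needed, since the argument hands back an explicit interval of values of $s$ on which the desired equality holds.
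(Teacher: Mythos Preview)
Your argument is correct and is precisely the unpacking the paper intends when it says the corollary is ``read off'' from the proof of Lemma~\ref{L:dominate}: you factor $y$ over $\CC$, use the multiplicative formula $H(\beta,s)(y) = \alpha(c)\prod_j \max\{s,\alpha(x-w_j)\}$, and then invoke the dichotomy from that proof at $s = r(\beta)$ to get $\alpha(x-w_j) > r(\beta)$ strictly for each root $w_j \in \gotho$. The only thing you add beyond the paper's own argument is the explicit disposal of roots with $\alpha(w_j) > 1$ and the check that $r(\beta) < 1$, both of which are routine.
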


With this, we may now complete the proof of Theorem~\ref{T:dominate}.
\begin{lemma} \label{L:dominate lemma}
Theorem~\ref{T:dominate} holds for $R=K$.
\end{lemma}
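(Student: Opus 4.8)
The plan is to show directly that $\beta \geq \gamma$ in $\calM(K[T])$ forces $\beta = H(\gamma, r(\beta))$; the hypothesis $\mu(\beta) = \mu(\gamma)$ is automatic here, since $\calM(K)$ is a single point. Everything will be read off from the two parametrized families $s \mapsto H(\gamma,s)$ and $s \mapsto H(\beta,s)$ together with the descriptions of $H(\cdot,s)$ provided by Lemma~\ref{L:dominate} and the rigidity of disc seminorms provided by Lemma~\ref{L:sup norm same radius}; all the genuinely nontrivial input about $\calM(K[T])$ is already packaged in those lemmas.

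First I would record two elementary facts about $H$ that are immediate from formula~\eqref{eq:continuity1}. (i) $H$ is monotonic in its first argument: since $\beta$ and $\gamma$ are multiplicative, hence power-multiplicative, observation~(iv) of Definition~\ref{D:submultiplicative} makes $\gamma \leq \beta$ equivalent to $\gamma(f) \leq \beta(f)$ for all $f \in K[T]$, and comparing~\eqref{eq:continuity1} term by term then gives $H(\gamma,t) \leq H(\beta,t)$ for all $t \in [0,1]$. (ii) The $i=0$ term of~\eqref{eq:continuity1} gives $H(\delta,t) \geq \delta$ for any $\delta \in \calM(K[T])$ and any $t$; since $t \mapsto H(\delta,t)$ is nondecreasing, this shows $\{t \in [0,1] : H(\delta,t) = \delta\} = [0,r(\delta)]$.

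Next, apply Lemma~\ref{L:dominate} to $\gamma$ and to $\beta$: for $s \in (r(\gamma),1]$ there is $z' \in \gotho$ with $H(\gamma,s) = \beta_{z',s}$, and for $s \in (r(\beta),1]$ there is $z \in \gotho$ with $H(\beta,s) = \beta_{z,s}$. Hence, for every $s$ with $\max\{r(\gamma),r(\beta)\} < s \leq 1$, fact~(i) yields $\beta_{z',s} = H(\gamma,s) \leq H(\beta,s) = \beta_{z,s}$, and Lemma~\ref{L:sup norm same radius} (the implication (b)$\Rightarrow$(a)) upgrades this to $H(\gamma,s) = H(\beta,s)$.

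The crux is then to let $s$ decrease to $\rho := \max\{r(\gamma),r(\beta)\}$ and invoke the continuity of $H$ from Theorem~\ref{T:homotopy1}, obtaining $H(\gamma,\rho) = H(\beta,\rho)$ (the radius-$1$ degenerate cases, where $H(\cdot,1)$ is the Gauss norm by Theorem~\ref{T:homotopy1}(b), are dispatched directly). If $r(\gamma) \leq r(\beta)$, then $\rho = r(\beta)$ and the identity reads $H(\gamma,r(\beta)) = H(\beta,r(\beta)) = \beta$ by fact~(ii), which is exactly the claim. If instead $r(\gamma) > r(\beta)$, then $\rho = r(\gamma)$, so the identity reads $\gamma = H(\gamma,r(\gamma)) = H(\beta,r(\gamma)) \geq \beta$ by fact~(ii); combined with $\gamma \leq \beta$ this gives $\gamma = \beta$, hence $r(\gamma) = r(\beta)$, a contradiction. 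So the second alternative never occurs and the proof is complete. I expect no serious obstacle: the only thing requiring care is the bookkeeping of the two radii — making sure Lemma~\ref{L:dominate} and Lemma~\ref{L:sup norm same radius} are applied on the correct range of $s$ and that the limit in $s$ is taken from above — since the substantive geometry of $\calM(K[T])$ has already been handled upstream.
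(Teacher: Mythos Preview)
Your proof is correct and follows essentially the same approach as the paper: both arguments use Lemma~\ref{L:dominate} to realize $H(\beta,s)$ and $H(\gamma,s)$ as disc seminorms for $s>\max\{r(\beta),r(\gamma)\}$, invoke Lemma~\ref{L:sup norm same radius} to turn the inequality $H(\gamma,s)\le H(\beta,s)$ into an equality, then let $s$ descend to $\max\{r(\beta),r(\gamma)\}$ and do a case split on which radius is larger. Your write-up is slightly more explicit than the paper's about justifying monotonicity of $H$ in its first argument and the shape of the set $\{t:H(\delta,t)=\delta\}$, but the logical skeleton is identical.
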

\begin{proof}
If $r(\beta) = 1$, then $\beta = H(\beta,1) = H(\gamma,1)$
by Theorem~\ref{T:homotopy1}(b).
If $r(\gamma) = 1$, then by Theorem~\ref{T:homotopy1}(b) again, 
$\beta \geq \gamma = H(\gamma,1) = H(\beta,1) \geq \beta$
and so $\beta = H(\gamma,1)$.
It is thus safe to assume $r(\beta), r(\gamma) < 1$.
 
For each $s \in (\max\{r(\beta),r(\gamma)\},1]$, 
by Lemma~\ref{L:dominate} we
have $H(\beta,s) = \beta_{z,s}$,
$H(\gamma,s) = \beta_{z',s}$ for some $z,z' \in \gotho$.
Since $\beta \geq \gamma$ implies
$H(\beta,s) \geq H(\gamma,s)$, we have $\beta_{z,s} \geq \beta_{z',s}$,
but by Lemma~\ref{L:sup norm same radius},
this forces $\beta_{z,s} = \beta_{z',s}$.
Hence $H(\beta,s) = H(\gamma,s)$.

If $r(\gamma) > r(\beta)$, by taking the limit as $s \to r(\gamma)^+$,
we deduce that $\gamma = H(\beta,r(\gamma)) > H(\beta,r(\beta)) = \beta$,
contradiction. Hence $r(\beta) \geq r(\gamma)$, and by taking the limit as $s \to r(\beta)^+$,
we deduce $\beta = H(\gamma,r(\beta))$ as desired.
(For an alternate proof, see \cite[Lemma~2.2.12]{kedlaya-semi4}.)
\end{proof}
\begin{cor} \label{C:compatible extension}
For any $\beta, \gamma \in \calM(K[T])$ with $\beta \geq \gamma$,
there exist $\tilde{\beta}, \tilde{\gamma} \in \calM(\CC[T])$ restricting to
$\beta, \gamma$, respectively, for which $\tilde{\beta} \geq \tilde{\gamma}$.
\end{cor}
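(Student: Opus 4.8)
The plan is to exploit the structure theory built up in Lemmas~\ref{L:sup norm same radius}, \ref{L:dominate}, and \ref{L:dominate lemma} to reduce everything to the case of Gauss norms $\beta_{z,r}$, for which compatible extensions to $\CC[T]$ are manifest. First I would dispose of the easy cases via the radii. If $r(\gamma) = 1$, then $\gamma$ is the Gauss norm on $K[T]$ by Theorem~\ref{T:homotopy1}(b) (it equals $H(\gamma,1) = (\lambda\circ\mu)(\gamma)$), and the same argument applied to $\beta$ (using $\beta \geq \gamma =$ Gauss norm, hence $H(\beta,1) \geq \gamma$, while $H(\beta,1) \leq H(\gamma,1) = \gamma$) forces $\beta = \gamma$ to be the Gauss norm as well; then take $\tilde\beta = \tilde\gamma$ to be the Gauss norm on $\CC[T]$. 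So assume $r(\gamma) < 1$, and note by Corollary~\ref{C:dominate} that $r(\beta) \geq r(\gamma)$, so $r(\beta) \geq r(\gamma)$ and we may choose a common auxiliary radius.

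The main step is to handle the case where $\gamma = \beta_{z',r(\gamma)}$ is itself a Gauss norm for some $z' \in \gotho$; by Theorem~\ref{T:dominate} (i.e. Lemma~\ref{L:dominate lemma}) we have $\beta = H(\gamma, r(\beta)) = \beta_{z', r(\beta)}$ by Lemma~\ref{L:homotopy2}, so in fact \emph{both} $\beta$ and $\gamma$ are Gauss norms for the common center $z'$. Then I would simply take $\tilde\gamma = \tilde\beta_{z', r(\gamma)}$ and $\tilde\beta = \tilde\beta_{z', r(\beta)}$ on $\CC[T]$: these restrict to $\gamma, \beta$ respectively by definition, and $\tilde\beta \geq \tilde\gamma$ because $r(\beta) \geq r(\gamma)$ (evident from the Gauss formula \eqref{eq:Gauss formula}). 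The remaining case is when $\gamma$ is \emph{not} of the form $\beta_{z,r}$ for any $z,r$. Here I would run an argument in the spirit of Lemma~\ref{L:dominate}: for each $s \in (r(\beta), 1]$ we have $H(\beta,s) = \beta_{z,s}$ and $H(\gamma,s) = \beta_{z',s}$ with $\beta_{z,s} \geq \beta_{z',s}$, forcing (Lemma~\ref{L:sup norm same radius}) a $\tau_s \in \Aut(\CC/K)$ with $\alpha(\tau_s(z) - z') \leq s$. Working inside a completed algebraic closure $\CC'$ of $\calH(\beta)$ with $\CC$ embedded in it, and letting $x \in \CC'$ be the image of $T$, I would show that the point $\tilde\beta_{x, r(\beta)} \in \calM(\CC'[T])$ restricts compatibly: its restriction to $K[T]$ is $H(\beta, r(\beta)) = \beta$, and I would use the density argument (integral elements dense in $\gotho$) together with Lemma~\ref{L:sup norm same radius} to locate a point of $\calM(\CC[T])$ dominated appropriately and restricting to $\gamma$.

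A cleaner route for the general case, which I would actually pursue, is to use Lemma~\ref{L:surjective2}: the map $\calM(\CC[T] \widehat\otimes_{K[T]} \CC[T]) \to \calM(\CC[T]) \times_{\calM(K[T])} \calM(\CC[T])$ is surjective, but more to the point I would first lift $\gamma$ to some $\tilde\gamma \in \calM(\CC[T])$ (via surjectivity of the restriction map along the split — because $K$ is a field — homomorphism $K[T] \to \CC[T]$; see Lemma~\ref{L:surjective}), and then I need to lift $\beta$ to a $\tilde\beta$ dominating that particular $\tilde\gamma$. For this I would pass to $\calH(\tilde\gamma)$ and consider $\beta$ together with its domination of $\gamma = \mu'(\tilde\gamma)$; concretely, since $\beta \geq \gamma$, the identity on the abelian group $K[T]$ is bounded from $(K[T], \gamma)$ to $(K[T],\beta)$, and I would build $\tilde\beta$ as an extension of $\beta$ to $\CC[T]$ through $\calH(\tilde\gamma)$, arranging $\tilde\beta \geq \tilde\gamma$ by taking a multiplicative seminorm on $\CC[T] \widehat\otimes_{K[T],\gamma} \calH(\tilde\gamma)$ and composing. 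The hard part will be reconciling the two domination conditions — restricting to $\beta$ on $K[T]$ \emph{and} dominating the chosen $\tilde\gamma$ on $\CC[T]$ — since a crude application of Lemma~\ref{L:surjective2} gives compatibility of restrictions but not the inequality; the resolution is exactly Theorem~\ref{T:dominate}, which says $\beta = H(\gamma, r(\beta))$, so $\beta$ is completely determined by $\gamma$ and $r(\beta)$, whence one can transport the homotopy $H$ to $\CC[T]$ and set $\tilde\beta = H(\tilde\gamma, r(\beta))$, using Theorem~\ref{T:homotopy1}(c) and \eqref{eq:continuity1} to see $\tilde\beta \geq \tilde\gamma$ and that $\tilde\beta$ restricts to $H(\gamma, r(\beta)) = \beta$.
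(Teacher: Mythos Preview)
Your final approach is exactly the paper's: lift $\gamma$ to some $\tilde{\gamma} \in \calM(\CC[T])$ via surjectivity of restriction (Lemma~\ref{L:surjective}, using that $K[T] \to K'[T]$ is split for finite $K'/K$), then set $\tilde{\beta} = H(\tilde{\gamma}, r(\beta))$, which dominates $\tilde{\gamma}$ by \eqref{eq:continuity1} and restricts to $H(\gamma, r(\beta)) = \beta$ by Theorem~\ref{T:dominate}. The preceding case analysis, the passage through a completed algebraic closure $\CC'$ of $\calH(\beta)$, and the attempt via Lemma~\ref{L:surjective2} are all unnecessary detours --- the paper's proof is exactly the three-line argument you arrive at in your last sentence, so you should lead with that and discard the rest.
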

\begin{proof}
For each finite extension $K'$ of $K$, the map $K[T] \to K'[T]$
is split, so by Lemma~\ref{L:surjective}, the restriction map
$\calM(K'[T]) \to \calM(K[T])$ is surjective. It follows that
$\calM(\CC[T]) \to \calM(K[T])$ is also surjective. 
(See Remark~\ref{R:quotient} for a more precise statement.)

We may thus choose $\tilde{\gamma} \in \calM(\CC[T])$ extending $\gamma$,
then put $\tilde{\beta} = H(\tilde{\gamma}, r(\beta))$. This 
seminorm restricts to $\beta$ by Theorem~\ref{T:dominate}.
(For an alternate proof, see \cite[Lemma~2.2.9]{kedlaya-semi4}.)
\end{proof}

\begin{lemma} \label{L:disc}
Assume that the norm on $K$ is nontrivial.
For $z \in \gotho$ and $r \in [0,1]$, let $D(z,r)$ be the set of
$\beta_{x,0} \in \calM(K[T])$ for which $\beta_{z,r} \geq \beta_{x,0}$.
Then  $D(z,r) = D(z,s)$ if and only if $r = s$.
\end{lemma}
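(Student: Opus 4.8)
The plan is to rewrite the defining condition of $D(z,r)$ using Lemma~\ref{L:sup norm same radius}: applying the equivalence (c)$\iff$(d) there with $z'=x$ shows that $\beta_{x,0}\in D(z,r)$ if and only if there is some $\tau\in\Aut(\CC/K)$ with $\alpha(\tau(z)-x)\leq r$, i.e.\ some $\Aut(\CC/K)$-conjugate of $x$ lies in the closed disc $\{y\in\gotho:\alpha(y-z)\leq r\}$. The implication $r=s\Rightarrow D(z,r)=D(z,s)$ is then trivial, and for $r\leq s$ one always has $D(z,r)\subseteq D(z,s)$; so the entire problem reduces to exhibiting, for each pair $0\leq r<s\leq 1$, a classical point $\beta_{x,0}\in D(z,s)\setminus D(z,r)$.

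I would first dispose of the case in which $z$ is not integral over $K$ (equivalently transcendental over $K$, since $z\in\gotho$). If $r>0$, I choose, using density of the integral elements in $\gotho$, some $z_0\in\gotho$ integral over $K$ with $\alpha(z-z_0)\leq r$; then Lemma~\ref{L:sup norm same radius} gives $\beta_{z,r}=\beta_{z_0,r}$ and $\beta_{z,s}=\beta_{z_0,s}$, hence $D(z,r)=D(z_0,r)$ and $D(z,s)=D(z_0,s)$, reducing to the case of integral $z$. If $r=0$, then $D(z,0)=\{\beta_{z,0}\}$ (since $\beta_{z,0}\geq\beta_{x,0}$ forces $\tau(z)=x$ for some $\tau$, whence $\beta_{x,0}=\beta_{z,0}$); choosing $z_0\in\gotho$ integral over $K$ with $\alpha(z-z_0)\leq s$, the point $\beta_{z_0,0}$ lies in $D(z,s)$ (take $\tau=\id$) but differs from $\beta_{z,0}$ (equality would force $\tau(z_0)=z$ for some $\tau$, making $z$ integral), so $D(z,0)\subsetneq D(z,s)$.

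It remains to treat $z$ integral over $K$, with conjugates $z=z_1,\dots,z_m$ over $K$. Here the hypothesis that the norm on $K$ is nontrivial enters: the value group $\Gamma$ of $\CC$ is then dense in $(0,+\infty)$, so I can pick a value $s'\in(r,s)\cap\Gamma$ lying outside the finite set $\Delta=\{\alpha(z_i-z_j):z_i\neq z_j\}$, and then an element $c\in\CC$ with $\alpha(c)=s'$; note $c\in\gotho$ and $x:=z+c\in\gotho$. Taking $\tau=\id$ gives $\alpha(z-x)=s'<s$, so $\beta_{x,0}\in D(z,s)$. Conversely, every $\tau\in\Aut(\CC/K)$ sends $z$ to some $z_i$, and $\alpha(z_i-x)=\alpha((z_i-z)-c)$: if $z_i=z$ this equals $s'>r$, while if $z_i\neq z$ then $\alpha(z_i-z)\in\Delta$ is $\neq\alpha(c)=s'$, so by the ultrametric inequality $\alpha(z_i-x)=\max\{\alpha(z_i-z),s'\}\geq s'>r$. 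Hence no conjugate of $x$ lies in the disc of radius $r$ about $z$, i.e.\ $\beta_{x,0}\notin D(z,r)$, giving $D(z,r)\subsetneq D(z,s)$.

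The main obstacle is the organization of the non-integral case, especially the corner $r=0$, where the "approximate $z$ by an integral element" reduction degenerates and must be replaced by the direct observation that $D(z,0)$ is a singleton, with a little care that the approximant used there is genuinely a new point. The other point requiring care is the choice of $s'$ outside the difference spectrum $\Delta$: this is exactly what guarantees that translating $z$ by an element of norm $s'$ pushes \emph{every} conjugate of $z$ to distance $>r$, rather than possibly collapsing one of them onto (or very near) $z$. Everything else is a routine combination of Lemma~\ref{L:sup norm same radius} with ultrametric estimates.
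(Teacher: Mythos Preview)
Your argument is correct, but it follows a different route from the paper. The paper argues by contradiction: assuming $D(z,r)=D(z,s)$ for some $r>s>0$, it first replaces $z$ by a nearby integral element $z'$, then observes that every $\beta_{x,0}\in D(z,r)$ must have $x$ within distance $s$ of some conjugate of $z'$; this forces the set of points $\tilde\beta_{x,s}\in\calM(\CC[T])$ dominated by $\tilde\beta_{z,r}$ to be finite. It then derives a contradiction by exhibiting infinitely many such points, parametrized by the (algebraically closed, hence infinite) residue field $\kappa_\CC$. Your proof instead produces an explicit witness $\beta_{x,0}\in D(z,s)\setminus D(z,r)$: after reducing to integral $z$, you translate by an element whose norm lies in $(r,s)$ and avoids the finitely many pairwise distances between the conjugates of $z$, so that the ultrametric inequality keeps every conjugate at distance $>r$ from $x$. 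Your approach is more constructive and avoids the residue-field cardinality argument; the paper's approach is less explicit but has the virtue of transporting cleanly to the Witt-vector setting later (compare the proof of the analogous result there). One small stylistic point: in your opening rewriting of $D(z,r)$ you phrase the condition as ``some conjugate of $x$ lies near $z$'', whereas Lemma~\ref{L:sup norm same radius}(d) literally gives ``some conjugate of $z$ lies near $x$''; you correctly use the latter formulation in the integral case, and the two are of course equivalent since $\Aut(\CC/K)$ preserves $\alpha$, but it would be cleaner to state it consistently.
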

\begin{proof}
It suffices to deduce a contradiction under the assumption that
$D(z,r) = D(z,s)$ for some $r>s>0$.
Pick $z' \in \gotho$ integral over $K$ with
$\alpha(z-z') < s$, so that $D(z,r) = D(z',r)$ and $D(z,s) = D(z',s)$.
Since $D(z,r) = D(z,s)$, for any $\beta_{x,0} \in D(z,r)$,
we have $\beta_{z',s} \geq \beta_{x,0}$ and hence
(by Lemma~\ref{L:sup norm same radius}) $\alpha(\tau(z') - x) \leq s$
for some $\tau \in \Aut(\CC/K)$. Consequently, there are only finitely
many points in $\calM(\CC[T])$ of the form $\tilde{\beta}_{x,s}$ 
which are dominated by $\tilde{\beta}_{z,r}$.

Pick $u \in \gotho$ with $\alpha(u) \in (s,r)$.
For $x,x' \in \gotho$ with $\alpha(z'-x), \alpha(z'-x') \leq \alpha(u)$,
declare $x,x'$ to be equivalent if $\tilde{\beta}_{x,t} =
\tilde{\beta}_{x',t}$ for some $t \in [0,\alpha(u))$.
The resulting equivalence classes may be put in bijection with $\kappa_\CC$
by mapping the class of $x$ to the residue class of $(z'-x)/u$.
Since $\kappa_\CC$ is algebraically closed and hence infinite,
this yields the desired contradiction.
\end{proof}

\begin{cor} \label{C:disc}
Assume that the norm on $K$ is nontrivial.
For $z \in \gotho$ and $r \in [0,1]$, 
$\beta_{z,r} = \sup D(z,r)$.
\end{cor}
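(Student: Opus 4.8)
The plan is to prove the two halves of the equality $\beta_{z,r} = \sup D(z,r)$ separately, where $\sup$ is the least upper bound in the domination order on $\calM(K[T])$. Since every member of $D(z,r)$ and $\beta_{z,r}$ itself is multiplicative, hence power-multiplicative, observation~(iv) of Definition~\ref{D:submultiplicative} lets me replace domination throughout by pointwise comparison of values; so proving the equality amounts to showing that the pointwise supremum of $D(z,r)$ equals $\beta_{z,r}$. One inequality is immediate: each $\beta_{x,0} \in D(z,r)$ satisfies $\beta_{z,r} \geq \beta_{x,0}$ by the very definition of $D(z,r)$, so $\beta_{z,r}$ is an upper bound. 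The content is the reverse, i.e.\ that for each $f \in K[T]$ and each $\epsilon > 0$ there is some $\beta_{x,0} \in D(z,r)$ with $\beta_{x,0}(f) > \beta_{z,r}(f) - \epsilon$; equivalently, that $\beta_{z,r}(f)$ is the supremum of $\alpha(f(x))$ over $x$ in the closed disc $\{x \in \CC : \alpha(x-z) \leq r\} \subseteq \gotho$. The case $r = 0$ I would dispose of at once, since Lemma~\ref{L:sup norm same radius} gives $D(z,0) = \{\beta_{z,0}\}$; so assume $r > 0$.

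Next I would fix a nonzero $f \in K[T]$ and factor it in $\CC[T]$ as $f = c\prod_{i=1}^{n}(T - w_i)$ with $c \in K^\times$ and $w_i \in \CC$. Using multiplicativity of the Gauss norms (Lemma~\ref{L:Gauss multiplicative}) one has $\beta_{z,r}(f) = \tilde{\beta}_{z,r}(f) = \alpha(c)\prod_i \max\{r, \alpha(w_i - z)\}$, while for any $x \in \gotho$ with $\alpha(x - z) \leq r$ one has $\beta_{x,0} \in D(z,r)$ (Lemma~\ref{L:sup norm same radius}, taking $\tau = \id$) and $\beta_{x,0}(f) = \alpha(f(x)) = \alpha(c)\prod_i \alpha(x - w_i)$. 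Splitting the roots into $I_{\mathrm{near}} = \{i : \alpha(w_i - z) \leq r\}$ and $I_{\mathrm{far}} = \{i : \alpha(w_i - z) > r\}$, I observe that for any such $x$ the ultrametric inequality already forces $\alpha(x - w_i) = \alpha(w_i - z) = \max\{r,\alpha(w_i - z)\}$ for $i \in I_{\mathrm{far}}$, so the whole problem reduces to choosing a single $x$ in the disc for which $\alpha(x - w_i)$ is as close as possible to $r$ simultaneously for all $i \in I_{\mathrm{near}}$.

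For that choice I would reuse the argument from the proof of Lemma~\ref{L:disc}. Since the norm on $K$ is nontrivial and $\CC$ is algebraically closed, $\alpha(\CC^\times)$ is a nontrivial divisible, hence dense, subgroup of $(0,+\infty)$; pick $\rho \in \alpha(\CC^\times)$ with $\rho \leq r$, taking $\rho = r$ if $r \in \alpha(\CC^\times)$ and otherwise $\rho$ arbitrarily close to $r$, and fix $u \in \CC^\times$ with $\alpha(u) = \rho$. The (finitely many) roots $w_i$ with $\alpha(w_i - z) = \rho$ have well-defined nonzero images $\overline{(w_i - z)/u}$ in $\kappa_\CC$; as $\kappa_\CC$ is algebraically closed it is infinite, so I can choose $c_0 \in \gotho$ with nonzero image in $\kappa_\CC$ distinct from all of these, and set $x = z + c_0 u \in \gotho$. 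Then $\alpha(x - z) = \rho \leq r$, and a case check on $\alpha(w_i - z)$ versus $\rho$ (using the distinctness of residues when $\alpha(w_i - z) = \rho$) gives $\alpha(x - w_i) = \max\{\rho, \alpha(w_i - z)\}$ for every $i$, whence $\beta_{x,0}(f) = \alpha(c)\prod_i \max\{\rho, \alpha(w_i - z)\}$. If $r \in \alpha(\CC^\times)$ this is exactly $\beta_{z,r}(f)$; otherwise every $i \in I_{\mathrm{near}}$ has $\alpha(w_i - z) < r$ (these norms lie in $\alpha(\CC^\times)\cup\{0\}$), so letting $\rho \to r^-$ through $\alpha(\CC^\times)$ gives $\beta_{x,0}(f) \to \beta_{z,r}(f)$. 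This yields the remaining inequality and finishes the proof.

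I expect the only genuine obstacle to be this last construction: when $r \in \alpha(\CC^\times)$, the polynomial $f$ may have roots lying exactly on the boundary sphere of the disc, and a naive translate $x = z + u$ need not avoid the cancellation $\alpha(x - w_i) < r$ at those roots; the residue-field dodge using the infinitude of $\kappa_\CC$ handles precisely this, just as in Lemma~\ref{L:disc}. It would be worth recording afterward that this corollary, together with Lemma~\ref{L:disc}, is what underlies the identification of $\tilde{\beta}_{z,r}$ with the supremum norm over a disc anticipated in Remark~\ref{R:supremum norm}.
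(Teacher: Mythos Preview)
Your argument is correct and is genuinely different from the paper's. You give the direct computation: factor $f$ over $\CC$, reduce the problem to simultaneously approximating $\max\{r,\alpha(w_i-z)\}$ by $\alpha(x-w_i)$ for all roots $w_i$, and then build a suitable $x$ via the residue-field trick exactly as in the proof of Lemma~\ref{L:disc}. This is precisely the classical proof that $\tilde\beta_{z,r}$ coincides with the supremum norm on the closed disc, which the paper deliberately postpones (see Remark~\ref{R:supremum norm}).

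The paper instead argues by contradiction using the homotopy machinery: it sets $\gamma_{z,r}=\sup D(z,r)$, applies Theorem~\ref{T:dominate} and Corollary~\ref{C:dominate} to show that if $\gamma_{z,r}\neq\beta_{z,r}$ then $r(\gamma_{z,r})<r$, and then invokes Lemma~\ref{L:dominate} and Lemma~\ref{L:disc} to produce a point of $D(z,r)$ not dominated by $H(\gamma_{z,r},s')$ for some $s'<r$, a contradiction. The trade-off is this: your proof is self-contained and elementary (needing only factorization over $\CC$, the ultrametric inequality, and the infinitude of $\kappa_\CC$), while the paper's proof showcases the domination/tree structure and the map $H$---which is the point, since the same template is later reused in the Witt-vector setting where no direct factorization argument is available.
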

\begin{proof}
Put $\gamma_{z,r} = \sup D(z,r)$; it is clear that $\beta_{z,r} \geq 
\gamma_{z,r}$.
By Corollary~\ref{C:radius}, $r(\beta_{z,r}) = r$.
By Theorem~\ref{T:dominate}, $\beta_{z,r} = H(\gamma_{z,r},r)$.

Suppose that $\beta_{z,r} \neq \gamma_{z,r}$;
by Corollary~\ref{C:dominate}, $s = r(\gamma_{z,r})$ must be strictly
less than $r$. 
Pick $s' \in (s,r)$.
By Lemma~\ref{L:dominate},
we can write $H(\gamma_{z,r}, s') = \beta_{z',s'}$ for some $z' \in \gotho$
which is integral over $K$. 
Since $\beta_{z',s'} \geq \gamma_{z,r} \geq \beta_{z,0}$,
by Lemma~\ref{L:sup norm same center}, $\beta_{z',s'} = \beta_{z,s'}$.

By Lemma~\ref{L:disc}, we can find $\beta_{z'',0} \in D(z,r)$
with $\beta_{z'',0} \notin D(z,s') = D(z',s')$.
Hence $H(\gamma_{z,r},s') = \beta_{z',s'} \not\geq \beta_{z'',0}$,
contradicting the fact that $\beta_{z',s'} 
\geq \gamma_{z,r} = \sup D(z,r) \geq \beta_{z'',0}$.
This contradiction forces $\beta_{z,r} = \gamma_{z,r}$, as desired.
\end{proof}

For completeness, we add a classification result formulated
in the style of
Berkovich (see Remark~\ref{R:contractible}).

\begin{theorem} \label{T:classify1}
Each element of $\calM(K[T])$ is of 
exactly one of the following four types.
\begin{enumerate}
\item[(i)]
A point of the form $\beta_{z,0}$ for some $z \in \gotho$.
Such a point has radius $0$ and is minimal.
\item[(ii)]
A point of the form $\beta_{z,r}$ for some $z \in \gotho$
and some $r \in (0,1]$ which is the norm of an element of $\gotho$.
Such a point has radius $r$ and is not minimal.
\item[(iii)]
A point of the form $\beta_{z,r}$ for some $z \in \gotho$
and some $r \in (0,1]$ which is not the norm of an element of $\gotho$.
Such a point has radius $r$ and is not minimal.
\item[(iv)]
The infimum of a decreasing sequence $\beta_{z_i,r_i}$ for which
the sets $D(z_i,r_i)$ have empty intersection.
Such a point has radius $\inf_i \{r_i\} > 0$ and is minimal.
\end{enumerate}
\end{theorem}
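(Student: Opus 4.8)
The plan is to follow Berkovich's classification strategy, using Theorem~\ref{T:dominate}, Lemma~\ref{L:dominate}, and Corollary~\ref{C:infimum} to reduce an arbitrary point of $\calM(K[T])$ either to a Gauss seminorm $\beta_{z,r}$ or to an explicit limit of such seminorms. First I would note that $\calM(K)$ consists of the single point $\alpha$ (any nonzero multiplicative seminorm $\beta$ on $K$ dominated by $\alpha$ is a norm, and comparing $\beta(c^n)$ with $\alpha(c^n)$ for large $n$ forces $\beta = \alpha$), so the hypothesis $\mu(\beta) = \mu(\gamma)$ needed to apply Theorem~\ref{T:dominate} and Corollary~\ref{C:dominate} on $\calM(K[T])$ holds automatically throughout. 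Now fix $\beta \in \calM(K[T])$. If $r(\beta) = 1$, then $\beta = H(\beta,1) = (\lambda \circ \mu)(\beta)$ is the Gauss norm $\beta_{0,1}$ by Theorem~\ref{T:homotopy1}(b), of type~(ii); so assume $r(\beta) < 1$. If $\beta = \beta_{z,r}$ for some $z \in \gotho$ and $r \in [0,1]$, then $r = r(\beta)$ by Corollary~\ref{C:radius}, and $\beta$ is of type~(i), (ii), or (iii) according as $r = 0$, $r \in (0,1] \cap \alpha(\gotho)$, or $r \in (0,1] \setminus \alpha(\gotho)$. The radii in these three cases come from Corollary~\ref{C:radius}; a point $\beta_{z,r}$ with $r > 0$ is not minimal because $\beta_{z,r} \geq \beta_{z,0}$ by Lemma~\ref{L:sup norm same center} while $\beta_{z,r} \neq \beta_{z,0}$ (their radii differ); and $\beta_{z,0}$ is minimal because any $\gamma \leq \beta_{z,0}$ satisfies $\beta_{z,0} = H(\gamma, r(\beta_{z,0})) = H(\gamma, 0) = \gamma$ by Theorem~\ref{T:dominate} and Theorem~\ref{T:homotopy1}(a).

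Suppose instead $\beta \neq \beta_{z,r}$ for every $z \in \gotho$ and $r \in [0,1]$; I would then show $\beta$ is of type~(iv). Choose $s_1 > s_2 > \cdots$ in $(r(\beta),1]$ with $s_i \to r(\beta)$ and set $\beta_i = H(\beta, s_i)$. By Lemma~\ref{L:dominate}, $\beta_i = \beta_{z_i, s_i}$ for some $z_i \in \gotho$; by \eqref{eq:continuity1} and Theorem~\ref{T:homotopy1}(d) the $\beta_i$ are decreasing (for $i < j$, $\beta_i = H(\beta_j, s_i) \geq \beta_j$); and $\beta = \inf_i \beta_i$, since each $\beta_i \geq \beta$ while, for any fixed $y \in K[T]$, Corollary~\ref{C:infimum} gives $\beta_i(y) = \beta(y)$ for all large $i$. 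Next, arguing as in the proof of Lemma~\ref{L:sup norm same radius} (replacing successive $z_i$ by their images under suitable elements of $\Aut(\CC/K)$, which changes none of the $\beta_i$), we may assume the closed discs $\bar{D}_\CC(z_i, s_i) \subseteq \CC$ are nested.

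It then remains to verify the emptiness, radius, and minimality claims. If some $\beta_{x,0}$ lay in every $D(z_i, s_i)$, it would be dominated by $\inf_i \beta_i = \beta$, whence $\beta = H(\beta_{x,0}, r(\beta)) = \beta_{x, r(\beta)}$ by Theorem~\ref{T:dominate} and Lemma~\ref{L:homotopy2} (applied over $\CC$ and then restricted to $K[T]$), contrary to hypothesis; so $\bigcap_i D(z_i, s_i) = \emptyset$ and $\beta$ is of type~(iv). Its radius equals $\inf_i s_i$: each $s_i = r(\beta_i) \geq r(\beta)$ by Corollaries~\ref{C:radius} and~\ref{C:dominate}, while $H(\beta, \inf_i s_i) = \lim_i H(\beta_i, \inf_i s_i) = \lim_i \beta_i = \beta$ by continuity of $H$ and Lemma~\ref{L:homotopy2}, so $r(\beta) \geq \inf_i s_i$. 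Moreover $\inf_i s_i > 0$: if $s_i \to 0$, then completeness of $\CC$ yields a common point $z$ of the nested discs $\bar{D}_\CC(z_i, s_i)$, so $\beta_i = \beta_{z, s_i} \geq \beta_{z,0}$ and $\beta_{z,0} \in \bigcap_i D(z_i, s_i)$, a contradiction. Finally $\beta$ is minimal: if $\gamma \leq \beta$ with $\gamma \neq \beta$, then $r(\gamma) < r(\beta) \leq 1$ by Corollary~\ref{C:dominate}, so Lemma~\ref{L:dominate} and Theorem~\ref{T:dominate} give $\beta = H(\gamma, r(\beta)) = \beta_{w, r(\beta)}$ for some $w \in \gotho$, again contradicting that $\beta$ is not a Gauss seminorm. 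The only part requiring care is the arrangement of the $\bar{D}_\CC(z_i, s_i)$ into a nested family and the extraction of consequences from their empty intersection; everything else is formal bookkeeping once Theorem~\ref{T:dominate} and Lemma~\ref{L:dominate} are in hand.
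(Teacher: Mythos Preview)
Your proof is correct and follows essentially the same route as the paper's: use Lemma~\ref{L:dominate} to write $H(\beta,s)$ as $\beta_{z_i,s_i}$ for a sequence $s_i \downarrow r(\beta)$, show the resulting $D(z_i,s_i)$ have empty intersection (else $\beta$ would equal some $\beta_{x,r(\beta)}$ via Theorem~\ref{T:dominate}), and use completeness of $\gotho$ to force $\inf_i s_i > 0$. Two small remarks: you do not explicitly verify that a point admitting a type~(iv) description in the sense of the theorem statement cannot also be some $\beta_{z,r}$ (the paper notes that $\beta_{z_i,r_i} \geq \beta_{z,r}$ would put $\beta_{z,0}$ in every $D(z_i,r_i)$), and your Galois rearrangement of the $z_i$ to obtain nested discs in $\CC$ is correct but more than the paper writes out---the paper's terse ``because $\gotho$ is complete'' is shorthand for exactly this argument.
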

\begin{proof}
By Corollary~\ref{C:radius}, 
$r(\beta_{z,r}) = r$. Consequently, types (i), (ii), (iii)
are mutually exclusive.
Moreover, $\beta_{z,r}$ cannot be of type (iv), since
$\beta_{z_i,r_i} \geq \beta_{z,r}$ implies $\beta_{z,0} \in D(z_i,r_i)$.
Consequently, no point can be of more than one type.

It remains to check that any point $\beta \in \calM(K[T])$
not of the form 
$\beta_{z,r}$ is of type (iv) and is minimal of the claimed radius.
Choose a sequence $1 \geq r_1 > r_2 > \cdots$ with infimum $r(\beta)$.
By Lemma~\ref{L:dominate}, for each $i$, 
we have $H(\beta,r_i) = \beta_{z_i,r_i}$ for some
$z_i \in \gotho$.
The sequence $\beta_{z_1,r_1}, \beta_{z_2,r_2}, \dots$ is decreasing
with infimum $\beta$; the sequence $D(z_i, r_i)$ is also decreasing.
For each $z \in \gotho$,
there exists $i$ for which $\beta_{z,r_i} \neq \beta_{z_i, r_i}$;
for such $i$ we have $\beta_{z,0} \notin D(z_i,r_i)$ by
Lemma~\ref{L:sup norm same radius}.
Hence the $D(z_i,r_i)$ have empty intersection; this forces
$\inf_i \{r_i\} > 0$ because $\gotho$ is complete.
Hence $\beta$ is of type (iv); it is minimal by
Theorem~\ref{T:dominate} plus Lemma~\ref{L:dominate}.
Since $\beta = \inf_i \{\beta_{z_i,r_i}\}$ and $r(\beta_{z_i,r_i}) = r_i$
by Corollary~\ref{C:radius}, we have $r(\beta) \geq \inf_i \{r_i\}$;
the reverse inequality also holds because $r_i = r(\beta_{z_i,r_i}) \geq r(\beta)$
by Corollary~\ref{C:dominate}.
\end{proof}

This classification can be used to describe the residual extensions and norm
groups of points in $\calM(K[T])$. For similar results, see
\cite[Lemma~2.2.18]{kedlaya-semi4} or \cite[\S 3]{temkin-uniformization}.
\begin{cor} \label{C:residual}
Let $\beta$ be a point of $\calM(K[T])$,
classified according to Theorem~\ref{T:classify1}.
Let $|\alpha^\times|,|\beta^\times|$ denote the groups of nonzero values assumed
by $\alpha,\beta$, respectively.
\begin{enumerate}
\item[(i)]
For $\beta$ of type (i),
$\kappa_{\calH(\beta)}$ is algebraic over $\kappa_K$,
and $|\beta^\times|/|\alpha^\times|$ is a torsion group.
\item[(ii)]
For $\beta$ of type (ii),
$\kappa_{\calH(\beta)}$ is finitely generated over $\kappa_K$
of transcendence degree $1$,
and $|\beta^\times|/|\alpha^\times|$ is a finite group.
\item[(iii)]
For $\beta$ of type (iii),
$\kappa_{\calH(\beta)}$ is a finite extension of $\kappa_K$,
and $|\beta^\times|/|\alpha^\times|$ is a finitely generated abelian
group of rank $1$.
\item[(iv)]
For $\beta$ of type (iv),
$\kappa_{\calH(\beta)}$ is algebraic over $\kappa_K$,
and $|\beta^\times|/|\alpha^\times|$ is a torsion group.
\end{enumerate}
\end{cor}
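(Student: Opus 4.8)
The plan is to deduce everything from the algebraically closed case together with elementary valuation theory. Fix the point $\beta$, classified as in Theorem~\ref{T:classify1}. We use repeatedly that $\CC$ is a completed algebraic closure of $K$: its residue field $\kappa_\CC$ is an algebraic closure of $\kappa_K$, and its value group $|\CC^\times|$ is the divisible hull of $|\alpha^\times|$ inside $\RR_{>0}$; in particular $|\CC^\times|/|\alpha^\times|$ is torsion and $|\CC^\times|$ is divisible. Also, for a bounded isometric inclusion of complete nonarchimedean fields we freely use the induced inclusions of residue fields and value groups, and for a finite such extension the inequality $ef \le [\cdot:\cdot]$. Types (i) and (iv) will be treated by comparing $\beta$ with a point of $\calM(\CC[T])$ above it; types (ii) and (iii) by descent along a finite extension of $K$.

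\textbf{Types (i) and (iv).} In these two cases I would exhibit an isometric embedding $\calH(\beta) \hookrightarrow \calH(\tilde\beta)$ with $\calH(\tilde\beta)$ an immediate extension of $\CC$, i.e.\ $\kappa_{\calH(\tilde\beta)} = \kappa_\CC$ and $|\calH(\tilde\beta)^\times| = |\CC^\times|$; granting this, $\kappa_{\calH(\beta)}$ is a subfield of $\kappa_\CC$ and $|\beta^\times|$ a subgroup of $|\CC^\times|$, so $\kappa_{\calH(\beta)}/\kappa_K$ is algebraic and $|\beta^\times|/|\alpha^\times|$ is torsion, which is exactly the assertion in both cases. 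For type (i), with $\beta = \beta_{z,0}$, take $\tilde\beta = \tilde\beta_{z,0}$: this is evaluation at $z \in \gotho$, so $\calH(\tilde\beta) = \CC$, and $\calH(\beta)$ embeds in it because $\tilde\beta$ restricts to $\beta$. For type (iv), use Remark~\ref{R:quotient} (surjectivity of $\calM(\CC[T]) \to \calM(K[T])$; cf.\ the proof of Corollary~\ref{C:compatible extension}) to choose any $\tilde\beta \in \calM(\CC[T])$ restricting to $\beta$; since $\tilde\beta_{x,s}$ restricts to $\beta_{x,s}$ and $\beta$ is not of the form $\beta_{x,s}$, the point $\tilde\beta$ is not of the form $\tilde\beta_{x,s}$, hence is of type (iv) over the algebraically closed field $\CC$, and then $\calH(\tilde\beta)$ is an immediate extension of $\CC$ by Berkovich's analysis \cite[\S1.4]{berkovich1}.

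\textbf{Types (ii) and (iii).} Here $\beta = \beta_{z,r}$ with $r > 0$, so we may assume $z \in \gotho$ is integral over $K$, and in case (ii) also pick $c \in \gotho$ integral over $K$ with $|c| = r$ (possible since $r \in |\CC^\times|$); set $K' = K(z)$ in case (iii) and $K' = K(z,c)$ in case (ii), a finite extension of $K$. Then $\beta$ is the restriction to $K[T]$ of the $r$-Gauss norm $\beta'$ on $K'[T]$ for the generator $T-z$ (cf.\ Remark~\ref{R:construct Gauss by quotient}), and the coordinate change $T = z + cU$ in case (ii), $T = z + U$ in case (iii), identifies $\calH(\beta')$ with the completion of $K'(U)$ for the $1$-Gauss (resp.\ $r$-Gauss) norm. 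One reads off directly: in case (ii), $\kappa_{\calH(\beta')} = \kappa_{K'}(\bar U)$ and $|\calH(\beta')^\times| = |K'^\times|$; in case (iii), since $r \notin |\CC^\times| \supseteq |K'^\times|$ and $|\CC^\times|$ is divisible, no monomial of positive degree can have $r$-Gauss value $1$, so $\kappa_{\calH(\beta')} = \kappa_{K'}$ and $|\calH(\beta')^\times| = |K'^\times| \oplus r^{\ZZ}$ with $r^{\ZZ}$ free of rank $1$. Now $\calH(\beta) \hookrightarrow \calH(\beta')$, and $\calH(\beta')$ is finite over $\calH(\beta)$ (of degree at most $[K':K]$, being generated over $\calH(\beta)$ by the finite set $K'$), so $\kappa_{\calH(\beta)} \subseteq \kappa_{\calH(\beta')}$ with finite residue extension and $|\beta^\times| \subseteq |\beta'^\times|$ with finite index. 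In case (iii) this already gives that $\kappa_{\calH(\beta)}$ is a finite extension of $\kappa_K$, and that $|\beta^\times|/|\alpha^\times|$ is a finite-index subgroup of $(|K'^\times|/|\alpha^\times|) \oplus r^{\ZZ}$, hence finitely generated of rank $1$ (as $|K'^\times|/|K^\times|$ is finite). In case (ii), $\kappa_{\calH(\beta)}$ lies between $\kappa_K$ and the finitely generated extension $\kappa_{K'}(\bar U)$, hence is finitely generated over $\kappa_K$; its transcendence degree is at most $1$ and cannot be $0$ (else $\kappa_{K'}(\bar U)$ would be finite over $\kappa_K$), so it equals $1$; and $|\beta^\times|/|\alpha^\times|$ is a subgroup of the finite group $|K'^\times|/|\alpha^\times|$, hence finite.

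\textbf{Main obstacle.} The argument is essentially bookkeeping, and I expect the only delicate points to be: checking that replacing $\beta_{z,r}$ by $\beta_{z',r}$ with $z'$ integral over $K$ does not alter the type; verifying that the coordinate changes used above are compatible with the relevant Gauss norms in the sense of Remark~\ref{R:construct Gauss by quotient}; and confirming that $\calH(\beta) \hookrightarrow \calH(\beta')$ really is a finite extension of complete nonarchimedean fields, so that $ef \le [\calH(\beta'):\calH(\beta)]$ controls the residue extension and the index of value groups. The sole nonelementary input is Berkovich's description of $\calH$ at a type~(iv) point over an algebraically closed field; everything else, including the use of a finite extension in place of the completed algebraic closure, is purely formal, and in particular avoids any appeal to Abhyankar's inequality.
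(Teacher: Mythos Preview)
Your argument is correct and follows the same overall strategy as the paper: handle types (ii) and (iii) by passing to a finite extension $K'/K$ in which the center becomes rational, and handle types (i) and (iv) by going up to $\CC$. The descent steps you spell out (finiteness of $\calH(\beta')/\calH(\beta)$, the Artin--Tate style argument for intermediate finitely generated fields, the rank computation for $|\beta^\times|/|\alpha^\times|$) are exactly what the paper compresses into its invocation of Ostrowski's theorem.

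The one genuine difference is in type (iv). You lift $\beta$ to $\tilde\beta \in \calM(\CC[T])$, observe that $\tilde\beta$ is again of type (iv), and then import from Berkovich that $\calH(\tilde\beta)$ is an immediate extension of $\CC$. The paper instead keeps everything internal: after reducing to $K = \CC$, it uses Corollary~\ref{C:infimum} to write $\beta(y) = H(\beta,s)(y)$ for $s$ just above $r(\beta)$, and then chooses $s \in |\alpha^\times|$ (so $H(\beta,s)$ is of type (ii), forcing $\beta(y) \in |\alpha^\times|$) or $s \notin |\alpha^\times|$ (so $H(\beta,s)$ is of type (iii), forcing the residue class of any $z/y$ to come from $K$). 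Your route is cleaner if one is willing to quote Berkovich; the paper's route is self-contained and shows concretely how the type (iv) case is controlled by the already-established types (ii) and (iii).
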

\begin{proof}
Recall that for $L/K$ a finite extension of complete nonarchimedean
fields, $\kappa_L$ is a finite extension
of $\kappa_K$ and $|L^\times|/|K^\times|$ is a finite group.
More precisely, by a theorem of Ostrowski
\cite[Theorem~6.2]{ribenboim},
\begin{equation} \label{eq:ostrowski}
\frac{[L:K]}{[\kappa_L:\kappa_K] \#(|L^\times|/|K^\times|)}
\begin{cases}
=1 & (\charac(\kappa_K) = 0) \\
\in \{1,p,p^2,\dots\} & (\charac(\kappa_K) = p>0).
\end{cases}
\end{equation}
Consequently,
in cases (ii) and (iii), it is enough to check the claims after
replacing $K$ by a finite
extension; in cases (i) and (iv), we may replace $K$ by $\CC$ itself. 
We make these assumptions hereafter.

In cases (i), (ii), (iii),
we may now assume that $\beta = \beta_{z,r}$ with
$z \in \gotho_K$. 
In case (i), $\calH(\beta) = K$;
in case (ii), $\kappa_{\calH(\beta)} \cong \kappa_K(x)$
for $x$ the class of $(T-z)/u$ for any $u \in K$ of norm $r$,
and $|\beta^\times|/|\alpha^\times|$ is trivial;
in case (iii), $\kappa_{\calH(\beta)} = \kappa_K$ 
and $|\beta^\times|/|\alpha^\times|$
is free on the generator $r$.

In case (iv), the norm $\alpha$ must be nontrivial.
By Corollary~\ref{C:infimum}, for each $y \in K[T]$,
any sufficiently small $s \in (r(\beta),1]$ satisfies
$H(\beta,s)(y) = \beta(y)$.
If we choose $s \in |\alpha^\times|$, 
we deduce that $|\beta^\times|/|\alpha^\times|$ is trivial.
If we choose $s \notin |\alpha^\times|$, then
for any $z \in K[T]$ with $\beta(z) \leq \beta(y)$, by case (iii),
there must exist $\lambda \in K$ for which $H(\beta,s)(z
- \lambda y) < H(\beta,s)(y)$.
This implies 
\[
\beta(z -\lambda y) \leq H(\beta,s)(z - \lambda y)
< H(\beta,s)(y) = \beta(y),
\]
so $z/y$ and $\lambda$ have the same
image in $\kappa_{\calH(\beta)}$. Hence $\kappa_{\calH(\beta)} = \kappa_K$.
\end{proof}

\begin{remark}
In cases (i) and (iv) of Corollary~\ref{C:residual}, it is not guaranteed that
$\kappa_{\calH(\beta)}$ is finite over $\kappa_K$
or that $|\beta^\times|/|\alpha^\times|$ is a finite group.
We illustrate this with an example of a point of type (i) for which $|\beta^\times|/|\alpha^\times|$ is infinite;
the other claims can be seen by similar arguments.

Let $F$ be an algebraically closed field of characteristic $0$, and take
$K = F((U))$ equipped with the $U$-adic norm (for any normalization).
We may then identify $\CC$ with the completion of the field of Puiseux series in $U$ over $F$.
Inside $\CC$, take $z = \sum_{n=1}^{\infty} U^{n + 1/n!}$ and put $\beta = \beta_{z,0} \in \calM(K[T])$.
We may establish by induction that for each positive integer $m$, $|U|^{1/m!} \in |\beta^\times|$: this is apparent
for $m=1$, and given that this holds for $m-1$, we have $F((U^{1/(m-1)!})) \subseteq \calH(\beta)$
and $\beta(T - \sum_{n=1}^{m-1} U^{n+1/n!}) = |U|^{m+1/m!}$. Consequently,
$|\beta^\times|/|\alpha^\times| \cong \QQ/\ZZ$ is not finite.
\end{remark}

\begin{remark} \label{R:tree}
Theorem~\ref{T:dominate} implies that as a partially ordered set under
domination, $\calM(K[T])$ carries the structure of a tree.
One can capture the tree structure in other ways, for instance,
by exhibiting $\calM(K[T])$ as an inverse limit of finite contractible
simplicial complexes; see for instance \cite[Proposition~1.19]{baker-rumely}.
(This construction is the simplest meaningful case of the main result of \cite{payne}.)

The geometry of $\calM(K[T])$, including the tree interpretation,
has been deployed in a number of apparently unrelated fields.
Here are some representative (but not exhaustive) examples.
\begin{itemize}
 \item
Favre and Jonsson \cite{favre-jonsson, favre-jonsson2, favre-jonsson3}
use the tree structure to study plurisubharmonic singularities of functions of two complex variables.
Some progress has been made in extending to more variables,
by Boucksom, Favre, and Jonsson \cite{boucksom-favre-jonsson}.
\item
Kedlaya \cite{kedlaya-goodformal1, kedlaya-goodformal2} uses the tree structure
to study the local structure of
irregular flat meromorphic connections on algebraic and algebraic varieties.
A related development in $p$-adic cohomology is \cite{kedlaya-semi4}.
\item
Temkin \cite{temkin-uniformization} uses the tree structure to establish 
local uniformization in positive characteristic up to an inseparable
morphism.
\item
Numerous applications have been found in the theory of dynamical systems.
A good starting point for this line of inquiry is the book of Baker
and Rumely \cite{baker-rumely}.
\item
A development closely related to the previous one is the use of
nonarchimedean potential theory in Arakelov theory,
e.g., in the study of equidistribution questions. This is pursued
thoroughly in the work of Chambert-Loir and his collaborators;
see for instance \cite{chambert-loir}.
\end{itemize}
\end{remark}

\section{Witt vectors}

We now introduce the ring of Witt vectors over a perfect ring of characteristic
$p$. These behave a bit like power series in the variable $p$ with coefficients
in the given ring, with the role of the structure morphism
(the injection of the coefficient ring into the series ring)
played by the Teichm\"uller map. The latter map 
is multiplicative but not additive;
nonetheless, we can use it to define raising and lowering
operators $\lambda, \mu$ analogous to the ones from \S\ref{sec:polynomial}.
(We previously considered these operators in \cite{kedlaya-icm}.)

\begin{hypothesis}
For the remainder of the paper, let $R$ denote an $\Fp$-algebra which is \emph{perfect},
i.e., for which the $p$-th power map is a bijection. Unless
otherwise specified, equip $R$ with the trivial norm.
\end{hypothesis}

\begin{remark}
If $R$ is an $\Fp$-algebra which is not necessarily perfect, we can form
the \emph{perfect closure} $R^{\perf}$ 
as the limit of the direct system $R \to R \to \cdots$ in which each arrow 
is the $p$-th power map. We obtain a natural map $R \to R^{\perf}$ by mapping
to the initial term of the direct system; the corresponding map
$\calM(R^{\perf}) \to \calM(R)$ is easily seen to be a homeomorphism.
\end{remark}

\begin{defn}
A \emph{strict $p$-ring} is a (commutative unital) ring $S$
which is $p$-torsion-free and $p$-adically complete and separated, and for which
$S/pS$ is a perfect $\Fp$-algebra.
\end{defn}

\begin{lemma} \label{L:Witt properties}
Let $S$ be a strict $p$-ring with $S/pS \cong R$.
\begin{enumerate}
\item[(a)]
Given $\overline{x} \in R$, let $x_n \in S$ be any lift of
$\overline{x}^{p^{-n}}$. Then the sequence $x_n^{p^n}$ converges $p$-adically
to a limit $[\overline{x}]$ (the \emph{Teichm\"uller lift} of $\overline{x}$),
which is the unique lift of $\overline{x}$
admitting a $p^n$-th root in $S$ for each nonnegative integer $n$.
\item[(c)]
The resulting \emph{Teichm\"uller map} $[\cdot]: R \to S$ is multiplicative.
\item[(d)]
Each $x \in S$ admits a unique representation
$\sum_{i=0}^\infty p^i [\overline{x_i}]$ with $\overline{x_i} \in R$.
\end{enumerate}
\end{lemma}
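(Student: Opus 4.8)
The plan is to prove the three assertions in order, since each relies on its predecessor. For (a), fix $\overline{x} \in R$ and choose, for each $n \geq 0$, a lift $x_n \in S$ of $\overline{x}^{p^{-n}}$ (possible since $R$ is perfect and $S \to R$ is surjective). The key observation is that if $y \equiv z \pmod{p^m S}$ then $y^p \equiv z^p \pmod{p^{m+1}S}$: this follows by writing $y = z + p^m w$, expanding $(z + p^m w)^p$ by the binomial theorem, and noting that every term after the first two is divisible by $p^{m+2}$ except possibly $p \cdot p^m w \cdot (\text{stuff})$, which already carries a factor $p^{m+1}$; more carefully, all intermediate binomial coefficients $\binom{p}{k}$ for $0<k<p$ are divisible by $p$. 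Applying this inductively, $x_{n+1}^p$ and $x_n$ are both lifts of $\overline{x}^{p^{-n}}$, hence congruent mod $pS$, so $x_{n+1}^{p^{n+1}} \equiv x_n^{p^n} \pmod{p^{n+1}S}$. Thus $(x_n^{p^n})_n$ is $p$-adically Cauchy and converges, since $S$ is $p$-adically complete, to a limit $[\overline{x}]$ that is independent of the choices of lifts $x_n$ (two choices differ mod $pS$ at each stage, hence their $p^n$-th powers differ mod $p^{n+1}S$). By construction $[\overline{x}]$ reduces to $\overline{x}$ mod $pS$, and since $[\overline{x}] = \lim_n x_n^{p^n}$ with $x_n^{p^n}$ itself a $p^n$-th power for each $n$ — one checks that $\lim_m x_{n+m}^{p^m}$ is a $p^n$-th root of $[\overline{x}]$ — it admits a $p^n$-th root for every $n$. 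Conversely, if $z$ is any lift of $\overline{x}$ admitting a $p^n$-th root $z_n$ for each $n$, then $z_n$ reduces to $\overline{x}^{p^{-n}}$ (using $p$-torsion-freeness, or just that $R$ is reduced so $p^n$-th roots in $R$ are unique), so $z = \lim z_n^{p^n} = [\overline{x}]$; this gives uniqueness.

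For (c), given $\overline{x}, \overline{y} \in R$, choose lifts $x_n$ of $\overline{x}^{p^{-n}}$ and $y_n$ of $\overline{y}^{p^{-n}}$; then $x_n y_n$ is a lift of $(\overline{x}\,\overline{y})^{p^{-n}}$, so $[\overline{x}\,\overline{y}] = \lim_n (x_n y_n)^{p^n} = \lim_n x_n^{p^n} \lim_n y_n^{p^n} = [\overline{x}][\overline{y}]$, the middle equality by continuity of multiplication in the $p$-adic topology.

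For (d), existence follows by a standard successive-approximation argument: given $x \in S$, let $\overline{x_0}$ be the image of $x$ in $R$; then $x - [\overline{x_0}] \in pS$, so write $x - [\overline{x_0}] = p x'$ with $x' \in S$ (using that $S$ is $p$-torsion-free to make $x'$ unique), set $\overline{x_1}$ to be the image of $x'$, and iterate; the partial sums $\sum_{i=0}^{N} p^i[\overline{x_i}]$ converge to $x$ by $p$-adic completeness. For uniqueness, suppose $\sum_{i=0}^\infty p^i[\overline{x_i}] = \sum_{i=0}^\infty p^i[\overline{y_i}]$. Reducing mod $pS$ gives $\overline{x_0} = \overline{y_0}$; subtracting $[\overline{x_0}]$ from both sides and dividing by $p$ (legitimate since $S$ is $p$-torsion-free) reduces to the same problem shifted by one index, and induction finishes. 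I expect the main obstacle to be (a): assembling the Cauchy estimate, the independence of choices, and especially the two-sided characterization of $[\overline{x}]$ as \emph{the} lift with compatible $p$-power roots all hinge on careful bookkeeping with the congruence $y \equiv z \pmod{p^m} \Rightarrow y^p \equiv z^p \pmod{p^{m+1}}$ and on exploiting $p$-torsion-freeness of $S$ and reducedness of $R$; once (a) is in hand, (c) and (d) are short.
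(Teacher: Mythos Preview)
Your proof is correct and follows essentially the same approach as the paper: the key congruence $y \equiv z \pmod{p^m} \Rightarrow y^p \equiv z^p \pmod{p^{m+1}}$, the Cauchy estimate for $x_n^{p^n}$, the construction of $p^n$-th roots via $\lim_m x_{n+m}^{p^m}$, and the successive-approximation for (d) all match. The only stylistic difference is in (c): the paper observes that $[\overline{x}][\overline{y}]$ admits $p^n$-th roots for all $n$ and invokes the uniqueness clause of (a), whereas you compute the limit directly from the definition; both are one-line arguments.
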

\begin{proof}
By the binomial theorem, $a \equiv b \pmod{p^m}$ implies
$a^p \equiv b^p \pmod{p^{m+1}}$. Consequently,
\[
x_{m+1}^{p^{m+1}} \cong x_m^{p^m} \pmod{p^{m+1}},
\]
so the $x_m^{p^m}$ converge to a limit $[\overline{x}]$.
Similarly,
for each nonnegative integer $n$, the $x_{m+n}^{p^m}$ converge to a 
$p^n$-th root of $[\overline{x}]$. If 
$x'$ is another lift of $\overline{x}$ admitting a $p^n$-th root $x'_n$
for each nonnegative integer $n$, then
\[
x_{m}^{p^{m}} \cong (x'_m)^{p^m}  = x' \pmod{p^{m+1}},
\]
so $[\overline{x}] = x'$. This proves (a). 

Given (a), the product of two
Teichm\"uller lifts admits a $p^n$-th root for each nonnegative integer $n$,
and so must also
be a Teichm\"uller lift; this yields (b). Since $[\overline{x}]$ is always
a lift of $\overline{x}$, (c) follows.
(See also \cite[\S II.4, Proposition~8]{serre}.)
\end{proof}

\begin{theorem} \label{T:Witt vectors}
There exists a unique (up to unique isomorphism) strict $p$-ring $W(R)$
for which $W(R)/(p) \cong R$. Moreover, the correspondence $R \rightsquigarrow
W(R)$ is covariantly functorial in $R$. 
\end{theorem}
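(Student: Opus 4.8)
The plan is to construct $W(R)$ via the classical Witt vector functor and then verify the required properties using the uniqueness statement of Lemma~\ref{L:Witt properties}. Concretely, I would set $W(R)$ to be the set $R^{\mathbb{N}}$ equipped with addition and multiplication defined by the universal Witt addition and multiplication polynomials $S_0, S_1, \dots$ and $P_0, P_1, \dots \in \mathbb{Z}[x_0, x_1, \dots; y_0, y_1, \dots]$; these polynomials are characterized by the requirement that the ghost components $w_n(x) = \sum_{i=0}^n p^i x_i^{p^{n-i}}$ be ring homomorphisms. Since $R$ is an $\mathbb{F}_p$-algebra, the usual subtlety (that the Witt polynomials a priori have denominators) is handled by the standard lemma that the $S_n, P_n$ nonetheless have integer coefficients, so the operations are well-defined on $R^{\mathbb{N}}$ regardless of the characteristic. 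One then checks that $W(R)$ is $p$-adically complete and separated, that it is $p$-torsion-free (here perfectness of $R$ is used: the map $p\colon W(R) \to W(R)$ sends $(x_0, x_1, \dots)$ to $(0, x_0^p, x_1^p, \dots)$ up to the relevant identification, and since $R$ is perfect this is injective), and that $W(R)/pW(R) \cong R$ via the first ghost/first component projection. This establishes existence.

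For functoriality, I would observe that a ring homomorphism $f\colon R \to R'$ induces $W(f)\colon W(R) \to W(R')$ by applying $f$ componentwise; since the Witt operations are given by the same universal polynomials, $W(f)$ is a ring homomorphism, and $W(\mathrm{id}) = \mathrm{id}$, $W(g \circ f) = W(g) \circ W(f)$ are immediate. Thus $R \rightsquigarrow W(R)$ is a covariant functor from perfect $\mathbb{F}_p$-algebras to strict $p$-rings.

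For uniqueness up to unique isomorphism, I would argue as follows. Let $S$ be any strict $p$-ring with an isomorphism $\iota\colon S/pS \xrightarrow{\sim} R$. By Lemma~\ref{L:Witt properties}(d), every element of $S$ has a unique expansion $\sum_{i=0}^\infty p^i [\overline{x_i}]$ with $\overline{x_i} \in S/pS$; transporting along $\iota$ and using the multiplicativity of the Teichm\"uller map (Lemma~\ref{L:Witt properties}(c)), one gets a bijection $S \to W(R)$ sending $\sum_i p^i [\overline{x_i}]$ to the Witt vector with Teichm\"uller coordinates $(\iota(\overline{x_0}), \iota(\overline{x_1}), \dots)$. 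That this bijection is a ring homomorphism follows because the addition and multiplication of Teichm\"uller expansions in any strict $p$-ring are governed by universal formulas (the same Witt polynomials), since the formulas are forced by continuity and multiplicativity of $[\cdot]$ together with $p$-adic completeness — this is essentially the content of Witt's theorem and can be extracted from the setup already recorded. Uniqueness of the isomorphism reducing to the identity on residue rings follows because any automorphism of $W(R)$ fixing $R$ must fix every Teichm\"uller lift (by the characterization in Lemma~\ref{L:Witt properties}(a): the Teichm\"uller lift is the \emph{unique} lift with compatible $p$-power roots, hence preserved by any ring automorphism over $R$) and hence fixes every element by the expansion in (d).

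The main obstacle is the uniqueness argument, specifically showing that the Teichm\"uller expansion bijection $S \to W(R)$ respects the ring operations without simply re-deriving the Witt polynomial formalism from scratch. The cleanest route is to note that both $S$ and $W(R)$ receive a map from (or are quotients of) the Witt ring of the perfect closure of a polynomial ring surjecting onto $R$, reducing the compatibility to the universal case where it is a formal identity among the $S_n$ and $P_n$; alternatively one invokes that the ghost map $W(R) \to \prod_n R$ is injective after inverting $p$ in a suitable sense (using perfectness), pinning down the operations. I would present the universal-polynomial/functoriality route since it dovetails with the functoriality claim and avoids any delicate convergence bookkeeping.
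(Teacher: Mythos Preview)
Your proposal is correct and follows essentially the same approach as the paper: both construct $W(R)$ via the universal Witt polynomials determined by the ghost components $W_n(X_0,\dots,X_n) = \sum_{i=0}^n p^i X_i^{p^{n-i}}$, and both defer the detailed verification that the resulting operations yield a strict $p$-ring to the standard reference (Serre). In fact you go further than the paper's sketch, which says nothing explicit about uniqueness or functoriality beyond citing \cite{serre}; your use of the Teichm\"uller expansion from Lemma~\ref{L:Witt properties} to pin down uniqueness is the standard argument and is sound.
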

\begin{proof}[Sketch of proof]
For $n=0,1,\dots$, put 
\[
W_n(X_0,\dots,X_n) = \sum_{i=0}^n p^i X_i^{p^{n-i}}.
\]
Given $\Phi \in \ZZ[X,Y]$, there exists a unique sequence
$\phi_0, \phi_1, \dots$ with $\phi_n \in \ZZ[X_0,\dots,X_n,Y_0,\dots,Y_n]$
such that
\begin{equation} \label{eq:Witt polynomials}
W_n(\phi_0, \dots,\phi_n) = \Phi(W_n(X_0,\dots,X_n),W_n(Y_0,\dots,Y_n))
\qquad (n=0,1,\dots);
\end{equation}
using the sequences associated to the polynomials $X-Y, XY$,
we define subtraction and multiplication rules on the set of sequences
$\overline{x_0},\overline{x_1},\dots$ with values in $R$.
This yields a strict $p$-ring $W(R)$ with $W(R)/(p) \cong R$;
more precisely, the sequence $\overline{x_0},\overline{x_1},\dots$ 
corresponds to the element $\sum_{i=0}^\infty p^i [\overline{x_i}^{p^{-i}}]$.
See \cite[\S II.6]{serre} for further details.
\end{proof}

\begin{defn}
The ring $W(R)$ of Theorem~\ref{T:Witt vectors}
is called the
\emph{ring of $p$-typical Witt vectors} with coefficients in $R$;
unless otherwise specified,
we equip $W(R)$ with the $p$-adic norm normalized with $|p| = p^{-1}$.
Since its construction is functorial in $R$, 
$W(R)$ carries an automorphism $\phi$ which corresponds to (and lifts)
the  $p$-power Frobenius map on $R$, called the \emph{Witt vector Frobenius}.
\end{defn}

\begin{remark} \label{R:addition formula}
The addition and multiplication of general elements of $W(R)$ 
is somewhat complicated to express explicitly. 
One important consequence of 
\eqref{eq:Witt polynomials}
is that if we write
$x = \sum_{i=0}^\infty p^i [\overline{x_i}]$,
$y = \sum_{i=0}^\infty p^i [\overline{y_i}]$,
$x-y = \sum_{i=0}^\infty p^i [\overline{z_i}]$, then $\overline{z_i}$
is a polynomial in $\overline{x_j}^{p^{j-i}},\overline{y_j}^{p^{j-i}}$ for $j=0,\dots,i$,
which has integer coefficients, 
is homogeneous of degree 1 for the weighting in which $\overline{x_j},
\overline{y_j}$ have degree 1,
and belongs to the ideal generated by $\overline{x_j}^{p^{j-i}} - \overline{y_j}^{p^{j-i}}$
for $j=0,\dots,i$ (because it vanishes whenever $x=y$).
See also Lemma~\ref{L:Witt explicit} below.
\end{remark}

\begin{lemma} \label{L:Witt explicit}
For $\overline{x} \in R$, write
$[\overline{x}+1] - 1 = \sum_{i=0}^\infty 
p^i [P_i(\overline{x}^{p^{-i}})]$ with $P_i(T) \in \Fp[T]$ as in 
Remark~\ref{R:addition formula}. Then
$P_i(T) \equiv T \pmod{T^2}$.
\end{lemma}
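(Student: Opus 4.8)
The plan is to pin down $P_i$ in terms of the standard Witt subtraction polynomials and then to extract its first two coefficients by differentiating a ghost-component identity.

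\smallskip
\noindent\textbf{Step 1 (unwind the normalization).} Recall from the proof of Theorem~\ref{T:Witt vectors} that the sequence $\overline{y_0},\overline{y_1},\dots$ corresponds to the element $\sum_i p^i[\overline{y_i}^{p^{-i}}]$; equivalently, the element $\sum_i p^i[\overline{a_i}]$ of $W(R)$ has sequence coordinates $\overline{a_i}^{p^i}$. Now $[\overline{x}+1]$ has sequence $(\overline{x}+1,0,0,\dots)$ and $1=[1]$ has sequence $(1,0,0,\dots)$, so by the subtraction rule of \eqref{eq:Witt polynomials} the $i$-th sequence coordinate of $[\overline{x}+1]-1$ is $w_i(\overline{x}+1)$, where $w_i(a_0)\in\ZZ[a_0]$ is the reduction mod $p$ of $\phi_i(a_0,0,\dots,0;1,0,\dots,0)$ and $\phi_i\in\ZZ[X_0,\dots,X_i,Y_0,\dots,Y_i]$ is the $i$-th polynomial attached to $\Phi=X-Y$. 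On the other hand, the $i$-th element coordinate of $[\overline{x}+1]-1$ is $P_i(\overline{x}^{p^{-i}})$, so its $i$-th sequence coordinate is $P_i(\overline{x}^{p^{-i}})^{p^i}=P_i(\overline{x})$ (using $P_i\in\Fp[T]$). Hence $P_i(T)$ is the reduction mod $p$ of $w_i(T+1)$, and it suffices to prove, with $w_i'=dw_i/da_0$,
\[
w_i(1)=0,\qquad w_i'(1)\equiv 1\pmod p .
\]

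\smallskip
\noindent\textbf{Step 2 (the two computations).} Specializing \eqref{eq:Witt polynomials} at $n=i$, $X_\bullet=(a_0,0,\dots,0)$, $Y_\bullet=(1,0,\dots,0)$ and using $W_i(a_0,0,\dots,0)=a_0^{p^i}$ gives the polynomial identity
\[
\sum_{k=0}^i p^k\, w_k(a_0)^{p^{i-k}} = a_0^{p^i}-1 \qquad\text{in }\ZZ[a_0].
\]
Setting $a_0=1$ and inducting on $i$ yields $w_i(1)=0$. Differentiating the identity with respect to $a_0$ and dividing by $p^i$ gives $\sum_{k=0}^i w_k(a_0)^{p^{i-k}-1}\,w_k'(a_0)=a_0^{p^i-1}$; evaluating at $a_0=1$, the terms with $k<i$ vanish because they carry the positive power $w_k(1)^{p^{i-k}-1}=0$, while the $k=i$ term has exponent $p^{i-i}-1=0$ and equals $w_i'(1)$. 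Since the right side is $1$, we get $w_i'(1)=1$.

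\smallskip
\noindent\textbf{Step 3 (conclude).} Writing $w_i(a_0)=\sum_m e_m a_0^m$ with $e_m\in\ZZ$, one has $w_i(T+1)=w_i(1)+w_i'(1)T+(\text{integer combination of }T^2,T^3,\dots)=T+(\text{terms of degree}\geq 2)$; reducing mod $p$ gives $P_i(T)\equiv T\pmod{T^2}$, as claimed. The only place demanding care is Step~1: the $p^{-i}$-power twist built into the notation $\sum_i p^i[P_i(\overline{x}^{p^{-i}})]$ must be threaded through correctly so that $P_i$ is identified with the mod $p$ reduction of $w_i(T+1)$. Once that identity and the companion identity $\sum_k p^k w_k^{p^{i-k}}=a_0^{p^i}-1$ are in hand, the differentiation trick closes the argument with no further input.
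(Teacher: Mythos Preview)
Your proof is correct. Both you and the paper use the ghost-component identity
\[
\sum_{k=0}^i p^k\, w_k(a_0)^{p^{i-k}} = a_0^{p^i}-1
\]
(in the paper's variables, $\sum_j p^j \tilde{P}_j(T)^{p^{i-j}} = (T+1)^{p^i}-1$), but you extract the linear term differently. The paper solves the recursion for the top coefficient and argues that the lower terms contribute only multiples of $p$ to the $T$-coefficient. You instead differentiate the identity and evaluate at $a_0=1$, using the exact vanishing $w_k(1)=0$ to kill the $k<i$ summands and isolate $w_i'(1)=1$. This is a slick variant: it avoids tracking the $p$-divisibility of the cross terms and gives the sharper integral statement $w_i'(1)=1$ rather than merely $\equiv 1 \pmod p$. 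Your Step~1 bookkeeping with the $p^{-i}$-twist is handled correctly (the key point being $P_i(T)^{p^i}=P_i(T^{p^i})$ over $\Fp$), and the ``$0^0=1$'' in Step~2 is unproblematic since the term in question is literally $w_i(a_0)^0 w_i'(a_0)=w_i'(a_0)$ before specialization.
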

\begin{proof}
Since $[\overline{x}+1]-1$ vanishes when $\overline{x} = 0$,
the polynomial $P_i(T)$ is divisible by $T$. To obtain the
congruence modulo $T^2$, note that
\[
P_i(T) \equiv
p^{-i} \left( (T+1)^{p^i} - 1 - 
\sum_{j=0}^{i-1} p^j P_j(T)^{p^{i-j}}
\right) \pmod{p}.
\]
The coefficient of $T$ on the right side equals 1 (from $p^{-i} (T+1)^{p^i}$)
plus a multiple of $p$ (from all other terms). This proves the claim.
\end{proof}

\begin{remark} \label{R:lift rational}
Suppose $\overline{x_1},\dots,\overline{x_n}$ generate the unit ideal in
$R$. Then $[\overline{x_1}],\dots,[\overline{x_n}]$ generate an ideal
in $W(R)$ containing an element congruent to 1 modulo $p$.
However, any such element is a unit, so the ideal generated is the unit ideal.
\end{remark}

There are two different meaningful types of polynomial extensions of a Witt
ring $W(R)$: the usual polynomial extension of the ring $W(R)$ itself,
and the Witt ring of the perfection of the polynomial extension of the base
rings. These rings enjoy the following relationship.
\begin{lemma} \label{L:Witt split}
Equip $W(R)[T]$ with the Gauss extension of the $p$-adic norm.
\begin{enumerate}
\item[(a)]
The isometric homomorphism
$\psi: W(R)[T] \to W(R[\overline{T}]^{\perf})$
which maps $W(R)$ to 
$W(R[\overline{T}]^{\perf})$
via the functoriality of Witt vectors, and which sends $T$ to $[\overline{T}]$,
is split.
\item[(b)]
The map $\psi^*$ is a quotient map of topological spaces.
\end{enumerate}
\end{lemma}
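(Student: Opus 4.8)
The plan is to realize $W(R[\overline{T}]^{\perf})$ as an orthonormalizable Banach module over $W(R)$ whose basis is the set of Teichm\"uller lifts $[\overline{T}^a]$ with $a$ ranging over $\Gamma := \ZZ[1/p] \cap [0,+\infty)$, in such a way that $\psi$ becomes the inclusion of the sub-basis indexed by $\ZZ_{\geq 0}$; the two assertions then follow formally. Concretely, let $\theta$ be the $W(R)$-linear map from the algebraic direct sum $\bigoplus_{a \in \Gamma} W(R)\, e_a$, equipped with the supremum norm, to $W(R[\overline{T}]^{\perf})$ carrying $e_a$ to $[\overline{T}^a]$. I would first check that $\theta$ is isometric. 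Reduction modulo $p$ carries $\sum_a x_a [\overline{T}^a]$ to $\sum_a \overline{x_a}\,\overline{T}^a$ in $R[\overline{T}]^{\perf}$, which vanishes exactly when every $\overline{x_a}$ does, because the $\overline{T}^a$ ($a \in \Gamma$) form an $R$-basis of $R[\overline{T}]^{\perf}$; since $W(R[\overline{T}]^{\perf})$ is $p$-torsion-free, iterating this yields $\sum_a x_a [\overline{T}^a] \in p^n W(R[\overline{T}]^{\perf})$ if and only if $x_a \in p^n W(R)$ for all $a$, hence $|\theta(\sum_a x_a e_a)| = \sup_a |x_a|$ (so $\theta$ is injective). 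Next I would check that $\theta$ has dense image: the reduction modulo $p$ of any $w \in W(R[\overline{T}]^{\perf})$ may be written $\sum_a \overline{x_a}\,\overline{T}^a$, and then $\sum_a [\overline{x_a}][\overline{T}^a] = \sum_a [\overline{x_a}\,\overline{T}^a]$ (multiplicativity of $[\cdot]$, Lemma~\ref{L:Witt properties}) is a $W(R)$-combination of Teichm\"uller monomials congruent to $w$ modulo $p$; a one-line induction on $n$ upgrades this to congruence modulo $p^n$. Thus $\theta$ extends to an isometric $W(R)$-module isomorphism $\widehat{\theta}$ from the $p$-adic completion of $\bigoplus_{a \in \Gamma} W(R)\, e_a$ onto $W(R[\overline{T}]^{\perf})$.

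Granting this, I would deduce (a) as follows. By multiplicativity of the Teichm\"uller map, $\psi(x T^j) = \psi(x)[\overline{T}]^j = \psi(x)[\overline{T}^j] = \widehat{\theta}(x\, e_j)$ for $x \in W(R)$, so $\psi$ is $\widehat{\theta}$ precomposed with the inclusion $\bigoplus_{j \in \ZZ_{\geq 0}} W(R)\, e_j \hookrightarrow \bigoplus_{a \in \Gamma} W(R)\, e_a$; in particular $\psi$ is isometric, as asserted in the statement. Let $P$ be the coordinate projection onto the summands indexed by $\ZZ_{\geq 0}$ and put $\pi = \widehat{\theta} \circ P \circ \widehat{\theta}^{-1}$. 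Then $\pi$ is norm-nonincreasing and $W(R)$-linear with $\pi \circ \psi = \id$, and it is $W(R)[T]$-linear because multiplication by $[\overline{T}] = \psi(T)$ transports under $\widehat{\theta}$ to the shift $e_a \mapsto e_{a+1}$, which commutes with $P$ (the only index that the shift could move into or out of $\ZZ_{\geq 0}$ is $a = -1$, which does not lie in $\Gamma$). Hence $\pi$ splits $\psi$. Strictly speaking $\pi$ takes values in $\widehat{W(R)[T]} = W(R)\langle T\rangle$ rather than in $W(R)[T]$; this is harmless, since $\calM(W(R)[T]) = \calM(W(R)\langle T\rangle)$ and since only a splitting of the completed map is used in Lemma~\ref{L:split} and Lemma~\ref{L:surjective}.

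Finally, (b) is formal: $\psi^*$ is continuous (indeed strongly continuous) as restriction along the bounded homomorphism $\psi$, and it is surjective by (a) together with Lemma~\ref{L:surjective}; since $\calM(W(R[\overline{T}]^{\perf}))$ is compact and $\calM(W(R)[T])$ is Hausdorff (Definition~\ref{D:spectrum}), $\psi^*$ is a closed surjection, hence a quotient map. The one point requiring real care is the orthonormal-basis claim of the first paragraph — the isometry and, especially, the density of $\theta$, where one must control Witt-vector carries and use $p$-torsion-freeness — after which everything reduces to bookkeeping.
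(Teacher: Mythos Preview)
Your proof is correct and follows essentially the same route as the paper's: the paper identifies $W(R[\overline{T}]^{\perf})$ with the $p$-adic completion of $\bigcup_{n} W(R)[T^{p^{-n}}]$ and splits by ``omitting all nonintegral powers of $T$,'' which is exactly your orthonormal-basis description and coordinate projection made explicit; part (b) in both cases is the compact--Hausdorff closed-map argument. Your observation that the splitting naturally lands in $W(R)\langle T\rangle$ rather than $W(R)[T]$ is a point the paper leaves implicit, and your justification that this suffices for the applications is correct.
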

\begin{proof}
Via $\psi$,
we may identify $W(R[\overline{T}]^{\perf})$ with the $p$-adic completion of
$\cup_{n=1}^\infty W(R)[T^{p^{-n}}]$. Under this identification,
we obtain a splitting by omitting all nonintegral powers of $T$.
Hence $\psi$ is split, yielding (a).

Since $\psi$ is split, $\psi^*$
is surjective by Lemma~\ref{L:surjective}.
Let $U \subseteq \calM(W(R)[T])$ be a subset whose inverse image $V$
in $\calM(W(R[\overline{T}]^{\perf}))$ is open. Let $U', V'$ be the complements
of $U,V$, respectively. Then $V'$ is closed and hence compact because
$\calM(W(R[\overline{T}]^{\perf}))$ is compact. Since $U' = \psi^*(V')$,
$U'$ is quasicompact and hence closed because $\calM(W(R)[T])$ is Hausdorff.
Hence $U$ is open; this proves that $\psi^*$ is a quotient map, yielding (b).
\end{proof}

\begin{remark} \label{R:derivation}
Define the map $\delta: W(R) \to W(R)$ by the formula
\[
\delta(s) = p^{-1} (\phi(s) - s^p) \qquad (s \in W(R)).
\]
The map $\delta$ is an example of a \emph{$p$-derivation} on $W(R)$, in that it 
has the
following properties.
\begin{enumerate}
\item[(a)]
We have $\delta(1) = 0$. (In this example, we also have $\delta([r]) = 0$ for all $r \in R$.)
\item[(b)]
For all $s_1,s_2 \in W(R)$, $\delta(s_1+s_2) = \delta(s_1) + \delta(s_2) - P(s_1,s_2)$,
where the polynomial $P(X,Y) \in \ZZ[X,Y]$ is given by $P(X,Y) = p^{-1}((X+Y)^p - X^p - Y^p)$.
\item[(c)]
For all $s_1, s_2 \in W(R)$, $\delta(s_1s_2) = s_1^p \delta(s_2) + s_2^p \delta(s_1) + p \delta(s_1) \delta(s_2)$.
\end{enumerate}
Such maps were introduced (with a slightly different sign convention)
by Joyal \cite{joyal}, and later exploited heavily by Buium \cite{buium} to transfer some concepts from
the theory of differential equations into arithmetic geometry.
We had hoped to use the $p$-derivation $\delta$ to construct an analogue of the
formula \eqref{eq:continuity1}, but so far we have not found any such analogue.
We will thus be forced to work more indirectly; see
Definition~\ref{D:Gauss norm2} and Theorem~\ref{T:construct Gauss}.
\end{remark}

\section{Raising and lowering for Witt vectors}
\label{sec:raising lowering}

The raising and lowering operators for Witt vectors are defined as follows.
\begin{lemma} \label{L:lambda}
For $\alpha$ a submultiplicative (resp.\ power-multiplicative, multiplicative)
seminorm on $R$ bounded by the trivial norm, the function
$\lambda(\alpha): W(R) \to [0,1]$ given by
\[
\lambda(\alpha) \left( \sum_{i=0}^\infty p^i [\overline{x_i}^{p^{-i}}]
\right) = \max_i \{ p^{-i} \alpha(\overline{x_i})^{p^{-i}} \}
\]
is a submultiplicative (resp.\ power-multiplicative, multiplicative)
seminorm on $W(R)$ bounded by the $p$-adic norm.
\end{lemma}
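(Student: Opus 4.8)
The plan is to verify the (semi)norm axioms directly from the given formula, feeding it through the structure theory of Witt vectors (Lemma~\ref{L:Witt properties} and Remark~\ref{R:addition formula}); the argument runs parallel to the proof of Lemma~\ref{L:Gauss multiplicative} for Gauss norms, the one genuinely new feature being the ``carries'' that occur in Witt addition. Throughout, write $x=\sum_{i\ge0}p^i[a_i]$ and $y=\sum_{j\ge0}p^j[b_j]$ for the Teichm\"uller expansions (so $a_i=\overline{x_i}^{\,p^{-i}}$ in the notation of the statement), so that $\lambda(\alpha)(x)=\max_i\{p^{-i}\alpha(a_i^{p^i})^{p^{-i}}\}$. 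For well-definedness note that the expansion is unique (Lemma~\ref{L:Witt properties}(d)) and that, since $\alpha(a_i^{p^i})\le1$, each term is $\le p^{-i}$, so the supremum is attained and $\lambda(\alpha)\le|\cdot|_p$. Once $\lambda(\alpha)$ is known to be an ultrametric seminorm this last bound makes it $1$-Lipschitz, hence continuous, for the $p$-adic topology; and $\lambda(\alpha)(1)=\alpha(1)\le1$ is immediate.

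For the ultrametric inequality, write $x-y=\sum_k p^k[c_k]$. By Remark~\ref{R:addition formula}, $c_k$ is an integer polynomial in the $a_j^{p^{j-k}},b_j^{p^{j-k}}$ ($j\le k$) each of whose monomials is isobaric of weight $1$ when the $a_j,b_j$ carry weight $1$; raising to the $p^k$-th power and using $\charac R=p$, the element $c_k^{p^k}$ becomes an integer polynomial in the $a_j^{p^j},b_j^{p^j}$ ($j\le k$) each of whose monomials $\prod_j(a_j^{p^j})^{g_j}\prod_j(b_j^{p^j})^{h_j}$ satisfies $\sum_j p^j(g_j+h_j)=p^k$. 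Estimating such a monomial by subadditivity and submultiplicativity of $\alpha$ and raising to the $p^{-k}$-th power, one gets, with $U_j:=p^{-j}\alpha(a_j^{p^j})^{p^{-j}}$ and $V_j:=p^{-j}\alpha(b_j^{p^j})^{p^{-j}}$,
\[
p^{-k}\Bigl(\prod_j\alpha(a_j^{p^j})^{g_j}\prod_j\alpha(b_j^{p^j})^{h_j}\Bigr)^{p^{-k}}
=p^{\,-k+\sum_j j(g_j+h_j)p^{j-k}}\prod_j U_j^{g_jp^{j-k}}\prod_j V_j^{h_jp^{j-k}}.
\]
Here the exponents $g_jp^{j-k},h_jp^{j-k}$ are nonnegative, supported on $j\le k$, and sum to $1$; so the power of $p$ is $\le1$ because $\sum_j j(g_j+h_j)p^{j-k}\le k\sum_j(g_j+h_j)p^{j-k}=k$, while by monotonicity of $t\mapsto t^s$ ($s\ge0$) the remaining product is $\le(\max_j\max\{U_j,V_j\})^1=\max\{\lambda(\alpha)(x),\lambda(\alpha)(y)\}$. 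Taking the maximum over the finitely many monomials and over $k$ gives $\lambda(\alpha)(x-y)\le\max\{\lambda(\alpha)(x),\lambda(\alpha)(y)\}$ for any submultiplicative $\alpha$, so $\lambda(\alpha)$ is a seminorm.

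Submultiplicativity then follows formally: since $[\cdot]$ is multiplicative (Lemma~\ref{L:Witt properties}(c)), $xy=\sum_{i,j}p^{i+j}[a_ib_j]$ converges $p$-adically, so by the ultrametric inequality together with continuity, $\lambda(\alpha)(xy)\le\sup_{i,j}\lambda(\alpha)(p^{i+j}[a_ib_j])$. The Witt coordinates of $p^k[c]$ vanish except in degree $k$ (where the entry is $c^{p^k}$), so $\lambda(\alpha)(p^k[c])=p^{-k}\alpha(c^{p^k})^{p^{-k}}$, and then submultiplicativity of $\alpha$ gives $\lambda(\alpha)(p^{i+j}[a_ib_j])\le\bigl(p^{-i}\alpha(a_i^{p^i})^{p^{-i}}\bigr)\bigl(p^{-j}\alpha(b_j^{p^j})^{p^{-j}}\bigr)\le\lambda(\alpha)(x)\lambda(\alpha)(y)$.

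The power-multiplicative and multiplicative cases I expect to be the main obstacle, since here one must control the carries in Witt addition. I would work with the valuation $w:=-\log_p\lambda(\alpha)$, valued in $[0,+\infty]$, which by power-multiplicativity reads $w(z)=\inf_i(i+v(a_i))$ for $z=\sum p^i[a_i]$, where $v:=-\log_p\alpha\ge0$; the inequality already proved gives $w(xy)\ge w(x)+w(y)$, so only the reverse is at issue and we may assume $w(x),w(y)<\infty$. The crucial elementary point is that $v\ge0$ forces only finitely many $i$ to have $i+v(a_i)$ below any prescribed bound; hence $I_x:=\{i:i+v(a_i)=w(x)\}$ and $I_y$ are finite and nonempty, with least elements $i_0,j_0$, and $w(x)^+:=\inf\{i+v(a_i):i+v(a_i)>w(x)\}>w(x)$ (similarly $w(y)^+$). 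Split $xy=\sum_{i,j}p^{i+j}[a_ib_j]=A+B$, where $A$ gathers the pairs $(i,j)\in I_x\times I_y$. Every term of $B$ has $w$-value $\ge\min\{w(x)^++w(y),\,w(x)+w(y)^+\}>w(x)+w(y)$, so $w(B)>w(x)+w(y)$; in $A$ the unique pair of minimal level $i_0+j_0$ is $(i_0,j_0)$, all others having level $\ge i_0+j_0+1$, so $A\equiv p^{i_0+j_0}[a_{i_0}b_{j_0}]\pmod{p^{i_0+j_0+1}}$, whence by uniqueness of Teichm\"uller expansions the level-$(i_0+j_0)$ coefficient of $A$ is $a_{i_0}b_{j_0}$ and $w(A)\le(i_0+j_0)+v(a_{i_0}b_{j_0})$. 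When $\alpha$ is multiplicative, $v$ is additive, so $v(a_{i_0}b_{j_0})=v(a_{i_0})+v(b_{j_0})$ and the last bound equals $w(x)+w(y)$; since also $w(A)\ge w(x)+w(y)$ (every term of $A$ has that $w$-value), $w(A)=w(x)+w(y)$, and therefore $w(xy)=w(A+B)=w(A)=w(x)+w(y)$, i.e.\ $\lambda(\alpha)(xy)=\lambda(\alpha)(x)\lambda(\alpha)(y)$. For $\alpha$ merely power-multiplicative, run the same argument with $y=x$, where only $v(a_{i_0}^2)=2v(a_{i_0})$ and the subadditivity $v(a_ia_j)\ge v(a_i)+v(a_j)$ are needed, to obtain $\lambda(\alpha)(x^2)=\lambda(\alpha)(x)^2$.
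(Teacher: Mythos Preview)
Your proof is correct. For the ultrametric inequality you take a different and rather cleaner route than the paper: you expand $c_k^{p^k}$ via Frobenius as a sum of monomials in $a_j^{p^j},b_j^{p^j}$ whose exponents satisfy $\sum_j p^j(g_j+h_j)=p^k$, and then bound each monomial by a weighted geometric mean. The paper instead proves the special case $\lambda(\alpha)(p^i([\overline{x}]-[\overline{y}]))\le\max\{\lambda(\alpha)(p^i[\overline{x}]),\lambda(\alpha)(p^i[\overline{y}])\}$ and then runs an iterative ``presentation'' argument (equations~\eqref{eq:seminorm sum1}--\eqref{eq:seminorm sum2}), rewriting $x-y$ step by step while maintaining the desired bound. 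Your approach is shorter and uses the isobaric structure more directly; the paper's approach is more hands-on and avoids the weighted-mean inequality, which some readers may find more transparent. For multiplicativity, your $A+B$ split and the paper's truncation $x',y'$ are essentially the same idea: both isolate the minimal-index contribution and show the remainder is strictly smaller. One small point worth making explicit in your write-up is that the bound $w(B)>w(x)+w(y)$ really does hold for the \emph{infinite} sum $B$, because the termwise lower bound $\min\{w(x)^++w(y),\,w(x)+w(y)^+\}$ is uniform and $w$ is continuous for the $p$-adic topology; you use this implicitly but it is the one place a reader might pause.
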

We will mostly apply this result when $\alpha$ is power-multiplicative, in which case we may use the simpler formula
\[
\lambda(\alpha) \left( \sum_{i=0}^\infty p^i [\overline{x_i}]
\right) = \max_i \{ p^{-i} \alpha(\overline{x_i}) \}.
\]
\begin{proof}
For $\overline{x}, \overline{y} \in R$ and $i$ a nonnegative integer, write $[\overline{x}] - [\overline{y}]
= \sum_{j=0}^\infty p^j [\overline{z_j}^{p^{-j}}]$. By Remark~\ref{R:addition formula},
$\overline{z_j}$ is a polynomial in $\overline{x},\overline{y}$ 
which has integer coefficients and is homogeneous of degree $p^j$. This remains true after taking $p^i$-th powers,
so we may deduce that $\alpha(\overline{z_j}^{p^i}) \leq \max\{\alpha(\overline{x}^{p^i})^{p^j},
\alpha(\overline{y}^{p^i})^{p^{j}}\}$. Consequently,
\begin{align*}
\lambda(\alpha)(p^{i}([\overline{x}]-[\overline{y}])) 
&= \lambda(\alpha)\left( \sum_{j=0}^\infty p^{i+j} [(\overline{z_j}^{p^i})^{p^{-i-j}}] \right) \\
&= \max_j \{p^{-i-j} \alpha(\overline{z_i}^{p^i})^{p^{-i-j}}\} \\
&\leq p^{-i} \max\{\alpha(\overline{x}^{p^i})^{p^{-i}}, \alpha(\overline{y}^{p^i})^{p^{-i}}\} \\
&= \max\{\lambda(\alpha)(p^{i} [\overline{x}]), \lambda(\alpha)(p^{i} [\overline{y}])\}. 
\end{align*}
Similarly, $\lambda(\alpha)(p^{i}([\overline{x}]+[\overline{y}])) 
\leq \max\{\lambda(\alpha)(p^{i} [\overline{x}]), \lambda(\alpha)(p^{i} [\overline{y}])\}$. 

We next establish that $\lambda(\alpha)$ is a seminorm. 
Let $x = \sum_{i=0}^\infty p^i [\overline{x_i}]$, $y = \sum_{i=0}^\infty p^i [\overline{y_i}]$ be two general
elements of $W(R)$, and write $x-y = \sum_{i=0}^\infty p^i [\overline{z_i}]$. For each nonnegative
integer $n$, we will exhibit an equality of the form
\begin{equation}\label{eq:seminorm sum1}
x-y = \left(\sum_{i=0}^{n-1} p^i [\overline{z}_i]\right) \pm p^n w_1 \pm \cdots \pm p^n w_k
\end{equation}
for some nonnegative integer $k = k(n)$ and some $w_1,\dots,w_k \in W(R)$ with the property
that
\begin{equation} \label{eq:seminorm sum2}
\lambda(\alpha)\left(\sum_{i=0}^{n-1} p^i [\overline{z}_i]\right), \lambda(\alpha)(p^n w_1),\dots,\lambda(\alpha)(p^n w_k)
\leq \max\{\lambda(\alpha)(x), \lambda(\alpha)(y)\}.
\end{equation}
From this, it will follow at once that $\lambda(\alpha)(x-y) \leq \max\{\lambda(\alpha)(x), \lambda(\alpha)(y)\}$,
and hence that $\lambda(\alpha)$ is a seminorm.

Suppose that for some nonnegative integer $n$, we are given $w_1,\dots,w_k$ satisfying
\eqref{eq:seminorm sum1} and \eqref{eq:seminorm sum2}. Then condition \eqref{eq:seminorm sum2} is preserved by
modifying \eqref{eq:seminorm sum1} in the following ways.
\begin{enumerate}
\item[1.]
Given a term $\pm p^n w_j$ in \eqref{eq:seminorm sum1}, replace it with the sum of two terms, one of which is
$\pm p^n$ times a Teichm\"uller element.
\item[2.] 
Given two terms of the form $\pm p^n [\overline{w}]$ and $\pm p^n [\overline{w}']$, replace them with their sum.
This maintains \eqref{eq:seminorm sum2} by our earlier argument.
\end{enumerate}
Moreover, the number of summands in \eqref{eq:seminorm sum1} among $\pm p^n w_1,\dots, \pm p^n w_k$
which are not divisible by $p^{n+1}$ never increases, and in fact always decreases in step 2 unless one 
of the two terms is itself divisible by $p^{n+1}$. Consequently, using these operations, we can 
always arrive at the situation where one of the summands in \eqref{eq:seminorm sum1} among $\pm p^n w_1 ,\dots, \pm p^n w_k$  equals 
$p^n [\overline{z}_n]$ and the others are divisible by $p^{n+1}$. This yields a sum of the desired form
with $n$ replaced by $n+1$, completing the proof that $\lambda(\alpha)$ is a seminorm. 
This in turn implies that
\[
\lambda(\alpha)(xy) 
\leq \max_{i,j} \{\lambda(\alpha)(p^i[\overline{x_i}] p^j [\overline{y_j}])\}
\leq \lambda(\alpha)(x) \lambda(\alpha)(y),
\]
so $\lambda(\alpha)$ is submultiplicative.

Suppose now that $\alpha$ is multiplicative. To check that
$\lambda(\alpha)$ is multiplicative, it is enough to check that
$\lambda(\alpha)(xy) \geq \lambda(\alpha)(x) \lambda(\alpha)(y)$ in case
$\lambda(\alpha)(x), \lambda(\alpha)(y) > 0$. Choose the minimal indices
$j,k$ for which $\lambda(\alpha)(p^j [\overline{x_j}])$, 
$\lambda(\alpha)(p^k [\overline{y_k}])$ attain their maximal values. For
\[
x' = \sum_{i=j}^{\infty} p^i [\overline{x_i}],
\qquad
y' = \sum_{i=k}^{\infty} p^i [\overline{y_i}],
\]
on one hand, we may write $x'y' = \sum_{i=j+k}^\infty p^i [\overline{z_i}]$ with
$\overline{z_{j+k}} = \overline{x_j} \overline{y_k}$.
By submultiplicativity, $\lambda(\alpha)(x'y') \geq \lambda(\alpha)(x) \lambda(\alpha)(y) \geq \lambda(\alpha)(x')
\lambda(\alpha)(y') \geq \lambda(\alpha)(x'y')$, so $\lambda(\alpha)(x'y') = \lambda(\alpha)(x) \lambda(\alpha)(y)$.
On the other hand, we have $\lambda(\alpha)(x-x') < \lambda(\alpha)(x)$, $\lambda(\alpha)(y-y') < \lambda(\alpha)(y)$,
so $\lambda(\alpha)(xy-x'y') < \lambda(\alpha)(x) \lambda(\alpha)(y) = \lambda(\alpha)(x'y')$.
Putting everything together,
we deduce that $\lambda(\alpha)$ is multiplicative.
Similarly, if $\alpha$ is power-multiplicative, we see that $\lambda(\alpha)$
is power-multiplicative by taking $x=y$ in the preceding argument.
\end{proof}

\begin{cor} \label{C:lambda}
For $\alpha$ a submultiplicative (resp.\ power-multiplicative, multiplicative)
seminorm on $R$ bounded by the trivial norm, the function
$\Lambda(\alpha): W(R) \to [0,1]$ given by
\[
\Lambda(\alpha) \left( \sum_{i=0}^\infty p^i [\overline{x_i}^{p^{-i}}]
\right) = \sup_i \{ \alpha(\overline{x_i})^{p^{-i}} \}
\]
is a submultiplicative (resp.\ power-multiplicative, multiplicative)
seminorm on $W(R)$ bounded by the trivial norm.
\end{cor}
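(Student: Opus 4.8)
The plan is to exhibit $\Lambda(\alpha)$ as a pointwise supremum of rescaled variants of the seminorm $\lambda(\alpha)$ from Lemma~\ref{L:lambda}, and then invoke the stability of submultiplicative (resp.\ power-multiplicative, multiplicative) seminorms under such suprema.

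First, for each $\rho \in (0,1)$, define $\lambda_\rho(\alpha)\colon W(R) \to [0,1]$ by
\[
\lambda_\rho(\alpha)\left( \sum_{i=0}^\infty p^i [\overline{x_i}^{p^{-i}}] \right) = \max_i\{\rho^i \alpha(\overline{x_i})^{p^{-i}}\};
\]
this is well defined because each term lies in $[0,1]$ and the terms tend to $0$. I claim each $\lambda_\rho(\alpha)$ is a submultiplicative (resp.\ power-multiplicative, multiplicative) seminorm on $W(R)$ bounded by the trivial norm. This is precisely the content of Lemma~\ref{L:lambda}, except that the $p$-adic norm on $W(R)$ has been renormalized so that $|p| = \rho$ — still a norm for which $W(R)$ is $p$-adically complete and separated, since $0 < \rho < 1$ — and indeed the proof of Lemma~\ref{L:lambda} carries over verbatim with $p^{-i}$ replaced by $\rho^i$ throughout: the only place the weights enter the estimates is through inequalities of the shape $\rho^{i+j} \leq \rho^i$, valid since $\rho \le 1$, while the combinatorial manipulation built around \eqref{eq:seminorm sum1}--\eqref{eq:seminorm sum2} and the ``leading digit'' argument for multiplicativity use only the $p$-adically complete additive structure of $W(R)$, not the size of $|p|$.

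Next I would verify that $\Lambda(\alpha) = \sup_{\rho \in (0,1)} \lambda_\rho(\alpha)$ pointwise, and that for each fixed $x$ the value $\lambda_\rho(\alpha)(x)$ is nondecreasing in $\rho$ (clear from the formula, as $\rho \mapsto \rho^i$ is nondecreasing for every $i$), so that this supremum is actually a limit as $\rho \to 1^-$. The identity $\sup_\rho \lambda_\rho(\alpha)(x) = \Lambda(\alpha)(x)$ is elementary: the bound $\rho^i \alpha(\overline{x_i})^{p^{-i}} \le \alpha(\overline{x_i})^{p^{-i}}$ gives ``$\le$'', and for ``$\ge$'' one selects an index $i_0$ with $\alpha(\overline{x_{i_0}})^{p^{-i_0}}$ within $\epsilon$ of the supremum, then chooses $\rho$ close enough to $1$ that $\rho^{i_0} > 1 - \epsilon$.

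Finally, a pointwise supremum of seminorms with values in $[0,1]$ is again such a seminorm (the ultrametric and positivity inequalities pass to suprema), hence is bounded by the trivial norm; submultiplicativity and power-multiplicativity likewise pass to arbitrary pointwise suprema because squaring and multiplication are monotone. In the multiplicative case, since the supremum is a monotone limit, multiplicativity of each $\lambda_\rho(\alpha)$ yields
\[
\Lambda(\alpha)(xy) = \lim_{\rho \to 1^-} \lambda_\rho(\alpha)(xy) = \left(\lim_{\rho \to 1^-} \lambda_\rho(\alpha)(x)\right)\left(\lim_{\rho \to 1^-}\lambda_\rho(\alpha)(y)\right) = \Lambda(\alpha)(x)\Lambda(\alpha)(y).
\]
The only step I anticipate needing genuine care is the claim that the proof of Lemma~\ref{L:lambda} is truly insensitive to the value of $|p| \in (0,1)$; everything else is bookkeeping. (One could equivalently just repeat the proof of Lemma~\ref{L:lambda} directly with weight $1$, replacing ``maximum'' by ``supremum'' and handling the resulting non-attainment by passing to the $\rho$-approximations exactly as above; the two presentations are interchangeable.)
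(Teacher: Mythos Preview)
Your proposal is correct and is essentially the paper's own argument: the paper writes $\Lambda(\alpha)(x) = \lim_{c \to +\infty} \lambda(\alpha^{c})(x)^{1/c}$ and appeals to Lemma~\ref{L:lambda}, which under the substitution $\rho = p^{-1/c}$ is exactly your family $\lambda_\rho(\alpha)$. The one refinement worth noting is that the paper's formulation sidesteps your concern about whether the proof of Lemma~\ref{L:lambda} is insensitive to the value of $|p|$: since $\alpha^c$ is again a submultiplicative (resp.\ power-multiplicative, multiplicative) seminorm bounded by the trivial norm, Lemma~\ref{L:lambda} applies to it directly, and taking the $c$-th root of a nonarchimedean (sub/power-)multiplicative seminorm trivially preserves those properties---so no re-examination of the proof is needed.
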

\begin{proof}
For $x \in W(R)$, we have
\[
\Lambda(\alpha)(x) = \lim_{c \to +\infty} \lambda(\alpha^{c})(x)^{1/c}.
\]
The claims then follow from Lemma~\ref{L:lambda}.
\end{proof}

\begin{remark}
While $\Lambda(\alpha)$ may seem like a more natural analogue of the
Gauss extension than $\lambda(\alpha)$, the proof of the continuity
of $\lambda$ (Theorem~\ref{T:lifting}) does not apply to $\Lambda$;
see Remark~\ref{R:big lambda}.
We thus work primarily with $\lambda$ hereafter.
\end{remark}

\begin{lemma}\label{L:retract}
For $\beta$ a power-multiplicative (resp.\ multiplicative)
seminorm on $W(R)$ bounded by the $p$-adic norm, 
the function $\mu(\beta): R \to [0,1]$
given by
\[
\mu(\beta)(\overline{x}) = \beta([\overline{x}])
\]
is a power-multiplicative (resp.\ multiplicative)
seminorm bounded by the trivial norm.
\end{lemma}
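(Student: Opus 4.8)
The plan is to check the defining conditions for $\mu(\beta)$ one at a time; the only serious point is the ultrametric inequality, since the Teichm\"uller map is not additive, and the device for handling it is to exploit that $R$ is perfect.

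I would start with the formal observations. Every element of the strict $p$-ring $W(R)$ has $p$-adic norm at most $1$, so since $\beta$ is bounded by the $p$-adic norm (and power-multiplicative, so observation (iv) of Definition~\ref{D:submultiplicative} applies) we get $\mu(\beta)(\overline{x}) = \beta([\overline{x}]) \le 1$; together with $\mu(\beta)(0) = \beta(0) = 0$ this shows $\mu(\beta)$ takes values in $[0,1]$, is bounded by the trivial norm, and satisfies axiom (b). Since the Teichm\"uller map is multiplicative (Lemma~\ref{L:Witt properties}), $[\overline{x}^2] = [\overline{x}]^2$, so power-multiplicativity of $\beta$ gives $\mu(\beta)(\overline{x}^2) = \beta([\overline{x}]^2) = \beta([\overline{x}])^2 = \mu(\beta)(\overline{x})^2$, i.e.\ $\mu(\beta)$ is power-multiplicative; if moreover $\beta$ is multiplicative then likewise $\mu(\beta)(\overline{x}\,\overline{y}) = \beta([\overline{x}][\overline{y}]) = \beta([\overline{x}])\beta([\overline{y}])$. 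As a multiplicative seminorm is automatically power-multiplicative, it remains only to prove the ultrametric inequality $\beta([\overline{x} - \overline{y}]) \le \max\{\beta([\overline{x}]), \beta([\overline{y}])\}$ assuming $\beta$ power-multiplicative.

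For this I would pass to $p^n$-th roots. Because Frobenius on $R$ is a bijective ring endomorphism, $(\overline{x} - \overline{y})^{p^{-n}} = \overline{x}^{p^{-n}} - \overline{y}^{p^{-n}}$; write $\overline{a} = \overline{x}^{p^{-n}}$, $\overline{b} = \overline{y}^{p^{-n}}$. Reducing modulo $p$ shows $[\overline{a}] - [\overline{b}] \equiv [\overline{a} - \overline{b}] \pmod{p}$, so $[\overline{a}] - [\overline{b}] = [\overline{a} - \overline{b}] + p w$ for a unique $w \in W(R)$ (using $p$-torsion-freeness). Raising to the $p^n$-th power and expanding by the binomial theorem, the leading term is $[\overline{a} - \overline{b}]^{p^n} = [(\overline{a} - \overline{b})^{p^n}] = [\overline{x} - \overline{y}]$ by multiplicativity of $[\cdot]$, while each remaining term $\binom{p^n}{k} [\overline{a} - \overline{b}]^{p^n - k} (p w)^k$ with $k \ge 1$ is divisible by $p^{n+1}$, since $v_p(\binom{p^n}{k}) = n - v_p(k)$ and $k - v_p(k) \ge 1$ for $k \ge 1$. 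Hence $[\overline{x} - \overline{y}] = ([\overline{a}] - [\overline{b}])^{p^n} - p^{n+1} v$ with $v \in W(R)$.

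Finally I would estimate. The term $p^{n+1} v$ has $\beta$-value at most $p^{-(n+1)}$. By power-multiplicativity of $\beta$ and its ultrametric inequality, $\beta(([\overline{a}] - [\overline{b}])^{p^n}) = \beta([\overline{a}] - [\overline{b}])^{p^n} \le \max\{\beta([\overline{a}]), \beta([\overline{b}])\}^{p^n}$; and since $[\overline{a}]^{p^n} = [\overline{x}]$, power-multiplicativity again gives $\beta([\overline{a}])^{p^n} = \beta([\overline{x}]) = \mu(\beta)(\overline{x})$, and similarly for $\overline{b}$. Therefore $\beta([\overline{x} - \overline{y}]) \le \max\{\mu(\beta)(\overline{x}), \mu(\beta)(\overline{y}), p^{-(n+1)}\}$ for every $n$; letting $n \to \infty$ (and noting that if $\mu(\beta)(\overline{x}) = \mu(\beta)(\overline{y}) = 0$ the bound forces $\beta([\overline{x} - \overline{y}]) = 0$) yields axiom (a). The main obstacle is precisely this non-additivity, and the point is that taking $p^n$-th roots converts the additive error of the Teichm\"uller map into an error divisible by $p^{n+1}$, which disappears in the limit.
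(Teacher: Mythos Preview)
Your proof is correct and follows essentially the same approach as the paper: both exploit perfectness of $R$ to pass to $p^n$-th roots, reducing the failure of additivity of the Teichm\"uller map to a $p$-adically small error that vanishes in the limit. The paper packages the approximation via the Witt vector Frobenius (writing $\phi^{-n}(z)^{p^n} \to [\overline{z}]$ and citing Lemma~\ref{L:Witt properties}(a)), whereas you carry out the same computation explicitly with the binomial theorem, but the content is the same.
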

\begin{proof}
Given $\overline{x}, \overline{y} \in R$, choose any $x,y \in W(R)$ lifting them.
For
$(\overline{z}, z) = (\overline{x},x), (\overline{y},y), (\overline{x}+\overline{y}, x+y)$,
for any $\epsilon > 0$, 
for $n$ sufficiently large we have
\[
\max\{\epsilon, \mu(\beta)(\overline{z})\} = \max\{\epsilon,
\beta(\phi^{-n}(z)^{p^n})\}
\]
because $\phi^{-n}(z^{p^n})$ converges $p$-adically to $[\overline{z}]$
by Lemma~\ref{L:Witt properties}(a).
Since $\beta$ is a power-multiplicative
(resp.\ multiplicative) seminorm, we deduce that $\mu(\beta)$
is one as well. (An alternate proof can be obtained using Remark~\ref{R:addition formula}.)
\end{proof}

We now have the following analogue of Theorem~\ref{T:lift1}.
\begin{theorem} \label{T:lifting}
Equip $R$ with the trivial norm and $W(R)$ with the $p$-adic norm.
Define $\lambda: \calM(R) \to \calM(W(R))$, $\mu: \calM(W(R)) \to \calM(R)$
as in Lemma~\ref{L:lambda} and Lemma~\ref{L:retract}.
\begin{enumerate}
 \item[(a)]
The functions $\lambda, \mu$ are strongly continuous and monotonic.
\item[(b)] For all $\alpha \in \calM(R)$, $(\mu \circ \lambda)(\alpha) = \alpha$.
\item[(c)] For all $\beta \in \calM(W(R))$, $(\lambda \circ \mu)(\beta) \geq \beta$.
\end{enumerate}
\end{theorem}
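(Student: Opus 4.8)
The plan is to follow the template set by Theorem~\ref{T:lift1}, handling each part in turn, with the continuity of $\lambda$ being the main new difficulty (there is no Gauss-norm coefficient formula available). For $\mu$: it is a restriction-type map in spirit, but since $[\cdot]$ is not a homomorphism one cannot invoke strong continuity of restriction directly. Instead I would use the explicit description $\mu(\beta)(\overline{x}) = \beta([\overline{x}])$ together with the fact (Lemma~\ref{L:Witt properties}(a)) that $[\overline{x}] = \lim_n \phi^{-n}(x)^{p^n}$ for any lift $x$ of $\overline{x}$; this exhibits $\mu(\beta)(\overline{x})$ as a limit of continuous functions of $\beta$, and more usefully, one checks that the preimage under $\mu$ of a basic subset $\{\alpha: \alpha(\overline{x}) \bowtie c\}$ of $\calM(R)$ is $\{\beta: \beta([\overline{x}]) \bowtie c\}$. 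The subtle point is that $[\overline{x}]$ need not lie in the subring one starts with, but it does lie in $W(R)$, so these are genuine Weierstrass/Laurent subsets of $\calM(W(R))$; combined with the fact that $R$ has the trivial norm (so $\calM(R)$ has a particularly simple structure of basic sets), this yields strong continuity of $\mu$. Monotonicity of $\mu$ is immediate: if $\beta \geq \beta'$ then $\beta([\overline{x}]) \geq c\,\beta'([\overline{x}])$ uniformly.

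For $\lambda$: monotonicity is immediate from the defining formula $\lambda(\alpha)(\sum p^i[\overline{x_i}]) = \max_i\{p^{-i}\alpha(\overline{x_i})^{p^{-i}}\}$, since enlarging $\alpha$ enlarges each term. For continuity, I would argue as in the polynomial case by controlling $\lambda(\alpha)(x)$ for fixed $x = \sum_{i=0}^\infty p^i[\overline{x_i}]$ in terms of finitely many $\alpha(\overline{x_i})$. The key observation is that since $\alpha$ is bounded by the trivial norm we have $p^{-i}\alpha(\overline{x_i})^{p^{-i}} \leq p^{-i}$, so for any $\epsilon > 0$ only the terms with $p^{-i} > \epsilon$, i.e.\ $i < \log_p(1/\epsilon)$, can contribute above level $\epsilon$; hence
\begin{align*}
\{\alpha \in \calM(R): \lambda(\alpha)(x) > \epsilon\} &= \bigcup_{i: p^{-i} > \epsilon} \{\alpha: p^{-i}\alpha(\overline{x_i})^{p^{-i}} > \epsilon\}, \\
\{\alpha \in \calM(R): \lambda(\alpha)(x) < \epsilon\} &= \bigcap_{i: p^{-i} > \epsilon} \{\alpha: p^{-i}\alpha(\overline{x_i})^{p^{-i}} < \epsilon\},
\end{align*}
each a finite union or intersection of basic subsets of $\calM(R)$. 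This gives continuity; the same bookkeeping, applied to Weierstrass and Laurent subspaces of $\calM(W(R))$ and pulling back along $\lambda$, gives that the preimage of such a subspace is a finite union of the corresponding type of subspace of $\calM(R)$. For rational subspaces one mimics the polynomial-ring computation: a rational subspace of $\calM(W(R))$ is cut out by inequalities $\beta(f_i) \leq p_i\beta(g)$, and $\lambda^{-1}$ of it becomes a comparison of two maxima over finitely many Teichmüller-coefficient terms (this is where the fact that rational subspaces only need generators of the unit ideal in the completion, cf.\ Remark~\ref{R:lift rational}, is used to see the relevant coefficients generate the unit ideal), which splits as a finite union of rational subspaces exactly as in the proof of Theorem~\ref{T:lift1}(a).

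Parts (b) and (c) are then formal. For (b): $(\mu \circ \lambda)(\alpha)(\overline{x}) = \lambda(\alpha)([\overline{x}]) = \lambda(\alpha)(p^0[\overline{x}]) = \alpha(\overline{x})$ directly from the formula, since $[\overline{x}] = \sum_{i} p^i[\overline{y_i}]$ with $\overline{y_0} = \overline{x}$ and $\overline{y_i} = 0$ for $i > 0$. For (c): writing $\beta \in \calM(W(R))$ and $x = \sum_i p^i[\overline{x_i}]$, the ultrametric inequality gives $\beta(x) \leq \max_i\{\beta(p^i[\overline{x_i}])\} = \max_i\{p^{-i}\beta([\overline{x_i}])\} = \max_i\{p^{-i}\mu(\beta)(\overline{x_i})\}$; comparing with $(\lambda \circ \mu)(\beta)(x) = \max_i\{p^{-i}\mu(\beta)(\overline{x_i})\}$ (using that $\mu(\beta)$ is power-multiplicative, so the $p^{-i}$-root version of the formula collapses) shows $(\lambda \circ \mu)(\beta) \geq \beta$. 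I expect the main obstacle to be the strong-continuity bookkeeping for $\lambda$ on rational subspaces — not conceptually hard, but one must be careful that the Teichmüller coefficients extracted from the generators really do generate the unit ideal so that the rewritten inequalities define bona fide rational subspaces, and that the truncation-at-level-$\epsilon$ argument interacts correctly with the $p^{-i}$ weights; the fact that $R$ carries the trivial norm is what keeps all of this finite.
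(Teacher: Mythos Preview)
Your proposal is correct and follows essentially the same route as the paper: the $p^{-i}$-decay truncation for $\lambda$, the direct $\beta([\overline{x}])$ description for $\mu$, and the formal verification of (b) and (c) all match. One small correction on the rational-subspace step for $\lambda$: the relevant input is not Remark~\ref{R:lift rational} but Remark~\ref{R:rational}, which lets you replace the generators $f_i,g$ by elements with \emph{finite} Teichm\"uller expansions (truncations $f_i',g'$) while defining the same rational subspace---this is what makes the comparison of maxima genuinely finite, after which the argument proceeds exactly as in Theorem~\ref{T:lift1}; the fact that the extracted coefficients $\overline{f_{ij}},\overline{g_j}$ generate the unit ideal in $R$ follows simply because $\overline{f_{i0}},\overline{g_0}$ already do (they are the mod-$p$ reductions of unit-ideal generators in $W(R)$).
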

\begin{proof}
For $x = \sum_{i=0}^\infty p^i [\overline{x_i}] \in W(R)$ and $\epsilon > 0$, choose $j > 0$ for which $p^{-j} < \epsilon$; then
$\lambda(\alpha)(p^i [\overline{x_i}]) < \epsilon$ for all $\alpha \in \calM(R)$ and all $i \geq j$. We thus have
\begin{align*}
\{\alpha \in \calM(R): \lambda(\alpha)(x) > \epsilon\} &= \bigcup_{i=0}^{j-1}
\{\alpha \in \calM(R): \alpha(\overline{x_i}) > p^i \epsilon \} \\
\{\alpha \in \calM(R): \lambda(\alpha)(x) < \epsilon\} &= \bigcap_{i=0}^{j-1}
\{\alpha \in \calM(R): \alpha(\overline{x_i}) < p^i \epsilon \},
\end{align*}
so $\lambda$ is continuous. 
Similarly, 
the inverse image of a Weierstrass
(resp.\ Laurent) subspace of $\calM(W(R))$ is a finite union of
Weierstrass (resp.\ Laurent)
subspaces of $\calM(R)$. Now let 
\[
U = \{\beta \in \calM(W(R)): \beta(x_i) \leq q_i \beta(y) \quad (i=1,\dots,n)\}
\]
be a rational subspace of $\calM(W(R))$ for some
$x_1,\dots,x_n,y \in W(R)$ 
generating the unit ideal and some $q_1,\dots,q_n > 0$.
Write $x_i = \sum_{j=0}^\infty p^j [\overline{x_{ij}}]$ and $y = 
\sum_{j=0}^\infty p^j [\overline{y_j}]$; then
the $\overline{x_{ij}}$ and $\overline{y_j}$ 
together must generate the unit ideal
(in fact only the $\overline{x_{i0}}$ and $\overline{y_0}$ are needed).
Moreover, by Remark~\ref{R:rational}, we can choose some nonnegative integer $m$
so that for
$x'_i = \sum_{j=0}^m p^j [\overline{x_{ij}}]$ and $y' = 
\sum_{j=0}^m p^j [\overline{y_j}]$, we also have
\[
U = \{\beta \in \calM(W(R)): \beta(x'_i) \leq q_i \beta(y') \quad (i=1,\dots,n)\}.
\]
We may then write
\begin{align*}
\lambda^{-1}(U) &= 
\{\alpha \in \calM(R): 
 \max_{i,j: j \leq m} \{p^{-j} \alpha(\overline{x_{ij}}) / q_i\} 
\leq \max_{j \leq m} \{p^{-j} \alpha(\overline{y_j})\}\} \\
&= \bigcup_{l=0}^m \{\alpha \in \calM(R): 
p^{-j} \alpha(\overline{x_{ij}}) \leq q_i p^{-l} \alpha(\overline{y_l}),
\,
p^{-j} \alpha(\overline{y_j}) \leq p^{-l} \alpha(\overline{y_l}) \\
&\qquad \qquad (i=1,\dots,n; j=0,\dots, m)\},
\end{align*}
which is a finite union of rational subspaces of $\calM(R)$.
Hence $\lambda$ is strongly continuous.

For $\overline{x} \in R$ and $\epsilon > 0$, we have
\begin{align*}
\{\beta \in \calM(W(R)): \mu(\beta)(\overline{x}) > \epsilon\}
&=
\{\beta \in \calM(W(R)): \beta([\overline{x}]) > \epsilon\} \\
\{\beta \in \calM(W(R)): \mu(\beta)(\overline{x}) < \epsilon\}
&=
\{\beta \in \calM(W(R)): \beta([\overline{x}]) < \epsilon\},
\end{align*}
so $\mu$ is continuous. Similarly, the inverse image of a Weierstrass
(resp.\ Laurent, rational) subspace of $\calM(W(R))$ is a
Weierstrass (resp.\ Laurent, rational)
subspace of $\calM(R)$, using Remark~\ref{R:lift rational} in the rational
case.
Since monotonicity is evident, this yields (a).

The equality (b) is evident from the definitions. The inequality
(c) follows from the definition of $\lambda$ and the observation
that $(\lambda \circ \mu)(\beta)(p^n [\overline{x}]) \geq \beta(p^n [\overline{x}])$ for any 
$\overline{x} \in R$ and any nonnegative integer $n$.
\end{proof}

\begin{remark} \label{R:big lambda}
The proof of continuity of $\lambda$ in Theorem~\ref{T:lifting}
does not apply to $\Lambda$, because we cannot avoid writing
$\{\alpha \in \calM(R): \Lambda(\alpha)(x) < \epsilon\}$
as an \emph{infinite} intersection of open sets.
Similarly, we make no statement (beyond closure) concerning the
inverse image under $\lambda$ of a subspace of
$\calM(W(R))$ of the form $\{\beta \in \calM(W(R)): \beta(x) = 0\}$,
because the inverse image is defined by the 
vanishing of infinitely
many elements of $R$.
\end{remark}

\begin{example}
Here is a simple example to illustrate that $\lambda \circ \mu$ need not be the identity map.
Put $R = \Fp[X]^{\perf}$, so that $W(R)$ is isomorphic to the $p$-adic completion of
$\cup_{n=1}^\infty \Zp[[\overline{X}]^{p^{-n}}]$ (compare Lemma~\ref{L:Witt split}).
The ring $W(R)/([\overline{X}]-p)$ is isomorphic to the completion of
$\cup_{n=1}^\infty \Zp[p^{p^{-n}}]$ for the unique multiplicative extension of the $p$-adic norm; let 
$\beta \in \calM(W(R))$ be the induced seminorm.

Note that $\mu(\beta)(\overline{X}) = \beta([\overline{X}]) = p^{-1}$ and that $\mu(\beta)(y) = 1$ for $y \in \Fp^\times$.
These imply that $\mu(\beta)(y) \leq p^{-p^{-n}}$ whenever $y \in \Fp[\overline{X}^{p^{-n}}]$ is divisible by $\overline{X}^{p^{-n}}$, so
$\mu(\beta)(y) = 1$ whenever $y \in \Fp^\times + \overline{X}^{p^{-n}} \Fp[\overline{X}^{p^{-n}}]$. We 
conclude that $\mu(\beta)$ equals the $\overline{X}$-adic norm on $R$ with the normalization $\mu(\beta)(\overline{X}) = p^{-1}$.
In particular, we have a strict inequality $(\lambda \circ \mu)(\beta) > \beta$.
\end{example}

\begin{remark} \label{R:smaller space}
The corresponding results from \cite{kedlaya-icm} are stated
with $\calM(W(R))$ replaced by the slightly smaller space
$\calM(W(R)[p^{-1}])$, with the arguments unchanged. 
Note however that extending $\lambda(\alpha)$ to $W(R)[p^{-1}]$ requires that
$\lambda(\alpha)(px) = p^{-1} \lambda(\alpha)(x)$, and this holds only if $\alpha$
is power-multiplicative. We will thus mostly restrict to this case in what follows.
This is no serious loss for our purposes, because replacing a seminorm on $R$ with its spectral seminorm
does not change the spectrum.
\end{remark}

\section{Gauss norms}
\label{sec:Gauss norms}

For $\alpha$ a submultiplicative seminorm on $R$ bounded by the trivial norm,
the submultiplicative seminorm $\lambda(\alpha)$ on $W(R)$
behaves like a $(p^{-1})$-Gauss seminorm for the generator $p$.
One would like analogues of Gauss seminorms for other generators, but
unlike in the polynomial case, these cannot be constructed by using 
automorphisms of $W(R)$ to move generators around.
Instead, we use the approach of Remark~\ref{R:construct Gauss by quotient}:
we pass to a polynomial ring equipped with
an appropriate Gauss norm, then return to $W(R)$ by taking a suitable quotient.
The main difficulty in this approach is to transfer multiplicativity
to the quotient norm;
this requires construction of some good coset representatives.

\begin{hypothesis} \label{H:primitive}
Throughout \S\ref{sec:Gauss norms},
equip $R$ with a power-multiplicative seminorm $\alpha$
bounded by the trivial norm, and write $\widehat{R}$ for the separated
completion with respect to $\alpha$.
(The restriction to the power-multiplicative case is made in light of
Remark~\ref{R:smaller space}.)
Choose $\pi = \sum_{i=0}^\infty p^i [\overline{\pi_i}]$
with $\alpha(\overline{\pi_0}) \leq p^{-1}$ and 
$\overline{\pi_1}$ a unit in $R$;
we write $\overline{\pi}$ as shorthand for $\overline{\pi}_0$.
(In the language of \cite{fargues-fontaine}, any such $\pi$
is \emph{primitive of degree $1$}.)
\end{hypothesis}

\begin{defn} \label{D:Gauss norm2}
For $t \in [0,1]$,
define the submultiplicative seminorm $H(\alpha,\pi,t)$ on $W(R)$
as the quotient norm 
on $W(R)[T]/(T - \pi) \cong W(R)$ 
induced by the $(t/p)$-Gauss extension
of $\lambda(\alpha)$ to $W(R)[T]$.
In case $\pi = p - [u]$ for  $u \in R$ with $\alpha(u) \leq p^{-1}$,
we denote $H(\alpha,\pi,t)$ also by $H(\alpha,u,t)$.
\end{defn}

We will show shortly that $H(\alpha,\pi,t)$ is multiplicative
whenever $\alpha$ is (Theorem~\ref{T:construct Gauss}(a)). For this,
we will need some convenient coset representatives for the ideal
$(T-\pi)$ in $W(R)[T]$.

\begin{defn} \label{D:stable}
We say that $x \in W(R)$ is \emph{stable} (or \emph{$\alpha$-stable},
in case we need to specify $\alpha$) if 
$x$ has the form
$\sum_{i=0}^\infty p^i [\overline{x_i}]$ with either
$\alpha(\overline{x_i}) = 0$ for all $i \geq 0$,
or $\alpha(\overline{x_0}) 
> p^{-i} \alpha(\overline{x_i})$ for all $i>0$.
For instance, any Teichm\"uller element is stable.
\end{defn}

\begin{remark} \label{R:why stable}
The term ``stable'' is chosen because of the following fact:
for any stable $x \in W(R)$ with reduction $\overline{x}$
and any $\beta \in \mu^{-1}(\alpha)$,
we have $\beta(x) = \lambda(\alpha)(x) = \alpha(\overline{x})$.
Namely, this is immediate unless $\alpha(\overline{x}) > 0$, in which case
\[
\beta(x - [\overline{x}]) \leq \lambda(\alpha)(x - [\overline{x}]) 
< \lambda(\alpha)([\overline{x}]) =
\alpha(\overline{x}) = \beta([\overline{x}]).
\]
\end{remark}

\begin{lemma} \label{L:stable residue}
Assume $R = \widehat{R}$.
For any $x \in W(R)$, there exists $y = \sum_{i=0}^\infty p^i [\overline{y_i}]
\in W(R)$
with $x \equiv y \pmod{\pi}$
and $\alpha(\overline{y_0}) \geq \alpha(\overline{y_i})$ for all $i>0$.
In particular, $y$ is stable.
\end{lemma}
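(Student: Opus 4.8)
The plan is to reduce $x$ modulo $\pi$ by trading each power of $p$ in its Teichm\"uller expansion for the corresponding power of $[\overline{\pi}]$, which has $\alpha$‑norm $\le p^{-1}$. The starting point is the factorization $\pi = [\overline{\pi}] + p\eta$, where $\eta := p^{-1}(\pi - [\overline{\pi}]) = \sum_{i\ge 1}p^{i-1}[\overline{\pi_i}]$ reduces mod $p$ to the unit $\overline{\pi_1}$ and hence (as $W(R)$ is $p$‑adically complete) is a unit in $W(R)$. Consequently $p \equiv -\eta^{-1}[\overline{\pi}]\pmod{\pi}$, and more generally $p^i \equiv (-\eta^{-1}[\overline{\pi}])^i\pmod{\pi}$ for every $i\ge 0$, since $p^i - (-\eta^{-1}[\overline{\pi}])^i$ is divisible by $p - (-\eta^{-1}[\overline{\pi}]) = \eta^{-1}\pi$.

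Writing $x = \sum_{i\ge 0}p^i[\overline{x_i}]$, I would introduce the stabilizing operator $\Psi(z) = [\overline{z_0}] - \eta^{-1}[\overline{\pi}]\,p^{-1}(z - [\overline{z_0}])$, which peels off the constant Teichm\"uller coordinate and replaces the leading $p$ of the remainder by $-\eta^{-1}[\overline{\pi}]$; from $z - \Psi(z) = p^{-1}(z - [\overline{z_0}])\,\eta^{-1}\pi$ one sees $\Psi(z)\equiv z\pmod{\pi}$. Equivalently, the partial sums $y_N := \sum_{i=0}^N(-\eta^{-1}[\overline{\pi}])^i[\overline{x_i}]$ satisfy $y_N\equiv x\pmod{(\pi,p^{N+1})}$. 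Using $\lambda(\alpha)([\overline{\pi}]w)\le\alpha(\overline{\pi})\lambda(\alpha)(w)$, $\lambda(\alpha)(\eta^{\pm1})=1$, and $\alpha(\overline{\pi})\le p^{-1}$, each step damps the non‑constant content by a factor of $\alpha$‑norm $\le p^{-1}$ and drags it one coordinate toward index $0$. I would then set $y := \lim_k\Psi^k(x)$ (equivalently $\lim_N y_N$), the limit taken coordinate by coordinate: since addition in $W(R)$ alters a fixed Teichm\"uller coordinate only through the lower coordinates of the added vector (Remark~\ref{R:addition formula}, with $\alpha$ bounded by the trivial norm), and those lower coordinates shrink like $p^{-k}$, the hypothesis $R=\widehat R$ forces each coordinate sequence to converge in $R$, so $y\in W(R)$. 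That $y\equiv x\pmod{\pi}$ follows by combining $y_N - x\in(\pi)+p^{N+1}W(R)$ with $\lambda(\alpha)(y - y_N)\to 0$ and passing to the limit (using that $\pi$ is a non‑zero‑divisor, so that $(\pi)$ is closed).

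Finally I would verify $\alpha(\overline{y_0})\ge\alpha(\overline{y_i})$ for $i\ge 1$, i.e.\ $\lambda(\alpha)(y)=\alpha(\overline{y_0})$ with the maximum attained only at index $0$: this is where the damping is used, since the content that started at coordinate $i$ of $x$ returns to $y$ only after being carried toward index $0$ and multiplied by an element of $\alpha$‑norm $\le\alpha(\overline{\pi})^i$. The last clause, that $y$ is stable in the sense of Definition~\ref{D:stable}, is then immediate: either all $\alpha(\overline{y_i})=0$, or $\alpha(\overline{y_0})>0$ and $\alpha(\overline{y_0})\ge\alpha(\overline{y_i})>p^{-i}\alpha(\overline{y_i})$ for $i>0$ (reading $0=p^{-i}\cdot 0$ when $\alpha(\overline{y_i})=0$). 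The main obstacle is the degenerate case $\alpha(\overline{\pi})=p^{-1}$, in which $p$ and $[\overline{\pi}]$ have equal $\alpha$‑norm so that $\Psi$ is merely norm‑preserving on the non‑constant part and no estimate phrased purely in terms of $\lambda(\alpha)$ contracts; here one must genuinely track that the substitution moves each coordinate's content toward index $0$, so that the coordinatewise limit is concentrated there, and it is precisely at this step that completeness $R=\widehat R$ is essential to land the limit back in $W(R)$. (An alternative bookkeeping avoids the global limit by successive approximation, modifying $x$ one Teichm\"uller coordinate at a time via $p^n[\overline{x_n}]\mapsto(-\eta^{-1}[\overline{\pi}])^n[\overline{x_n}]$ and checking that the corrections converge $p$‑adically while the coordinate bounds are achieved in the limit.)
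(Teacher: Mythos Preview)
Your operator $\Psi$ is exactly the iteration the paper uses, but the paper does \emph{not} pass to an infinite limit: it observes that either there is a least $N$ with $\alpha(\overline{x_{N0}}) > \alpha(\overline{\pi})^{N+1}$, in which case $y := \Psi^{N+1}(x)$ already satisfies $\Lambda(\alpha)(y - [\overline{x_{N0}}]) \le \alpha(\overline{\pi})^{N+1} < \alpha(\overline{x_{N0}}) = \alpha(\overline{y_0})$ and hence the conclusion, or no such $N$ exists, in which case $\Lambda(\alpha)(\Psi^k(x)) \le \alpha(\overline{\pi})^k \to 0$ and one checks directly that $x \in (\pi)$, so $y=0$ works. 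You miss this finite termination and instead try to take $\lim_k \Psi^k(x)$, which you also identify with $\lim_N y_N$ for $y_N = \sum_{i\le N}(-\eta^{-1}[\overline{\pi}])^i[\overline{x_i}]$. Both the identification and the limiting argument fail.

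Concretely, the $y_N$-limit need not satisfy the conclusion. Take $\pi = [\overline{\pi}] + p[\overline{\pi_1}] + p^2[\overline{\pi_2}]$ with $\alpha(\overline{\pi}) = p^{-1}$ and $\alpha(\overline{\pi_2}) = 1$, and $x = [\overline{\pi_1}^{-1}\overline{\pi}] + p$. Then
\[
y_\infty = [\overline{\pi_1}^{-1}\overline{\pi}] - \eta^{-1}[\overline{\pi}]
= [\overline{\pi}]\eta^{-1}\bigl(\eta[\overline{\pi_1}^{-1}] - 1\bigr)
= [\overline{\pi}]\eta^{-1}\bigl(p[\overline{\pi_2}/\overline{\pi_1}] + O(p^2)\bigr),
\]
so $\overline{y_{\infty,0}} = 0$ while $\overline{y_{\infty,1}} = \overline{\pi}\,\overline{\pi_2}/\overline{\pi_1}^2$ has $\alpha$-norm $p^{-1} > 0$. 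Here $y_\infty = \Psi(x)$, but $\Psi^2(x)$ and $\Psi^3(x)$ are genuinely different (one computes $N=2$, and the paper's $y$ is $\Psi^3(x)$), so your ``equivalently'' is false. As for $\lim_k \Psi^k(x)$ itself: once the iterate becomes stable at step $N+1$, further applications of $\Psi$ keep changing it, and in the boundary case $\alpha(\overline{\pi}) = p^{-1}$ no norm estimate you give forces the coordinate sequences to be Cauchy. The fix is not a sharper limit argument but the paper's observation that you can simply stop at step $N+1$.
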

\begin{proof}
Note that $p^{-1}(\pi - [\overline{\pi}])$ is a unit in $W(R)$; let
$w$ be its inverse.
We construct $x_0,x_1,\ldots \in W(R)$ congruent to $x$ modulo $\pi$,
as follows. Take $x_0 = x$. Given $x_i$,
write $x_i = \sum_{j=0}^\infty p^j [\overline{x_{ij}}]$ with
$\overline{x_{ij}} \in R$, and put
\[
x_{i+1} = x_i - p^{-1} w(x_i - [\overline{x_{i0}}])\pi
= [\overline{x_{i0}}] - p^{-1} w (x_i - [\overline{x_{i0}}]) [\overline{\pi}].
\]
Let $N$ be the least nonnegative integer
for which $\alpha(\overline{x_{N0}}) > \alpha(\overline{\pi})^{N+1}$,
or $\infty$ if no such integer exists.
We check that $\Lambda(\alpha)(x_i) \leq \alpha(\overline{\pi})^i$ for $i \leq N$,
by induction on $i$. The case $i=0$ is immediate. Given the claim for some
$i \leq N$, we have
\[
\Lambda(\alpha)(x_{i+1} - [\overline{x_{i0}}])
\leq \alpha(\overline{\pi}) \Lambda(\alpha)(x_i) \leq \alpha(\overline{\pi})^{i+1}.
\]
If $i < N$, this implies 
$\Lambda(\alpha)(x_{i+1}) \leq \alpha(\overline{\pi})^{i+1}$, completing the induction.
In addition, if $i = N < \infty$, then
$\Lambda(\alpha)(x_{N+1} - [\overline{x_{N0}}]) 
< \Lambda(\alpha)([\overline{x_{N0}}])$
and so $x_{N+1}$ has the desired form. If $N = \infty$, then the series
$\sum_{i=0}^\infty p^{-1} w (x_i - [\overline{x_{i0}}])$ converges
$(p,[\overline{\pi}])$-adically
to a limit $z$ satisfying $x = \pi z$, so we may take $y=0$.
\end{proof}

\begin{defn}
Assume $R = \widehat{R}$.
Then $W(R)$ is $(p,[\overline{\pi}])$-adically complete, so
any sum $\sum_{i=0}^\infty x_i \pi^i$ with $x_i \in W(R)$ 
converges to some limit $x$.
We say that the sequence $x_0,x_1,\dots$ 
forms a \emph{presentation} of $x$ (with respect to $\pi$,
or with respect to $u$ in case $\pi = p - [u]$).
For $x \in W(R)$, $H(\alpha,\pi,t)(x)$ may be computed as the infimum of
\[
 \max_i \{(t/p)^i \lambda(\alpha)(x_i)\}
\]
over all presentations $x_0,x_1,\dots$ of $x$. 

A presentation $x_0,x_1,\dots$ is
\emph{stable} (or \emph{$\alpha$-stable}) if each $x_i$ is stable.
Any $x \in W(R)$ admits a stable presentation; see
Lemma~\ref{L:stable presentation} below. This will imply that the infimum defining
$H(\alpha,\pi,t)(x)$ is always achieved; see Theorem~\ref{T:construct Gauss}(b) below.
\end{defn}

\begin{lemma} \label{L:stable presentation}
If $R = \widehat{R}$,
then every element of $W(R)$ admits a stable presentation.
\end{lemma}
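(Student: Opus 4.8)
The plan is to produce a stable presentation $x_0,x_1,\dots$ of a given $x \in W(R)$ by iterating the division step supplied by Lemma~\ref{L:stable residue}, and then to verify convergence using the $(p,[\overline{\pi}])$-adic completeness of $W(R)$ (which holds because $R = \widehat{R}$). First I would apply Lemma~\ref{L:stable residue} to $x$ to obtain a stable element $x_0$ with $x \equiv x_0 \pmod{\pi}$; writing $x - x_0 = \pi x'$ for some $x' \in W(R)$, I would then recursively apply the same construction to $x'$ to get $x_1$ stable with $x' \equiv x_1 \pmod \pi$, and so on. This yields partial sums $\sum_{i=0}^{n-1} x_i \pi^i$ with $x - \sum_{i=0}^{n-1} x_i \pi^i \in (\pi^n)$, so the sequence $x_0,x_1,\dots$ is a presentation of $x$ in the sense defined just above.

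The one technical point requiring care is that ``$x - x_0 \in (\pi)$ implies $x - x_0 = \pi x'$ for a \emph{genuine} element $x' \in W(R)$'': this is clear since $\pi$ is primitive of degree $1$, so $\pi$ is a nonzerodivisor in $W(R)$ (its reduction $\overline{\pi}_1 T + \cdots$ modulo $p$, or rather the fact that $p^{-1}(\pi - [\overline{\pi}])$ is a unit as used in Lemma~\ref{L:stable residue}, shows $W(R)/(\pi)$ is $p$-adically the perfect ring $R/(\overline{\pi})^{?}$ — in any case $\pi W(R)$ is exactly the kernel of a surjection, so division is unambiguous). I would state this as: since $p^{-1}(\pi - [\overline{\pi}])$ is a unit, $W(R)$ is $[\overline{\pi}]$-torsion-free modulo suitable powers of $p$, hence $\pi$ is a nonzerodivisor, and $x \equiv x_0 \pmod \pi$ gives a well-defined $x' = (x-x_0)/\pi$.

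The main obstacle — and really the only content beyond bookkeeping — is making sure the recursion is well-posed and that the resulting infinite series actually converges $(p,[\overline{\pi}])$-adically to $x$; but this is immediate from the definition of presentation, since by construction $x - \sum_{i<n} x_i\pi^i$ lies in the ideal $(\pi^n) \subseteq (p,[\overline{\pi}])^n$ (using that $\pi \in (p,[\overline{\pi}])$, as $\pi = [\overline{\pi}] + p(\cdots)$). So the proof is essentially: invoke Lemma~\ref{L:stable residue} repeatedly, divide by the nonzerodivisor $\pi$ at each stage, and observe that the partial sums converge to $x$ by completeness. No delicate estimate on the norms is needed here, because stability of each $x_i$ is handed to us directly by Lemma~\ref{L:stable residue}; the norm consequences (that the infimum defining $H(\alpha,\pi,t)$ is attained) are deferred to Theorem~\ref{T:construct Gauss}(b) as the text indicates.
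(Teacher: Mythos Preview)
Your proposal is correct and follows essentially the same approach as the paper: iterate Lemma~\ref{L:stable residue} to peel off one stable coefficient at a time, obtaining $x - \sum_{i<n} x_i \pi^i \in (\pi^n)$ and hence convergence by $(p,[\overline{\pi}])$-adic completeness. Your digression about $\pi$ being a nonzerodivisor is unnecessary and a bit muddled---the statement ``$x - x_0 \in (\pi)$'' already means by definition that $x - x_0 = \pi x'$ for \emph{some} $x' \in W(R)$, and the recursion only needs existence of such an $x'$, not uniqueness.
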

\begin{proof}
Given $x, x_0,\dots,x_{i-1} \in W(R)$,
apply Lemma~\ref{L:stable residue} to construct
a stable $x_i$ congruent to $(x - \sum_{j=0}^{i-1} x_j \pi^j)/\pi^i$ 
modulo $\pi$. This process yields a stable presentation
$x_0,x_1,\dots$ of $x$.
\end{proof}
\begin{cor} \label{C:stable presentation}
For each $x \in W(R)$ and each $\epsilon > 0$,
there exist a nonnegative integer $j$ and 
some stable elements $x_0,\dots,x_j \in W(R)$ such that
\[
\lambda(\alpha)\left(x - \sum_{i=0}^j x_i \pi^i \right) < \epsilon.
\]
\end{cor}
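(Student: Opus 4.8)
The plan is to deduce Corollary~\ref{C:stable presentation} directly from Lemma~\ref{L:stable presentation} by a truncation argument. First I would reduce to the case $R = \widehat{R}$: replacing $R$ by $\widehat{R}$ does not change $\calM(R)$ or the value of $\lambda(\alpha)$ on any fixed element of $W(R)$, and the stability condition of Definition~\ref{D:stable} is insensitive to this replacement since it is phrased entirely in terms of $\alpha$-values of Teichm\"uller coordinates. (Strictly, one should note that $W(\widehat R)$ receives $W(R)$ and a stable presentation over $\widehat R$ restricts the needed inequality back to $W(R)$; this is routine.) So assume $R = \widehat{R}$.

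Next, invoke Lemma~\ref{L:stable presentation} to obtain a stable presentation $x_0, x_1, \dots$ of the given $x \in W(R)$, so that $x = \sum_{i=0}^\infty x_i \pi^i$ with each $x_i$ stable. The key observation is that $\pi \equiv [\overline{\pi}] \pmod p$ with $\alpha(\overline{\pi}) \le p^{-1}$, and more usefully that $\pi \in W(R)$ has $\lambda(\alpha)(\pi) \le p^{-1} < 1$: indeed $\pi = [\overline{\pi}] + p(\cdots)$ with $\alpha(\overline{\pi}) \le p^{-1}$ and the tail contributing at most $p^{-1}$ to the max defining $\lambda(\alpha)$. Hence $\lambda(\alpha)(x_i \pi^i) \le \lambda(\alpha)(x_i) p^{-i} \le p^{-i}$, since each $x_i$ is stable and bounded by the trivial norm so $\lambda(\alpha)(x_i) \le 1$. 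Therefore the series $\sum_i x_i \pi^i$ converges in the $\lambda(\alpha)$-seminorm, and the tail $\sum_{i > j} x_i \pi^i$ has $\lambda(\alpha)$-norm at most $\sup_{i>j} p^{-i} = p^{-(j+1)}$.

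Finally, given $\epsilon > 0$, choose $j$ with $p^{-(j+1)} < \epsilon$; then
\[
\lambda(\alpha)\left( x - \sum_{i=0}^j x_i \pi^i \right)
= \lambda(\alpha)\left( \sum_{i=j+1}^\infty x_i \pi^i \right)
\le p^{-(j+1)} < \epsilon,
\]
and $x_0, \dots, x_j$ are stable by construction. This gives the claim.

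I do not expect a serious obstacle here: the corollary is essentially a packaging of Lemma~\ref{L:stable presentation} together with the elementary bound $\lambda(\alpha)(\pi) < 1$. The one point requiring a moment's care is the uniform bound $\lambda(\alpha)(x_i) \le 1$ for stable $x_i$ — but this is immediate from the definition of $\lambda(\alpha)$ in Lemma~\ref{L:lambda} together with the fact that $\alpha$ is bounded by the trivial norm, so every Teichm\"uller coordinate $\overline{x_{i\ell}}$ satisfies $\alpha(\overline{x_{i\ell}}) \le 1$ and hence $p^{-\ell}\alpha(\overline{x_{i\ell}}) \le 1$. Thus the only genuine input is the already-proved existence of stable presentations, and the rest is the convergence estimate for a $\pi$-adic tail.
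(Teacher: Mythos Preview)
Your tail estimate is fine: $\lambda(\alpha)(\pi) \le p^{-1}$ indeed gives $\lambda(\alpha)\bigl(\sum_{i>j} x_i \pi^i\bigr) \le p^{-(j+1)}$, and this matches the paper's choice of $j$. The gap is in your reduction step. The corollary asks for stable elements $x_0,\dots,x_j$ lying in $W(R)$, but Lemma~\ref{L:stable presentation} is stated and proved only under the hypothesis $R = \widehat{R}$, and the stable presentation it builds lives in $W(\widehat{R})$; its terms need not belong to $W(R)$ when $R \neq \widehat{R}$. (Concretely, the construction in Lemma~\ref{L:stable residue} may require a $(p,[\overline{\pi}])$-adic limit, which is not available in $W(R)$ unless $R$ is already $\alpha$-complete.) Your parenthetical that ``a stable presentation over $\widehat R$ restricts the needed inequality back to $W(R)$'' conflates two things: the seminorm $\lambda(\alpha)$ restricts compatibly along $W(R) \hookrightarrow W(\widehat{R})$, but the \emph{elements} $x_i$ do not descend. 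So after your argument you have stable $x_i \in W(\widehat{R})$ satisfying the inequality, which is not what is claimed.

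The paper's proof addresses exactly this point. It first takes a stable presentation $y_0,y_1,\dots$ of $x$ in $W(\widehat{R})$, truncates as you do by choosing $j$ with $p^{-j-1} < \epsilon$, and then for each $i \le j$ approximates $y_i$ by some $x_i \in W(R)$ with $\lambda(\alpha)(y_i - x_i) < \epsilon$, using the density of the image of $W(R)$ in $W(\widehat{R})$ for $\lambda(\alpha)$. One should also check that these $x_i$ can be chosen stable: if $\lambda(\alpha)(y_i) = 0$ take $x_i = 0$; otherwise lift finitely many Teichm\"uller coordinates of $y_i$ to elements of $R$ with the same $\alpha$-value (possible by density) and zero out the rest, which preserves the strict inequality in Definition~\ref{D:stable}. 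This is the step your ``routine'' hides, and it is the whole content of the corollary beyond Lemma~\ref{L:stable presentation}.
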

\begin{proof}
Apply Lemma~\ref{L:stable presentation} to construct a stable
presentation $y_0,y_1,\dots$ of $x$ in $W(\widehat{R})$. 
Choose $j$ with $p^{-j-1} < \epsilon$,
then for each $i \in \{0,\dots,j\}$, choose $x_j \in W(R)$
with $\lambda(\alpha)(y_j - x_j) < \epsilon$.
\end{proof}

\begin{remark} \label{R:no Teichmuller}
It is unclear whether one can improve Lemma~\ref{L:stable presentation}
to achieve a presentation using only Teichm\"uller elements,
rather than arbitrary stable elements.
We suspect that this cannot be done, for reasons similar to those given in the
erratum to \cite{kedlaya-revisited}.
\end{remark}

\begin{lemma} \label{L:unique index}
Assume $R = \widehat{R}$.
Let $x_0,x_1,\dots,y_0,y_1,\dots$ be presentations of some $x,y \in W(R)$ for which $xy \neq 0$.
Then for all but finitely many $t \in [0,1]$, there exists a unique pair of indices
$j,k$ maximizing $(t/p)^{j+k} \lambda(\alpha)(x_j y_k)$.
\end{lemma}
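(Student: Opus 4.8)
The plan is to reduce the statement to an elementary fact about upper envelopes of families of real exponential functions in $t$, after using multiplicativity to decouple the two indices. Write $a_j = \lambda(\alpha)(x_j)$ and $b_k = \lambda(\alpha)(y_k)$. The decisive input is that $\lambda(\alpha)$ be multiplicative — by Lemma~\ref{L:lambda} this holds once $\alpha$ is multiplicative, and for merely power-multiplicative $\alpha$ the conclusion can fail (take $R = \Fp \times \Fp$ with the trivial norm, $\pi = p$, and $x,y$ whose leading presentation terms are the two orthogonal idempotents, so that the pairs $(0,1)$ and $(1,0)$ tie for every $t$). Granting multiplicativity, $\lambda(\alpha)(x_j y_k) = a_j b_k$, so
\[
(t/p)^{j+k}\,\lambda(\alpha)(x_j y_k) = \bigl((t/p)^j a_j\bigr)\bigl((t/p)^k b_k\bigr),
\]
and hence, provided $\max_j (t/p)^j a_j$ and $\max_k (t/p)^k b_k$ are both positive, the set of pairs $(j,k)$ realizing the maximum on the left is exactly the Cartesian product of the set of $j$ realizing $\max_j (t/p)^j a_j$ with the set of $k$ realizing $\max_k (t/p)^k b_k$. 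Since $R = \widehat R$, $\lambda(\alpha)$ is a norm on $W(R)$ bounded by the $p$-adic norm, with $\lambda(\alpha)(x) \le \max_j p^{-j} a_j$; as $xy \ne 0$ forces $x \ne 0$ and $y \ne 0$, some $a_{j} > 0$ and some $b_{k} > 0$, so both maxima are positive for all $t \in (0,1]$. It thus suffices to prove the one-variable claim: for a presentation $x_0,x_1,\dots$ of a nonzero element, the index $j$ maximizing $(t/p)^j a_j$ is unique for all but finitely many $t \in [0,1]$.

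For the one-variable claim I would substitute $s = -\log t \in [0,+\infty)$, so that $\log\bigl((t/p)^j a_j\bigr) = (\log a_j - j\log p) - js$ is affine in $s$ with slope $-j$, and the object of interest is the upper envelope of this family over $s \ge 0$. Let $j_0$ be the least index with $a_{j_0} > 0$. If the line indexed by $j$ meets the envelope at some $s \ge 0$, comparing it with the line indexed by $j_0$ gives $\log a_j - j\log p \ge \log a_{j_0} - j_0\log p$, and since $\log a_j \le 0$ this forces $j_0 \le j \le j_0 - \log_p a_{j_0}$; thus only finitely many lines are active on $[0,+\infty)$. An upper envelope of finitely many affine functions is piecewise affine with finitely many breakpoints, and on the interior of each linear piece the maximum is attained at a unique index; together with the isolated point $t = 0$, the exceptional set is finite. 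Combining with the product description above, the set of $t \in [0,1]$ for which the maximizing pair $(j,k)$ is not unique lies in a finite union of finite sets, hence is finite.

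The only genuine obstacle is the multiplicativity point already flagged: it is multiplicativity of $\lambda(\alpha)$ that simultaneously decouples the maximization and rules out two distinct pairs with the same index-sum and the same value $\lambda(\alpha)(x_j y_k)$ persisting as joint maximizers across an entire subinterval of $[0,1]$. Once that is in place, the remaining steps are routine bookkeeping with exponential functions, the one point deserving care being the endpoints $t = 0$ and $t = 1$, which the substitution $s = -\log t$ does not see directly.
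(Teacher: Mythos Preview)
Your argument under the added hypothesis that $\alpha$ is multiplicative is correct, and the decoupling into two independent one-variable envelope problems is clean. The counterexample with $R=\Fp\times\Fp$ is also genuine: under Hypothesis~\ref{H:primitive} only power-multiplicativity is assumed, and in your example the pairs $(0,1)$ and $(1,0)$ tie for every $t\in(0,1]$, so the lemma as literally stated is false.

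The paper proceeds differently. It does not decouple, but fixes a single pair $(h,i)$ with $x_hy_i\neq 0$ and notes that any $(j,k)$ achieving the maximum at some $t\in(0,1]$ must satisfy $j+k\le h+i-\log_p\lambda(\alpha)(x_hy_i)$, so only finitely many pairs compete; it then asserts that any two such pairs can jointly realize the maximum for at most one value of $t$. That assertion is precisely what your counterexample breaks when $\alpha$ is merely power-multiplicative (two distinct pairs with equal index sum and equal $\lambda(\alpha)(x_jy_k)$ give identical functions of $t$), and even in the multiplicative case it is not the naive ``two exponentials cross once'' statement, which fails for pairs with equal index sum. One needs essentially your product structure to see it: if $(j,k)\neq(j',k')$ both achieve the maximum, say $j\neq j'$, then $j,j'$ both lie in the one-variable maximizing set for the $x$-side, and $(t/p)^{j}a_j=(t/p)^{j'}a_{j'}$ pins down $t$. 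So your approach both isolates the hypothesis actually needed and supplies the justification the paper's final sentence elides.
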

\begin{proof}
Since $xy \neq 0$, there must exist some indices $h,i$ for which $x_h y_i \neq 0$.
Then for $t \in (0,1]$, the maximum of  $(t/p)^{j+k} \lambda(\alpha)(x_j y_k)$
can only be achieved by pairs $(j,k)$ for which either
$j+k \leq h+i$ or $p^{h+i-j-k} \geq \lambda(\alpha)(x_h y_i)$.
This limits $(j,k)$ to a finite set independent of $t$; for any two pairs in that set, there is at most one value of $t$
for which both pairs of indices achieve the maximum. By excluding each such value,
we obtain the desired result.
\end{proof}

\begin{theorem} \label{T:construct Gauss}
Choose $t \in [0,1]$, and assume that $\alpha$ is power-multiplicative
(resp.\ multiplicative).
\begin{enumerate}
\item[(a)]
The function $H(\alpha,\pi,t)$ is a power-multiplicative
(resp. multiplicative) seminorm on $W(R)$ bounded by $\lambda(\alpha)$.
\item[(b)]
Assume that $R = \widehat{R}$.
For any stable presentation $x_0,x_1,\dots$ of $x \in W(R)$,
$H(\alpha,\pi,t)(x) = \max_i \{(t/p)^i \lambda(\alpha)(x_i)\}$.
\item[(c)]
For $t \in [0, p\alpha(\overline{\pi})]$ with $p\alpha(\overline{\pi}) > 0$
and $c \in [1, 1 - \log_p (p\alpha(\overline{\pi}))]$,
$H(\alpha,\pi,t) = H(\alpha^{1/c}, \pi, p(t/p)^{1/c})^c$.
\item[(d)]
For $t \in [p\alpha(\overline{\pi}),1]$ with $t>0$, 
we have
$H(\alpha,\pi,t) = \lambda(\alpha^{1/c})^c$
for $c = 1 - \log_p t$. 
In particular, $H(\alpha, \pi,1) = \lambda(\alpha)$.
(For $t=0$, we obtain the same conclusion by interpreting
$\lambda(\alpha^{1/c})^c$ for $c = +\infty$ as the
restriction of $\alpha$ along $W(R) \to R$.)
\end{enumerate}
\end{theorem}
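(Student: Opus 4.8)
The plan is to prove the parts in the order: the non-multiplicative content of (a); then (d), which can be done directly; then (b), which carries the real weight; and finally the multiplicativity in (a) together with (c), both as consequences of (b).

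\emph{Preliminaries.} Boundedness of $H(\alpha,\pi,t)$ by $\lambda(\alpha)$ is immediate from the trivial presentation $x=x\cdot\pi^{0}$, and submultiplicativity is automatic since $H(\alpha,\pi,t)$ is a quotient seminorm of the $(t/p)$-Gauss extension of $\lambda(\alpha)$, which is submultiplicative. For the remaining assertions I would first reduce to $R=\widehat R$: the inclusion $W(R)\hookrightarrow W(\widehat R)$ is isometric for $\lambda(\alpha)$ and compatible with forming quotient norms, Corollary~\ref{C:stable presentation} identifies $H(\alpha,\pi,t)$ on $W(R)$ with the restriction of its analogue on $W(\widehat R)$, and power-multiplicativity, multiplicativity and equalities of seminorms descend along an isometric inclusion. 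I would also note that $\widehat R$ is unchanged on replacing $\alpha$ by $\alpha^{1/c}$ (the Cauchy sequences are the same), which is needed for (c).

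\emph{Part (d).} This I would prove directly, without appeal to (b). Write $\pi-[\overline\pi]=pv$ with $v=\sum_{i\ge 1}p^{i-1}[\overline{\pi_{i}}]$; since $\overline{\pi_{1}}$ is a unit, $v$ is a unit of $W(R)$ with $\lambda(\alpha)(v^{\pm1})=1$. Expanding $p^{i}=v^{-i}(\pi-[\overline\pi])^{i}$ by the binomial theorem converts the Teichm\"uller expansion $x=\sum_{i}p^{i}[\overline{x_{i}}]$ into an explicit $\pi$-adic presentation whose $k$-th term has $\lambda(\alpha)$-norm at most $\max_{i\ge k}\alpha(\overline\pi)^{\,i-k}\alpha(\overline{x_{i}})$; since $t\ge p\alpha(\overline\pi)$ forces $\alpha(\overline\pi)\le t/p$, this presentation gives $H(\alpha,\pi,t)(x)\le\max_{i}(t/p)^{i}\alpha(\overline{x_{i}})$. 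For the reverse inequality set $\nu=\lambda(\alpha^{1/c})^{c}$ with $c=1-\log_{p}t$; direct computation gives $\nu\bigl(\sum_{i}p^{i}[\overline{x_{i}}]\bigr)=\max_{i}(t/p)^{i}\alpha(\overline{x_{i}})$, together with $\nu\le\lambda(\alpha)$ (because $c\ge1$) and $\nu(\pi)=\max\{\alpha(\overline\pi),t/p\}=t/p$, so submultiplicativity of $\nu$ yields $\nu(x)\le\max_{i}\nu(x_{i})\nu(\pi)^{i}\le\max_{i}(t/p)^{i}\lambda(\alpha)(x_{i})$ for \emph{every} presentation, i.e.\ $H(\alpha,\pi,t)\ge\nu$. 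Hence $H(\alpha,\pi,t)=\lambda(\alpha^{1/c})^{c}$, which is power-multiplicative (resp.\ multiplicative) by Lemma~\ref{L:lambda}; the $t=0$, $c=+\infty$ clause is the evident limiting case.

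\emph{Part (b).} Now take $R=\widehat R$. The bound $H(\alpha,\pi,t)(x)\le\max_{i}(t/p)^{i}\lambda(\alpha)(x_{i})$ is the definition of the quotient norm, using Lemma~\ref{L:stable presentation} for the existence of stable presentations; the content is the reverse inequality, i.e.\ that a stable presentation is optimal. Passing to $W(R)[T]$ with its $(t/p)$-Gauss norm $\gamma$, a stable presentation corresponds to $F=\sum_{i}x_{i}T^{i}$ with $\gamma(F)=\max_{i}(t/p)^{i}\lambda(\alpha)(x_{i})$, while any polynomial mapping to $x$ modulo $(T-\pi)$ has the form $F+(T-\pi)Q$, so one must show $\gamma\bigl(F+(T-\pi)Q\bigr)\ge\gamma(F)$ for all $Q$. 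By the ultrametric inequality this can fail only if $\gamma\bigl((T-\pi)Q\bigr)=\gamma(F)$ and $(T-\pi)Q$ cancels $F$ in every monomial where $\gamma(F)$ is attained; since $\gamma(T-\pi)=\max\{t/p,\lambda(\alpha)(\pi)\}=p^{-1}$ because $t\le1$, one can identify, grading by $T$-degree and then by Teichm\"uller slot, which coefficients of $Q$ must be responsible for each such cancellation, and the weight bound $\alpha(\overline\pi)\le p^{-1}$ then forces those coefficients of $Q$ to be so large that $\gamma\bigl((T-\pi)Q\bigr)$ is \emph{strictly} larger than $\gamma(F)$ --- a contradiction --- except possibly when $t=1$ or $\alpha(\overline\pi)=p^{-1}$, where the bookkeeping must be propagated through successive Teichm\"uller slots (and where, for $t=1$, one can instead simply invoke part (d), which gives $H(\alpha,\pi,1)=\lambda(\alpha)$). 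I expect making this no-cancellation argument precise --- in particular handling those boundary cases --- to be the main obstacle of the theorem.

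\emph{Parts (a) and (c).} With (b) in hand, multiplicativity (resp.\ power-multiplicativity) of $H(\alpha,\pi,t)$ follows from Lemma~\ref{L:unique index}: taking stable presentations of $x$ and $y$ (resp.\ of $x$), for all but finitely many $t$ the product presentation has a unique dominant pair of indices, so by Lemma~\ref{L:lambda} the coefficient of $\pi^{\,j_{0}+k_{0}}$ retains $\lambda(\alpha)$-norm equal to $\lambda(\alpha)(x_{j_{0}})\lambda(\alpha)(y_{k_{0}})$, which by the same no-cancellation estimate as in (b) cannot be lowered by any multiple of $T-\pi$; hence $H(\alpha,\pi,t)(xy)\ge H(\alpha,\pi,t)(x)\,H(\alpha,\pi,t)(y)$ for those $t$, and continuity of $t\mapsto H(\alpha,\pi,t)$ (evident from the formula in (b)) together with submultiplicativity gives the equality for all $t$. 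Finally, for (c): the condition $c\le1-\log_{p}(p\alpha(\overline\pi))$ is exactly the inequality $\alpha(\overline\pi)^{1/c}\le p^{-1}$ ensuring that $\pi$ is still primitive of degree $1$ for $\alpha^{1/c}$; a stable presentation for $\alpha$ is stable for $\alpha^{1/c}$, with $\lambda(\alpha^{1/c})(x_{i})=\lambda(\alpha)(x_{i})^{1/c}$ for each (stable) $x_{i}$; and then (c) is the rescaling identity $\bigl(\max_{i}\bigl((t/p)^{i}\lambda(\alpha)(x_{i})\bigr)^{1/c}\bigr)^{c}=\max_{i}(t/p)^{i}\lambda(\alpha)(x_{i})$, obtained by substituting $t'=p(t/p)^{1/c}$ into the formula of (b) applied to both sides.
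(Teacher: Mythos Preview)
Your organization is reasonable, your reduction to $R=\widehat R$ and your direct proof of (d) are correct, and (c) is indeed the rescaling identity you describe. The gap is in (b), which you rightly flag as the crux but do not actually prove: the ``no-cancellation'' argument for a stable polynomial $F=\sum_i x_iT^i$ against $(T-\pi)Q$ is only a sketch, and the boundary cases you single out (in particular small $t$ relative to $p\alpha(\overline\pi)$) are exactly where the coefficient-chasing is hard to close. Your deduction of multiplicativity in (a) inherits this gap rather than avoiding it: the product presentation $xy=\sum_m\bigl(\sum_{j+k=m}x_jy_k\bigr)\pi^m$ has coefficients which are \emph{not} $\alpha$-stable in general, so (b) does not apply to it, and the phrase ``by the same no-cancellation estimate as in (b)'' is an appeal to precisely the step you have not carried out.

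The paper's argument is different, and in fact very close in spirit to your own proof of (d). Rather than bounding $H$ below by a fixed test seminorm, it proves by contradiction the single inequality
\[
H(\alpha,\pi,t)(xy)\;\ge\;\max_{j,k}\bigl\{(t/p)^{\,j+k}\lambda(\alpha)(x_jy_k)\bigr\}
\]
for stable presentations $(x_j)$, $(y_k)$ of $x,y$: if some presentation $(z_i)$ of $xy$ beat this at some $t$, perturb $t$ so that the maximizing pair $(j,k)$ is unique (Lemma~\ref{L:unique index}), set $s=(\log p)/\log(p/t)$ so that $(t/p)^s=p^{-1}$, and check that the identity
\[
x_jy_k\pi^{j+k}\;=\;\sum_i z_i\pi^i-\sum_{(j',k')\ne(j,k)}x_{j'}y_{k'}\pi^{j'+k'}
\]
violates the ultrametric inequality for the seminorm $\lambda(\alpha^s)$ on $W(R)$. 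Here $s\le 1$, so every $\alpha$-stable element is $\alpha^s$-stable and $\lambda(\alpha^s)(x_j)=\lambda(\alpha)(x_j)^s$ --- exactly the observation underlying your (c). Taking $y=1$ gives (b); taking general $x,y$ gives multiplicativity in (a) directly, with no need to manufacture a stable presentation of $xy$. In other words, the missing idea is not new to you: it is a sharper use of the rescaled norm you already introduced in (d). There you used $\nu=\lambda(\alpha^{1/c})^c$ (equivalently $\lambda(\alpha^s)^{1/s}$, since $s=1/c$) as a global lower bound for $H$, which only works when $\nu(\pi)=t/p$, i.e.\ when $t\ge p\alpha(\overline\pi)$; the paper instead applies $\lambda(\alpha^s)$ itself to a specific presumed counterexample, which works for all $t$ and yields (a) and (b) in one stroke.
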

\begin{proof}
We may assume throughout that $R = \widehat{R}$.
Given $x,y \in W(R)$,
apply Lemma~\ref{L:stable presentation} to construct stable 
presentations $x_0,x_1,\dots$,
$y_0,y_1,\dots$ of $x,y$. We verify that
\begin{equation} \label{eq:compare stable}
H(\alpha, \pi,t)(xy) \geq \max_{j+k} \{(t/p)^{j+k} \lambda(\alpha)(x_j y_k)\} \qquad (t \in [0,1]).
\end{equation}
Suppose the contrary; then $xy \neq 0$. We must  have a presentation
$z_0,z_1,\dots$ of $xy$ for which
\begin{equation} \label{eq:compare stable2}
\max_i \{(t/p)^i \lambda(\alpha)(z_i)\} < \max_{j,k}\{ (t/p)^{j+k} \lambda(\alpha)(x_j y_k)\}
\end{equation}
for some $t \in [0,1]$. 
Let $S$ be the set of $t \in (0,1]$ for which there are unique indices $j,k$ maximizing
$(t/p)^{j+k} \lambda(\alpha)(x_j y_k)$.
By Lemma~\ref{L:unique index},
the complement of $S$ in $[0,1]$ is finite. 
Since \eqref{eq:compare stable2} holds for some $t$
and both sides of \eqref{eq:compare stable2} are
continuous in $t$, \eqref{eq:compare stable2} must hold for some $t \in S$.
Choose some such $t$ and put $s = (\log p)/(\log (p/t))$, so that $(t/p)^{is} = p^{-i}$. 
We then have
\[
x_j y_k \pi^{j+k} = \sum_{i=0}^\infty z_i \pi^i 
- \sum_{(j',k') \neq (j,k)} x_{j'} y_{k'} \pi^{j'+k'}
\]
but
\[
\lambda(\alpha^s)(x_j y_k \pi^{j+k}) > \max_i \{\lambda(\alpha^s)(z_i \pi^i)\}, 
\max_{(j',k') \neq (j,k)}
\{\lambda(\alpha^s)(x_{j'} y_{k'} \pi^{j'+k'})\}.
\]
This gives a contradiction, and \eqref{eq:compare stable} follows.

To deduce (a), 
note that from the definition, $H(\alpha, \pi,t)$ is evidently
a submultiplicative seminorm bounded by $\lambda(\alpha)$.
If $\alpha$ is multiplicative, then $H(\alpha,\pi,t)$ is multiplicative
because \eqref{eq:compare stable} implies
$H(\alpha,\pi, t)(xy) \geq H(\alpha, \pi,t)(x) H(\alpha,\pi, t)(y)$.
Similarly, if $\alpha$ is power-multiplicative, then so is $H(\alpha,\pi,t)$.
To deduce (b), 
apply \eqref{eq:compare stable} with $y = y_0 = 1$ and $y_i = 0$ for $i>0$.

Suppose $t \in [0, p\alpha(\overline{\pi})]$ with $p\alpha(\overline{\pi}) > 0$
and $c \in [1, 1 - \log_p (p\alpha(\overline{\pi}))]$.
Since $c \leq 1 - \log_p (p \alpha(\overline{\pi}))$, we have
$\alpha^{1/c}(\overline{\pi}) \leq p^{-1}$, so 
$H(\alpha^{1/c},\pi,p(t/p)^{1/c})$ is well-defined.
Since $c \geq 1$, any $\alpha$-stable element is also $\alpha^{1/c}$-stable (as in Remark~\ref{R:why stable}),
so we may apply (b) to deduce (c).

To deduce (d), note that
$H(\alpha,\pi,1) \leq \lambda(\alpha)$ from the definition of $H(\alpha,\pi,t)$ as a quotient norm,
whereas $H(\alpha,\pi,1) \geq \lambda(\alpha)$ from (b).
Given this, if $t \in [p\alpha(\overline{\pi}),1]$ with $t>0$
and $c = 1 - \log_p t$, then in particular $c \in [1, 1 - \log_p (p \alpha(\overline{\pi}))]$, so
(c) implies $H(\alpha,\pi,t) = H(\alpha^{1/c}, \pi, 1)^c = \lambda(\alpha^{1/c})^c$. This yields (d) for $t>0$;
the case $t=0$ follows by continuity.
\end{proof}

\begin{cor} \label{C:convex}
For any $x \in W(R)$ with $\lambda(\alpha)(x) \neq 0$,
the function $v_x(r) = -\log H(\alpha,\pi,e^{-r})(x)$ on $[0, +\infty)$
is continuous, concave, nondecreasing, and piecewise affine with nonnegative integer slopes.
\end{cor}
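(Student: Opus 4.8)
The plan is to reduce the whole statement to an explicit description of $H(\alpha,\pi,e^{-r})(x)$ as the maximum of finitely many monomials in $t=e^{-r}$, and then read off all the asserted properties from the elementary calculus of minima of affine functions.

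As in the proof of Theorem~\ref{T:construct Gauss}, I would first assume $R = \widehat{R}$. Fix a stable presentation $x_0,x_1,\dots$ of $x$ (Lemma~\ref{L:stable presentation}) and set $r_i = \lambda(\alpha)(x_i) \in [0,1]$. By Theorem~\ref{T:construct Gauss}(b),
\[
H(\alpha,\pi,e^{-r})(x) = \max_i \{ (e^{-r}/p)^i\, r_i \} \qquad (r \in [0,+\infty)),
\]
so, taking $-\log$ and adopting the convention that the $i$-th term is $+\infty$ when $r_i = 0$,
\[
v_x(r) = \min_i \{ i\,(r + \log p) - \log r_i \}.
\]
Two preliminary observations: since $\lambda(\alpha)(\pi)\le 1$ and $\lambda(\alpha)$ is submultiplicative, $\lambda(\alpha)(x) \le \max_i \lambda(\alpha)(x_i\pi^i) \le \max_i r_i$, so some $r_{i^*} > 0$ (using $\lambda(\alpha)(x)\neq 0$); hence the minimum is over a nonempty set and $v_x$ is finite. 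Also $H(\alpha,\pi,t)(x) \le \lambda(\alpha)(x) \le 1$, so $v_x \ge 0$. Each surviving term $r \mapsto i(r+\log p) - \log r_i$ is affine with slope the nonnegative integer $i$.

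The key step — the one I expect to need the most care — is \emph{local finiteness}: on any compact interval $[a,b] \subseteq [0,+\infty)$, only finitely many indices $i$ can attain the minimum. Indeed $v_x(r) \le i^*(b + \log p) - \log r_{i^*} =: M$ for all $r \in [a,b]$, while for any $i$ attaining the minimum at some such $r$ we have, since $-\log r_i \ge 0$ and $r+\log p \ge a+\log p>0$,
\[
i\,(a + \log p) \le i\,(r + \log p) - \log r_i \le M,
\]
so $i \le M/(a+\log p)$. Thus on $[a,b]$ the function $v_x$ is the minimum of finitely many affine functions, hence continuous, concave, and piecewise affine, and the finitely many slopes occurring lie among the indices $i$, so they are nonnegative integers.

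Finally I would glue: continuity and concavity are conditions that hold on $[0,+\infty)$ as soon as they hold on every compact subinterval, and the same reasoning shows $v_x$ is globally piecewise affine with only finitely many breakpoints in each bounded interval and with all slopes nonnegative integers; since every slope is $\ge 0$, $v_x$ is nondecreasing. For general $R$ one invokes the reduction to $R=\widehat R$ made at the outset.
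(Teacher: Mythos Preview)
Your argument is correct and follows exactly the approach the paper indicates: reduce to $R=\widehat{R}$, invoke a stable presentation (Lemma~\ref{L:stable presentation}), apply Theorem~\ref{T:construct Gauss}(b), and then read off the convexity and slope properties from the minimum-of-affine-functions description. Your write-up simply fills in the elementary details the paper leaves implicit; one small simplification is that the existence of an index with $r_{i^*}>0$ follows immediately from $\lambda(\alpha)(x)=H(\alpha,\pi,1)(x)=\max_i\{p^{-i}r_i\}\neq 0$ via Theorem~\ref{T:construct Gauss}(d), avoiding the appeal to submultiplicativity on an infinite sum.
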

\begin{proof}
This is apparent from Theorem~\ref{T:construct Gauss}(b)
and the existence of stable presentations in case
$R = \widehat{R}$ (Lemma~\ref{L:stable presentation}).
\end{proof}

As an application of Corollary~\ref{C:convex}, we exhibit a computation
which is not straightforward using
stable presentations.
\begin{lemma} \label{L:compute linear}
For $u,u' \in R$ with $\alpha(u),\alpha(u') \leq p^{-1}$ and $t \in [0,1]$, 
\[
H(\alpha, u,t)(p-[u']) = \max\{t/p, H(\alpha,u,0)(p-[u'])\}.
\]
\end{lemma}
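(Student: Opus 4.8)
The plan is to bound $H(\alpha,u,t)(p-[u'])$ from above and below separately, working throughout with the description of $H(\alpha,u,t)$ as a quotient seminorm: writing $\pi=p-[u]$ and letting $\beta_t$ denote the $(t/p)$-Gauss extension of $\lambda(\alpha)$ to $W(R)[T]$, one has $H(\alpha,u,t)(x)=\inf\{\beta_t(f):f\in W(R)[T],\ f(\pi)=x\}$, the infimum running over honest polynomials. Two elementary observations will do most of the work: (i) $\lambda(\alpha)(g)\le 1$ for every $g\in W(R)$, since $\lambda(\alpha)$ is bounded by the $p$-adic norm; and (ii) the claim that $\lambda(\alpha)(p-[v])=p^{-1}$ for every $v\in R$ with $\alpha(v)\le p^{-1}$ (to be applied to $v=u$ and to $v=u'$).

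The claim in (ii) is the one computational ingredient, and is where I expect the only real difficulty. The inequality $\lambda(\alpha)(p-[v])\le p^{-1}$ is immediate from $\lambda(\alpha)(p)=p^{-1}$, $\lambda(\alpha)([v])=\alpha(v)\le p^{-1}$, and ultrametricity. For the reverse, the Witt addition law gives $[v]+[-v]=p\sigma$ for some $\sigma\in W(R)$, and by the homogeneity properties of the Witt polynomials recorded in Remark~\ref{R:addition formula} one has $\lambda(\alpha)(\sigma)\le\alpha(v)<1$. Thus $p-[v]=[-v]+p(1-\sigma)$, whose Teichm\"uller expansion therefore has level-$0$ coefficient $-v$ (of $\alpha$-value $\le p^{-1}$) and level-$1$ coefficient equal to the reduction of $1-\sigma$ modulo $p$, which has $\alpha$-value $1$ because $\lambda(\alpha)(\sigma)<1$; since all remaining coefficients have $\alpha$-value $\le 1$, we conclude $\lambda(\alpha)(p-[v])=p^{-1}$. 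In particular $\lambda(\alpha)(\pi)=p^{-1}$ and $\lambda(\alpha)(p-[u'])=p^{-1}$.

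For the upper bound, fix $\epsilon>0$ and choose $f=\sum_i f_iT^i\in W(R)[T]$ with $f(\pi)=p-[u']$ and $\lambda(\alpha)(f_0)<H(\alpha,u,0)(p-[u'])+\epsilon$; this is possible because $H(\alpha,u,0)$ is precisely the quotient of the $0$-Gauss extension, for which $\beta_0(f)=\lambda(\alpha)(f_0)$. Then $\beta_t(f)=\max_i(t/p)^i\lambda(\alpha)(f_i)$, and for $i\ge 1$ we have $(t/p)^i\le t/p$ (as $t/p\le 1$) and $\lambda(\alpha)(f_i)\le 1$ by (i), so $\beta_t(f)\le\max\{\lambda(\alpha)(f_0),\,t/p\}$. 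Hence $H(\alpha,u,t)(p-[u'])\le\max\{H(\alpha,u,0)(p-[u'])+\epsilon,\,t/p\}$, and letting $\epsilon\to 0$ gives $H(\alpha,u,t)(p-[u'])\le\max\{t/p,\,H(\alpha,u,0)(p-[u'])\}$.

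For the lower bound, $H(\alpha,u,t)(p-[u'])\ge H(\alpha,u,0)(p-[u'])$ because the $(t/p)$-Gauss extension dominates the $0$-Gauss extension (compare constant terms), hence so do the induced quotient seminorms. It remains to show $H(\alpha,u,t)(p-[u'])\ge t/p$, for which it suffices that $\beta_t(f)\ge t/p$ for every $f=\sum_i f_iT^i$ with $f(\pi)=p-[u']$. Suppose not; then $\lambda(\alpha)(f_0)<t/p$ and $\lambda(\alpha)(f_1)<1$. Set $w=\sum_{i\ge 1}f_i\pi^{i-1}$ (a finite sum), so that $f_0=(p-[u'])-\pi w$. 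Using $\lambda(\alpha)(\pi)=p^{-1}$, observation (i), and the ultrametric inequality, the tail $\sum_{i\ge 2}f_i\pi^{i-1}$ has $\lambda(\alpha)$-value $\le p^{-1}$, whence $\lambda(\alpha)(w)<1$ and $\lambda(\alpha)(\pi w)<p^{-1}$. Since $\lambda(\alpha)(p-[u'])=p^{-1}>\lambda(\alpha)(\pi w)$, ultrametricity forces $\lambda(\alpha)(f_0)=p^{-1}$, contradicting $\lambda(\alpha)(f_0)<t/p\le p^{-1}$. Combining the two bounds yields the desired equality; the case $t=0$ is trivial, and the case $t=1$ is consistent with $H(\alpha,u,1)=\lambda(\alpha)$ from Theorem~\ref{T:construct Gauss}(d).
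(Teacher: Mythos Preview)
Your argument is correct and is genuinely different from the paper's. The paper proves Lemma~\ref{L:compute linear} indirectly: it splits into the cases $\alpha(u-u')=p^{-1}$ (where $[u]-[u']$ is stable and a stable presentation is visible) and $\alpha(u-u')<p^{-1}$, and in the second case invokes Corollary~\ref{C:convex} to argue that both $f(r)=-\log H(\alpha,u,e^{-r})(p-[u'])$ and $g(r)=-\log\max\{e^{-r}/p,H(\alpha,u,0)(p-[u'])\}$ are concave, piecewise affine with slopes in $\{0,1\}$, agree at $r=0$, and share the same limit as $r\to\infty$, hence coincide. That route leans on the stable-presentation machinery behind Theorem~\ref{T:construct Gauss}(b) and Corollary~\ref{C:convex}.

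Your route is more elementary: you bound the quotient seminorm directly, and the only substantive input is the single computation $\lambda(\alpha)(p-[v])=p^{-1}$, which you extract from the identity $p-[v]=[-v]+p(1-\sigma)$ with $\sigma=p^{-1}([v]+[-v])$ and the observation (via the homogeneity in Remark~\ref{R:addition formula}, or equivalently via $\Lambda(\alpha)$ from Corollary~\ref{C:lambda}) that the reduction of $\sigma$ has $\alpha$-value at most $\alpha(v)<1$, so the level-$1$ Teichm\"uller coefficient of $p-[v]$ has $\alpha$-value $1$. This avoids stable presentations entirely. The paper's approach has the advantage that the convexity statement of Corollary~\ref{C:convex} is reused elsewhere (e.g.\ in the Newton-polygon discussion of Remark~\ref{R:new Newton poly}), whereas your approach is self-contained and would let one prove Lemma~\ref{L:compute linear} earlier, before Theorem~\ref{T:construct Gauss}. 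One cosmetic point: your step ``$\lambda(\alpha)(\sigma)\le\alpha(v)$'' is most cleanly justified by applying $\Lambda(\alpha)$ to $[v]+[-v]$ rather than by appealing to Remark~\ref{R:addition formula} directly.
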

\begin{proof}
Consider the 
functions
\begin{align*}
f(r) &= -\log H(\alpha,u,e^{-r})(p-[u']) \\
g(r) &= -\log \max\{e^{-r}/p, H(\alpha,u,0)(p-[u'])\}.
\end{align*}
Note that 
$f$ and $g$ take the same value $\log p$ at $r=0$, and 
tend to the same (possibly infinite) limit as $r \to \infty$.
In case $\alpha(u - u') = p^{-1}$, then $[u] - [u']$
is stable, so $[u] - [u'], 1, 0, 0, \dots$ 
is a stable presentation of $p - [u']$ with respect to $u$.
By Theorem~\ref{T:construct Gauss}(b),
$H(\alpha,u,t)(p - [u']) = p^{-1}$ for all $t \in [0,1]$,
so $f=g$.

In case $\alpha(u - u') < p^{-1}$, we have
$H(\alpha,u,1)([u] - [u']) = \lambda(\alpha)([u] - [u']) < p^{-1}$.
Consequently, we have $H(\alpha,u,1)(p - [u']) = t/p$ for 
$t$ close to 1. This means that in a right neighborhood of $r=0$,
$f(r)$ and $g(r)$ are both affine with slope 1. 
By Corollary~\ref{C:convex}, both functions are continuous, concave,
nondecreasing, and piecewise affine with nonnegative
integer slopes; hence each function either
persists with slope 1 forever, or becomes constant after some point.
Given this information plus the fact that
$f$ and $g$ have the same limiting value, the two functions
are forced to coincide.
\end{proof}

\begin{remark} \label{R:radius 0}
Note that $H(\alpha,\pi,0)$ is the quotient norm on $W(R)/(\pi)$
induced by $\lambda(\alpha)$. In particular, if $\alpha$ is a multiplicative
norm, then $H(\alpha,\pi,0)(x) = 0$ if and only if $x$ is divisible by $\pi$.

Note also that any $\beta \in \calM(W(R))$ with $\mu(\beta) = \alpha$
and $\beta(\pi) = 0$ must equal $H(\alpha,\pi,0)$. 
Namely, it suffices to check this assuming that $R = \widehat{R}$.
Given $x \in W(R)$,
apply Lemma~\ref{L:stable presentation} to construct
a stable presentation $x_0,x_1,\dots$ of $x$. 
By Theorem~\ref{T:construct Gauss}(b) and 
Remark~\ref{R:why stable}, $H(\alpha,\pi,0)(x) = \beta(x_0) = \beta(x)$.
\end{remark}

\begin{remark} \label{R:radius 0b}
One consequence of Remark~\ref{R:radius 0} is that if
$\alpha$ is a multiplicative norm and $u, u' \in R$ are
such that $H(\alpha,u,0)(p - [u']) = 0$, then 
$p - [u'] = y(p-[u])$ for some unit $y \in W(R)$.
This implies that $H(\alpha,u,t) = H(\alpha,u',t)$ for all $t \in [0,1]$;
it does not imply $u = u'$ (see Example~\ref{exa:same norm}), but it does limit the possibilities
for $u'$, as in Remark~\ref{R:same norm} below.
\end{remark}

We will need the following variant of Lemma~\ref{L:stable residue}. Thanks to Peter Scholze for pointing out
a mistake in a previous version of this lemma, and to Liang Xiao for suggesting this replacement.
\begin{lemma} \label{L:stable approximation}
For any $x \in W(R)$ and any $\epsilon > 0$, there exists $y = \sum_{i=0}^\infty p^i [\overline{y_i}] \in W(R)$
with $x \equiv y \pmod{\pi}$ and $\beta(\overline{y_i}) \leq \max\{\beta(\overline{y_0}), \epsilon\}$
for all $i>0$ and all $\beta \in \calM(R)$.
\end{lemma}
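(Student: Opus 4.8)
\section*{Proof proposal}

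The plan is to exploit the splitting $\pi = [\overline{\pi}] + pw$, where $w = p^{-1}(\pi - [\overline{\pi}]) = \sum_{j\ge0} p^j[\overline{\pi_{j+1}}]$ is a \emph{unit} in $W(R)$ because $\overline{\pi_1}$ is a unit (as in Remark~\ref{R:lift rational}, its reduction is the unit $\overline{\pi_1}$). This gives the congruence $p \equiv -w^{-1}[\overline{\pi}] \pmod{\pi}$, so that modulo $\pi$ each factor of $p$ may be traded for a factor of $[\overline{\pi}]$, which has $\alpha$\nobreakdash-norm at most $p^{-1}$. First I would reduce to the case $R = \widehat{R}$: in that case $W(R)$ is $(p,[\overline{\pi}])$\nobreakdash-adically complete, so the series arising below converge, and one passes back to general $R$ by the usual density argument (approximate a $\pi$\nobreakdash-quotient $z$ of $x - y$ by an element of $W(R)$ and absorb the change into the error term, exactly as in the proof of Corollary~\ref{C:stable presentation}).

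Next I would construct $y$ by an iterative ``peeling'' procedure. Writing $x = [\overline{x_0}] + p x_1$, replace $p x_1$ by $-w^{-1}[\overline{\pi}] x_1$ modulo $\pi$; this replaces $x$ by $[\overline{x_0}] + \eta$ with $\Lambda(\alpha)(\eta) \le \alpha(\overline{\pi})\,\Lambda(\alpha)(x_1) \le \alpha(\overline{\pi})$. Repeating with $\eta$ in place of $x$, and using that dividing by $p$ and re-multiplying by $[\overline{\pi}]$ sends a $\Lambda(\alpha)$\nobreakdash-norm $\rho$ to at most $\alpha(\overline{\pi})\rho^{p}$ (shifting of Witt coordinates, cf.\ Remark~\ref{R:addition formula}), the remaining ``tame'' part contracts super-geometrically. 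After finitely many steps one obtains
\[
x \equiv y := \Bigl(\textstyle\sum_{k=0}^{M-1}[\overline{d_k}]\Bigr) + z \pmod{\pi},
\qquad \overline{d_0} = \overline{x_0},\quad \Lambda(\alpha)(z) \le \epsilon,
\]
where the non-leading pieces $\overline{d_k}$ ($k\ge1$) all have $\alpha(\overline{d_k}) \le \alpha(\overline{\pi}) \le p^{-1}$ (and one may, after enough peeling, split off those of norm $\le\epsilon$ into $z$, keeping only finitely many ``large'' pieces).

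Finally I would read off the Witt coordinates of $y$ from the universal addition formula of Remark~\ref{R:addition formula}: $\overline{y_0} = \overline{d_0} + \cdots + \overline{d_{M-1}} + \overline{z_0}$, while for $i\ge1$ the coordinate $\overline{y_i}$ is a sum of the coordinates of $z$ together with cross terms, each of which is an integer combination of homogeneous monomials of total degree one in the $\overline{d_k}^{\,p^{-j}}$ and the coordinates of $z$, every such monomial involving at least one factor other than a power of $\overline{d_0}$. Writing such a monomial as $\overline{d_0}^{\,\theta} \cdot (\text{a product of fractional powers of quantities of }\alpha\text{-norm} \le c)^{1-\theta}$ with $\theta\in[0,1)$ and $c = \max\{\alpha(\overline{\pi}),\epsilon\}$, and using the convexity inequality $\beta(a)^{\theta} c^{1-\theta} \le \max\{\beta(a),c\}$, one bounds each cross term, hence $\overline{y_i}$, by $\max\{\beta(\overline{d_0}), c\}$, and since $\overline{y_0}$ and $\overline{d_0}$ differ only by terms of $\alpha$\nobreakdash-norm $\le c$, one gets the asserted estimate with $\epsilon$ replaced by $c$; iterating (or handling separately the case $\alpha(\overline{\pi}) > \epsilon$ by a preliminary reduction of the leading coordinate) brings $c$ down to $\epsilon$.

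The step I expect to be the main obstacle is precisely the last reconciliation: a single use of $p \equiv -w^{-1}[\overline{\pi}]$ only gains a factor $\alpha(\overline{\pi})$, which need not be $\le \epsilon$, and one \emph{cannot} repair this by rescaling $\pi$, since $\alpha(\overline{\pi})$ is an invariant of the ideal $(\pi)$. The difficulty is sharpest when, for some $\beta \in \calM(R)$, the leading coordinate $\overline{y_0}$ is small under $\beta$ while some higher coordinate is not: because $\overline{y_0}$ need not be a unit in $R$ (so one cannot factor $y$ as $[\overline{y_0}]\cdot(\text{1-unit})$ and conclude directly), the bound must be obtained by tracking all Witt coordinates and the error term simultaneously, and arranging the peeling so that every ``non-leading'' contribution is either genuinely dominated by $\overline{y_0}$ or has $\alpha$\nobreakdash-norm $\le\epsilon$. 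Getting this bookkeeping right — quite possibly by first passing to a localization that makes $\overline{\pi}$ (equivalently, the relevant part of $\overline{y_0}$) behave like a uniformizer — is where the real work lies.
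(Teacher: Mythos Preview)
Your proposal has a genuine gap at precisely the point you flag. The convexity estimate on the cross terms gives at best $\beta(\overline{y_i}) \le \max\{\beta(\overline{d_0}), \alpha(\overline{\pi})\}$, not $\max\{\beta(\overline{d_0}), \epsilon\}$: the non-leading pieces $\overline{d_k}$ with $1\le k < K$ satisfy only $\alpha(\overline{d_k}) \le \alpha(\overline{\pi})^k$, and for small $k$ this exceeds $\epsilon$. Your suggestion to ``iterate'' does not help, because a second pass of the same procedure applied to $y$ produces new non-leading pieces whose $\alpha$-norms are again bounded only by $\alpha(\overline{\pi})$, not by anything smaller; the floor $\alpha(\overline{\pi})$ is intrinsic to your peeling step and cannot be lowered by repetition. (Incidentally, the claimed super-geometric contraction $\rho \mapsto \alpha(\overline{\pi})\rho^{p}$ is wrong: for power-multiplicative $\alpha$ one has $\Lambda(\alpha)(px') = \Lambda(\alpha)(x')$, so dividing by $p$ and multiplying by $[\overline{\pi}]$ contracts $\Lambda(\alpha)$ only geometrically, $\rho \mapsto \alpha(\overline{\pi})\rho$.)

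The paper avoids this difficulty by a different and much simpler idea. It runs the \emph{exact} iteration from the proof of Lemma~\ref{L:stable residue}---not your accumulation of separate Teichm\"uller terms $[\overline{d_0}]+[\overline{d_1}]+\cdots$, but the replacement $x_{i+1} = [\overline{x_{i0}}] - p^{-1}w(x_i-[\overline{x_{i0}}])[\overline{\pi}]$ applied to the whole element at each step---for a fixed number $n$ of steps with $\alpha(\overline{\pi})^n \le \epsilon$, and takes $y = x_n$. The point is that this iteration is defined by universal polynomial formulas in the Witt coordinates, so the estimates in the proof of Lemma~\ref{L:stable residue} hold verbatim with \emph{any} $\beta \in \calM(R)$ in place of $\alpha$, with its own stabilization index $N_\beta$. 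One then makes a dichotomy for each $\beta$: if $n \le N_\beta$, then $\Lambda(\beta)(y) \le \beta(\overline{\pi})^n \le \alpha(\overline{\pi})^n \le \epsilon$ (using $\beta \le \alpha$), so every $\beta(\overline{y_i}) \le \epsilon$; if $n > N_\beta$, then the iteration has already $\beta$-stabilized, and an easy induction gives $\Lambda(\beta)(x_i) = \beta(\overline{x_{i0}}) = \beta(\overline{x_{N_\beta 0}})$ for all $i \ge N_\beta+1$, whence $\beta(\overline{y_i}) \le \beta(\overline{y_0})$ for all $i$. Thus a single $y$, constructed once using $\alpha$ alone, works uniformly over $\calM(R)$; no monomial bookkeeping or localization is needed.
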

\begin{proof}
Define $x = x_0, x_1, \dots$ as in the proof of Lemma~\ref{L:stable residue},
and again write $x_i = \sum_{j=0}^\infty p^j [\overline{x_{ij}}]$.
Take $n$ to be a nonnegative integer for which $\alpha(\overline{\pi})^n \leq \epsilon$,
and put $y = x_n$. For $\beta \in \calM(R)$, let $N$ be the least nonnegative integer for which
$\beta(\overline{x_{N0}}) > \beta(\overline{\pi})^{N+1}$, or $\infty$ if no such integer exists.
By arguing as in the proof of Lemma~\ref{L:stable residue},
we see that if $n \leq N$, then
$\Lambda(\beta)(y) \leq \beta(\overline{\pi})^n \leq \alpha(\overline{\pi})^n \leq \epsilon$;
if instead $n > N$, then $\beta(\overline{x_{i0}}) = \Lambda(\beta)(\overline{x_{i+1}}) = \beta(\overline{x_{N0}})$
for $i = N$, but also for $i > N$ by induction on $i$.
\end{proof}

\section{Newton polygons and factorizations}
\label{sec:Newton polygons}

The development of the basic algebra of polynomials over a complete
nonarchimedean field is often phrased in the language of Newton polygons.
One can develop a similar device to deal with the ring of Witt vectors over
a perfect valuation ring; we use these to develop an analogue 
of the factorization of a polynomial over an algebraically closed field
into linear constituents. This observation is due to Fargues
and Fontaine; see Remark~\ref{R:fargues-fontaine}.

\begin{hypothesis} \label{H:Newton polygons}
Throughout \S\ref{sec:Newton polygons}, 
let $\gotho$ be the valuation ring of a perfect field
of characteristic $p$ complete under a multiplicative norm $\alpha$.
Equip $W(\gotho)$ with the norm $\lambda(\alpha)$,
which is also multiplicative by Lemma~\ref{L:lambda}.
\end{hypothesis}

\begin{defn}
Let $W^\dagger(\Frac \gotho)$ denote the set of 
$x = \sum_{i=0}^\infty p^i [\overline{x_i}] \in W(\Frac \gotho)$
for which $p^{-i} \alpha(\overline{x_i}) \to 0$ as $i \to \infty$.
For $T$ the set of nonzero Teichm\"uller lifts in
$W(\gotho)$, we may identify $W^\dagger(\Frac \gotho)$ with
the completion of $T^{-1} W(\gotho)$ for the unique multiplicative extension
of $\lambda(\alpha)$. 
We define \emph{stable} elements of $W^\dagger(\Frac \gotho)$
using the same definition as in $W(\gotho)$ (see Definition~\ref{D:stable}).

For $x = \sum_{i=0}^\infty p^i [\overline{x_i}] \in W^\dagger(\Frac \gotho)$ nonzero, 
the \emph{Newton polygon} of $x$ is the lower boundary of the convex hull of the set
\[
\bigcup_{i=0}^\infty \{(a,b) \in \RR^2: a
\geq -\log \alpha(\overline{x_i}), \, b \geq i\},
\]
minus any segments of slope less than $(\log p)^{-1}$.
The \emph{multiplicity} of $r \in [-(\log p)^{-1},0)$ 
in the Newton polygon of $x$ is the
height of the segment of the Newton polygon of slope $r$,
or $0$ if no such segment exists.
\end{defn}

\begin{lemma} \label{L:np}
For $x,y \in W^\dagger(\Frac \gotho)$ nonzero and 
$r \in [-(\log p)^{-1},0)$, the multiplicity of $r$ in the
Newton polygon of $xy$ is the sum of the multiplicities of $r$ in the
Newton polygon of $x$ and $y$.
\end{lemma}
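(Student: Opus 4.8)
The plan is to detect the Newton polygon of a nonzero element of $W^\dagger(\Frac\gotho)$ by a one‑parameter family of multiplicative norms, and then to deduce additivity of multiplicities from multiplicativity of those norms. First I would produce the norms: for each real $c \geq 1$, the power $\alpha^{1/c}$ is again a multiplicative norm on $\gotho$ bounded by the trivial norm, so Lemma~\ref{L:lambda} makes $\lambda(\alpha^{1/c})$ a multiplicative seminorm on $W(\gotho)$, and hence $f_\rho := \lambda(\alpha^{1/c})^c$ is one as well, where I write $\rho = p^{-c} \in (0, p^{-1}]$. Explicitly $f_\rho(\sum_i p^i[\overline{x_i}]) = \max_i\{\rho^i \alpha(\overline{x_i})\}$, so comparing term by term (using $c \geq 1$, $p>1$) gives $f_\rho \leq \lambda(\alpha)$, on $W(\gotho)$ and then on the localization $T^{-1}W(\gotho)$ obtained by inverting nonzero Teichm\"uller lifts. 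Since $W^\dagger(\Frac\gotho)$ is the completion of $T^{-1}W(\gotho)$ for $\lambda(\alpha)$, the bound $f_\rho \leq \lambda(\alpha)$ lets $f_\rho$ extend by continuity to a multiplicative norm on $W^\dagger(\Frac\gotho)$; approximating $x = \sum_i p^i[\overline{x_i}] \in W^\dagger(\Frac\gotho)$ in $\lambda(\alpha)$ by its partial sums (which lie in $T^{-1}W(\gotho)$) then shows the displayed formula for $f_\rho$ persists there, the defining condition $p^{-i}\alpha(\overline{x_i}) \to 0$ of $W^\dagger(\Frac\gotho)$ guaranteeing the maximum is attained. (Alternatively one could take $f_\rho = H(\alpha, p, p\rho)$ and invoke Theorem~\ref{T:construct Gauss}, since $p$ is primitive of degree $1$.)

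Next I would set up the dictionary between the $f_\rho$ and the Newton polygon. For $x = \sum_i p^i[\overline{x_i}] \neq 0$, put $g_x(s) = \log f_{e^{-s}}(x) = \max_i\{-is + \log \alpha(\overline{x_i})\}$ for $s \in [\log p, +\infty)$; the condition defining $W^\dagger(\Frac\gotho)$ makes this a finite maximum, so $g_x$ is convex, non‑increasing and piecewise affine there, with all slopes nonpositive integers. Unwinding the convex‑hull definition, one checks that the Newton polygon of $x$ is obtained from $g_x$ by the standard duality: the face of the Newton polygon of slope $r \in [-(\log p)^{-1},0)$ corresponds to the breakpoint of $g_x$ at $s_0 = -r^{-1} \in [\log p,+\infty)$, and the height of that face equals the jump in the slope of $g_x$ across $s_0$; moreover the clause ``remove any segments of slope less than $(\log p)^{-1}$'' is exactly what discards the asymptotic behaviour of $g_x$ as $s \to +\infty$, which is why multiplicities are only defined for such $r$. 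In particular the multiplicity of $r$ in the Newton polygon of $x$ equals $\lim_{s \to s_0^+} g_x'(s) - \lim_{s \to s_0^-} g_x'(s)$ with $s_0 = -r^{-1}$. I expect this bookkeeping — reconciling the convex‑hull picture with $g_x$ and its Legendre dual, with the correct (reciprocal‑slope) conventions — to be the only step requiring real care, and hence the main obstacle; everything else is a citation or a one‑line deduction.

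Finally, since $\lambda(\alpha)$ is a norm, $x,y$ nonzero forces $xy$ nonzero, so the Newton polygon of $xy$ is defined. Multiplicativity of each $f_\rho$ on $W^\dagger(\Frac\gotho)$ gives $g_{xy}(s) = g_x(s) + g_y(s)$ for all $s \in [\log p,+\infty)$. Taking the jump in slope at a fixed $s_0$ is additive in the function, so by the dictionary of the second paragraph the multiplicity of $r = -s_0^{-1}$ in the Newton polygon of $xy$ is the sum of the multiplicities of $r$ in the Newton polygons of $x$ and of $y$, which is the assertion of the lemma. (Equivalently, Legendre duality carries the sum $g_{xy} = g_x + g_y$ to the face‑by‑face superposition of the two Newton polygons.)
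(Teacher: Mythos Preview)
Your approach via the one-parameter family of multiplicative norms $f_\rho = \lambda(\alpha^{1/c})^c$ (with $\rho = p^{-c}$) is sound and is exactly what the paper's one-line proof sketch points toward: the paper omits the argument entirely, referring only to the multiplicativity proofs of Lemma~\ref{L:lambda} and Theorem~\ref{T:construct Gauss}(a), and your family $f_\rho$ is precisely the common generalization of those two. The construction of $f_\rho$, its extension from $T^{-1}W(\gotho)$ to $W^\dagger(\Frac\gotho)$ via the bound $f_\rho \leq \lambda(\alpha)$, and the identification of multiplicities with slope jumps of $g_x(s) = \log f_{e^{-s}}(x)$ are all correct for $r$ in the open interval $(-(\log p)^{-1},0)$.

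There is, however, a genuine gap at the boundary slope $r = -(\log p)^{-1}$, stemming from a misreading of which end of the Newton polygon is truncated. You write that removing the indicated segments ``discards the asymptotic behaviour of $g_x$ as $s \to +\infty$''; this is backwards. The segments removed are those of slope strictly less than $-(\log p)^{-1}$ (the paper's printed ``$(\log p)^{-1}$'' is evidently missing a sign), and under your correspondence $s_0 = -r^{-1}$ these correspond to $s_0 < \log p$, not $s_0 \to +\infty$. So when $r = -(\log p)^{-1}$, your slope-jump formula asks for $\lim_{s \to (\log p)^-} g_x'(s)$, which is unavailable: you only defined $g_x$ on $[\log p,+\infty)$, and for general $x \in W^\dagger(\Frac\gotho)$ the supremum $\sup_i\{\rho^i \alpha(\overline{x_i})\}$ need not be finite for any $\rho > p^{-1}$, so there is no extension of $g_x$ to the left to fall back on. Concretely, the multiplicity at $r = -(\log p)^{-1}$ equals $I(x) - i_*(x)$, where $i_*(x)$ and $I(x)$ are the least and greatest indices $i$ attaining $p^{-i}\alpha(\overline{x_i}) = \lambda(\alpha)(x)$; your argument recovers $i_*(x) = -g_x'((\log p)^+)$ and its additivity, but says nothing about $I(x)$.

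The repair is short. Rerun the multiplicativity argument of Lemma~\ref{L:lambda} choosing the \emph{maximal} rather than minimal indices $j,k$ achieving $\lambda(\alpha)(p^j[\overline{x_j}]) = \lambda(\alpha)(x)$ and $\lambda(\alpha)(p^k[\overline{y_k}]) = \lambda(\alpha)(y)$; the same computation (with $x'' = \sum_{i \leq j} p^i[\overline{x_i}]$ in place of $x' = \sum_{i \geq j} p^i[\overline{x_i}]$, noting that $p^{-i}\alpha(\overline{x_i}) \to 0$ makes $\lambda(\alpha)(x - x'')$ a genuine maximum and hence strictly smaller than $\lambda(\alpha)(x)$) gives $I(xy) = I(x) + I(y)$. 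With that one extra step your proof is complete, and the interior case $r \in (-(\log p)^{-1},0)$ stands exactly as you wrote it.
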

\begin{proof}
The argument is similar to the proofs that $\lambda(\alpha)$ and 
$H(\alpha,u,t)$ are multiplicative
(Lemma~\ref{L:lambda} and Theorem~\ref{T:construct Gauss}(a)), so we omit the details.
See also \cite[Lemma~2.1.7]{kedlaya-revisited}.
\end{proof}
\begin{cor} \label{C:stable units}
The units in $W^\dagger(\Frac \gotho)$ are precisely the nonzero stable elements,
which are in turn the elements with no slopes in their Newton polygons.
\end{cor}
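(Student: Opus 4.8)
The plan is to prove the three-way equivalence by showing that each of the three classes of elements coincides with the units.

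First I would identify the stable nonzero elements with the elements having no slopes in their Newton polygons. If $x = \sum_{i=0}^\infty p^i [\overline{x_i}]$ is stable and nonzero, then by definition $\alpha(\overline{x_0}) > p^{-i}\alpha(\overline{x_i})$ for all $i > 0$; in terms of the defining set of the Newton polygon, the point $(-\log\alpha(\overline{x_0}), 0)$ lies strictly below and to the left of every other relevant point $(-\log\alpha(\overline{x_i}), i)$, so the lower convex hull consists of the single vertex at height $0$ and the Newton polygon has no segments of slope in $[-(\log p)^{-1}, 0)$. Conversely, if the Newton polygon of $x$ has no such segments, then after removing segments of slope less than $(\log p)^{-1}$ (note: the excerpt's slope conventions should be read with the normalization $|p| = p^{-1}$ in mind) the only vertex at height $0$ must dominate, which forces the stability inequality.

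Next I would use Lemma~\ref{L:np}: multiplicities in Newton polygons are additive under multiplication. If $xy = 1$, then $1$ is stable (indeed Teichm\"uller) with empty Newton polygon, so by additivity both $x$ and $y$ must have empty Newton polygons, hence are nonzero and stable. This shows every unit is a nonzero stable element. For the converse, suppose $x$ is nonzero and stable with reduction $\overline{x}$; since $\overline{x} = \overline{x_0}$ is a unit in $\Frac\gotho$ (being nonzero in a field), I can form $[\overline{x}]^{-1} = [\overline{x}^{-1}] \in W^\dagger(\Frac\gotho)$, and then $[\overline{x}]^{-1} x = 1 - z$ where $\lambda(\alpha)(z) < 1$ by the stability inequality and Remark~\ref{R:why stable}. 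More care is needed here: one must check $[\overline{x}]^{-1}x \in W^\dagger(\Frac\gotho)$, which follows because $x \in W^\dagger$ and $W^\dagger$ is the completion of $T^{-1}W(\gotho)$, a ring closed under multiplication by nonzero Teichm\"uller inverses. Then $1 - z$ is invertible via the geometric series $\sum_{n=0}^\infty z^n$, which converges in $W^\dagger(\Frac\gotho)$ because $\lambda(\alpha)(z^n) = \lambda(\alpha)(z)^n \to 0$ (using that $\lambda(\alpha)$ is multiplicative on $W^\dagger(\Frac\gotho)$) and $W^\dagger(\Frac\gotho)$ is complete for $\lambda(\alpha)$. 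Hence $x$ is a unit.

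The main obstacle I anticipate is bookkeeping around the slope normalization in the Newton polygon definition and making sure the ``no slopes'' characterization is stated with the correct threshold, so that the empty-Newton-polygon condition genuinely matches stability rather than some shifted variant; getting the inequalities in Definition~\ref{D:stable} to line up exactly with the convex-hull description is the delicate point. Everything else — the convergence of the geometric series, the reduction to $1 - z$, and the appeal to Lemma~\ref{L:np} — is routine given the multiplicativity of $\lambda(\alpha)$ on $W^\dagger(\Frac\gotho)$ and the completeness already recorded in the excerpt.
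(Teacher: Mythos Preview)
Your proposal is correct and follows essentially the same approach as the paper: both use Lemma~\ref{L:np} (additivity of Newton polygon multiplicities) to show that units have no slopes and hence are stable, and both invert a nonzero stable element by dividing by the leading Teichm\"uller lift to obtain $1-z$ with $\lambda(\alpha)(z)<1$ and summing the geometric series. The paper is terser (it states the decomposition $x = [\overline{y}]z$ with $z \in 1 + pW^\dagger(\Frac\gotho)$ and omits the equivalence between stability and ``no slopes'' as evident), but the logical content is the same.
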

\begin{proof}
Any nonzero element $x$ of $W^\dagger(\Frac \gotho)$
can be written uniquely as $[\overline{y}]z$ with $\overline{y} \in \gotho$ nonzero
and $z \in 1 + p W^\dagger(\Frac \gotho)$.
If $x$ is stable, then $\lambda(\alpha)(z-1) < 1$, so $z$ is a unit,
as then is $x$. Conversely, if $x$ is a unit, then by
Lemma~\ref{L:np} the multiplicity of each $r \in [-(\log p)^{-1},0)$ in the Newton
polygon of $x$ is zero, so $x$ must be stable.
\end{proof}
\begin{lemma} \label{L:prime factors}
For $u \in \gotho$ with $\alpha(u) \leq p^{-1}$,
the ideal $(p-[u])$ in $W(\gotho)$ is prime.
\end{lemma}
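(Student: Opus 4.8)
The plan is to realize $(p-[u])$ as the kernel of a multiplicative seminorm on $W(\gotho)$; the kernel of any seminorm that is multiplicative and not identically zero is automatically a proper prime ideal, so this will finish the proof at once. The seminorm to use is $H(\alpha,u,0)$ from Definition~\ref{D:Gauss norm2}.

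First I would check that the apparatus of \S\ref{sec:Gauss norms} is available with $R = \gotho$: the norm $\alpha$ is multiplicative (hence in particular power-multiplicative) and bounded by the trivial norm, $\gotho$ is complete so $\widehat{R} = R$, and $\pi = p - [u]$ with $\alpha(u) \leq p^{-1}$ is primitive of degree $1$ in the sense of Hypothesis~\ref{H:primitive} --- indeed this is exactly the case isolated in Definition~\ref{D:Gauss norm2}. Because $\alpha$ is multiplicative, Theorem~\ref{T:construct Gauss}(a) then tells us that $H(\alpha,u,0)$ is a \emph{multiplicative} seminorm on $W(\gotho)$ (not merely power-multiplicative), bounded by $\lambda(\alpha)$; it is not identically zero, since $H(\alpha,u,0)(1) = 1$.

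Next I would record the elementary fact that if $\beta$ is a multiplicative seminorm on a ring $A$ with $\beta \not\equiv 0$, then $\beta^{-1}(0)$ is a prime ideal: it is an ideal by the triangle inequality together with $\beta(ax) \leq \beta(a)\beta(x)$, it is proper because $\beta(1) = 1$, and $\beta(xy) = \beta(x)\beta(y) = 0$ forces $\beta(x) = 0$ or $\beta(y) = 0$. Applying this with $\beta = H(\alpha,u,0)$ shows that $H(\alpha,u,0)^{-1}(0)$ is prime.

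It then remains only to identify this kernel with $(p-[u])$, and this is supplied by Remark~\ref{R:radius 0}: since $\alpha$ is a multiplicative norm, $H(\alpha,u,0)(x) = 0$ if and only if $p-[u]$ divides $x$ in $W(\gotho)$. Hence $(p-[u]) = H(\alpha,u,0)^{-1}(0)$ is prime, as asserted. There is no real obstacle here; the only two points needing a moment's attention are verifying that $p-[u]$ is primitive of degree $1$ (so that the \S\ref{sec:Gauss norms} constructions apply) and noting that multiplicativity of $\alpha$ --- rather than mere power-multiplicativity --- is precisely what upgrades $H(\alpha,u,0)$ to a genuinely multiplicative seminorm.
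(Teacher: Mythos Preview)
Your proposal is correct and follows essentially the same route as the paper: both arguments use Theorem~\ref{T:construct Gauss}(a) to see that $H(\alpha,u,0)$ is multiplicative, and Remark~\ref{R:radius 0} to identify its kernel with $(p-[u])$. The only difference is packaging: the paper phrases it as ``$xy$ divisible implies $H(\alpha,u,0)(xy)=0$ implies one factor vanishes implies that factor is divisible,'' whereas you abstract out the general principle that the kernel of a nonzero multiplicative seminorm is prime and then identify the kernel.
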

\begin{proof}
If $xy$ is divisible by $p-[u]$, then $H(\alpha,u,0)(xy) = 0$.
Since $H(\alpha,u,0)$ is multiplicative by Theorem~\ref{T:construct Gauss}(a),
this forces either $H(\alpha,u,0)(x) = 0$ or $H(\alpha,u,0)(y) = 0$.
Without loss of generality, suppose $H(\alpha,u,0)(x) = 0$;
then by Remark~\ref{R:radius 0}, $x$ is divisible by $p-[u]$.
\end{proof}

\begin{lemma} \label{L:get factor}
Suppose that $x \in W(\gotho)$ is nonzero and not stable.
\begin{enumerate}
\item[(a)]
There exists
an isometric embedding $\gotho \to \gotho_0$ of complete perfect valuation rings
of characteristic $p$ such that in $W(\gotho_0)$,
$x$ is divisible by $p-[u]$ for some $u \in \gotho_0$ with $\alpha(u) \leq p^{-1}$.
\item[(b)]
If $\Frac \gotho$ is algebraically closed,
we may take $u \in \gotho$.
\end{enumerate}
\end{lemma}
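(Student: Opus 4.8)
The plan is to reduce (a) to (b) and then prove (b) by a successive approximation argument.

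\emph{Reduction of (a) to (b).} Let $\gotho_1$ be the valuation ring of a completed algebraic closure of $\Frac \gotho$; by a theorem of Krasner this field is algebraically closed, hence perfect, so $\gotho_1$ is again a complete perfect valuation ring of characteristic $p$, and it carries the unique multiplicative extension of $\alpha$, making $\gotho \to \gotho_1$ isometric. Since stability of an element of $W(\gotho)$ depends only on the $\alpha$-values of its coordinates, $x$ remains nonzero and not stable in $W(\gotho_1)$, so applying (b) over $\gotho_1$ produces the desired $u$, with $\gotho_0 = \gotho_1$. Henceforth assume $\Frac \gotho$ is algebraically closed.

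\emph{Proof of (b).} If $\overline{x_0} = 0$ then $x = px'$ for some $x' \in W(\gotho)$, so $x$ is divisible by $p = p - [0]$ and we may take $u = 0$. Otherwise $\overline{x_0} \neq 0$; then non-stability forces $\alpha(\overline{x_0}) \leq p^{-1}$ (else $\alpha(\overline{x_0}) > p^{-1} \geq p^{-i}\alpha(\overline{x_i})$ for all $i > 0$ would make $x$ stable), and $\alpha(\overline{x_0}) > 0$ makes the Newton polygon of $x$ have finite total height and contain at least one nonzero slope. Attach to such a slope the value $\rho \in (0, p^{-1}]$ for which $p - [u]$, for any $u$ with $\alpha(u) = \rho$, is primitive of degree $1$ with Newton polygon a single segment coinciding with a segment of that of $x$; here $\rho$ lies in the (divisible) value group of $\alpha$. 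I will construct $u_0, u_1, \dots \in \gotho$ with $\alpha(u_n) = \rho$ and $\alpha(u_{n+1} - u_n)$ decreasing geometrically, along which $\epsilon_n := H(\alpha, u_n, 0)(x)$ decreases geometrically to $0$; by Remark~\ref{R:radius 0} (available since $\alpha$ is multiplicative), $\epsilon_n = 0$ exactly when $p - [u_n]$ divides $x$, so the limit $u := \lim_n u_n$, which lies in $\gotho$ by completeness and has $\alpha(u) = \rho \leq p^{-1}$, will satisfy $H(\alpha,u,0)(x) = 0$ and hence $(p - [u]) \mid x$, provided $u \mapsto H(\alpha,u,0)(x)$ is continuous (estimates as in Lemma~\ref{L:compute linear}). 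To initialize, take $u_0$ by solving the ``residual equation'' of the relevant Newton-polygon segment, which has a root in the algebraically closed residue field $\kappa_\gotho$ and lifts to a $u_0$ with $\epsilon_0$ small. For the step: given $u_n$, use Lemma~\ref{L:stable residue} to write $x \equiv y^{(n)} \pmod{p - [u_n]}$ with $y^{(n)}$ stable, so $\epsilon_n = \alpha(\overline{y^{(n)}_0})$ by Remark~\ref{R:why stable} and Theorem~\ref{T:construct Gauss}(b); if $\epsilon_n > 0$, set $u_{n+1} = u_n + \delta_n$, where $\delta_n$ is obtained by expanding $p - [u_{n+1}] = (p - [u_n]) - ([u_n + \delta_n] - [u_n])$, controlling $[u_n + \delta_n] - [u_n]$ modulo higher powers via Lemma~\ref{L:Witt explicit}, and solving the resulting equation for $\delta_n$ over $\gotho$ (legitimate because a valuation ring of an algebraically closed field is integrally closed, together with a comparison of $\alpha$-values); one checks $\alpha(\delta_n) < \rho$ and $\epsilon_{n+1}$ geometrically smaller than $\epsilon_n$.

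\emph{Main obstacle.} The substance is the convergence of the iteration. Multiplicativity of Newton polygons (Lemma~\ref{L:np}) is what pins the valuation of an approximate root at $\rho$, prevents it from drifting, and makes the residual-equation initialization meaningful. The genuinely delicate case is a Newton-polygon segment of multiplicity greater than $1$ (most visibly when $p$ divides the multiplicity), where the naive Newton step stalls or diverges because the relevant ``derivative'' is too small; as in the classical theory of Newton polygons this is handled by a recursive refinement of the data attached to that segment — an induction on the multiplicity — carried out before running the approximation. A cleaner but more expensive alternative, closer to the Fargues--Fontaine viewpoint, is to reduce $x$ modulo $p - [\overline{T}]$ inside $W(\gotho[\overline{T}]^{\perf})$ with a suitable Gauss norm and transport the structure theory of Berkovich discs from \S\ref{sec:polynomial} across the splitting of Lemma~\ref{L:Witt split} and the Teichm\"uller bijection it induces on the quotient: this presents $x$ as a nonzero function on a disc of radius $\rho$, whose zeros — forced to exist by the slope hypothesis — are exactly the sought-for factors $p - [u]$.
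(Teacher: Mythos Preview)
Your approach is genuinely different from the paper's. The paper proves (a) first, by an abstract existence argument: after normalizing so that the Newton polygon of $x$ has a segment of slope $-(\log p)^{-1}$, it observes that $x$ is a nonunit in the completion $S$ of $W^\dagger(\Frac\gotho)[p^{-1}]$ under $\lambda(\alpha)$ (because the residue ring $\kappa_S$ is a Laurent polynomial ring in which the image of $x$ is not a unit), so $\calM(S/(x))\neq\emptyset$ by Theorem~\ref{T:Gelfand transform1}; any $\beta$ therein satisfies $\beta(p)=p^{-1}$ and $\beta(x)=0$, and lifting $\beta$ through the splitting of Lemma~\ref{L:Witt split} produces an extension $\gotho_0$ and a $u\in\gotho_0$ with $(p-[u])\mid x$ via Remark~\ref{R:radius 0}. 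Part (b) is then deduced from (a) by contradiction: if $u\notin\gotho$, the induced point on $\calM(\gotho[\overline{T}])$ has positive Berkovich radius $r$, and a pigeonhole argument over $p^n+1$ tensor copies of $\gotho_0$ forces $p^n+1$ distinct $p^n$-th roots of unity in a single residue field.

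Your reduction of (a) to (b) is correct and economical, and the constructive Newton-type iteration you sketch for (b) is in the spirit of the Fargues--Fontaine argument alluded to in Remark~\ref{R:fargues-fontaine}. But there is a genuine gap: you correctly identify convergence of the iteration as ``the main obstacle'' and then do not resolve it. The phrase ``handled by a recursive refinement \ldots\ an induction on the multiplicity'' is a plan, not a proof. Concretely: in a stable presentation $x=\sum_i x_i(p-[u_n])^i$, the coefficient $x_1$ plays the role of the derivative, and for a Newton-polygon segment of multiplicity $m>1$ one has $\lambda(\alpha)(x_1)$ strictly smaller than what the simple-root heuristic requires, so the correction $\delta_n$ you solve for need not make $\epsilon_{n+1}$ smaller than $\epsilon_n$. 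Making this work requires either a Weierstrass-preparation step in $W(\gotho)$ (isolating a distinguished factor of degree $m$) or an honest higher-order iteration with explicit contraction estimates, and you supply neither. Your final sentence gestures toward the paper's mechanism (working in $W(\gotho[\overline{T}]^{\perf})$ and transporting disc geometry) but does not carry it out either.
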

\begin{proof}
In both cases, we may assume $x$ is not divisible by $p$, as otherwise $u=0$ works.
By rescaling $\alpha$, we may reduce to the case where $-(\log p)^{-1}$
has nonzero multiplicity in the Newton polygon of $x$. In this case, 
to prove (a), we
will construct $u$ so that $\alpha_0(u) = p^{-1}$,
for $\alpha_0$ the extended norm on $\gotho_0$.

Let $S$ be the completion of $W^\dagger(\Frac \gotho)[p^{-1}]$ for the unique
multiplicative extension of $\lambda(\alpha)$. 
Then $\kappa_S$ is a Laurent polynomial ring over 
$\kappa_\gotho$ generated by the class of $x[\overline{x}^{-1}] - 1$.
It follows that $x$ is not a unit in $S$.

Equip $S/(x)$ with the quotient norm. Since $S/(x)$ is nonzero,
$\calM(S/(x)) \neq \emptyset$
by Theorem~\ref{T:Gelfand transform1}, 
Choose $\beta \in \calM(S/(x))$; it corresponds to an element of 
$\mu^{-1}(\alpha)$ with $\beta(p) = p^{-1}$ and $\beta(x) = 0$.
(Note that the condition on $\beta(p)$ would not have been guaranteed
had we used $W^\dagger(\Frac \gotho)$ instead of $S$.)
Restrict $\beta$ to $S$ and then to $W(\gotho)$, then 
use the isomorphism $W(\gotho)[T]/(T-p) \cong W(\gotho)$ to further restrict $\beta$
to $W(\gotho)[T]$. 

Since the restriction
map $\psi^*: \calM(W(\gotho[\overline{T}]^{\perf})) \to \calM(W(\gotho)[T])$
of Lemma~\ref{L:Witt split} is surjective,
we can extend $\beta$ to $\beta_0 \in 
\calM(W(\gotho[\overline{T}]^{\perf}))$.
Put $\alpha_0 = \mu(\beta_0)$, let $\gotho_0$ be the valuation ring
of $\calH(\alpha_0)$, and take
$u$ to be the image of $[\overline{T}]$ in $\gotho_0$.
Since $\beta_0(x) = 0$,
$x$ is divisible by $p-[u]$ in $W(\gotho_0)$ by Remark~\ref{R:radius 0}.
This proves (a).

To prove (b), keep notation as above, but suppose by way of contradiction 
that $u \notin \gotho$. Since $\Frac \gotho$ is algebraically closed, 
the restriction of the norm on $\gotho_0$ to $\gotho[\overline{T}]$ defines a point of
$\calM(\gotho[\overline{T}])$ whose radius $r$ is positive. 
This in turn implies if we equip $\gotho_0[\overline{T}]^{\perf}$ with the
$r$-Gauss norm for the generator $\overline{T} - u$, then the map
$\gotho_0[\overline{T}]^{\perf} \to \gotho_0 \widehat{\otimes} \gotho_0$ taking $\gotho$ to $\gotho \otimes 1$
and $\overline{T}$ to $1 \otimes u$ is isometric.

Choose a nonnegative integer $n$ for which $p^{-p^n-1} < r$. 
For $i=0,\dots,p^n$, let $\gotho_i$ be a copy of $\gotho_0$ in which $u_i$ denotes the element corresponding to $u$.
Restrict $\alpha_0$ along the map $\gotho_0 \widehat{\otimes}_{\gotho} \cdots \widehat{\otimes}_{\gotho} \gotho_{p^n} \to \gotho_0$ to obtain a seminorm $\alpha'$, and let $\gotho'$ be the valuation ring of $\calH(\alpha')$.

For $0 \leq i < j \leq p^n$, we have 
$H(\alpha',u_0,0)(p-[u_i]) = H(\alpha',u_0,0)(p-[u_j]) = 0$, so 
$[(u_j/u_i)^{p^{-n}}]$ maps to a $p^n$-th root of unity in $\calH(H(\alpha',u_0,0))$.
If this root were 1, then by Remark~\ref{R:radius 0}, $[(u_j/u_i)^{p^{-n}}] - 1$
would be divisible by $p-[u_0]$ in $W(\gotho')$,
which would imply $\alpha'((u_j/u_i)^{p^{-n}} - 1) \leq p^{-1}$ and $\alpha'(u_j - u_i) \leq p^{-p^{n}} \alpha(u) < r$;
however, this would contradict the description of $\gotho_0 \widehat{\otimes} \gotho_0$ from the first paragraph.
It follows that $[(u_j/u_i)^{p^{-n}}]$ maps to a nontrivial $p^n$-th root of unity in $\calH(H(\alpha',u_0,0))$,
but by the pigeonhole principle, this cannot hold simultaneously for all $i,j$. The resulting contradiction
forces $u \in \gotho$; this yields (b).
\end{proof}

\begin{remark} \label{R:same norm}
By a similar argument to the proof of Lemma~\ref{L:get factor}(b),
one may show the following.
For $u \in \gotho$ with $\alpha(u) \leq p^{-1}$,
for each $\epsilon > 0$, the set of $u' \in \gotho$ with 
$H(\alpha,u,0)(p - [u']) = 0$ is contained in
finitely many residue classes
modulo elements of norm at most $\epsilon$. More precisely, if $p^{-p^n-1} < \epsilon$, there are at most $p^n$
such classes.
\end{remark}

\begin{theorem} \label{T:factor}
Assume that $\Frac \gotho$ is algebraically closed.
For $x \in W(\gotho)$ nonzero and not stable, 
we can write $x = y (p - [u_1]) \cdots (p - [u_n])$
for some nonzero stable $y \in W(\gotho)$ and some $u_1,\dots,u_n
\in \gotho$ with $\alpha(u_1),\dots,\alpha(u_n) < p^{-1}$.
\end{theorem}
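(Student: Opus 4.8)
The plan is to strip off the linear factors $p-[u_i]$ one at a time, using Lemma~\ref{L:get factor}(b) to manufacture each factor and Lemma~\ref{L:np} (via the Newton polygon) to see that the process terminates. As a preliminary normalization, since $W(\gotho)$ is $p$-torsion-free and $p$-adically separated, I would first write $x = p^m x_0$ with $x_0 \in W(\gotho)$ not divisible by $p$; each factor $p$ is $p-[0]$ with $\alpha(0) = 0 \le p^{-1}$, so it suffices to factor $x_0$. Renaming $x_0$ to $x$, I may henceforth assume $p \nmid x$. If this $x$ is stable we are done with $y = x$ and no linear factors; otherwise we proceed by induction.

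The induction is on $N(x) := \sum_r \mathrm{mult}_r(x)$, the total multiplicity of the slopes $r \in [-(\log p)^{-1},0)$ of the Newton polygon of $x$. This is a nonnegative integer --- finite because $\overline{x_0} \neq 0$ forces the Newton polygon to meet the horizontal axis at a finite abscissa, leaving only finitely many surviving slopes (whose total height is bounded by $-\log_p \alpha(\overline{x_0})$) --- and it is additive under multiplication by Lemma~\ref{L:np}. By Corollary~\ref{C:stable units}, $N(x) = 0$ exactly when $x$ is stable, so our hypothesis gives $N(x) \geq 1$.

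For the inductive step, apply Lemma~\ref{L:get factor}(b): since $\Frac \gotho$ is algebraically closed and $x$ is nonzero and not stable, there are an isometric embedding $\gotho \hookrightarrow \gotho_0$ and some $u \in \gotho$ with $\alpha(u) \leq p^{-1}$ (necessarily $u \neq 0$ since $p \nmid x$; one gets the sharper bound $\alpha(u) < p^{-1}$ precisely when the removed slope is not the minimal value $-(\log p)^{-1}$) such that $p-[u]$ divides $x$ in $W(\gotho_0)$. I would then descend this divisibility to $W(\gotho)$ itself: the reduction of $p-[u]$ modulo $p$ is $-u$, a nonzero element of the field $\Frac \gotho$, so $p-[u]$ is a unit in the complete discrete valuation ring $W(\Frac \gotho)$, whence $x' := x(p-[u])^{-1}$ lies in $W(\Frac \gotho)$; since $x'$ also lies in $W(\gotho_0)$ and $W(\gotho_0) \cap W(\Frac \gotho) = W(\gotho)$ (compare Teichm\"uller coordinates, using that $\gotho$ consists of the elements of $\Frac \gotho$ of norm $\leq 1$), we conclude $x' \in W(\gotho)$, with $x = x'(p-[u])$ and, again by reduction mod $p$, $p \nmid x'$.

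Finally, by Lemma~\ref{L:np} we have $N(x') = N(x) - N(p-[u]) = N(x) - 1$, since a factor $p-[u]$ with $0 < \alpha(u) \le p^{-1}$ has exactly one Newton slope, of height $1$. If $x'$ is stable we finish with $y = x'$; otherwise $x'$ is nonzero and not stable, so the inductive hypothesis gives $x' = y(p-[u_2])\cdots(p-[u_n])$ of the required form, and hence $x = y(p-[u_2])\cdots(p-[u_n])(p-[u])$. The delicate point I expect is the bookkeeping that keeps the inductive hypotheses in force --- that each successive quotient remains not divisible by $p$ (so that $N$ genuinely drops) and remains non-stable (so that Lemma~\ref{L:get factor}(b) keeps applying) until the terminal stable quotient $y$ is reached --- together with the descent of divisibility from $W(\gotho_0)$ down to $W(\gotho)$ just described.
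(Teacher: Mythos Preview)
Your proof is correct and follows essentially the same approach as the paper: reduce to the case $p\nmid x$, then peel off linear factors $p-[u]$ one at a time using Lemma~\ref{L:get factor}(b), with Lemma~\ref{L:np} guaranteeing that the total Newton multiplicity drops by exactly one at each step and hence that the process terminates at a stable element.

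Your write-up is in fact more explicit than the paper's in one useful respect: you spell out the descent from $W(\gotho_0)$ to $W(\gotho)$ (via $W(\Frac\gotho)\cap W(\gotho_0)=W(\gotho)$), which the paper leaves implicit when it applies Lemma~\ref{L:get factor} and then silently computes in $W(\gotho)$. You are also right to flag the discrepancy between the strict inequality $\alpha(u_i)<p^{-1}$ in the statement and the $\alpha(u)\le p^{-1}$ that the lemma actually delivers; the later uses of the theorem in the paper only require the weak inequality.
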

\begin{proof}
We may divide out powers of $p$ as needed to reduce to the case
where $x$ is not divisible by $p$.
Let $n$ be the sum of all multiplicities in the Newton polygon of $x$;
this is a nonnegative integer.
We check that for $m=0,\dots,n$, we can find $u_1,\dots,u_m \in \gotho$
such that $x$ is divisible by $(p-[u_1]) \cdots (p-[u_m])$. 
This proceeds by induction on $m$, with empty base case $m=0$.
For the induction step, since $m < n$, by Lemma~\ref{L:np},
the sum of all multiplicities in the Newton polygon of $x_m = 
x/((p-[u_1])\cdots(p-[u_m]))$
is nonzero, so $x_m$ cannot be stable. We may thus apply 
Lemma~\ref{L:get factor} to construct $u_{m+1}$ of the desired form.

Given $u_1,\dots,u_n$ as above, put $y = x/((p-[u_1])\cdots(p-[u_n])) \in W(\gotho_1)$.
By Lemma~\ref{L:np}, the Newton polygon of $y$ has no slopes, so $y$ is stable.
This gives the desired factorization.
\end{proof}

\begin{remark}  \label{R:new Newton poly}
For a fixed choice of $u \in \gotho$ with $\alpha(u) \leq p^{-1}$,
one can also define Newton polygons which keep track of the 
seminorms $H(\alpha,u,t)$, either by examining stable presentations,
or by taking the concave duals of the graphs of the functions $v_r(x)$
from Corollary~\ref{C:convex}.
We leave it to the reader to formulate and verify the multiplicativity
property in this case.

By analogy with the theory of Newton polygons for polynomials over a complete
nonarchimedean field, one may expect that 
for $x \in W(\gotho)$ nonzero, we can use the Newton polygon to read off
some information about the factors occuring in the representation
$x = y(p - [u_1])\cdots(p-[u_n])$ given by Theorem~\ref{T:factor}.
Again, this is equivalent to a statement about the function $v_r(x)$,
which may be deduced from Lemma~\ref{L:compute linear}:
the right slope of $v_r(x)$ at $r$
counts the number of indices
$i$ for which $H(\alpha,u,0)(p - [u_i]) < e^{-r}/p$.
\end{remark}

\begin{remark} \label{R:fargues-fontaine}
A similar analysis of elements of $W(\gotho)$,
including a somewhat more constructive proof of Theorem~\ref{T:factor},
appears in the
development of $p$-adic Hodge theory given by Fargues and
Fontaine \cite{fargues-fontaine}.
\end{remark}

As an application of Theorem~\ref{T:factor}, we can produce an example of distinct 
$u,u' \in \gotho$ with $\alpha(u) = \alpha(u') = p^{-1}$ for which
$p-[u]$ and $p-[u']$ generate the same ideal in $W(\gotho)$, as promised in 
Remark~\ref{R:radius 0b}. This example is crucial in $p$-adic Hodge theory,
as in \cite{kedlaya-icm} or \cite{fargues-fontaine}.

\begin{example} \label{exa:same norm}
Suppose that there exists $\overline{x} \in \gotho$ with $\alpha(\overline{x}) = p^{-p/(p-1)}$.
Put
\[
\pi = \sum_{j=0}^{p-1} [1 + \overline{x}]^{j/p} \in W(\gotho)
\]
and write $\pi = \sum_{i=0}^\infty p^i [\overline{\pi_i}]$. Then
$\alpha(\overline{\pi_0}) = p^{-1}$ and $\alpha(\overline{\pi_1}) = 1$, so 
by Theorem~\ref{T:factor} we can write $\pi = (p-[u])y$ for some $u \in \gotho$
with $\alpha(u) = p^{-1}$ and some unit $y \in W(\gotho)$. 
Note that $\pi(1 - [1 + \overline{x}]^{1/p}) = 1 - [1+\overline{x}]$,
so $1$ and $[1 + \overline{x}]$ have the same image in $W(\gotho)/(p-[u]) = W(\gotho)/(\pi)$.
Consequently, $H(\alpha,u,0)(p-[u']) = 0$ for $u' = u(1 + \overline{x})^\gamma$
for any $\gamma \in \ZZ$, and by continuity also for any $\gamma \in \Zp$ if we use the binomial
series to define $(1+ \overline{x})^\gamma$.
By Remark~\ref{R:radius 0}, $p-[u]$ and $p-[u']$ generate the same ideal in $W(\gotho)$.
\end{example}

\section{Restriction of Gauss norms}
\label{sec:restriction}

We are now ready to construct a strong deformation retract between
the spectra of $R$ and $W(R)$. We cannot directly imitate the construction
for polynomial rings, for lack of an analogue of the formula
\eqref{eq:continuity1} (see Remark~\ref{R:derivation}).
We thus instead follow the approach suggested in Remark~\ref{R:Gauss approach}.
Given an element of $\calM(W(R))$, we express it as the restriction from
a larger Witt ring of a seminorm of the form $H(\alpha,u,0)$,
then define the homotopy by restricting the corresponding seminorms $H(\alpha,u,t)$. Before embarking on this construction, we read off a key
continuity property from the construction of 
the seminorms $H(\alpha,u,t)$.
\begin{theorem} \label{T:continuous1}
Equip $R$ with a power-multiplicative norm $\alpha$ bounded above by the
trivial norm,
equip $W(R)$ with the power-multiplicative norm $\lambda(\alpha)$,
and choose $\pi = \sum_{i=0}^\infty p^i [\overline{\pi_i}]$
with $\alpha(\overline{\pi_0}) \leq p^{-1}$ and 
$\overline{\pi_1}$ a unit in $R$.
Then the map
\[
H(\cdot, \pi, \cdot): 
\calM(R) \times [0,1] 
\to \calM(W(R))
\]
defined by
Theorem~\ref{T:construct Gauss}(a) is continuous.
\end{theorem}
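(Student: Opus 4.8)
The plan is to reduce to showing that for each fixed $f \in W(R)$ the function $(\beta, t) \mapsto H(\beta,\pi,t)(f)$ is continuous on $\calM(R) \times [0,1]$; this suffices, since the sets $\{\gamma \in \calM(W(R)) : \gamma(f) \in I\}$, for $f \in W(R)$ and $I$ an open interval, form a subbasis for the topology of $\calM(W(R))$. To prove this continuity I would produce, for each $\epsilon > 0$, an honestly continuous function $M_\epsilon \colon \calM(R) \times [0,1] \to [0,\infty)$ with $|H(\beta,\pi,t)(f) - M_\epsilon(\beta,t)| \le \epsilon$ for all $(\beta,t)$; then $H(\cdot,\pi,\cdot)(f)$ is a uniform limit of continuous functions as $\epsilon \to 0$, hence continuous. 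The essential point is that Lemma~\ref{L:stable approximation} supplies approximate stability which is uniform over all of $\calM(R)$, and this is exactly what lets $M_\epsilon$ be chosen independently of $\beta$.

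As in the proof of Theorem~\ref{T:construct Gauss}, I would first reduce to the case $R = \widehat{R}$: the formation of $\lambda(\beta)$ and of the quotient seminorms $H(\beta,\pi,t)$ is compatible with the dense isometric inclusion $W(R) \hookrightarrow W(\widehat{R})$, and $\calM(R) = \calM(\widehat{R})$. Now fix $f$ and $\epsilon > 0$. Iterating Lemma~\ref{L:stable approximation} (choose $f_0 \equiv f \pmod{\pi}$ as there, write $f - f_0 = \pi g_1$, choose $f_1 \equiv g_1 \pmod{\pi}$, and so on) produces a presentation $f = \sum_{i=0}^\infty f_i \pi^i$ — the series converges because $\lambda(\alpha)(\pi) \le p^{-1}$ (since $\alpha(\overline{\pi_0}) \le p^{-1}$ and $\alpha$ is bounded by the trivial norm) — in which each $f_i = \sum_{k=0}^\infty p^k[\overline{f_{i,k}}]$ satisfies $\beta(\overline{f_{i,k}}) \le \max\{\beta(\overline{f_{i,0}}),\epsilon\}$ for all $k \ge 1$ and all $\beta \in \calM(R)$. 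Set
\[
M_\epsilon(\beta,t) = \sup_{i \ge 0} (t/p)^i \beta(\overline{f_{i,0}}).
\]
Each term here is continuous in $(\beta,t)$ and bounded above by $p^{-i}$, so $M_\epsilon$ is a uniform limit of its finite partial maxima and hence continuous.

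For the upper bound, the presentation $(f_i)$ gives $H(\beta,\pi,t)(f) \le \sup_i (t/p)^i \lambda(\beta)(f_i)$, and the uniform approximate stability forces $\lambda(\beta)(f_i) \le \max\{\beta(\overline{f_{i,0}}), p^{-1}\epsilon\}$, hence $H(\beta,\pi,t)(f) \le \max\{M_\epsilon(\beta,t), p^{-1}\epsilon\}$. For the lower bound, put $x' = \sum_i [\overline{f_{i,0}}]\pi^i$; since Teichm\"uller elements are stable, $([\overline{f_{i,0}}])_i$ is a stable presentation, so Theorem~\ref{T:construct Gauss}(b) gives $H(\beta,\pi,t)(x') = M_\epsilon(\beta,t)$ exactly. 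On the other hand $f - x' = \sum_i (f_i - [\overline{f_{i,0}}])\pi^i$ with $\lambda(\beta)(f_i - [\overline{f_{i,0}}]) \le p^{-1}\max\{\beta(\overline{f_{i,0}}),\epsilon\}$, whence $H(\beta,\pi,t)(f - x') \le p^{-1}\max\{M_\epsilon(\beta,t),\epsilon\}$. Applying the ultrametric inequality to $x' = f - (f - x')$ forces either $H(\beta,\pi,t)(f) \ge M_\epsilon(\beta,t)$ or $M_\epsilon(\beta,t) \le p^{-1}\max\{M_\epsilon(\beta,t),\epsilon\}$; the latter is possible only when $M_\epsilon(\beta,t) \le \epsilon$. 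In either case $H(\beta,\pi,t)(f) \ge M_\epsilon(\beta,t) - \epsilon$, so combined with the upper bound we get $|H(\beta,\pi,t)(f) - M_\epsilon(\beta,t)| \le \epsilon$, as needed.

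The step I expect to be the genuine obstacle is the lower bound, and specifically its uniformity in $\beta$: a stable presentation produced by Lemma~\ref{L:stable residue} and Lemma~\ref{L:stable presentation} depends on the chosen seminorm, and without the uniform variant Lemma~\ref{L:stable approximation} one could not compare $H(\beta,\pi,t)(f)$ with a single $\beta$-independent continuous model $M_\epsilon$. This is presumably why Lemma~\ref{L:stable approximation}, with its uniformity over $\calM(R)$, was included; granting it, the rest is routine bookkeeping with quotient seminorms and the ultrametric inequality.
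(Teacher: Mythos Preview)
Your argument is correct, but it is not the route the paper takes. The paper proves continuity pointwise: given $(\gamma_0,t_0)$, it invokes Corollary~\ref{C:stable presentation} to produce finitely many $\gamma_0$-stable elements $x_0,\dots,x_j$ approximating $x$, observes that stability is an open condition on $\gamma$ (so the same $x_i$ remain $\gamma$-stable on a neighborhood of $\gamma_0$), and then applies Theorem~\ref{T:construct Gauss}(b) over $\gotho_{\calH(\gamma)}$ to get a locally valid formula $\max\{\epsilon/2, H(\gamma,\pi,t)(x)\} = \max\{\epsilon/2, \max_i (t/p)^i \gamma(\overline{x_{i0}})\}$. By contrast, you use Lemma~\ref{L:stable approximation} to build a single presentation whose approximate stability is \emph{uniform} over all of $\calM(R)$, and then exhibit $H(\cdot,\pi,\cdot)(f)$ as a uniform limit of continuous functions. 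Your approach is arguably cleaner and makes more essential use of Lemma~\ref{L:stable approximation}; the paper in fact reserves that lemma for Corollary~\ref{C:continuous section}, not for this theorem, so your closing guess about its purpose is slightly off. One small point of care: when you invoke Theorem~\ref{T:construct Gauss}(b) for the Teichm\"uller presentation of $x'$, the completeness hypothesis there refers to the ambient seminorm, so strictly one should pass to $\gotho_{\calH(\beta)}$ (as the paper does) before applying it; this costs nothing since Teichm\"uller elements remain stable after base change.
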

\begin{proof}
To check continuity, we must check that for each 
$x \in W(R)$ and each $\epsilon > 0$,
the sets
\begin{gather*}
\{(\gamma,t) \in \calM(R) \times [0,1]:
H(\gamma,\pi,t)(x) > \epsilon\}, \\
\{(\gamma,t) \in \calM(R) \times [0,1]:
H(\gamma,\pi,t)(x) < \epsilon\}
\end{gather*}
are open.
Pick $(\gamma_0,t_0)$ in one of these sets.
By Corollary~\ref{C:stable presentation},
we can find $\gamma_0$-stable elements $x_0,\dots,x_j \in W(R)$
such that 
\[
\lambda(\gamma_0) \left(
x -\sum_{i=0}^{j} x_i \pi^i \right) 
< \epsilon/2.
\]
We may further ensure that 
each nonzero $x_i$ satisfies $\lambda(\gamma_0)(x_i) > 0$.

Given some nonzero $x_i$, write $x_i = \sum_{k=0}^\infty p^k [\overline{x_{ik}}]$.
Choose an integer $h$ for which $\gamma_0(\overline{x_{i0}}) > p^{-h}$.
Then the set of $\gamma \in \calM(W(R))$ 
for which $\gamma(\overline{x_{i0}}) > p^{-h}$
and $p^{-k} \gamma(\overline{x_{ik}}) < \gamma(\overline{x_{i0}})$ 
for $k=1,\dots,h-1$
is open and contains $\gamma_0$. 
Consequently, there is an open neighborhood $U$ of $\gamma_0$ in
$\calM(W(R))$ such that $x_0,\dots,x_j$ are $\gamma$-stable
for each  $\gamma \in U$. 

For $(\gamma,t) \in U \times [0,1]$,
applying Theorem~\ref{T:construct Gauss}(b) over the ring $\gotho_{\calH(\gamma)}$ yields
\[
 \max\{ \epsilon/2, 
H(\gamma,\pi, t)(x)\}
= \max\{\epsilon/2, 
\max_i \{(t/p)^i \gamma(\overline{x_{i0}})\}\}.
\]
There thus exist an open neighborhood $V$
of $\gamma_0$ and an open interval $I$ containing $t_0$ 
for which for each pair $(\gamma,t) \in V \times I$,
$H(\gamma,\pi, t)(x)$ and $H(\gamma_0,\pi, t_0)(x)$ 
are either both greater than $\epsilon$
or both less than $\epsilon$.
This yields the desired result.
\end{proof}
\begin{cor} \label{C:continuous section}
With notation as in Theorem~\ref{T:continuous1},
the map $H(\cdot, \pi,0)$ induces a homeomorphism 
$\calM(R) \to \calM(W(R)/(\pi))$,
whose inverse is induced by $\mu$. 
Moreover, any subset of $\calM(R)$ is Weierstrass (resp.\ Laurent, rational) if and only if
its image in $\calM(W(R)/(\pi))$ is.
\end{cor}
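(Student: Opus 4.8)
The plan is to identify $f:=H(\cdot,\pi,0)$ with a homeomorphism $\calM(R)\to\calM(W(R)/(\pi))$ whose inverse $\mu'$ is the map induced by $\mu$, and then to promote this inversion to the level of Weierstrass/Laurent/rational subspaces by showing that $f$ and $\mu'$ are both strongly continuous. Since passing from $R$ to its separated completion changes neither $\calM(R)$ nor, by Corollary~\ref{C:stable presentation}, the quotient norm $H(\alpha,\pi,0)$, I may and will assume $R=\widehat R$, so that stable presentations are available. First I would record that the two maps are well defined: for $\gamma\in\calM(R)$, Lemma~\ref{L:lambda} and Theorem~\ref{T:construct Gauss}(a) make $H(\gamma,\pi,0)$ a multiplicative seminorm on $W(R)$, it kills $\pi$ (use the presentation $\pi=0\cdot 1+1\cdot\pi$) hence descends to $W(R)/(\pi)$, and it is bounded by $H(\alpha,\pi,0)$ because $\gamma\le\alpha$ pointwise; thus $f(\gamma)\in\calM(W(R)/(\pi))$. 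Dually, $\mu'$ is the composite $\calM(W(R)/(\pi))\hookrightarrow\calM(W(R))\xrightarrow{\mu}\calM(R)$, the first arrow being restriction along $W(R)\to W(R)/(\pi)$, and it lands in $\calM(R)$ by Lemma~\ref{L:retract}.

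Next I would check $\mu'\circ f=\id$ and $f\circ\mu'=\id$. For the first: the Teichm\"uller element $[\overline x]$ is stable (Definition~\ref{D:stable}), so $[\overline x],0,0,\dots$ is a stable presentation with respect to $\pi$, and Theorem~\ref{T:construct Gauss}(b) gives $\mu'(f(\gamma))(\overline x)=H(\gamma,\pi,0)([\overline x])=\lambda(\gamma)([\overline x])=\gamma(\overline x)$. For the second: given $\beta\in\calM(W(R)/(\pi))$, put $\gamma=\mu'(\beta)$; regarded on $W(R)$, the seminorm $\beta$ satisfies $\beta(\pi)=0$, $\mu(\beta)=\gamma$, and $\beta\le\lambda(\gamma)$ by Theorem~\ref{T:lifting}(c), so the second paragraph of Remark~\ref{R:radius 0}, applied with $\gamma$ in place of $\alpha$, forces $\beta=H(\gamma,\pi,0)=f(\mu'(\beta))$. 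Now $f$ is continuous by Theorem~\ref{T:continuous1} restricted to $t=0$ (its image lies in the closed subspace $\{\beta:\beta(\pi)=0\}$, which is homeomorphic to $\calM(W(R)/(\pi))$), and $\mu'$ is continuous as a composite of a restriction map with $\mu$ (Theorem~\ref{T:lifting}(a)); since $\calM(R)$ is compact and $\calM(W(R)/(\pi))$ is Hausdorff, $f$ is a homeomorphism with inverse $\mu'$.

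For the ``moreover'' it suffices to prove that $f$ and $\mu'$ are both strongly continuous: if $U\subseteq\calM(R)$ is Weierstrass then $f(U)=(\mu')^{-1}(U)$ is Weierstrass, and if $f(U)$ is Weierstrass then $U=f^{-1}(f(U))$ is, and likewise for Laurent and rational. Strong continuity of $\mu'$ is the exact analogue of the corresponding assertion for $\mu$ in the proof of Theorem~\ref{T:lifting}(a): the $\mu'$-preimage of the Weierstrass subspace $\{\gamma:\gamma(\overline{f_i})\le p_i\}$ of $\calM(R)$ is $\{\bar\beta:\bar\beta([\overline{f_i}])\le p_i\}$, similarly for Laurent, and for a rational subspace one invokes Remark~\ref{R:lift rational} to see that the relevant elements still generate the unit ideal. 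The heart of the matter is strong continuity of $f=H(\cdot,\pi,0)$, where there is no analogue of the differential formula \eqref{eq:continuity1} to fall back on; here I would use the uniform stable approximation of Lemma~\ref{L:stable approximation}. Given $\bar y\in W(R)/(\pi)$, a lift $y\in W(R)$, and $\epsilon>0$, that lemma supplies $z=\sum p^i[\overline{z_i}]\equiv y\pmod\pi$ with $\beta(\overline{z_i})\le\max\{\beta(\overline{z_0}),\epsilon\}$ for all $i>0$ and \emph{all} $\beta\in\calM(R)$. Consequently, when $\gamma(\overline{z_0})\ge\epsilon$ the element $z$ is $\gamma$-stable, so $H(\gamma,\pi,0)(\bar y)=\lambda(\gamma)(z)=\gamma(\overline{z_0})$ by Theorem~\ref{T:construct Gauss}(b) and Remark~\ref{R:why stable}, while when $\gamma(\overline{z_0})<\epsilon$ one has $H(\gamma,\pi,0)(\bar y)\le\lambda(\gamma)(z)\le\epsilon$. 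Choosing $\epsilon$ below the constant occurring in a given Weierstrass (resp.\ Laurent) condition then identifies the preimage with the single subspace $\{\gamma:\gamma(\overline{z_0})\le p\}$ (resp.\ $\{\gamma:\gamma(\overline{z_0})\ge q\}$); the rational case reduces to finitely many such comparisons once one uses the unit-ideal hypothesis to bound $\max_i H(\gamma,\pi,0)(\overline{f_i})$ uniformly away from $0$. I expect this last point — computing $H(\gamma,\pi,0)$ uniformly in $\gamma$ — to be the main obstacle, and Lemma~\ref{L:stable approximation} is precisely what overcomes it.
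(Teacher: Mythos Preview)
Your approach is essentially the same as the paper's: the homeomorphism is established via Theorem~\ref{T:continuous1}, Theorem~\ref{T:lifting}, and Remark~\ref{R:radius 0}, and the transfer of Weierstrass/Laurent/rational subspaces in the hard direction (from $\calM(W(R)/(\pi))$ to $\calM(R)$) hinges on Lemma~\ref{L:stable approximation} exactly as you propose. Two small points: first, ``strongly continuous'' in the sense of Definition~\ref{D:rational} only yields that preimages are \emph{finite unions} of the relevant subspaces, which is not enough for the if-and-only-if statement---what you actually prove (and what is needed) is the stronger fact that preimages are \emph{single} such subspaces; second, in the rational case you should, as the paper does, use Remark~\ref{R:rational} to bound $\gamma(g)$ (not $\max_i \gamma(f_i)$) away from zero on $U$ and then verify that the resulting $\overline{z_0}$'s generate the unit ideal in $R$.
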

\begin{proof}
The first statement is immediate from Theorem~\ref{T:continuous1},
Theorem~\ref{T:lifting}, and Remark~\ref{R:radius 0}.
For the second statement, observe that from the proof of 
Theorem~\ref{T:lifting}(a), the image of a Weierstrass
(resp.\ Laurent, rational) subspace of $\calM(R)$ is again one.
We establish the converse only for a rational subspace, as the other cases behave similarly;
we may also assume that $R$ is complete under $\alpha$.
Let
\[
U = \{\gamma \in \calM(W(R)/(\pi)): \gamma(f_i) \leq p_i \gamma(g) \quad
(i=1,\dots,n)\}
\]
be the rational subspace defined by some $f_i, g \in W(R)$ generating the unit ideal in $W(R)/(\pi)$
and some $p_i > 0$. Apply Remark~\ref{R:rational} to find $\epsilon > 0$ for which $\gamma(g) > \epsilon$
for all $\gamma \in U$. By Lemma~\ref{L:stable approximation},
we can find $\overline{f_1}',\dots,\overline{f_n}',\overline{g}' \in R$ such that for all $\gamma \in \calM(W(R)/(\pi))$,
\[
\gamma(f_i - [\overline{f_i}']) \leq p^{-1} \max\{\gamma(f_i), p_i \epsilon \},
\qquad
\gamma(g - [\overline{g}']) \leq p^{-1} \max\{\gamma(g), \epsilon\}.
\]
(More precisely, apply Lemma~\ref{L:stable approximation} with $x = f_1,\dots,f_n,g$, let
$f_1',\dots,f_n',g'$ be the resulting values of $y$, then reduce modulo $p$.)
By Remark~\ref{R:rational}, $[\overline{f_1}'], \dots, [\overline{f_n}'], [\overline{g}']$ also generate the unit ideal
in $W(R)/(\pi)$, so $\overline{f_1}',\dots,\overline{f_n}',\overline{g}',\overline{\pi}$ generate the unit ideal in $R$;
the same is then true without $\overline{\pi}$.

For $\gamma \in \calM(W(R)/(\pi))$ corresponding to $\beta \in \calM(R)$,
$\gamma(g) \geq \epsilon$ if and only if $\beta(\overline{g}') \geq \epsilon$,
in which case $\gamma(g) = \beta(\overline{g}')$. Also, in this case,
$\gamma(f_i) \leq p_i \gamma(g)$ if and only if
$\beta(\overline{f_i}') \leq p_i \gamma(g) = p_i \beta(\overline{g}')$.
Consequently, $U$ corresponds to the rational subspace
\[
\{ \beta \in \calM(R): \beta(\overline{f_i}') \leq p_i \beta(\overline{g}') \quad (i=1,\dots,n)\},
\]
as desired.
\end{proof}

\begin{remark}
Corollary~\ref{C:continuous section} defines a remarkable
section of the projection $\mu$: it is a homeomorphism of topological
spaces, 
but one of the underlying rings is of characteristic $p$
while the other is not. We plan to explore the relationship between these
rings in subsequent work.
\end{remark}

To use Theorem~\ref{T:continuous1} to define the desired homotopy,
we argue as in Remark~\ref{R:Gauss approach}. However, we must overcome
a technical complication that does not occur there, because the 
analogous construction here is not \emph{a priori} well-defined.

\begin{lemma} \label{L:no choice}
Define $\psi: W(R)[T] \to W(R[\overline{T}]^{\perf})$ as in
Lemma~\ref{L:Witt split}. Choose $\beta_1, \beta_2 \in
\calM(W(R[\overline{T}]^{\perf}))$ with $\beta_1(p-[\overline{T}]) = 
\beta_2(p-[\overline{T}]) = 0$ and $\psi^*(\beta_1) = \psi^*(\beta_2)$.
Then for all $t \in [0,1]$, 
the restrictions of $H(\mu(\beta_1), [\overline{T}], t)$
and $H(\mu(\beta_2), [\overline{T}], t)$ to $W(R)$ coincide.
\end{lemma}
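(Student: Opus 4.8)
The strategy is to follow through the definition of $H(\mu(\beta_i),\overline T,t)$ — i.e.\ $H(\mu(\beta_i),\pi,t)$ with $\pi:=p-[\overline T]=\psi(p-T)$, the quotient of the $(t/p)$-Gauss extension of $\lambda(\mu(\beta_i))$ as in Definition~\ref{D:Gauss norm2} — and restrict back to $W(R)$ along the functorial map $\psi_0:=\psi|_{W(R)}\colon W(R)\to W(R[\overline T]^{\perf})$. Write $S=R[\overline T]^{\perf}$ (we may assume $S$ complete for $\alpha_i$ when invoking stable presentations, for each $i$ separately, since the value of $H(\alpha,\pi,t)$ is completion-invariant) and $\alpha_i=\mu(\beta_i)\in\calM(S)$. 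From $\beta_i(p-[\overline T])=0$ and the ultrametric inequality one gets $\beta_i(p)=\beta_i([\overline T])=\psi^*(\beta_i)(T)=:\tau\le p^{-1}$; hence $\alpha_i(\overline T)=\tau$, $\alpha_i|_R=\mu(\psi^*(\beta_i)|_{W(R)})=:\alpha_0$, and $\beta_i\circ\psi_0=\psi^*(\beta_i)|_{W(R)}=:\bar\beta$ are all independent of $i$ (using $\psi(p-T)=\pi$, multiplicativity of Teichm\"uller lifting, and Lemma~\ref{L:retract}). The goal is then that $H(\alpha_i,\overline T,t)\circ\psi_0$ is independent of $i$ for all $t$. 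For $t=0$ this is immediate from Remark~\ref{R:radius 0}, which gives $H(\alpha_i,\overline T,0)=\beta_i$, whence $H(\alpha_i,\overline T,0)\circ\psi_0=\bar\beta$. For $t\in(0,1]$ with $t\ge p\tau$, Theorem~\ref{T:construct Gauss}(d) gives $H(\alpha_i,\overline T,t)=\lambda(\alpha_i^{1/c})^c$ with $c=1-\log_p t\ge1$; restricting along $\psi_0$ and using $\alpha_i^{1/c}|_R=\alpha_0^{1/c}$ yields $H(\alpha_i,\overline T,t)(\psi_0 x)=\bigl(\max_k p^{-k}\alpha_0(\overline{x_k})^{1/c}\bigr)^c$ for $x=\sum_k p^k[\overline{x_k}]\in W(R)$, again independent of $i$. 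In particular if $\tau=0$ we are done, so assume $\tau>0$ and $t\in(0,p\tau)$.

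For $t\in(0,p\tau)$ I would first reduce, by continuity of $H(\alpha_i,\overline T,t)$ for the $p$-adic topology on $W(S)$, to the case $x=\sum_{k=0}^N p^k[\overline{x_k}]$ with $\overline{x_k}\in R$. In $W(S)$ one has $p=[\overline T]+\pi$, so expanding the binomials,
\[
x=\sum_{k=0}^N([\overline T]+\pi)^k[\overline{x_k}]=\sum_{j=0}^N\pi^j\,\psi(\tilde y_j),\qquad \tilde y_j:=\sum_{k=j}^N\binom kj[\overline{x_k}]\,T^{k-j}\in W(R)[T].
\]
Let $G_\tau$ be the $\tau$-Gauss norm on $W(R)[T]$ over $(W(R),\lambda(\alpha_0))$ (multiplicative by Lemma~\ref{L:Gauss multiplicative}). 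Since $\lambda(\alpha_i)$ is multiplicative with $\lambda(\alpha_i)([\overline T])=\tau$ and $\lambda(\alpha_i)|_{W(R)}=\lambda(\alpha_0)$, the ultrametric inequality gives $\lambda(\alpha_i)(\psi(f))\le G_\tau(f)$ for all $f\in W(R)[T]$, so the presentation above and Definition~\ref{D:Gauss norm2} give
\[
H(\alpha_i,\overline T,t)(\psi_0 x)\ \le\ \max_{0\le j\le N}(t/p)^j\,G_\tau(\tilde y_j),
\]
and the right side depends only on $x,\alpha_0,\tau,t$. Equivalently, letting $N_t$ be the (multiplicative) quotient norm on $W(R)[T]$ of the $(t/p)$-Gauss extension of $G_\tau$ by $(T'-(p-T))$ as in Remark~\ref{R:construct Gauss by quotient}, the same argument applied to an arbitrary $W(R)[T]$-presentation shows $H(\alpha_i,\overline T,t)\circ\psi_0\le N_t|_{W(R)}$, and $N_t|_{W(R)}$ is visibly independent of $i$.

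The remaining point — and the step I expect to be the main obstacle — is the reverse inequality $N_t|_{W(R)}\le H(\alpha_i,\overline T,t)\circ\psi_0$, which is where $\psi^*(\beta_1)=\psi^*(\beta_2)$ must be used substantively. The naive attempt — pushing a stable presentation $\psi_0 x=\sum_j b_j\pi^j$ (Theorem~\ref{T:construct Gauss}(b), Lemma~\ref{L:stable presentation}) through the $W(R)[T]$-linear splitting $\rho\colon W(S)\to W(R)[T]$ of Lemma~\ref{L:Witt split}, which by $W(R)[T]$-linearity yields the $W(R)[T]$-presentation $x=\sum_j\rho(b_j)(p-T)^j$ — does not by itself suffice, because $\rho$ need not be norm-nonincreasing from $(W(S),\lambda(\alpha_i))$ to $(W(R)[T],G_\tau)$ on the $b_j$ (stable representatives over $W(S)$ genuinely involve the roots $[\overline T^{1/p^n}]$, on which $\rho$ can strictly decrease the norm). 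The plan to get around this is to pass to the piecewise-affine description: by Corollary~\ref{C:convex} it suffices to show the concave, nondecreasing, integer-slope functions $r\mapsto -\log H(\alpha_i,\overline T,e^{-r})(\psi_0 x)$ ($i=1,2$) coincide; by the previous paragraph they agree on $[0,-\log(p\tau)]$, and by continuity of $H$ in $t$ at $t=0$ they share the limit $-\log\bar\beta(x)$ as $r\to\infty$, so what is needed is that their break data — equivalently the Newton polygon of $\psi_0 x$ relative to $\pi$ — is independent of $i$. This is where one invokes $\psi^*(\beta_1)=\psi^*(\beta_2)$: it forces $\beta_i$, hence (via $\mu$) the finitely many evaluation data governing that Newton polygon, to be independent of $i$ on the subring $\psi(W(R)[T])$ where $\rho$ is the identity, the transition from $\psi(W(R)[T])$ to the stable representatives being handled by a limiting argument in the spirit of Lemma~\ref{L:stable approximation}. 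Carrying out this last reduction precisely is the technical heart of the proof.
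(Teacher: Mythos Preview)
Your strategy is fundamentally different from the paper's, and the part you flag as ``the technical heart of the proof'' is a genuine gap that you do not close. You correctly handle $t=0$ and $t\ge p\tau$, and your upper bound $H(\alpha_i,\overline T,t)\circ\psi_0\le N_t|_{W(R)}$ is fine. But for $t\in(0,p\tau)$ your plan for the reverse inequality is only a sketch: knowing that two concave, nondecreasing, integer-slope functions agree on an initial interval and share the same limit at infinity does \emph{not} force them to coincide (e.g.\ slope $2$ then $0$ versus slope $1$ then $0$, with the same total rise). You therefore need the break data itself to match, and your proposal to extract that data from the common restriction $\psi^*(\beta_1)=\psi^*(\beta_2)$ via ``a limiting argument in the spirit of Lemma~\ref{L:stable approximation}'' is not carried out. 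The difficulty is real: the slopes of $r\mapsto -\log H(\alpha_i,\overline T,e^{-r})(\psi_0 x)$ are governed (as in Remark~\ref{R:new Newton poly}) by a factorization of $\psi_0 x$ over a completed algebraic closure of $\calH(\alpha_i)$, and this factorization lives over a field that depends on $i$. There is no evident mechanism to compare the two factorizations using only the agreement on $\psi(W(R)[T])$.

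The paper sidesteps all of this with a short structural argument. By Lemma~\ref{L:surjective2}, one chooses $\beta_3\in\calM\bigl(W(R[\overline T]^{\perf})\widehat\otimes_{W(R)[T]}W(R[\overline T]^{\perf})\bigr)$ restricting to $\beta_1,\beta_2$ on the two tensorands; identifying the tensor product with a dense subring of $W(R[\overline T_1,\overline T_2]^{\perf})$ and setting $\alpha_3=\mu(\beta_3)$, one has $\beta_3(p-[\overline T_1])=\beta_3(p-[\overline T_2])=0$. Then Remark~\ref{R:radius 0b} gives $H(\alpha_3,\overline T_1,t)=H(\alpha_3,\overline T_2,t)$ for all $t$, because $p-[\overline T_1]$ and $p-[\overline T_2]$ generate the same ideal in $W(\gotho_3)$. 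Restricting along each tensorand inclusion $W(R[\overline T]^{\perf})\hookrightarrow W(R[\overline T_1,\overline T_2]^{\perf})$ (which is compatible with the formation of $H(\cdot,\overline T_i,t)$, since $\alpha_3$ restricts to $\alpha_i$ and stable presentations over the smaller ring remain stable over the larger) yields $H(\alpha_1,\overline T,t)$ and $H(\alpha_2,\overline T,t)$ as restrictions of the \emph{same} seminorm on the bigger ring, hence they agree on $W(R)$. The key idea you are missing is precisely this passage to a common overfield in which both $\overline T$'s live simultaneously, so that the comparison becomes the purely algebraic fact of Remark~\ref{R:radius 0b} rather than a delicate analytic estimate.
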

\begin{proof}
By Lemma~\ref{L:surjective2},
for $S = W(R[\overline{T}]^{\perf}) \otimes_{W(R)}
W(R[\overline{T}]^{\perf})$,
there exists $\beta_3 \in 
\calM(S)$ restricting to $\beta_1, \beta_2$ on the tensorands.
(For $\beta = \psi^*(\beta_1)$,
one can also argue directly that $\calH(\beta_1) \widehat{\otimes}_{\calH(\beta)}
\calH(\beta_2) \neq 0$ using the fact that $\calH(\beta_i)$ is the completion of
an algebraic extension of $\calH(\beta)$.)
We may identify $S$ with a dense subring of 
$W(R[\overline{T_1}, \overline{T_2}]^{\perf})$ by identifying $[\overline{T}] \otimes 1$
with $[\overline{T_1}]$ and $1 \otimes [\overline{T}]$ with $[\overline{T_2}]$;
we may then extend $\beta_3$ to $W(R[\overline{T_1}, 
\overline{T_2}]^{\perf})$ by continuity.

For $i = 1,2,3$, put $\alpha_i = \mu(\beta_i)$, 
let $\gotho_i$ be the valuation ring of $\calH(\alpha_i)$,
and extend $\beta_i$ to a multiplicative seminorm on $W(\gotho_i)$.
Then $\beta_3(p-[\overline{T_1}]) = \beta_3(p - [\overline{T_2}]) = 0$,
so by Remark~\ref{R:radius 0b}, 
we have $H(\alpha_3, \overline{T_1}, t) = H(\alpha_3, \overline{T_2},t)$
for all $t \in [0,1]$.
Since $H(\alpha_3,\overline{T_i},t)$ restricts to
$H(\alpha_i,\overline{T_i}, t)$, this proves the claim.
\end{proof}

\begin{defn} \label{D:deformation}
Define $\psi: W(R)[T] \to W(R[\overline{T}]^{\perf})$ as in
Lemma~\ref{L:Witt split}.
Given $\beta \in \calM(W(R))$,
restrict $\beta$ along $W(R)[T] \to W(R)[T]/(p-T) \cong W(R)$,
then apply Lemma~\ref{L:Witt split}(b) to extend $\beta$
to $\beta_1 \in \calM(W(R[\overline{T}]^{\perf}))$.
By Lemma~\ref{L:no choice}, for $t \in [0,1]$, the restriction of 
$H(\mu(\beta_1),\overline{T},t)$ to $W(R)$ is independent
of the choice of $\beta_1$; we call this restriction
$H(\beta,t)$. It is a multiplicative seminorm by Theorem~\ref{T:construct Gauss}(a); its formation is evidently compatible with restriction along bounded
homomorphisms.
\end{defn}

\begin{remark} \label{R:use all extensions}
For $\beta \in \calM(W(R))$,
let $\tilde{\beta}$ be
the spectral seminorm associated to the
product seminorm on $W(R)[T]/(T-p) \otimes_{W(R)[T]} 
W(R[\overline{T}]^{\perf})$ using $\beta$ on the first factor;
this equals the supremum 
over all extensions of $\beta$ to $W(R[\overline{T}]^{\perf})$
(see Definition~\ref{D:transform}).
Consequently, by
Lemma~\ref{L:no choice}, we may compute $H(\beta,t)$
by restricting the spectral seminorm associated to the quotient norm on
\[
W(R[\overline{T}]^{\perf})[U]/(U - p + [\overline{T}])
\]
induced by the $(t/p)$-Gauss extension of $\lambda(\mu(\tilde{\beta}))$.
\end{remark}

\begin{remark} \label{R:monotonicity}
One consequence of Remark~\ref{R:use all extensions}
is
monotonicity:
for $\beta, \beta' \in \calM(W(R))$ and
$t, t' \in [0,1]$ with $\beta \geq \beta'$ and 
$t \geq t'$, we have
$H(\beta, t) \geq H(\beta', t')$.
This is not evident from Definition~\ref{D:deformation} because
Lemma~\ref{L:Witt split} does not guarantee 
that $\beta, \beta'$ admit extensions
$\beta_1, \beta'_1$ to $W(R[\overline{T}]^{\perf})$
which satisfy $\beta_1 \geq \beta'_1$.
\end{remark}

We obtain the following analogue of Theorem~\ref{T:homotopy1}.
\begin{theorem} \label{T:final homotopy}
The map $H: \calM(W(R)) \times [0,1] \to \calM(W(R))$ given in
Definition~\ref{D:deformation} is continuous and 
has the following additional properties.
\begin{enumerate}
 \item[(a)]
For $\beta \in \calM(W(R))$, $H(\beta, 0) = \beta$.
\item[(b)]
For $\beta \in \calM(W(R))$, $H(\beta, 1) = (\lambda \circ \mu)(\beta)$.
\item[(c)]
For $\beta \in \calM(W(R))$ and $t \in [0,1]$, $\mu(H(\beta, t)) = \mu(\beta)$.
\item[(d)]
For $\beta \in \calM(W(R))$ and $s,t \in [0,1]$,
$H(H(\beta,s),t) = H(\beta, \max\{s,t\})$.
\end{enumerate}
\end{theorem}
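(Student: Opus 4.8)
The plan is to prove continuity of $H$ by a compactness argument, to read off (a)--(c) by unwinding Definition~\ref{D:deformation}, and to establish (d) by imitating the two-variable construction promised in Remark~\ref{R:Gauss approach}. For continuity: given a convergent net $(\beta^{(n)},t^{(n)}) \to (\beta,t)$, I would lift each $\beta^{(n)}$ to some $\beta_1^{(n)} \in \calM(W(R[\overline{T}]^{\perf}))$ with $\beta_1^{(n)}(p-[\overline{T}]) = 0$ as in Definition~\ref{D:deformation}, and pass to a subnet along which $\beta_1^{(n)}$ converges to some $\beta_1^*$ (compactness of $\calM(W(R[\overline{T}]^{\perf}))$). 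Then $\beta_1^*$ is again a legitimate lift of $\beta$ (continuity of $\psi^*$, and $\beta_1^*(p-[\overline{T}]) = 0$ by continuity), so Theorem~\ref{T:continuous1} composed with the continuous map $\mu$, together with the independence-of-choices statement of Lemma~\ref{L:no choice}, gives $H(\beta^{(n)},t^{(n)}) \to H(\beta,t)$; the usual subnet argument then finishes.

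Fix $\beta$ together with a lift $\beta_1$ and write $\alpha_1 = \mu(\beta_1)$, so $\alpha_1|_R = \mu(\beta)$ and $\alpha_1(\overline{T}) = \beta(p) \le p^{-1}$. Property (a) holds because $\beta_1(p-[\overline{T}]) = 0$ forces $\beta_1 = H(\alpha_1,\overline{T},0)$ by Remark~\ref{R:radius 0}, and this restricts to $\beta$. Property (b) holds because $H(\alpha_1,\overline{T},1) = \lambda(\alpha_1)$ by Theorem~\ref{T:construct Gauss}(d), whose restriction to $W(R)$ is $\lambda(\alpha_1|_R) = (\lambda\circ\mu)(\beta)$ by the compatibility of $\lambda$ with restriction of scalars along $R \to R[\overline{T}]^{\perf}$. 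Property (c) holds because, after replacing $R[\overline{T}]^{\perf}$ by its $\alpha_1$-completion (harmless for the restriction to $W(R)$), every Teichm\"uller element $[\overline{x}]$ with $\overline{x} \in R$ is stable with respect to $p-[\overline{T}]$, so Theorem~\ref{T:construct Gauss}(b) gives $H(\alpha_1,\overline{T},t)([\overline{x}]) = \lambda(\alpha_1)([\overline{x}]) = \alpha_1(\overline{x})$, whence $\mu(H(\beta,t))(\overline{x}) = \mu(\beta)(\overline{x})$.

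For (d) I would pass to a two-variable base. With $\beta_1,\alpha_1$ as above we have $H(\beta,s) = H(\alpha_1,\overline{T},s)|_{W(R)}$. Applying Lemma~\ref{L:Witt split} and Lemma~\ref{L:surjective} over $S_1 = R[\overline{T}]^{\perf}$, choose $\delta \in \calM(W(S_{12}))$, where $S_{12} = R[\overline{T},\overline{U}]^{\perf}$, with $\delta(p-[\overline{U}]) = 0$ and $\delta|_{W(S_1)} = H(\alpha_1,\overline{T},s)$; set $\alpha_{12} = \mu(\delta)$, so that $\alpha_{12}|_{S_1} = \alpha_1$ and, by Remark~\ref{R:radius 0}, $\delta = H(\alpha_{12},\overline{U},0)$. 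Since $\delta(p-[\overline{U}]) = 0$ forces $\delta(g - g|_{[\overline{U}]\mapsto p}) = 0$ for every $g$, the restriction $\gamma_1 := \delta|_{W(R[\overline{U}]^{\perf})}$ is a legitimate choice for the extension of $H(\beta,s)$ used in Definition~\ref{D:deformation}. Thus, by Lemma~\ref{L:no choice} and compatibility of $H(\cdot,\cdot,\cdot)$ with restriction of scalars, $H(H(\beta,s),t) = H(\alpha_{12},\overline{U},t)|_{W(R)}$, while $H(\beta,\max\{s,t\}) = H(\alpha_{12},\overline{T},\max\{s,t\})|_{W(R)}$. So (d) is reduced to the single identity $H(\alpha_{12},\overline{U},t)|_{W(S_1)} = H(\alpha_1,\overline{T},\max\{s,t\})$ on $W(S_1)$.

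This identity is where the main difficulty lies. It holds at $t=0$ by the choice of $\delta$; in general both sides have $\mu$-image $\alpha_1$, are dominated by $\lambda(\alpha_1)$, and agree on $p-[\overline{T}]$, taking there the common value $\max\{s,t\}/p$ by Lemma~\ref{L:compute linear}. To prove it I would reduce, using compatibility with bounded homomorphisms and Remark~\ref{R:use all extensions}, to the case $S_1 = \gotho$, the valuation ring of a complete perfect field with $\alpha_1$ multiplicative, and then compute both sides on stable presentations. Writing $x = \sum_i x_i(p-[\overline{T}])^i$ with the $x_i$ $\alpha_1$-stable gives the right-hand side as $\max_i \{(\max\{s,t\}/p)^i \lambda(\alpha_1)(x_i)\}$ by Theorem~\ref{T:construct Gauss}(b); expanding $p-[\overline{T}] = (p-[\overline{U}]) + ([\overline{U}]-[\overline{T}])$ and using that $[\overline{U}]-[\overline{T}]$ is $\alpha_{12}$-stable of $\lambda(\alpha_{12})$-norm $s/p$ — the delicate point, which I would extract from Remark~\ref{R:why stable} and the valuations of $\overline{T},\overline{U}$ forced by the construction, after a preliminary rescaling reducing to the case $s/p = \alpha_{12}(\overline{U})$ — one obtains a stable presentation of $x$ with respect to $p-[\overline{U}]$ whose $(t/p)$-weighted maximum is exactly that same expression, the binomial cross-terms $\binom{i}{j}([\overline{U}]-[\overline{T}])^{i-j}$ being controlled exactly as in the multiplicativity arguments of Lemma~\ref{L:lambda} and Theorem~\ref{T:construct Gauss}. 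The degenerate case $\beta(p)=0$ follows immediately from Theorem~\ref{T:construct Gauss}(d), and Corollary~\ref{C:convex} supplies the concluding slope comparison, exactly as in the proof of Lemma~\ref{L:compute linear}.
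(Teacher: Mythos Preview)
Your treatment of continuity and parts (a)--(c) is essentially correct and parallel to the paper, though for continuity the paper argues more cleanly: it equips $R[\overline{T}]^{\perf}$ with the $\overline{T}$-adic norm (normalized so that $\alpha(\overline{T}) = p^{-1}$), observes that the composite $\calM(W(R[\overline{T}]^{\perf})/(p-[\overline{T}])) \times [0,1] \to \calM(W(R))$ is continuous by Theorem~\ref{T:continuous1}, and then uses that the projection to $\calM(W(R)) \times [0,1]$ is a quotient map (Lemma~\ref{L:Witt split}(b)). Your net argument is a valid alternative once you specify this norm so that Theorem~\ref{T:continuous1} applies.

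For (d), however, your approach diverges from the paper's and contains a genuine gap. The paper does not attempt a direct two-variable stable-presentation computation. After reducing to $R = \gotho$ the valuation ring of a complete perfect field with $\beta = H(\gamma,u,0)$, it further passes to the case where $\Frac\gotho$ is algebraically closed and then invokes the factorization result Theorem~\ref{T:factor}: every nonzero element of $W(\gotho)$ is a stable element times a product of factors $p-[u']$. This reduces (d) to checking the identity on each linear factor $p-[u']$, where Lemma~\ref{L:compute linear} gives the explicit formula, so that both sides of (d) applied to $p-[u']$, as functions of $r = -\log t$, are piecewise affine with slopes in $\{0,1\}$ and agree at $r=0$ and as $r\to\infty$; this forces equality.

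Your approach breaks at two points. First, the claim that $[\overline{U}]-[\overline{T}]$ is $\alpha_{12}$-stable of norm $s/p$ is in general false: writing $[\overline{U}]-[\overline{T}] = \sum_j p^j[\overline{z_j}]$, stability requires $\alpha_{12}(\overline{U}-\overline{T}) > p^{-j}\alpha_{12}(\overline{z_j})$ for all $j>0$, and the computation underlying Lemma~\ref{L:compute metric} (see \eqref{eq:compute distance}) shows this fails exactly when $\overline{U}$ and $\overline{T}$ are close relative to their common size --- the regime of interest when $s$ is small. No rescaling repairs this, since the obstruction is governed by $\alpha_{12}(\overline{U}/\overline{T}-1)$. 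Even if $[\overline{U}]-[\overline{T}]$ were stable, products and sums of stable elements need not be stable, so the binomial expansion does not yield a stable presentation with respect to $p-[\overline{U}]$. Second, your concluding appeal to Corollary~\ref{C:convex} cannot close the argument for a general $x$: that corollary only constrains the slopes to be nonnegative integers, which together with matching endpoint values does not determine the function. The argument of Lemma~\ref{L:compute linear} succeeds only because for a single linear factor the slopes are forced to lie in $\{0,1\}$; this is precisely why the paper's reduction via Theorem~\ref{T:factor} to linear factors is essential and cannot be bypassed.
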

\begin{proof}
Let $\alpha$ be the $\overline{T}$-adic norm on
$R[\overline{T}]^{\perf}$ 
for the normalization $\alpha(\overline{T}) = p^{-1}$.
Equip $W(R[\overline{T}]^{\perf})/(p - [\overline{T}])$
with the quotient norm induced by $\lambda(\alpha)$. We then obtain
a continuous map
\[
\calM(W(R[\overline{T}]^{\perf})/(p - [\overline{T}]))
\times [0,1] \to \calM(W(R))
\]
by applying $\mu \times \id$ (which is continuous by Theorem~\ref{T:lifting}),
then $H(\cdot,\overline{T},\cdot)$ (which is continuous by
Theorem~\ref{T:continuous1}), then restricting along
the inclusion $W(R) \to W(R[\overline{T}]^{\perf})$.

By Lemma~\ref{L:no choice}, we have a commutative diagram
\[
\xymatrix{
\calM(W(R[\overline{T}]^{\perf})/(p - [\overline{T}])) \times [0,1] 
\ar[rd] \ar[d] & \\
\calM(W(R)) \times [0,1] \ar^{H}[r] & \calM(W(R))
}
\]
in which the diagonal arrow is continuous and
the vertical arrow is a quotient map by Lemma~\ref{L:Witt split}(b).
This yields the continuity of $H$.
We deduce (a) from Remark~\ref{R:radius 0},
(b) from Theorem~\ref{T:construct Gauss}(d),
and (c) from Remark~\ref{R:why stable}
(or more precisely, by noting that $\mu(H(\beta, t))(\overline{x}) = H(\beta,t)([\overline{x}])$
and that because Teichm\"uller elements are stable,
the latter equals $\beta([\overline{x}]) = \mu(\beta)(\overline{x})$).

To establish (d), we may follow the construction of 
Definition~\ref{D:deformation} to reduce to the case where
$R = \gotho$ is the valuation ring of a perfect field complete
for a multiplicative norm $\gamma$, and 
$\beta(p-[u])=0$ for some $u \in \gotho$ with $\gamma(u) \leq p^{-1}$.
By Remark~\ref{R:radius 0} again, this ensures that
$\beta = H(\gamma,u,0)$. This formula defines an extension of $\beta$
to $W(\gotho_1)$ whenever $\gotho_1$ is the valuation ring of a complete
field extension of $\Frac \gotho$; we may thus reduce to the case where
$\Frac \gotho$ is algebraically closed.

In this case, by Theorem~\ref{T:factor}, any
nonzero element of $W(\gotho)$ factors as a stable 
element times a product of finitely many terms each of the form 
$p - [u']$ for some $u' \in \gotho$ with $\gamma(u') \leq p^{-1}$.
To establish (d), we thus need only check that the functions
\begin{align*}
f(r) &= -\log H(H(\beta,s),e^{-r})(p-[u']) \\
g(r) &= -\log H(\beta,\max\{s,e^{-r}\})(p-[u'])
\end{align*}
are identically equal.
By Lemma~\ref{L:compute linear}, $f$ and $g$ are both
continuous, concave, nondecreasing, and piecewise linear
with slopes in $\{0,1\}$. They moreover take the same value
at $r=0$ (namely $\log p$) and have the same limiting value
as $r \to \infty$ (because $H(H(\beta,s),0) = H(\beta,s)$
by (a)). Consequently, they must coincide.
\end{proof}
\begin{cor}
Each subset of $\calM(R)$ has the same homotopy type
as its inverse image in $\calM(W(R))$ under $\mu$.
\end{cor}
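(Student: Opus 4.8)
The plan is to mirror exactly the argument that produced the corollary following Theorem~\ref{T:homotopy1} in the polynomial case. Given an arbitrary subset $S \subseteq \calM(R)$, I will exhibit $\lambda|_S \colon S \to \mu^{-1}(S)$ and $\mu|_{\mu^{-1}(S)} \colon \mu^{-1}(S) \to S$ as mutually inverse up to homotopy, with the deformation $H$ of Theorem~\ref{T:final homotopy} supplying the homotopy; in fact I will show $S$ is a strong deformation retract of $\mu^{-1}(S)$.

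The steps, in order. First, by Theorem~\ref{T:lifting}(b) we have $\mu(\lambda(\alpha)) = \alpha$, so $\lambda$ carries $S$ into $\mu^{-1}(S)$, and the composite $\mu|_{\mu^{-1}(S)} \circ \lambda|_S$ is the identity on $S$. Second, by Theorem~\ref{T:final homotopy}(c), $\mu(H(\beta,t)) = \mu(\beta)$ for all $t$, so if $\beta \in \mu^{-1}(S)$ then $H(\beta,t) \in \mu^{-1}(S)$ for all $t \in [0,1]$; thus $H$ restricts to a continuous map $\mu^{-1}(S) \times [0,1] \to \mu^{-1}(S)$, continuity being inherited from the continuity of $H$ on $\calM(W(R)) \times [0,1]$. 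Third, Theorem~\ref{T:final homotopy}(a,b) give $H(\beta,0) = \beta$ and $H(\beta,1) = (\lambda \circ \mu)(\beta)$, so this restricted homotopy deforms $\id_{\mu^{-1}(S)}$ into $\lambda|_S \circ \mu|_{\mu^{-1}(S)}$, whose image lies in $\lambda(S) \subseteq \mu^{-1}(S)$. Fourth, I check that $H$ fixes $\lambda(S)$ pointwise: for $\alpha \in S$, Theorem~\ref{T:final homotopy}(b) gives $H(\lambda(\alpha),1) = (\lambda \circ \mu \circ \lambda)(\alpha) = \lambda(\alpha)$ using Theorem~\ref{T:lifting}(b), and then Theorem~\ref{T:final homotopy}(d) yields $H(\lambda(\alpha),t) = H(H(\lambda(\alpha),1),t) = H(\lambda(\alpha),\max\{1,t\}) = H(\lambda(\alpha),1) = \lambda(\alpha)$ for all $t \in [0,1]$. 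Together these four points say precisely that $\lambda(S)$, hence $S$ (via the homeomorphism $\mu|_{\lambda(S)}$), is a strong deformation retract of $\mu^{-1}(S)$, so the two spaces have the same homotopy type. The continuity of $\lambda$ and $\mu$ used throughout is Theorem~\ref{T:lifting}(a), and all maps are being restricted to subspaces with their subspace topologies, which causes no difficulty.

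There is essentially no serious obstacle here: once Theorem~\ref{T:final homotopy} is in hand, the corollary is a formal consequence, exactly as in the polynomial case. The one point that genuinely uses the structure of $H$ — rather than mere abstract nonsense about deformation retracts — is the fibrewise property Theorem~\ref{T:final homotopy}(c), which is what allows $H$ to be restricted to $\mu^{-1}(S)$ for a completely arbitrary $S$ (not necessarily open, closed, or otherwise nice); and the fixity of $\lambda(S)$, which is the only place a short computation intervenes. I would present the proof in two sentences citing Theorem~\ref{T:final homotopy} and Theorem~\ref{T:lifting}.
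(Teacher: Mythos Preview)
Your proposal is correct and matches the paper's approach: the paper states the corollary with no proof, treating it as an immediate consequence of Theorem~\ref{T:final homotopy} (exactly as its polynomial analogue follows Theorem~\ref{T:homotopy1}), and you have simply filled in the routine details of that deduction. Your verification that $H$ fixes $\lambda(S)$ pointwise via Theorem~\ref{T:final homotopy}(b,d) is precisely the argument the paper sketches in the remark following the polynomial-case corollary.
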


We have the following analogue of Lemma~\ref{L:homotopy2}.
\begin{lemma} \label{L:same homotopy}
For $\alpha \in \calM(R)$ and $s,t \in [0,1]$,
$H(H(\alpha,u,s),t) = H(\alpha,u,\max\{s,t\})$.
\end{lemma}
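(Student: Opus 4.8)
The plan is to deduce the lemma from Theorem~\ref{T:final homotopy}(d) together with the single identity
\[
H(H(\alpha,u,0),s) = H(\alpha,u,s) \qquad (\alpha \in \calM(R),\ s \in [0,1]),
\]
where the outer $H(\cdot,s)$ is the deformation of Definition~\ref{D:deformation}, applied to the point $H(\alpha,u,0) \in \calM(W(R))$ (multiplicative by Theorem~\ref{T:construct Gauss}(a)), and the remaining $H$'s are the quotient seminorms of Definition~\ref{D:Gauss norm2}. Granting this, writing $\beta_0 = H(\alpha,u,0)$ and applying the identity twice with Theorem~\ref{T:final homotopy}(d) in between,
\[
H(H(\alpha,u,s),t) = H(H(\beta_0,s),t) = H(\beta_0,\max\{s,t\}) = H(\alpha,u,\max\{s,t\}).
\]

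To prove the identity, first record that $\mu(\beta_0) = \alpha$ and $\beta_0(p-[u]) = 0$, by Remark~\ref{R:radius 0} (the first because Teichm\"uller elements are stable, so Theorem~\ref{T:construct Gauss}(b) gives $\beta_0([\overline x]) = \lambda(\alpha)([\overline x]) = \alpha(\overline x)$). Next I would compute $H(\beta_0,s)$ via Definition~\ref{D:deformation} using a convenient extension of $\beta_0$ to $W(R[\overline T]^{\perf})$. Let $\rho\colon R[\overline T]^{\perf} \to R$ be the $R$-algebra map with $\rho(\overline T) = u$ (available since $R$ is perfect), put $\alpha_T = \alpha \circ \rho \in \calM(R[\overline T]^{\perf})$ (so $\alpha_T(\overline T) = \alpha(u) \le p^{-1}$), and set $\beta_{0,1} = H(\alpha_T,\overline T,0) \in \calM(W(R[\overline T]^{\perf}))$. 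The crux is the transfer identity
\[
H(\alpha_T,\overline T,s)(x) = H(\alpha,u,s)(W(\rho)(x)) \qquad (x \in W(R[\overline T]^{\perf}),\ s \in [0,1]),
\]
which follows because $\lambda(\alpha_T) = \lambda(\alpha)\circ W(\rho)$ and because the surjection $W(\rho)$ carries the principal ideal $(p-[\overline T])$ \emph{onto} $(p-[u])$ (and $W(\rho)[U]$ carries $(U-p+[\overline T])$ onto $(U-p+[u])$), so the infima defining the two quotient seminorms run over matching sets.

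Taking $s=0$ in the transfer identity, and using $W(\rho)\circ\psi|_{W(R)} = \id_{W(R)}$ together with $W(\rho)(\psi(T)) = [u] \equiv p \pmod{p-[u]}$, one gets $\beta_{0,1}(p-[\overline T]) = 0$ and that $\psi^*(\beta_{0,1})$ equals the pullback of $\beta_0$ along $W(R)[T] \to W(R)[T]/(p-T) \cong W(R)$; thus $\beta_{0,1}$ is an admissible choice in Definition~\ref{D:deformation}. Moreover $\mu(\beta_{0,1})(\overline y) = H(\alpha,u,0)([\rho(\overline y)]) = \alpha(\rho(\overline y)) = \alpha_T(\overline y)$, i.e.\ $\mu(\beta_{0,1}) = \alpha_T$. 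Since Definition~\ref{D:deformation} is independent of the chosen extension by Lemma~\ref{L:no choice}, this gives $H(\beta_0,s) = H(\mu(\beta_{0,1}),\overline T,s)|_{W(R)} = H(\alpha_T,\overline T,s)|_{W(R)}$, and restricting the transfer identity along $\psi|_{W(R)}$ (where $W(\rho)$ acts as the identity) yields $H(\beta_0,s) = H(\alpha,u,s)$, as desired.

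The main obstacle I anticipate is the transfer identity: one must verify that $\lambda$, the $(s/p)$-Gauss extension, and the relevant principal ideals are all compatible with the functorial map $W(\rho)$, the delicate point being \emph{surjectivity} of $W(\rho)$ onto the ideals $(p-[u])$ and $(U-p+[u])$ (which holds since $W(\rho)$ is surjective and these ideals are principal). Everything else is formal unwinding of the definitions, together with the routine reduction to the case $R = \widehat R$ (as in Remark~\ref{R:radius 0}) wherever Theorem~\ref{T:construct Gauss}(b) is invoked.
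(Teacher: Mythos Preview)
Your proof is correct and shares the paper's overall architecture: both reduce the lemma to the single identity $H(H(\alpha,u,0),s)=H(\alpha,u,s)$ and then invoke Theorem~\ref{T:final homotopy}(d). The difference lies in how that identity is established. The paper takes an \emph{arbitrary} extension $\beta_1$ of $\beta_0$ as in Definition~\ref{D:deformation}, observes that $\beta_1(p-[u])=\beta_1(p-[\overline T])=0$, and then appeals to Remark~\ref{R:radius 0b} (divisibility in the valuation ring of $\calH(\mu(\beta_1))$) to conclude $H(\mu(\beta_1),u,s)=H(\mu(\beta_1),\overline T,s)$; restricting to $W(R)$ gives the identity. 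You instead build a \emph{specific} extension via the retraction $\rho\colon R[\overline T]^{\perf}\to R$, $\overline T\mapsto u$, and prove the transfer identity $H(\alpha_T,\overline T,s)=H(\alpha,u,s)\circ W(\rho)$ directly from the quotient-norm definition, using that $W(\rho)$ is a surjection carrying $(U-p+[\overline T])$ onto $(U-p+[u])$ and that $\lambda(\alpha_T)=\lambda(\alpha)\circ W(\rho)$. Your route is a bit longer but more self-contained: it avoids Remark~\ref{R:radius 0b} and the implicit passage to a valuation ring, at the cost of checking the compatibility of quotient norms under a ring surjection. The paper's route is shorter because the needed divisibility statement has already been packaged in Remark~\ref{R:radius 0b}.
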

\begin{proof}
Put $\beta = H(\alpha,u,0)$ and set notation as in
Definition~\ref{D:deformation}. Then
$\beta_1(p - [u]) = \beta_1(p - [\overline{T}]) = 0$,
so $H(\alpha,u,s) = H(\beta,s)$ by Remark~\ref{R:radius 0b}.
By Theorem~\ref{T:final homotopy}(d),
\[
H(H(\alpha,u,s),t) = H(H(\beta,s),t) = H(\beta, \max\{s,t\})
= H(\alpha,u,\max\{s,t\})
\]
as desired.
\end{proof}

We also have the following analogue of Theorem~\ref{T:dominate}.
Again, this depends on an analysis of the fibres of $\mu$, which we
carry out in \S\ref{sec:fibres}.
\begin{defn}
For $\beta \in \calM(W(R))$, 
the set of $s \in [0,1]$ for which $H(\beta,s) = \beta$ is nonempty
(because it contains $0$), and closed (by continuity), so it has a greatest
element. As in Definition~\ref{D:radius}, we call this greatest element
the \emph{radius} of $\beta$, and denote it by $r(\beta)$.
\end{defn}

\begin{theorem} \label{T:tree}
Suppose $\beta, \gamma \in \calM(W(R))$ are such that $\beta \geq \gamma$
and $\mu(\beta) = \mu(\gamma)$. Then $\beta = H(\gamma, r(\beta))$.
\end{theorem}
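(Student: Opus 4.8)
The plan is to imitate the proof of Theorem~\ref{T:dominate}, i.e.\ of Lemma~\ref{L:dominate lemma}, as closely as the absence of an explicit formula like \eqref{eq:continuity1} permits. First dispose of the boundary cases using Theorem~\ref{T:final homotopy}(b): if $r(\beta) = 1$ then $\beta = H(\beta,1) = (\lambda \circ \mu)(\beta) = (\lambda \circ \mu)(\gamma) = H(\gamma,1) = H(\gamma,r(\beta))$; if instead $r(\gamma) = 1$ then $\beta \geq \gamma = H(\gamma,1) = (\lambda \circ \mu)(\gamma) = (\lambda \circ \mu)(\beta) = H(\beta,1) \geq \beta$, whence $\beta = \gamma$, $r(\beta) = 1$, and $H(\gamma,r(\beta)) = \beta$. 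So we may assume $r(\beta), r(\gamma) < 1$. Note also that for any $\beta \in \calM(W(R))$ and any $s > r(\beta)$ we have $H(\beta,s) > \beta$: monotonicity (Remark~\ref{R:monotonicity}) gives $H(\beta,s) \geq H(\beta,0) = \beta$, while $H(\beta,s) \neq \beta$ by definition of the radius.

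The two facts I would establish first, in the projected \S\ref{sec:fibres}, are Witt-vector analogues of Lemma~\ref{L:dominate} and Lemma~\ref{L:sup norm same radius}. (i) \emph{Analogue of Lemma~\ref{L:dominate}}: for $\beta \in \calM(W(R))$ with $\alpha = \mu(\beta)$ and for $s \in (r(\beta),1]$, the seminorm $H(\beta,s)$ is \emph{Gauss-like}, meaning it is the restriction to $W(R)$ of $H(\alpha', u, s)$ for some complete, algebraically closed perfect valuation ring $\gotho'$ extending the valuation ring of $\calH(\alpha)$, with norm $\alpha'$ and some $u \in \gotho'$ satisfying $\alpha'(u) \leq p^{-1}$; moreover such a seminorm has radius exactly $s$. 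The proof should follow Lemma~\ref{L:dominate}: reduce, as in the opening of Lemma~\ref{L:get factor} and in the proof of Theorem~\ref{T:final homotopy}(d) (using Definition~\ref{D:deformation}, and Lemma~\ref{L:surjective2} to pass to a common base), to the case $R = \gotho$ a complete algebraically closed perfect valuation ring with $\beta = H(\alpha,u,0)$; then use Theorem~\ref{T:factor} to factor an arbitrary nonzero element of $W(\gotho)$ as a stable element times linear terms $p - [u_i]$, and use Lemma~\ref{L:compute linear} together with Corollary~\ref{C:convex} to compute $H(\beta,s)$ on these, showing it stabilizes to a Gauss-like seminorm as $s \downarrow r(\beta)$; the radius claim then follows from Lemma~\ref{L:same homotopy} (the analogue of Corollary~\ref{C:radius}). (ii) \emph{Analogue of Lemma~\ref{L:sup norm same radius}}: two Gauss-like seminorms on $W(R)$ with the same restriction $\alpha$ along $\mu$ and the same parameter $s$, one dominating the other, must be equal. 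This should again be checked on linear factors, using Lemma~\ref{L:compute linear}, Remark~\ref{R:radius 0b}, and the control of the ideals $(p-[u'])$ supplied by Remark~\ref{R:same norm}.

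Granting (i) and (ii), the theorem follows as in Lemma~\ref{L:dominate lemma}. For $s \in (\max\{r(\beta),r(\gamma)\},1]$, both $H(\beta,s)$ and $H(\gamma,s)$ are Gauss-like with parameter $s$ and restrict to $\mu(\beta) = \mu(\gamma)$ along $\mu$; since $\beta \geq \gamma$, Remark~\ref{R:monotonicity} gives $H(\beta,s) \geq H(\gamma,s)$, and (ii) upgrades this to $H(\beta,s) = H(\gamma,s)$. If $r(\gamma) > r(\beta)$, then letting $s \downarrow r(\gamma)$ and using the continuity of $H$ (Theorem~\ref{T:final homotopy}) yields $\gamma = H(\gamma,r(\gamma)) = H(\beta,r(\gamma)) > \beta$ by the last observation of the first paragraph, contradicting $\beta \geq \gamma$; hence $r(\beta) \geq r(\gamma)$. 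Finally, letting $s \downarrow r(\beta)$ gives $\beta = H(\beta,r(\beta)) = H(\gamma,r(\beta))$, as desired. The main obstacle is step (i): whereas in the polynomial case one exploits the explicit formula \eqref{eq:continuity1} and the complete classification of $\calM(\CC[T])$, here one has only the indirect construction of $H(\cdot,t)$ via quotients of Gauss extensions and the factorization Theorem~\ref{T:factor}, so proving that $H(\beta,s)$ is Gauss-like — and controlling it uniformly in the element of $W(R)$ being evaluated as $s$ decreases to $r(\beta)$ — will require a careful analysis of stable presentations and of the piecewise-affine functions $v_x$ of Corollary~\ref{C:convex}.
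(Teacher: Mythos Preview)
Your proposal is correct and follows essentially the same route as the paper: reduce to the fibre over a single $\alpha$ (the paper takes $R = \gotho$, the valuation ring of $\calH(\alpha)$), handle the boundary cases $r(\beta)=1$ or $r(\gamma)=1$ via Theorem~\ref{T:final homotopy}(b), then invoke the two key lemmas you identify---the paper's Lemma~\ref{L:dominate2} (your (i)) and Lemma~\ref{L:sup norm same radius2} (your (ii))---to conclude exactly as in Lemma~\ref{L:dominate lemma}. One organizational point worth noting: the paper fixes a single completed algebraic closure $\tilde{\gotho}$ of $\Frac\gotho$ at the outset, so that the centers $u,u'$ produced for $H(\beta,s)$ and $H(\gamma,s)$ automatically live in a common ring where (ii) applies; your looser formulation ``some algebraically closed $\gotho'$'' would need this made explicit, but your reduction sketch already points in that direction.
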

\begin{proof}
Put $\alpha = \mu(\beta) = \mu(\gamma)$, let $\gotho$
be the valuation ring of $\calH(\alpha)$,
and identify $\beta, \gamma$ with the corresponding points in
$\mu^{-1}(\alpha) \subseteq \calM(W(\gotho))$. These identifications
are compatible with the formation of $H(\cdot,t)$; in particular,
they do not change the radius of $\beta$. It thus suffices to check
the case $R = \gotho$, for which see Lemma~\ref{L:tree proof}.
\end{proof}
\begin{cor} \label{C:tree}
For $\beta,\gamma \in \calM(W(R))$ satisfying $\mu(\beta) = \mu(\gamma)$
and $\beta \geq \gamma$,
we have  $r(\beta) \geq r(\gamma)$, with equality if and only if
$\beta = \gamma$.
\end{cor}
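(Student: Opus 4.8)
The plan is to follow the proof of Corollary~\ref{C:dominate} essentially verbatim, with Theorem~\ref{T:tree} playing the role of Theorem~\ref{T:dominate} and Theorem~\ref{T:final homotopy}(d) playing the role of Theorem~\ref{T:homotopy1}(d). First, since $\beta \geq \gamma$ and $\mu(\beta) = \mu(\gamma)$, Theorem~\ref{T:tree} gives $\beta = H(\gamma, r(\beta))$. Next I would record the general fact that $H(\delta,t) = \delta$ for all $t \in [0, r(\delta)]$, since by Theorem~\ref{T:final homotopy}(d) and the definition of the radius, $H(\delta,t) = H(H(\delta, r(\delta)), t) = H(\delta, \max\{t, r(\delta)\}) = H(\delta, r(\delta)) = \delta$; in particular the set of $s$ with $H(\delta,s)=\delta$ is the interval $[0,r(\delta)]$.

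Using Theorem~\ref{T:final homotopy}(d) once more, in the symmetric form $H(H(\gamma, s), t) = H(\gamma, \max\{s, t\}) = H(H(\gamma, t), s)$, I would then compute, for every $t \in [0, r(\gamma)]$,
\[
H(\beta, t) = H(H(\gamma, r(\beta)), t) = H(H(\gamma, t), r(\beta)) = H(\gamma, r(\beta)) = \beta,
\]
where the third equality uses $H(\gamma,t)=\gamma$ for $t \le r(\gamma)$ and the last is again $\beta = H(\gamma,r(\beta))$. Since $r(\beta)$ is by definition the largest $s \in [0,1]$ with $H(\beta, s) = \beta$, this forces $r(\beta) \geq r(\gamma)$. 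For the equality clause: if $r(\beta) = r(\gamma)$, then $\beta = H(\gamma, r(\beta)) = H(\gamma, r(\gamma)) = \gamma$, the last step being the definition of the radius of $\gamma$; conversely $\beta = \gamma$ trivially yields $r(\beta) = r(\gamma)$.

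I do not expect any genuine obstacle here: the statement is a purely formal consequence of Theorem~\ref{T:tree} together with the semigroup identity Theorem~\ref{T:final homotopy}(d). The only points needing a moment's care are bookkeeping ones — the commutation $H(H(\gamma, r(\beta)), t) = H(H(\gamma, t), r(\beta))$, which holds simply because $\max\{r(\beta), t\} = \max\{t, r(\beta)\}$, and the observation that all homotopy parameters appearing ($r(\beta)$, $r(\gamma)$, $t$) automatically lie in $[0,1]$. All the substantive work has already been isolated in Theorem~\ref{T:tree}, whose proof is deferred to Lemma~\ref{L:tree proof} in \S\ref{sec:fibres}; the corollary itself is then immediate.
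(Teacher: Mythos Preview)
Your proof is correct and follows essentially the same route as the paper: both invoke Theorem~\ref{T:tree} to write $\beta = H(\gamma, r(\beta))$, then use the semigroup identity of Theorem~\ref{T:final homotopy}(d) to show $H(\beta,t)=\beta$ for $t\in[0,r(\gamma)]$, and conclude via the definition of radius. You simply spell out a bit more explicitly why $H(\gamma,t)=\gamma$ for $t\le r(\gamma)$ and why the two applications of (d) commute, which the paper leaves implicit.
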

\begin{proof}
For $t \in [0, r(\gamma)]$, by Theorem~\ref{T:tree}
and Theorem~\ref{T:final homotopy}(d) we have
\[
H(\beta,t) = H(H(\gamma,r(\beta)),t) = H(H(\gamma,t), r(\beta)) =
H(\gamma,r(\beta)) = \beta,
\]
so $r(\beta) \geq r(\gamma)$. If equality holds, then
$\gamma = H(\gamma,r(\gamma)) = H(\gamma,r(\beta)) = \beta$.
\end{proof}

\section{Structure of fibres}
\label{sec:fibres}

We conclude with a description
of the fibres of the map $\mu: \calM(W(R)) \to \calM(R)$
similar to the description of
$\calM(K[T])$ given in \S\ref{sec:polynomial}. 
This will allow us to establish Theorem~\ref{T:tree}, 
thus giving a combinatorial interpretation of the fibres of $\mu$.

\begin{hypothesis}
Throughout \S\ref{sec:fibres}, retain Hypothesis~\ref{H:Newton polygons}.
In addition, let $\tilde{\gotho}$ be the valuation ring of
the completion of an algebraic closure of
$\Frac \gotho$, equipped with the unique multiplicative extension 
$\tilde{\alpha}$ of $\alpha$,
and equip $W(\tilde{\gotho})$ with the multiplicative
norm $\lambda(\tilde{\alpha})$.
\end{hypothesis}

\begin{defn}
For $u \in \tilde{\gotho}$ 
with $\tilde{\alpha}(u) \leq p^{-1}$ and $t \in [0,1]$, let 
$\tilde{\beta}_{u,t}  \in \mu^{-1}(\tilde{\alpha})$ be the
seminorm $H(\tilde{\alpha},u,t)$
of Theorem~\ref{T:construct Gauss}.
Let $\beta_{u,t}$ be the restriction of $\tilde{\beta}_{u,t}$ to $W(\gotho)$.
\end{defn}

Before studying the $\beta_{u,t}$, we must work out some facts about the
$\tilde{\beta}_{u,t}$ which are not quite as obvious as their counterparts
for $K[T]$.
\begin{lemma} \label{L:sup norm extended}
For $u,u' \in \tilde{\gotho}$ 
with $\tilde{\alpha}(u), \tilde{\alpha}(u') \leq p^{-1}$ and $t \in (0,1]$,
the following conditions are equivalent.
\begin{enumerate}
\item[(a)] We have $\tilde{\beta}_{u,t} = \tilde{\beta}_{u',t}$.
\item[(b)] We have $\tilde{\beta}_{u,t} \geq \tilde{\beta}_{u',t}$.
\item[(c)] We have $\tilde{\beta}_{u,t} \geq \tilde{\beta}_{u',0}$.
\item[(d)] We have $t/p \geq \tilde{\beta}_{u',0}(p-[u])$.
\end{enumerate}
\end{lemma}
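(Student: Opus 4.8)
The plan is to establish the cycle of implications (d)$\implies$(a)$\implies$(b)$\implies$(c)$\implies$(d), exactly in the spirit of Lemma~\ref{L:sup norm same radius} from the polynomial case. The implications (a)$\implies$(b)$\implies$(c) are immediate from the definitions (domination is transitive, and $\tilde{\beta}_{u',t} \geq \tilde{\beta}_{u',0}$ by Theorem~\ref{T:construct Gauss} since $H(\tilde\alpha,u',t)$ is bounded below by $H(\tilde\alpha,u',0)$, which follows from monotonicity in $t$ as in Remark~\ref{R:monotonicity} or Lemma~\ref{L:same homotopy}). So the real content is (d)$\implies$(a) and (c)$\implies$(d).

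For (d)$\implies$(a): the hypothesis $t/p \geq \tilde{\beta}_{u',0}(p-[u])$ says that $p - [u]$ becomes small at the seminorm $H(\tilde\alpha, u', t)$. Concretely, by Lemma~\ref{L:compute linear}, $\tilde{\beta}_{u',t}(p - [u]) = \max\{t/p,\ H(\tilde\alpha,u',0)(p-[u])\} = \max\{t/p, \tilde{\beta}_{u',0}(p-[u])\} = t/p$ under (d). The plan is then to compute $H(\tilde\alpha, u, t)$ as a restriction from $W(\tilde\gotho)[T]/(T-p+[u]) \cong W(\tilde\gotho)$ of the $(t/p)$-Gauss norm for $\lambda(\tilde\alpha)$, and similarly for $u'$ with generator $p - [u']$; the key point is that the difference of the two generators $(p-[u]) - (p-[u']) = [u'] - [u]$ has $\lambda(\tilde\alpha)$-norm $\tilde\alpha(u'-u)$. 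Using Remark~\ref{R:construct Gauss by quotient} (constructing a Gauss seminorm for a shifted generator), I want to argue that when the generators differ by an element whose norm is at most $t/p$, the two quotient seminorms coincide. The cleanest route is probably: since $\tilde{\beta}_{u',t}(p - [u]) = t/p$ and $p - [u']$ is (up to the unit $p^{-1}(\text{stuff})$) the "uniformizer of radius $t$" at $\tilde\beta_{u',t}$, the element $[u] - [u']$ lies in the maximal ideal relative to the radius, so $H(\tilde\alpha,u,t)$ and $H(\tilde\alpha,u',t)$ are quotient norms by ideals that agree after completion at the appropriate level, forcing (a). This is the analogue of "$\alpha(\tau(z)-z') \le r$ implies $\tilde\beta_{z,r} = \tilde\beta_{z',r}$" and I expect to lean on Remark~\ref{R:radius 0b} and Lemma~\ref{L:compute linear}.

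For (c)$\implies$(d): assume $\tilde{\beta}_{u,t} \geq \tilde{\beta}_{u',0}$. Since $\tilde{\beta}_{u',0}$ is the quotient norm modulo $(p-[u'])$, domination means that for any $x$ divisible by $p-[u']$ we have $\tilde{\beta}_{u,t}(x) = 0$ — wait, more carefully, domination $\tilde\beta_{u,t} \ge \tilde\beta_{u',0}$ gives $\tilde\beta_{u',0}(x) \le c\,\tilde\beta_{u,t}(x)$, so $\tilde\beta_{u,t}(x) = 0 \implies \tilde\beta_{u',0}(x) = 0$. Applying this with $x = p - [u']$ itself: $\tilde\beta_{u',0}(p-[u']) = 0$ trivially, which gives nothing. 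Instead I should run the argument the other way: I want to show $t/p \geq \tilde\beta_{u',0}(p-[u])$, equivalently $\tilde\beta_{u,t}(p - [u]) = t/p \geq \tilde\beta_{u',0}(p-[u])$ — but $\tilde\beta_{u,t}(p-[u]) = t/p$ by Theorem~\ref{T:construct Gauss}(b) applied to the stable presentation $p - [u] = 0 \cdot 1 + 1 \cdot (p-[u])$, so I need $\tilde\beta_{u',0}(p - [u]) \le t/p$, which follows if I can produce an $x$ with $\tilde\beta_{u',0}(p-[u]) \le \tilde\beta_{u',0}(x) \le c \tilde\beta_{u,t}(x)$ and... this needs more care. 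The honest approach for (c)$\implies$(d) is to use the classification/tree structure: $\tilde\beta_{u',0}$ is a minimal point (radius $0$), and $\tilde\beta_{u,t}$ dominating it means $\tilde\beta_{u',0}$ lies "below" $\tilde\beta_{u,t}$ in the tree; combined with Theorem~\ref{T:tree} (or rather, since we're proving things needed for it, with the more primitive Lemma~\ref{L:compute linear} and Corollary~\ref{C:convex}), the condition that $p - [u]$ reaches norm $0$ under $\tilde\beta_{u',0}$ while having norm exactly $t/p$ under $\tilde\beta_{u,t}$ pins down the relationship. I would compute $\tilde\beta_{u,t}(p - [u'])$ via Lemma~\ref{L:compute linear}: it equals $\max\{t/p, H(\tilde\alpha,u,0)(p-[u'])\}$, and $H(\tilde\alpha,u,0)(p-[u']) = \tilde\beta_{u,0}(p - [u'])$; meanwhile by symmetry $H(\tilde\alpha,u',0)(p-[u]) = \tilde\beta_{u',0}(p-[u])$, and these two quantities are equal (both measure the "distance" between the two linear elements, since $(p-[u]) - (p-[u']) = [u']-[u]$ and one can pass one quotient through the other). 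So (c), which via Lemma~\ref{L:compute linear} forces $\tilde\beta_{u,t}(p-[u']) = t/p$, i.e. $H(\tilde\alpha,u,0)(p-[u']) \le t/p$, gives $\tilde\beta_{u',0}(p-[u]) \le t/p$, which is (d).

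The main obstacle I anticipate is the symmetry claim $\tilde{\beta}_{u,0}(p - [u']) = \tilde{\beta}_{u',0}(p-[u])$ (distance-like symmetry of the two quotient norms), and more generally making rigorous the "generators differing by a small element give the same quotient norm" step in (d)$\implies$(a). In the polynomial case these are transparent because one literally substitutes $T \mapsto T + (z-z')$; here, because $W(R)$ has no such automorphism, I expect to have to argue via an auxiliary ring $W(\tilde\gotho)[T]$ with $T \mapsto [u]$ or $T \mapsto [u']$, compare the two Gauss extensions there (where the shift \emph{is} available, via Remark~\ref{R:construct Gauss by quotient}), and then push down. I would budget most of the write-up for that comparison, citing Lemma~\ref{L:compute linear}, Remark~\ref{R:radius 0}, and Remark~\ref{R:radius 0b} as the technical inputs, and keeping the four implications themselves short.
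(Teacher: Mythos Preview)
There are two genuine gaps.

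First, you are making (c)$\implies$(d) much harder than it is. Since both seminorms are multiplicative, domination means the pointwise inequality (Definition~\ref{D:submultiplicative}(iv)), so (c) evaluated at $x = p-[u]$ gives $\tilde\beta_{u',0}(p-[u]) \le \tilde\beta_{u,t}(p-[u]) = t/p$ directly. You had this written down and then talked yourself out of it; there is no need to go through the symmetry $\tilde\beta_{u,0}(p-[u']) = \tilde\beta_{u',0}(p-[u])$, which in any case is not obvious and is essentially what the paper proves \emph{en route} to (d)$\implies$(a).

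Second, and more seriously, your plan for (d)$\implies$(a) does not work. You propose to compare the two quotient norms by a shift $T \mapsto T + ([u']-[u])$ in $W(\tilde\gotho)[T]$, invoking Remark~\ref{R:construct Gauss by quotient}; but that remark requires $\lambda(\tilde\alpha)([u']-[u]) \le t/p$, which is strictly stronger than (d). (Also, your claim that $\lambda(\tilde\alpha)([u']-[u]) = \tilde\alpha(u'-u)$ is false: the Teichm\"uller map is not additive.) Example~\ref{exa:same norm} shows that one can have $\tilde\beta_{u',0}(p-[u]) = 0$, hence (d) for all $t>0$, while $\lambda(\tilde\alpha)([u']-[u])$ stays bounded away from~$0$; so for small~$t$ your shift argument simply fails. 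The paper instead proceeds in two steps. It first proves directly that condition (d) is symmetric in $u,u'$: unwinding (d) gives $y$ with $\lambda(\tilde\alpha)([u]-[u'] + y(p-[u'])) \le t/p$, one checks $1+y$ is a unit, and then $y' = y/(1+y)$ witnesses the symmetric inequality. Given symmetry, it suffices to show (d)$\implies$(b); this is done by taking a stable presentation $x = \sum_i x_i (p-[u])^i$, using (d) to bound $\tilde\beta_{u',0}(x) \le \max_i\{(t/p)^i \lambda(\tilde\alpha)(x_i)\} = \tilde\beta_{u,t}(x)$, and then applying $H(\cdot,t)$ with Lemma~\ref{L:same homotopy} and Remark~\ref{R:monotonicity} to upgrade $\tilde\beta_{u,t} \ge \tilde\beta_{u',0}$ to $\tilde\beta_{u,t} \ge \tilde\beta_{u',t}$. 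Your proposal is missing both the symmetry trick and the stable-presentation estimate.
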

\begin{proof}
Clearly (a)$\implies$(b)$\implies$(c)$\implies$(d);
it remains to check that (d)$\implies$(a). 
If $t \geq \max\{p \tilde{\alpha}(u), p \tilde{\alpha}(u')\}$, 
then $\tilde{\beta}_{u,t} = \tilde{\beta}_{u',t}$ by
Theorem~\ref{T:construct Gauss}(d),
so (a) always holds. We may thus assume $t < \max\{p \tilde{\alpha}(u),
p \tilde{\alpha}(u')\}$ hereafter.

By (d), we have $\tilde{\beta}_{u',0}([u] - [u']) \leq t/p$.
That is, there exists $y \in W(\tilde{\gotho})$
for which 
\[
\lambda(\tilde{\alpha})([u] - [u'] + y(p-[u'])) \leq t/p.
\]
Note that we cannot have $\tilde{\alpha}(u) \neq \tilde{\alpha}(u')$,
as then $[u]-[u']$ would be stable and we would derive the 
contradiction $\max\{\tilde{\alpha}(u), \tilde{\alpha}(u')\} 
= \lambda(\tilde{\alpha})([u] - [u']) = \tilde{\beta}_{u',0}([u]-[u']) \leq t/p$. We must thus have
$\tilde{\alpha}(u) = \tilde{\alpha}(u')$.
For $\overline{y}$ the reduction of $y$ modulo $p$,
we cannot have $\tilde{\alpha}(1 + \overline{y}) < 1$, or else we would
derive the contradiction
$\max\{\tilde{\alpha}(u), \tilde{\alpha}(u')\} = \tilde{\alpha}(u - (1+\overline{y}) u')
\leq t/p$.
We deduce that $1+y$ is a unit in $W(\tilde{\gotho})$.

Put $y' = y/(1+y)$; then 
\[
[u] - [u'] + y'(p-[u]) = (1+y)^{-1} ([u] - [u'] + y(p - [u'])),
\]
so $\lambda(\tilde{\alpha})([u] - [u'] + y'(p-[u])) \leq t/p$
and hence $\tilde{\beta}_{u,0}(p - [u']) \leq t/p$.
In other words, condition (d) is symmetric in $u$ and $u'$.

This means that to prove that (d)$\implies$(a), it is sufficient to check
that (d)$\implies$(b). Given (d), for $x \in W(\gotho)$,
apply Lemma~\ref{L:stable presentation} to
construct a stable presentation $x_0,x_1,\dots$ of $x$ with respect
to $u$. By Theorem~\ref{T:construct Gauss}(b),
$\tilde{\beta}_{u,t}(x) = \max_i \{(t/p)^i \lambda(\tilde{\alpha})(x_i)\}$.
Applying $\tilde{\beta}_{u',0}$ to the identity
$x = \sum_i x_i (p-[u])^i$ then gives $\tilde{\beta}_{u,t}(x) \geq
\tilde{\beta}_{u',0}(x)$. Lemma~\ref{L:same homotopy} 
and Remark~\ref{R:monotonicity}
then give
\[
\tilde{\beta}_{u,t} = H(\tilde{\beta}_{u,t},t) \geq 
H(\tilde{\beta}_{u',0},t) = \tilde{\beta}_{u',t},
\]
yielding (b) and completing the proof.
\end{proof}
Lemma~\ref{L:sup norm extended} allows us to replace the center
$u$ of the norm $\tilde{\beta}_{u,t}$ with a nearby value,
as was critical in the analysis of $\calM(K[T])$.
\begin{cor} \label{C:nearby}
For $u,u' \in \tilde{\gotho}$ 
with $\tilde{\alpha}(u), \tilde{\alpha}(u') \leq p^{-1}$ and $t \in (0,1]$,
if $\lambda(\alpha)([u] - [u']) \leq t/p$,
then $\tilde{\beta}_{u,t} = \tilde{\beta}_{u',t}$.
\end{cor}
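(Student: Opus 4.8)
The plan is to reduce the statement directly to condition~(d) of Lemma~\ref{L:sup norm extended}. The key observation is the trivial identity
\[
p - [u] = (p - [u']) + ([u'] - [u]),
\]
which says that $p - [u]$ is congruent to $[u'] - [u]$ modulo the ideal $(p - [u'])$ of $W(\tilde{\gotho})$. Combining this with Remark~\ref{R:radius 0}, according to which $\tilde{\beta}_{u',0} = H(\tilde{\alpha}, u', 0)$ is exactly the quotient norm on $W(\tilde{\gotho})/(p - [u'])$ induced by $\lambda(\tilde{\alpha})$, and estimating this quotient norm using the representative $[u'] - [u]$, I would obtain
\[
\tilde{\beta}_{u',0}(p - [u]) \leq \lambda(\tilde{\alpha})([u'] - [u]) = \lambda(\tilde{\alpha})([u] - [u']) \leq \frac{t}{p}.
\]
(Here $\lambda(\tilde{\alpha})$ is the multiplicative norm on $W(\tilde{\gotho})$ fixed in the running hypothesis; since $u, u'$ lie in $\tilde{\gotho}$ rather than $\gotho$, the hypothesis $\lambda(\alpha)([u]-[u']) \leq t/p$ should be read with $\tilde{\alpha}$ in place of $\alpha$.)

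This last inequality is precisely condition~(d) of Lemma~\ref{L:sup norm extended} applied to the pair $(u, u')$ with parameter $t \in (0,1]$, which is the hypothesis of the corollary. Hence the equivalence established in that lemma — specifically the implication (d)$\implies$(a) — yields $\tilde{\beta}_{u,t} = \tilde{\beta}_{u',t}$, as desired.

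I do not expect any genuine obstacle here: all of the substance lies in Lemma~\ref{L:sup norm extended}, and the only thing to verify is the elementary bound on the quotient seminorm, which is immediate from the definition of a quotient seminorm (Definition~\ref{D:residue ring}'s companion, i.e.\ the infimum over coset representatives). The only point requiring a moment's care is the notational one about which multiplicative norm ($\lambda(\alpha)$ versus $\lambda(\tilde{\alpha})$) is meant in the hypothesis, which I would flag explicitly as above.
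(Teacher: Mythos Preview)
Your proof is correct and follows essentially the same route as the paper's own argument: the paper simply notes that $\tilde{\beta}_{u',0}$ is bounded by $\lambda(\tilde{\alpha})$ (since it is defined as a quotient seminorm), so $\tilde{\beta}_{u',0}(p-[u]) = \tilde{\beta}_{u',0}([u]-[u']) \leq \lambda(\tilde{\alpha})([u]-[u']) \leq t/p$, which is condition~(d) of Lemma~\ref{L:sup norm extended}. Your observation about the intended meaning of $\lambda(\alpha)$ versus $\lambda(\tilde{\alpha})$ is also apt; the paper is slightly loose with this notation.
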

\begin{proof}
Since $\tilde{\beta}_{u,t} \leq \lambda(\alpha)$,
this follows from Lemma~\ref{L:sup norm extended}.
\end{proof}
\begin{cor} \label{C:nearby2}
For $u \in \tilde{\gotho}$ 
with $\tilde{\alpha}(u) \leq p^{-1}$ and $t \in (0,1]$,
there exists $u' \in \tilde{\gotho}$ which is integral over $\gotho$
such that $\tilde{\alpha}(u') \leq p^{-1}$,
$\lambda(\tilde{\alpha})([u] - [u']) < t/p$,
and $\tilde{\beta}_{u,t} = \tilde{\beta}_{u',t}$.
\end{cor}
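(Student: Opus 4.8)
The plan is to reduce the whole statement to Corollary~\ref{C:nearby}. That corollary already yields $\tilde{\beta}_{u,t} = \tilde{\beta}_{u',t}$ as soon as $\lambda(\tilde{\alpha})([u]-[u']) \le t/p$, so it suffices to produce $u' \in \tilde{\gotho}$ which is integral over $\gotho$, has $\tilde{\alpha}(u') \le p^{-1}$, and satisfies the strict inequality $\lambda(\tilde{\alpha})([u]-[u']) < t/p$. First I would record the density input: the elements of $\tilde{\gotho}$ integral over $\gotho$ include the whole valuation ring of the algebraic closure of $\Frac\gotho$, which is dense in $\tilde{\gotho}$ since $\tilde{\gotho}$ is by definition the valuation ring of the completion of that algebraic closure. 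Hence for any prescribed $\delta > 0$ we can choose $u'$ integral over $\gotho$ with $\tilde{\alpha}(u-u') < \delta$; taking $\delta \le p^{-1}$ and using $\tilde{\alpha}(u) \le p^{-1}$ forces $\tilde{\alpha}(u') \le p^{-1}$ automatically. So the real point is to determine how small $\delta = \delta(t)$ must be in order to guarantee $\lambda(\tilde{\alpha})([u]-[u']) < t/p$.

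For that I would expand $[u]-[u']$ using the Witt addition formula of Remark~\ref{R:addition formula}. Writing $[u]-[u'] = \sum_{j \ge 0} p^j [\overline{z_j}]$, the component $\overline{z_j}$ is an integer-coefficient polynomial in $u^{p^{-j}}$ and $(u')^{p^{-j}}$, homogeneous of degree $1$ in $u,u'$, and lying in the ideal generated by $u^{p^{-j}} - (u')^{p^{-j}}$. Since $R$ is perfect of characteristic $p$ we have $u^{p^{-j}} - (u')^{p^{-j}} = (u-u')^{p^{-j}}$, so $\overline{z_j} = (u-u')^{p^{-j}} g_j$ with $g_j$ an integer-coefficient polynomial in $u^{p^{-j}},(u')^{p^{-j}}$ homogeneous of degree $1 - p^{-j}$ in $u,u'$. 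Using $\tilde{\alpha}(u),\tilde{\alpha}(u') \le p^{-1}$ and the power-multiplicativity of $\tilde{\alpha}$, this gives $\tilde{\alpha}(\overline{z_j}) \le \tilde{\alpha}(u-u')^{p^{-j}}\, p^{-1+p^{-j}}$, and the formula for $\lambda$ from Lemma~\ref{L:lambda} yields
\[
\lambda(\tilde{\alpha})([u]-[u']) = \max_{j \ge 0}\bigl\{ p^{-j}\,\tilde{\alpha}(\overline{z_j}) \bigr\}
\le \max_{j \ge 0}\bigl\{ p^{-j-1+p^{-j}}\,\tilde{\alpha}(u-u')^{p^{-j}} \bigr\}.
\]
Because the exponent $-j-1+p^{-j}$ tends to $-\infty$, the factor $p^{-j-1+p^{-j}}$ is already smaller than $t/p$ for all $j$ outside some finite set $I$, so those terms of the maximum are harmless regardless of $u'$; the $j=0$ term is exactly $\tilde{\alpha}(u-u')$. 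For the finitely many $j \in I$ one checks that demanding $\tilde{\alpha}(u-u') < (t\,p^{\,j-p^{-j}})^{p^{j}}$ makes the corresponding term $< t/p$. Thus taking $\delta$ to be the minimum of $p^{-1}$, of $t/p$, and of these finitely many positive thresholds does the job, and any $u'$ as above with $\tilde{\alpha}(u-u') < \delta$ works.

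The main obstacle is this estimate, and specifically the fact that the tail coefficients $p^{-j-1+p^{-j}}$ need not all lie below $t/p$ when $t$ is small, so one cannot simply assert $\lambda(\tilde{\alpha})([u]-[u']) \le \tilde{\alpha}(u-u')$; the argument must isolate the finite set of problematic indices. Put differently, what is being verified is that $\overline{x} \mapsto [\overline{x}]$ is \emph{uniformly} continuous on the ball $\{\tilde{\alpha} \le p^{-1}\}$ from $(\tilde{\gotho},\tilde{\alpha})$ to $(W(\tilde{\gotho}),\lambda(\tilde{\alpha}))$, and the Witt-component bookkeeping above is exactly what produces a modulus of continuity. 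Everything else — the density of the integral closure and the appeal to Corollary~\ref{C:nearby} — is routine.
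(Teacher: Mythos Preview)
Your proof is correct and follows essentially the same approach as the paper: expand $[u]-[u']$ via Remark~\ref{R:addition formula}, use the perfect-field identity $u^{p^{-j}}-(u')^{p^{-j}}=(u-u')^{p^{-j}}$ to bound $\lambda(\tilde{\alpha})([u]-[u'])$ by $\max_j\{p^{-j}\tilde{\alpha}(u-u')^{p^{-j}}\}$, observe that only finitely many indices $j$ impose nontrivial constraints, choose $u'$ integral over $\gotho$ close enough to $u$ by density, and conclude via Corollary~\ref{C:nearby}. Your estimate is in fact slightly sharper than the paper's (you use the homogeneity of the quotient $g_j$ to insert the extra factor $p^{-1+p^{-j}}$, whereas the paper simply bounds $\tilde{\alpha}(g_j)\le 1$), but this refinement is not needed for the argument.
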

\begin{proof}
By Remark~\ref{R:addition formula},
$[u] - [u'] = \sum_{i=0}^\infty p^i [P_i]$ for some
polynomials $P_i$ in $u^{p^{-i}}, (u')^{p^{-i}}$ such that
$P_i$ is homogeneous of degree $p^i$ and divisible by $u^{p^{-i}} - (u')^{p^{-i}}$.
It follows that 
\begin{equation} \label{eq:ultrametric}
\lambda(\tilde{\alpha})([u] - [u'])
\leq \max_i \{p^{-i} \tilde{\alpha}(u - u')^{p^{-i}}\}.
\end{equation}
We can make the right side smaller than $t/p$
by ensuring that $\tilde{\alpha}(u - u') < (tp^{i-1})^{p^i}$ for each of the finitely
many nonnegative integers $i$ for which $p^{-i} \geq t/p$;
this is possible because the integral closure of $\gotho$
in $\tilde{\gotho}$ is dense. By Corollary~\ref{C:nearby},
we obtain the desired result.
\end{proof}

\begin{remark} \label{R:ultrametric}
Define the function $d(u,u') = p\tilde{\beta}_{u',0}(p-[u])$.
If $d(u,u'), d(u',u'') \leq t$, then
Lemma~\ref{L:sup norm extended} gives $\tilde{\beta}_{u',t} = \tilde{\beta}_{u,t}
= \tilde{\beta}_{u'',t}$ and hence $d(u,u'') \leq t$.
In other words, the function $d$
satisfies the strong triangle inequality
$d(u,u'') \leq \max\{d(u,u'),d(u',u'')\}$.
Lemma~\ref{L:sup norm extended} also implies the symmetry property
$d(u,u') = d(u',u)$. 
This almost implies that $d$ is an ultrametric distance
function, but not quite:  
we can have $d(u,u') = 0$ even when $u \neq u'$. 
(That is, $d$ is a \emph{pseudometric} rather than a true metric.)
Nonetheless, the function $d$
will play a role in the following arguments similar to that played by
the usual distance function
on $K$ in the analysis of $\calM(K[T])$. 
\end{remark}

We can now give an analogue of Lemma~\ref{L:sup norm same center}.
\begin{lemma} \label{L:sup norm same center2}
For $u \in \tilde{\gotho}$ 
with $\tilde{\alpha}(u) \leq p^{-1}$ and $s,t \in [0,1]$,
$\beta_{u,s} \geq \beta_{u,t}$ if and only if $s \geq t$.
\end{lemma}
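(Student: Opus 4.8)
The plan is to imitate the proof of Lemma~\ref{L:sup norm same center}, the polynomial-ring analogue, replacing the minimal-polynomial trick there by a Galois-descent argument adapted to Witt vectors. The ``if'' direction is immediate: by Definition~\ref{D:Gauss norm2}, $\tilde{\beta}_{u,t} = H(\tilde{\alpha},u,t)$ is the quotient on $W(\tilde{\gotho}) \cong W(\tilde{\gotho})[T]/(T-(p-[u]))$ of the $(t/p)$-Gauss extension of $\lambda(\tilde{\alpha})$, and the $r$-Gauss seminorm on a polynomial ring is pointwise nondecreasing in $r$ (clear from \eqref{eq:Gauss formula}); passing to quotients and restricting to $W(\gotho)$ gives $\beta_{u,s} \geq \beta_{u,t}$ whenever $s \geq t$. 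Since both seminorms are multiplicative (Theorem~\ref{T:construct Gauss}(a)) and equivalent multiplicative seminorms are necessarily equal (apply the domination inequality to $n$th powers and let $n \to \infty$), the ``only if'' direction reduces to showing that $\beta_{u,s} \neq \beta_{u,t}$ whenever $0 \leq s < t \leq 1$.

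For the separation statement I would first treat the case $0 < s < t \leq 1$. Applying Corollary~\ref{C:nearby2} with parameter $s$, choose $u' \in \tilde{\gotho}$ integral over $\gotho$ with $\tilde{\alpha}(u') \leq p^{-1}$ and $\lambda(\tilde{\alpha})([u]-[u']) < s/p$; then Corollary~\ref{C:nearby} gives $\tilde{\beta}_{u,s} = \tilde{\beta}_{u',s}$ and (since $s/p < t/p$) also $\tilde{\beta}_{u,t} = \tilde{\beta}_{u',t}$, so it suffices to separate $\beta_{u',s}$ and $\beta_{u',t}$. We may thus assume $u$ is integral over $\gotho$. Let $L$ be the splitting field over $K_0 := \Frac \gotho$ of the minimal polynomial $P(X) = \prod_{i=1}^m (X-u_i)$ of $u$; since $K_0$ is perfect, $P$ is separable, $L/K_0$ is finite Galois, $L$ is again a perfect field complete for the (unique) extension of $\alpha$, and $u = u_1$ is one of the distinct roots $u_1,\dots,u_m \in \gotho_L$, each with $\tilde{\alpha}(u_i) = \tilde{\alpha}(u) \leq p^{-1}$. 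Set
\[
x = \prod_{i=1}^m (p - [u_i]) \in W(\gotho_L).
\]
Because $\Gal(L/K_0)$ permutes the $u_i$ and acts on $W(\gotho_L)$ through Witt functoriality, $x$ is $\Gal(L/K_0)$-invariant, hence lies in $W(\gotho_L)^{\Gal(L/K_0)} = W(\gotho)$ (an elementary descent, using only finite Galois theory and uniqueness of Teichm\"uller expansions). Since $\tilde{\beta}_{u,s}$ and $\tilde{\beta}_{u,t}$ are multiplicative, Lemma~\ref{L:compute linear} gives
\[
\tilde{\beta}_{u,\tau}(x) = \prod_{i=1}^m \max\left\{ \tau/p,\ H(\tilde{\alpha},u,0)(p - [u_i]) \right\} \qquad (\tau \in \{s,t\}),
\]
and the factor with $i=1$ contributes exactly $\tau/p$ because $H(\tilde{\alpha},u,0)(p-[u]) = 0$ by Remark~\ref{R:radius 0}. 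As $s < t$ and every factor is positive, $\beta_{u,s}(x) = \tilde{\beta}_{u,s}(x) < \tilde{\beta}_{u,t}(x) = \beta_{u,t}(x)$, so $\beta_{u,s} \neq \beta_{u,t}$.

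It remains to handle $0 = s < t \leq 1$. Suppose $\beta_{u,0} = \beta_{u,t}$, pick $t' \in (0,t)$, and apply the operator $H(\cdot,t')$ of Definition~\ref{D:deformation} to both sides: using Lemma~\ref{L:same homotopy} together with compatibility of $H$ with restriction, one gets $\beta_{u,t'} = H(\beta_{u,0},t') = H(\beta_{u,t},t') = \beta_{u,t}$, contradicting the previous case. Hence $\beta_{u,0} \neq \beta_{u,t}$, and the ``only if'' direction follows: if $\beta_{u,s} \geq \beta_{u,t}$ with $s < t$, then $\beta_{u,t} \geq \beta_{u,s}$ by the ``if'' direction, forcing $\beta_{u,s} = \beta_{u,t}$, which has just been excluded. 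The step I expect to require the most care is the reduction to $u$ integral over $\gotho$ followed by the descent of $x$ to $W(\gotho)$: one must confirm that the closeness furnished by Corollary~\ref{C:nearby2} suffices to match \emph{both} $\tilde{\beta}_{u,s}$ and $\tilde{\beta}_{u,t}$ via Corollary~\ref{C:nearby}, and that $W(\gotho_L)^{\Gal(L/K_0)}$ really equals $W(\gotho)$ — which is precisely why one works over the finite extension $L$ rather than over $\tilde{\gotho}$, so as to avoid invoking an Ax--Sen--Tate-type statement.
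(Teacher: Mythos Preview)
Your proof is correct and follows essentially the same route as the paper's: reduce via Corollary~\ref{C:nearby2} to $u$ integral over $\gotho$, then use the Galois-invariant test element $x=\prod_i (p-[u_i])\in W(\gotho)$ to separate $\beta_{u,s}$ from $\beta_{u,t}$. The only differences are cosmetic: you swap the roles of $s$ and $t$, you invoke Lemma~\ref{L:compute linear} where the paper argues the strict inequality on the factor $p-[u']$ directly, you spell out the Galois descent for $x\in W(\gotho)$ that the paper leaves implicit, and your treatment of the boundary case $s=0$ via $H(\cdot,t')$ and Lemma~\ref{L:same homotopy} is just an explicit version of the squeezing argument the paper alludes to with ``it is enough to check this when $t>0$.''
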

\begin{proof}
If $s \geq t$, then evidently $\beta_{u,s} \geq \beta_{u,t}$. It remains
to show that if $s>t$, then $\beta_{u,s} \neq \beta_{u,t}$; it is enough
to check this when $t > 0$. By Corollary~\ref{C:nearby2}, we can
choose $u' \in \tilde{\gotho}$ integral over $\gotho$ 
with $\tilde{\alpha}(u') \leq p^{-1}$ for which
$\beta_{u,t} = \beta_{u',t}$, and hence $\beta_{u,s} = \beta_{u',s}$
by Lemma~\ref{L:same homotopy}.
Let $P(T) = \prod_{i=1}^m (T - u_i)$ 
be the minimal polynomial of $u'$ over $\gotho$. 
Then $\tilde{\beta}_{u',s}(p - [u_i]) \geq \tilde{\beta}_{u',t}(p - [u_i])$
with strict inequality when $u_i = u'$.
If we put $y = \prod_{i=1}^m (p - [u_i]) \in W(\gotho)$, then
$\beta_{u,s}(y) = \tilde{\beta}_{u',s}(y) > \tilde{\beta}_{u',t}(y)
= \beta_{u,t}(y)$, so $\beta_{u,s} \neq \beta_{u,t}$ as desired.
\end{proof}
\begin{cor} \label{C:radius2}
For $u \in \tilde{\gotho}$ 
with $\tilde{\alpha}(u) \leq p^{-1}$ and $t \in [0,1]$,
$r(\beta_{u,t}) = t$.
\end{cor}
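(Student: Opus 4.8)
The plan is to imitate the proof of Corollary~\ref{C:radius}, with Lemma~\ref{L:same homotopy} in the role of Lemma~\ref{L:homotopy2} and Lemma~\ref{L:sup norm same center2} in the role of Lemma~\ref{L:sup norm same center}.

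First I would establish the clean formula $H(\beta_{u,t},s) = \beta_{u,\max\{s,t\}}$ for all $s \in [0,1]$. For this, recall that $\beta_{u,t}$ is by definition the restriction of $\tilde{\beta}_{u,t} = H(\tilde{\alpha},u,t)$ along the bounded (in fact isometric) inclusion $W(\gotho) \hookrightarrow W(\tilde{\gotho})$, and that the formation of the homotopy $H(\cdot,s)$ of Definition~\ref{D:deformation} is compatible with restriction along bounded homomorphisms; hence $H(\beta_{u,t},s)$ is the restriction to $W(\gotho)$ of $H(\tilde{\beta}_{u,t},s) = H(H(\tilde{\alpha},u,t),s)$. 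Applying Lemma~\ref{L:same homotopy} over the valuation ring $\tilde{\gotho}$ (with its norm $\tilde{\alpha}$ and with center $u$) rewrites the latter as $H(\tilde{\alpha},u,\max\{s,t\}) = \tilde{\beta}_{u,\max\{s,t\}}$, whose restriction to $W(\gotho)$ is $\beta_{u,\max\{s,t\}}$.

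Granting this, the radius computation falls out immediately. For $s \le t$ one has $\max\{s,t\} = t$, so $H(\beta_{u,t},s) = \beta_{u,t}$, giving $r(\beta_{u,t}) \ge t$. For $s > t$, Lemma~\ref{L:sup norm same center2} gives $\beta_{u,s} \ge \beta_{u,t}$ but $\beta_{u,s} \ne \beta_{u,t}$ (an equality would force $\beta_{u,t} \ge \beta_{u,s}$, hence $t \ge s$ by the same lemma, a contradiction), so $H(\beta_{u,t},s) = \beta_{u,s} \ne \beta_{u,t}$, giving $r(\beta_{u,t}) \le t$. Combining the two inequalities yields $r(\beta_{u,t}) = t$.

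I do not expect a genuine obstacle here: all the substantive content is already packaged in Lemma~\ref{L:same homotopy} (which promotes Theorem~\ref{T:final homotopy}(d) to the level of the seminorms $H(\alpha,u,t)$) and in Lemma~\ref{L:sup norm same center2}. The one step that warrants a moment's care is the first one --- legitimately moving between $\beta_{u,t}$ on $W(\gotho)$ and $\tilde{\beta}_{u,t}$ on $W(\tilde{\gotho})$ when applying $H(\cdot,s)$ --- and this is handled precisely by the compatibility of $H(\cdot,s)$ with restriction along bounded homomorphisms noted in Definition~\ref{D:deformation}.
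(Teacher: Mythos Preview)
Your proposal is correct and follows essentially the same approach as the paper, which simply cites Lemma~\ref{L:sup norm same center2} together with Lemma~\ref{L:same homotopy}. Your write-up merely spells out the details the paper leaves implicit, including the use of the compatibility of $H(\cdot,s)$ with restriction along $W(\gotho)\hookrightarrow W(\tilde{\gotho})$ to carry Lemma~\ref{L:same homotopy} from $\tilde{\gotho}$ down to $\gotho$.
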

\begin{proof}
This follows from Lemma~\ref{L:sup norm same center2}
plus Lemma~\ref{L:same homotopy}.
\end{proof}

We also have an analogue of Lemma~\ref{L:sup norm same radius}.
\begin{lemma} \label{L:sup norm same radius2}
For $u,u' \in \tilde{\gotho}$ 
with $\tilde{\alpha}(u), \tilde{\alpha}(u') \leq p^{-1}$ and $t \in [0,1]$, 
the following are equivalent.
\begin{enumerate}
\item[(a)] We have $\beta_{u,t} = \beta_{u',t}$.
\item[(b)] We have $\beta_{u,t} \geq \beta_{u',t}$.
\item[(c)] We have $\beta_{u,t} \geq \beta_{u',0}$.
\item[(d)] There exists $\tau \in \Aut(\tilde{\gotho}/\gotho)$
for which $t/p \geq \tilde{\beta}_{u',0}(p-[\tau(u)])$.
\end{enumerate}
\end{lemma}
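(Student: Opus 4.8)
The plan is to mirror the proof of Lemma~\ref{L:sup norm same radius}, with Lemma~\ref{L:sup norm extended} now playing the role that Lemma~\ref{L:sup norm same radius} itself played for $\calM(K[T])$. The implications (d)$\implies$(a)$\implies$(b)$\implies$(c) are routine: (a)$\implies$(b)$\implies$(c) are immediate, and for (d)$\implies$(a) I would first record that each $\tau\in\Aut(\tilde{\gotho}/\gotho)$ lifts to an isometric automorphism of $W(\tilde{\gotho})$ compatible with Teichm\"uller lifts and with $\lambda(\tilde{\alpha})$, carrying $p-[u]$ to $p-[\tau(u)]$; hence $\tilde{\beta}_{\tau(u),t}(x)=\tilde{\beta}_{u,t}(\tau^{-1}(x))$, so $\tilde{\beta}_{\tau(u),t}$ and $\tilde{\beta}_{u,t}$ have the same restriction to $W(\gotho)$, i.e.\ $\beta_{\tau(u),t}=\beta_{u,t}$. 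Condition (d) says precisely that $\tilde{\beta}_{u',0}(p-[\tau(u)])\le t/p$ for some such $\tau$, which forces $\tilde{\beta}_{\tau(u),t}=\tilde{\beta}_{u',t}$ by Lemma~\ref{L:sup norm extended} (or, when $t=0$, by Remark~\ref{R:radius 0b}); restricting to $W(\gotho)$ and using the equivariance just noted gives $\beta_{u,t}=\beta_{\tau(u),t}=\beta_{u',t}$, which is (a). So the content is (c)$\implies$(d).

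Write $d(v,w)=p\,\tilde{\beta}_{w,0}(p-[v])$ as in Remark~\ref{R:ultrametric}, so that (d) reads: $d(\tau(u),u')\le t$ for some $\tau\in\Aut(\tilde{\gotho}/\gotho)$. First I would reduce to $t>0$. If (c) holds for $t=0$, then by Lemma~\ref{L:sup norm same center2} it holds for every $t\in(0,1]$, so (assuming the $t>0$ case) we obtain $\tau_t$ with $d(\tau_t(u),u')\le t$; since $\Aut(\tilde{\gotho}/\gotho)$ is compact and $\tau\mapsto d(\tau(u),u')$ is continuous (the map $v\mapsto[v]$ is continuous from $\tilde{\gotho}$ to $W(\tilde{\gotho})$, quotient norms are $1$-Lipschitz, and the automorphism action is continuous), a limit point of the $\tau_t$ as $t\to0^+$ gives $\tau$ with $d(\tau(u),u')=0$, which is (d) for $t=0$. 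So assume $t>0$. Next, by Corollary~\ref{C:nearby2} choose $u_0\in\tilde{\gotho}$ integral over $\gotho$ with $\lambda(\tilde{\alpha})([u]-[u_0])<t/p$ and $\tilde{\beta}_{u,t}=\tilde{\beta}_{u_0,t}$; then $d(u,u_0)\le p\,\lambda(\tilde{\alpha})([u]-[u_0])<t$ (using $\tilde{\beta}_{u_0,0}\le\lambda(\tilde{\alpha})$ and $\tilde{\beta}_{u_0,0}(p-[u_0])=0$ from Remark~\ref{R:radius 0}), so by the strong triangle inequality for $d$ and its $\Aut(\tilde{\gotho}/\gotho)$-invariance, (d) for $u$ is equivalent to (d) for $u_0$, while (c) is unchanged since $\beta_{u,t}=\beta_{u_0,t}$. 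Hence we may assume $u$ itself is integral over $\gotho$.

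Now carry out the polynomial-style computation. Let $P(T)=\prod_{i=1}^{m}(T-u_i)$ be the minimal polynomial of $u$ over $\gotho$, which is separable because $\Frac\gotho$ is perfect; the roots lie in $\tilde{\gotho}$, each has $\tilde{\alpha}(u_i)=\tilde{\alpha}(u)\le p^{-1}$, and $\{u_1,\dots,u_m\}$ is exactly the $\Aut(\tilde{\gotho}/\gotho)$-orbit of $u$. Put $Q=\prod_{i=1}^{m}(p-[u_i])\in W(\tilde{\gotho})$. Its Witt components are $\ZZ$-polynomial expressions in the elements $u_i^{p^{-j}}$, hence are integral over $\gotho$ and are fixed by $\Aut(\tilde{\gotho}/\gotho)$, hence lie in $\gotho$ (Galois descent for the integral closure, with no completion issue), so $Q\in W(\gotho)$. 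Order the roots so that $d(u_1,u')\le\cdots\le d(u_m,u')$. Using multiplicativity of $\tilde{\beta}_{u',0}$ and of $\tilde{\beta}_{u_1,t}$ (Theorem~\ref{T:construct Gauss}(a)), Lemma~\ref{L:compute linear}, and the equivariance $\beta_{u,t}=\beta_{u_1,t}$ from the first paragraph,
\[
\beta_{u',0}(Q)=\prod_{i=1}^{m}\tilde{\beta}_{u',0}(p-[u_i])=p^{-m}\prod_{i=1}^{m}d(u_i,u'),\qquad \beta_{u,t}(Q)=\tilde{\beta}_{u_1,t}(Q)=p^{-m}\prod_{i=1}^{m}\max\{t,d(u_i,u_1)\}.
\]
If (d) fails, then $d(u_i,u')>t$ for every $i$; combined with the ordering and the strong triangle inequality, $d(u_1,u_i)\le d(u_i,u')$, hence $\max\{t,d(u_1,u_i)\}\le d(u_i,u')$ for all $i$, with strict inequality at $i=1$ (where the left side is $t<d(u_1,u')$). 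As all the $d(u_i,u')$ are positive, this yields $\beta_{u,t}(Q)<\beta_{u',0}(Q)$, contradicting (c) (both seminorms being power-multiplicative, domination is pointwise comparison by observation (iv) of Definition~\ref{D:submultiplicative}). Hence (d) holds.

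I expect the main obstacle to be the Galois-theoretic bookkeeping: checking the $\Aut(\tilde{\gotho}/\gotho)$-equivariance of the seminorms $\tilde{\beta}_{u,t}$, identifying the conjugates of $u$ with its $\Aut(\tilde{\gotho}/\gotho)$-orbit, and verifying that the auxiliary element $Q$ descends to $W(\gotho)$, together with the continuity and compactness inputs used in the $t=0$ reduction. Once $Q$ is in hand, the norm estimate is formally the same as in Lemma~\ref{L:sup norm same radius}.
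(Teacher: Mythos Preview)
Your proof is correct and follows essentially the same route as the paper: the easy implications via Lemma~\ref{L:sup norm extended} and Galois equivariance, the reduction to $t>0$ via compactness of $\Aut(\tilde{\gotho}/\gotho)$, the replacement of $u$ by an integral element via Corollary~\ref{C:nearby2}, and the contradiction obtained by evaluating both seminorms on $Q=\prod_i(p-[u_i])$ using Lemma~\ref{L:compute linear} and the ultrametric for $d$. You are simply more explicit than the paper about the equivariance, about why $Q$ descends to $W(\gotho)$, and about the continuity needed in the $t=0$ limit argument.
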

\begin{proof}
Assume first that $t>0$.
By Lemma~\ref{L:sup norm extended}, we have
(d)$\implies$(a)$\implies$(b)$\implies$(c), so it remains to check that
(c)$\implies$(d). Assume (c), then
apply Corollary~\ref{C:nearby2} to construct $v \in \tilde{\gotho}$
integral over $\gotho$ with $\tilde{\alpha}^{-1}(v) = p^{-1}$
for which $\lambda(\tilde{\alpha})([u] - [v]) < t/p$ and 
$\tilde{\beta}_{u,t} = \tilde{\beta}_{v,t}$.
Let $P(T) = \prod_{i=1}^m (T - v_i)$ 
be the minimal polynomial of $v$ over $\gotho$,
with the roots ordered so that the sequence 
$t_i = p \tilde{\beta}_{u',0}(p - [v_i])$
is nondecreasing.

If (d) fails, then also $t/p < \tilde{\beta}_{u',0}(p-[\tau(v)])$,
so $t_i > t$ for $i=1,\dots,m$.
We exploit transitivity as in Remark~\ref{R:ultrametric}:
since $t_i \geq t_1$, by Lemma~\ref{L:sup norm extended} we have
$\tilde{\beta}_{u',t_i} = \tilde{\beta}_{v_i,t_i}$
and
$\tilde{\beta}_{u',t_i} = \tilde{\beta}_{v_1,t_i}$,
so $\tilde{\beta}_{v_i,t_i} = \tilde{\beta}_{v_1,t_i}$.
By Lemma~\ref{L:sup norm extended} again,
\begin{equation} \label{eq:bound linear}
\max\{t/p, \tilde{\beta}_{v_1,0}(p - [v_i])\} \leq t_i/p.
\end{equation}
This inequality becomes strict for $i=1$.

If we put $y = \prod_{i=1}^m (p - [v_i]) \in W(\gotho)$, then
by Lemma~\ref{L:compute linear} and \eqref{eq:bound linear},
\begin{align*}
\beta_{u,t}(y) = \tilde{\beta}_{v_1,t}(y) 
&= \prod_{i=1}^m \tilde{\beta}_{v_1,t}(p - [v_i]) \\
&= \prod_{i=1}^m \max\{t/p, \tilde{\beta}_{v_1,0}(p - [v_i])\} \\
&< \prod_{i=1}^m (t_i/p) = \prod_{i=1}^m \tilde{\beta}_{u',0}(p - [v_i]) = \beta_{u',0}(y),
\end{align*}
contradiction. Hence (d) holds, as desired.

Suppose now that $t=0$. 
Note that each condition for $t=0$ implies the corresponding condition for all $t>0$.
For (a),(b),(c), the converse implication is clear; the converse implication also holds for (d)
by the completeness of $\tilde{\gotho}$
and the compactness of $\Aut(\tilde{\gotho}/\gotho)$. We may thus reduce the claim to the case $t>0$
treated above. 
\end{proof}

We are now ready to make the decisive step, analogous to
Lemma~\ref{L:dominate}.
\begin{lemma} \label{L:dominate2}
For $\beta \in \mu^{-1}(\alpha)$ and $s \in (r(\beta),1]$,
there exists $u \in \gotho$ with $\tilde{\alpha}(u) \leq p^{-1}$
for which $H(\beta,s) = \beta_{u,s}$.
\end{lemma}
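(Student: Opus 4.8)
\emph{Proof proposal.} The plan is to mirror the proof of Lemma~\ref{L:dominate}, with Theorem~\ref{T:factor} playing the role of the factorization of a polynomial into linear factors over an algebraically closed field. Set
\[
S = \{s \in [0,1] : H(\beta,s) = \beta_{u,s} \text{ for some } u \in \tilde{\gotho} \text{ with } \tilde{\alpha}(u) \le p^{-1}\}.
\]
First I would check that $1 \in S$ (take $u = 0$: by Theorem~\ref{T:construct Gauss}(d), $\beta_{0,1}$ is the restriction of $\lambda(\tilde{\alpha})$ to $W(\gotho)$, namely $\lambda(\alpha)$, which equals $H(\beta,1) = (\lambda \circ \mu)(\beta)$ by Theorem~\ref{T:final homotopy}(b)), and that $S$ is up-closed: if $s \in S$ with $H(\beta,s) = \beta_{u,s}$ and $s \le s' \le 1$, then by Theorem~\ref{T:final homotopy}(d), the compatibility of the construction of Definition~\ref{D:deformation} with restriction along bounded homomorphisms, and Lemma~\ref{L:same homotopy} applied over $\tilde{\gotho}$,
\[
H(\beta, s') = H(H(\beta,s),s') = H(\beta_{u,s}, s') = \beta_{u,s'},
\]
so $s' \in S$. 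Put $r = \inf S$; since $S$ is up-closed and contains $1$, the lemma is equivalent to the inequality $r(\beta) \ge r$, i.e.\ to the assertion that $H(\beta,s) = \beta$ for all $s \in [0,r)$.

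To prove this, I would unwind Definition~\ref{D:deformation}. Restricting $\beta$ along $W(\gotho)[T] \to W(\gotho)[T]/(T-p) \cong W(\gotho)$ and extending along the surjection $\psi^*$ of Lemma~\ref{L:Witt split}, one obtains $\beta_0 \in \calM(W(\gotho[\overline{T}]^{\perf}))$ with $\beta_0(p-[\overline{T}]) = 0$ and $\beta_0|_{W(\gotho)} = \beta$; put $\alpha_0 = \mu(\beta_0)$, so $\alpha_0$ restricts to $\alpha$ on $\gotho$. Let $\CC$ be a completed algebraic closure of $\calH(\alpha_0)$; it is a complete algebraically closed field of characteristic $p$ containing $\Frac \tilde{\gotho}$ isometrically, with valuation ring $\gotho_\CC$ carrying the extended norm $\alpha_\CC$, and with $\tilde{\gotho} \subseteq \gotho_\CC$. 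Let $v$ be the image of $[\overline{T}]$ in $W(\gotho_\CC)$, so $\alpha_\CC(v) = \beta(p) \le p^{-1}$. Using Remark~\ref{R:radius 0} together with the compatibility of the construction $H(\gamma,\pi,t)$ with isometric base extension — which follows from the stable-presentation formula of Theorem~\ref{T:construct Gauss}(b) and the fact that stable elements remain stable under such extensions — one gets, for all $t \in [0,1]$,
\[
H(\beta,t) = H(\alpha_\CC, v, t)\big|_{W(\gotho)}, \qquad \beta_{u,t} = H(\alpha_\CC, u, t)\big|_{W(\gotho)} \quad (u \in \tilde{\gotho},\ \tilde{\alpha}(u) \le p^{-1}).
\]
Now fix $s \in [0,r)$ and a nonzero $x \in W(\gotho)$; I would show $H(\beta,s)(x)$ does not depend on $s$. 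Factor $x = y(p-[u_1])\cdots(p-[u_n])$ inside $W(\tilde{\gotho})$ via Theorem~\ref{T:factor} (legitimate since $\Frac\tilde{\gotho}$ is algebraically closed), with $y$ stable and $\tilde{\alpha}(u_i) < p^{-1}$. Since $H(\alpha_\CC, v, s)$ is multiplicative (Theorem~\ref{T:construct Gauss}(a)), it is enough to evaluate it on $y$ and on each $p - [u_i]$. On $y$ it returns $\lambda(\alpha_\CC)(y)$ by Remark~\ref{R:why stable}, independently of $s$. On $p-[u_i]$, Lemma~\ref{L:compute linear} gives $H(\alpha_\CC,v,s)(p-[u_i]) = \max\{s/p,\ H(\alpha_\CC,v,0)(p-[u_i])\}$; moreover $H(\alpha_\CC,v,0)(p-[u_i]) > s/p$, for otherwise Lemma~\ref{L:sup norm extended} (applied with $\gotho_\CC$ in place of $\tilde{\gotho}$) would yield $H(\alpha_\CC,v,s) = H(\alpha_\CC,u_i,s)$, hence $H(\beta,s) = \beta_{u_i,s}$ and $s \in S$, contradicting $s < \inf S$. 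Thus each factor is independent of $s$ on $[0,r)$, so $H(\beta,s)(x)$ is constant there; since $H(\beta,0) = \beta$ by Theorem~\ref{T:final homotopy}(a), we conclude $H(\beta,s) = \beta$ for $s \in [0,r)$, whence $r(\beta) \ge r$, completing the proof.

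I expect the main obstacle to be the middle step: arranging that $H(\beta,\cdot)$ and all of the $\beta_{u,\cdot}$ are simultaneously realized as restrictions of seminorms $H(\alpha_\CC,\cdot,\cdot)$ defined over one common complete algebraically closed field $\CC$, so that the multiplicativity of $H(\alpha_\CC,v,s)$ and the factorization of Theorem~\ref{T:factor} may be deployed together. This rests on the (routine but not wholly automatic) compatibility of the $H$-constructions with isometric base change, which I would verify through stable presentations as indicated.
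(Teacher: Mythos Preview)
Your proof is correct and follows essentially the same strategy as the paper: define the set $S$, show it is up-closed with $1 \in S$, reduce to $r(\beta) \ge \inf S$, and establish this by lifting $\beta$ to a seminorm $H(\alpha_\CC,v,0)$ over a complete algebraically closed extension where Theorem~\ref{T:factor} applies and Lemma~\ref{L:compute linear} together with Lemma~\ref{L:sup norm extended} control each factor $p-[u_i]$. Your rendition is in fact slightly cleaner than the paper's, which defines $S$ via the inequality $\beta_{u,s}\ge\beta$, splits into cases on whether $\tilde{\alpha}(u)<p^{-1}$ or $=p^{-1}$, and cites Lemma~\ref{L:sup norm same radius2} where (as you do) Lemma~\ref{L:sup norm extended} over the enlarged ring is what is actually being invoked.
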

\begin{proof}
Let $S$ be the set of $s \in [0,1]$ for which $\beta_{u,s} \geq \beta$
for some $u \in \tilde{\gotho}$ with $\tilde{\alpha}(u) \leq p^{-1}$. 
The set $S$ is up-closed and nonempty;
let $t$ be its infimum.
As in the proof of Lemma~\ref{L:dominate}, 
it suffices to check that $r(\beta) \geq t$.

By proceeding as in Definition~\ref{D:deformation}, we can construct
an isometric embedding $\gotho \to \gotho_1$ of complete valuation rings
of characteristic $p$, with the norm on $\gotho_1$ denoted by $\alpha_1$,
and an element $v \in \gotho_1$ with $\alpha_1(v) = p^{-1}$,
for which $\beta$ is the restriction of the seminorm $H(\alpha_1,v,0)$.
There is no harm in further enlarging $\gotho_1$ so that
$\Frac \gotho_1$ becomes algebraically closed; we may then  
identify $\tilde{\gotho}$ with a subring of $\gotho_1$.

For $u \in \gotho$, if $\tilde{\alpha}(u) < p^{-1}$, then
by Lemma~\ref{L:compute linear}, $H(\alpha_1,v,s)(p - [u])$
is constant on $[0,1]$. If instead $\tilde{\alpha}(u) = p^{-1}$,
then for $s \in [0,t)$ we have $H(\beta,s) \neq \beta_{u,s}$,
so by Lemma~\ref{L:sup norm same radius2}, $s/p < H(\alpha_1,v,0)(p - [u])$.
By Lemma~\ref{L:compute linear}, for $s \in [0,t]$,
\[
H(\alpha_1,v,s)(p-[u]) = \max\{s/p, H(\alpha_1,v,0)(p - [u])\}
= H(\alpha_1,v,0)(p - [u])\}.
\]
For each nonzero $x \in W(\gotho)$,
by Theorem~\ref{T:factor} we have
$x = y(p - [u_1]) \cdots (p - [u_n])$
for some stable $y \in W(\tilde{\gotho})$
and some $u_1,\dots,u_n \in \tilde{\gotho}$
with $\tilde{\alpha}(u_i) \leq p^{-1}$.
For $s \in [0,t]$,
\[
H(\beta,s)(x) = 
H(\alpha_1,v,s)(x) = \lambda(\tilde{\alpha})(y) \prod_{i=1}^n H(\alpha_1,v,0)(p - [u_i])
\]
is independent of $s$. Hence $H(\beta,s) = \beta$ for $s \in [0,t]$,
and so $r(\beta) \geq t$ as desired.
\end{proof}
\begin{cor} \label{C:infimum2}
Suppose that $\beta \in \mu^{-1}(\alpha)$ is such that
$\beta \neq \beta_{u,t}$ for all $u \in \tilde{\gotho}$ 
with $\tilde{\alpha}(u) \leq p^{-1}$ and all $t \in [0,1]$.
Then for each $y \in W(\gotho)$, for any sufficiently small 
$s \in (r(\beta),1]$,
$\beta(y) = H(\beta,s)(y)$.
\end{cor}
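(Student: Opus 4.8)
The plan is to read this off from the proof of Lemma~\ref{L:dominate2}, just as Corollary~\ref{C:infimum} was read off from the proof of Lemma~\ref{L:dominate}. The case $y=0$ is trivial, so assume $y \neq 0$. Exactly as in the proof of Lemma~\ref{L:dominate2}, construct an isometric embedding $\gotho \to \gotho_1$ of complete perfect valuation rings of characteristic $p$, with $\Frac \gotho_1$ algebraically closed and norm $\alpha_1$ extending $\alpha$, together with an element $v \in \gotho_1$ with $\alpha_1(v) \leq p^{-1}$, such that $\beta$ is the restriction to $W(\gotho)$ of $H(\alpha_1, v, 0)$; also fix an embedding $\tilde{\gotho} \hookrightarrow \gotho_1$. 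By Lemma~\ref{L:same homotopy} (giving $H(H(\alpha_1,v,0),s) = H(\alpha_1,v,s)$) together with the compatibility of the operator $H(\cdot,t)$ of Definition~\ref{D:deformation} with restriction along bounded homomorphisms, $H(\beta,s)$ is then the restriction to $W(\gotho)$ of $H(\alpha_1,v,s)$ for every $s \in [0,1]$.

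Since $\Frac \gotho_1$ is algebraically closed, either $y$ is stable, in which case $H(\alpha_1,v,s)(y) = \lambda(\alpha_1)(y)$ is independent of $s$ by Theorem~\ref{T:construct Gauss}(b) and we are done, or Theorem~\ref{T:factor} (applied over $\tilde{\gotho}$) writes $y = y'(p-[u_1])\cdots(p-[u_n])$ with $y'$ stable and $u_1,\dots,u_n \in \tilde{\gotho}$, $\tilde{\alpha}(u_i) \leq p^{-1}$. Using the multiplicativity of $H(\alpha_1,v,s)$ (Theorem~\ref{T:construct Gauss}(a)) and Theorem~\ref{T:construct Gauss}(b) for the stable factor $y'$, we get
\[
H(\beta,s)(y) = \lambda(\alpha_1)(y') \prod_{i=1}^n H(\alpha_1,v,s)(p-[u_i]),
\]
while Lemma~\ref{L:compute linear} gives $H(\alpha_1,v,s)(p-[u_i]) = \max\{s/p, c_i\}$ with $c_i := H(\alpha_1,v,0)(p-[u_i])$. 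Hence $s \mapsto H(\beta,s)(y)$ is constant on $[0,s_y]$, where $s_y := \min\bigl(\{1\} \cup \{p\,c_i : 1 \leq i \leq n\}\bigr)$. Since $H(\beta,0) = \beta$, it follows that $H(\beta,s)(y) = \beta(y)$ for all $s \in [0,s_y]$, so the corollary is proved once we check $s_y > r(\beta)$.

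The hypothesis $\beta \neq \beta_{u,t}$ enters precisely in the inequality $p\,c_i > r(\beta)$ for $1 \leq i \leq n$. Suppose instead $c_i \leq r(\beta)/p$. If $r(\beta) = 0$ this forces $c_i = 0$, so $p-[u_i]$ is divisible by $p-[v]$ in $W(\gotho_1)$ by Remark~\ref{R:radius 0}, whence $H(\alpha_1,v,t) = H(\alpha_1,u_i,t)$ for all $t$ by Remark~\ref{R:radius 0b}. If $r(\beta) > 0$, then, since condition (d) of Lemma~\ref{L:sup norm extended} is symmetric in its two centres (as established in that lemma's proof), the inequality $c_i = H(\alpha_1,v,0)(p-[u_i]) \leq r(\beta)/p$ yields $H(\alpha_1,v,r(\beta)) = H(\alpha_1,u_i,r(\beta))$ on $W(\gotho_1)$. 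In either case, restricting to $W(\gotho)$ and using $H(\beta,r(\beta)) = \beta$ together with $H(\alpha_1,u_i,s)|_{W(\gotho)} = \beta_{u_i,s}$ (valid because $u_i \in \tilde{\gotho}$, by Theorem~\ref{T:construct Gauss}(b)), we obtain $\beta = \beta_{u_i,r(\beta)}$, contradicting the hypothesis. This gives $s_y > r(\beta)$ and completes the argument.

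I expect the main obstacle to be purely bookkeeping: keeping straight the three layers of seminorms — the $H(\alpha_1,\cdot,\cdot)$ on $W(\gotho_1)$, their restrictions to $W(\tilde{\gotho})$ (which coincide with $H(\tilde{\alpha},\cdot,\cdot)$ by Theorem~\ref{T:construct Gauss}(b)), and their further restrictions to $W(\gotho)$ (which produce $\beta$ and the $\beta_{u,t}$) — and in particular confirming that Lemma~\ref{L:sup norm extended} may legitimately be applied ``over $\gotho_1$'', i.e.\ with $\gotho_1$ in the role of $\tilde{\gotho}$, which is fine since $\gotho_1$ is the valuation ring of a complete algebraically closed field. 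No genuinely new idea beyond the proof of Lemma~\ref{L:dominate2} should be required.
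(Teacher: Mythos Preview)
Your proof is correct and follows essentially the same route the paper intends: extract the argument from the proof of Lemma~\ref{L:dominate2}, factor $y$ over $\tilde{\gotho}$ via Theorem~\ref{T:factor}, use Lemma~\ref{L:compute linear} to see that $s\mapsto H(\beta,s)(y)$ is constant on $[0,s_y]$, and invoke the hypothesis $\beta\neq\beta_{u,t}$ to force the strict inequality $pc_i>r(\beta)$. Your handling of the three layers $W(\gotho)\subset W(\tilde{\gotho})\subset W(\gotho_1)$ is careful and correct; in particular the restriction identity $H(\alpha_1,u_i,s)|_{W(\gotho)}=\beta_{u_i,s}$ does follow from Theorem~\ref{T:construct Gauss}(b) since an $\tilde{\alpha}$-stable presentation in $W(\tilde{\gotho})$ remains $\alpha_1$-stable in $W(\gotho_1)$.

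One small simplification: in the case $r(\beta)>0$ you invoke the \emph{symmetry} of condition (d) in Lemma~\ref{L:sup norm extended}, but this is unnecessary. The inequality $c_i=H(\alpha_1,v,0)(p-[u_i])\le r(\beta)/p$ is already literally condition (d) with $u'=v$, $u=u_i$, $t=r(\beta)$, so (d)$\Rightarrow$(a) directly yields $H(\alpha_1,v,r(\beta))=H(\alpha_1,u_i,r(\beta))$ without appealing to symmetry. Also, as you correctly note in your closing remark, Lemma~\ref{L:sup norm extended} is stated for $\tilde{\gotho}$ but its proof uses only that one is working over a complete valuation ring, so applying it over $\gotho_1$ is legitimate (algebraic closedness is not needed for that lemma, though it is for Theorem~\ref{T:factor}).
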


With this analysis, we obtain Theorem~\ref{T:tree} as follows.
\begin{lemma}  \label{L:tree proof}
Theorem~\ref{T:tree} holds in case $R = \gotho$.
\end{lemma}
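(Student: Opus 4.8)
The plan is to mirror, essentially line by line, the proof of the polynomial-ring analogue Lemma~\ref{L:dominate lemma}, replacing each ingredient by its Witt-vector counterpart from \S\ref{sec:fibres}. First I would invoke the reduction already used in the proof of Theorem~\ref{T:tree} one more time: it replaces $\gotho$ by the valuation ring of $\calH(\mu(\beta))$, which again satisfies Hypothesis~\ref{H:Newton polygons}, preserves $r(\beta)$, and is compatible with the formation of $H(\cdot,t)$. This lets me assume $\mu(\beta) = \mu(\gamma) = \alpha$, so that $\beta, \gamma \in \mu^{-1}(\alpha)$ and Lemmas~\ref{L:dominate2} and \ref{L:sup norm same radius2} become available.

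Next I would dispose of the extreme cases. If $r(\beta) = 1$, then Theorem~\ref{T:final homotopy}(b) gives $\beta = H(\beta,1) = (\lambda\circ\mu)(\beta) = (\lambda\circ\mu)(\gamma) = H(\gamma,1) = H(\gamma,r(\beta))$, using $\mu(\beta) = \mu(\gamma)$. If $r(\gamma) = 1$, then $\gamma = (\lambda\circ\mu)(\gamma) = (\lambda\circ\mu)(\beta) \geq \beta \geq \gamma$ by Theorem~\ref{T:final homotopy}(b) and Theorem~\ref{T:lifting}(c), forcing $\beta = \gamma$ and hence $H(\gamma,r(\beta)) = H(\beta,r(\beta)) = \beta$. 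So I may assume $r(\beta), r(\gamma) < 1$.

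For the main case I would fix $s \in (\max\{r(\beta),r(\gamma)\},1]$ and apply Lemma~\ref{L:dominate2} to write $H(\beta,s) = \beta_{u,s}$ and $H(\gamma,s) = \beta_{u',s}$ with $u,u' \in \gotho$, $\alpha(u),\alpha(u') \leq p^{-1}$. From $\beta \geq \gamma$ I get $H(\beta,s) \geq H(\gamma,s)$ by Remark~\ref{R:monotonicity}, i.e. $\beta_{u,s} \geq \beta_{u',s}$, and Lemma~\ref{L:sup norm same radius2} upgrades this to $\beta_{u,s} = \beta_{u',s}$; hence $H(\beta,s) = H(\gamma,s)$ for all such $s$. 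Then, exactly as in Lemma~\ref{L:dominate lemma}: if $r(\gamma) > r(\beta)$, letting $s \downarrow r(\gamma)$ and using continuity of $H$ (Theorem~\ref{T:final homotopy}) gives $\gamma = H(\beta,r(\gamma))$, which must strictly dominate $\beta$ (since $H(\beta,r(\gamma)) = \beta$ would force $r(\beta) \geq r(\gamma)$), contradicting $\beta \geq \gamma$; therefore $r(\beta) \geq r(\gamma)$, and letting $s \downarrow r(\beta)$ in $H(\beta,s) = H(\gamma,s)$ yields $\beta = H(\beta,r(\beta)) = H(\gamma,r(\beta))$.

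The substantive work — the classification of the seminorms $\beta_{u,t}$ on a single fibre, together with the comparison Lemmas~\ref{L:dominate2} and \ref{L:sup norm same radius2} — has already been done, so what remains is essentially bookkeeping parallel to \S\ref{sec:polynomial}. The one step I expect to need a little care is the monotonicity $H(\beta,s) \geq H(\gamma,s)$: as Remark~\ref{R:monotonicity} cautions, this does not follow from the construction of $H$ in Definition~\ref{D:deformation} and must be read off instead from the ``supremum over all extensions'' description in Remark~\ref{R:use all extensions}. Everything else is a limiting argument that transcribes the polynomial case verbatim.
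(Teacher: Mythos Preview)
Your proposal is correct and matches the paper's proof essentially line for line: the same handling of the extreme cases $r(\beta)=1$ and $r(\gamma)=1$ via Theorem~\ref{T:final homotopy}(b), the same application of Lemma~\ref{L:dominate2} and Lemma~\ref{L:sup norm same radius2} to get $H(\beta,s)=H(\gamma,s)$ for $s>\max\{r(\beta),r(\gamma)\}$ (with monotonicity supplied by Remark~\ref{R:monotonicity}, exactly as you note), and the same limit argument to conclude. The only cosmetic difference is that you make the reduction to $\mu(\beta)=\mu(\gamma)=\alpha$ explicit, whereas the paper treats it as already absorbed into the hypothesis via the proof of Theorem~\ref{T:tree}.
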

\begin{proof}
If $r(\beta) = 1$, then $\beta = H(\beta,1) = H(\gamma,1)$
by Theorem~\ref{T:final homotopy}(b).
If $r(\gamma) = 1$, then by Theorem~\ref{T:final homotopy}(b) again, 
$\beta \geq \gamma = H(\gamma,1) = H(\beta,1) \geq \beta$
and so $\beta = H(\gamma,1)$.
It is thus safe to assume $r(\beta), r(\gamma) < 1$.

For each $s \in (\max\{r(\beta),r(\gamma)\},1]$, 
by Lemma~\ref{L:dominate2} we
have $H(\beta,s) = \beta_{u,s}$,
$H(\gamma,s) = \beta_{u',s}$ for some $u,u' \in \tilde{\gotho}$
with $\tilde{\alpha}(u),\tilde{\alpha}(u') \leq p^{-1}$.
Since $\beta \geq \gamma$ implies
$H(\beta,s) \geq H(\gamma,s)$ by Remark~\ref{R:monotonicity}, we have $\beta_{u,s} \geq \beta_{u',s}$,
but by Lemma~\ref{L:sup norm same radius2},
this forces $\beta_{u,s} = \beta_{u',s}$.
Hence $H(\beta,s) = H(\gamma,s)$.

If $r(\gamma) > r(\beta)$, by taking the limit as $s \to r(\gamma)^+$,
we deduce that $\gamma = H(\beta,r(\gamma)) = H(\beta,r(\beta)) = \beta$,
contradiction. Hence $r(\beta) \geq r(\gamma)$, and by taking the limit as $s \to r(\beta)^+$,
we deduce $\beta = H(\gamma,r(\beta))$ as desired.
\end{proof}

We derive the following corollary analogous to 
Corollary~\ref{C:compatible extension}.
\begin{cor}
For any $\beta, \gamma \in \mu^{-1}(\alpha)$ with $\beta \geq \gamma$,
there exist $\tilde{\beta}, \tilde{\gamma} \in \mu^{-1}(\tilde{\alpha})$
restricting to
$\beta, \gamma$, respectively, for which $\tilde{\beta} \geq \tilde{\gamma}$.
\end{cor}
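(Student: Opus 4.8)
The plan is to mirror the proof of Corollary~\ref{C:compatible extension}. The essential point is to extend $\gamma$ to a point of $\mu^{-1}(\tilde{\alpha})$; once that is done, the rest of the argument there carries over word for word. In the polynomial case this extension was produced from surjectivity of $\calM(\CC[T]) \to \calM(K[T])$, which rested on splitness of $K[T] \to K'[T]$ for finite $K'/K$. That route is unavailable here, since $W(\gotho) \to W(\gotho')$ need not be split for $\gotho'$ the valuation ring of a finite extension of $\Frac \gotho$ (defect can occur). Instead I would exploit the machinery of \S\ref{sec:restriction}. As in the construction underlying Definition~\ref{D:deformation} and the proof of Lemma~\ref{L:dominate2}, one produces an isometric embedding $\gotho \hookrightarrow \gotho_1$ of valuation rings of complete perfect fields of characteristic $p$, with norm $\alpha_1$ on $\gotho_1$ restricting to $\alpha$, and an element $v \in \gotho_1$ with $\alpha_1(v) \leq p^{-1}$, such that $\gamma$ is the restriction of $H(\alpha_1, v, 0)$ along $W(\gotho) \to W(\gotho_1)$; moreover one may enlarge $\gotho_1$ so that $\Frac \gotho_1$ is algebraically closed.

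Since $\Frac \gotho_1$ is then a complete algebraically closed extension of $\Frac \gotho$, it contains an isometric copy of $\Frac \tilde{\gotho}$, so we may regard $\tilde{\gotho}$ as a subring of $\gotho_1$ with $W(\gotho) \subseteq W(\tilde{\gotho}) \subseteq W(\gotho_1)$. Set $\tilde{\gamma} = H(\alpha_1, v, 0)|_{W(\tilde{\gotho})}$; restricting further to $W(\gotho)$ recovers $\gamma$. To see that $\tilde{\gamma} \in \mu^{-1}(\tilde{\alpha})$: by Corollary~\ref{C:continuous section}, $\mu(H(\alpha_1, v, 0)) = \alpha_1$, so $\mu(\tilde{\gamma})$ is the restriction of $\alpha_1$ to $\tilde{\gotho}$; since $\Frac \tilde{\gotho}$ is an algebraic extension of the complete field $\Frac \gotho$, this restriction is the unique norm extending $\alpha$, namely $\tilde{\alpha}$. (Alternatively, one can evaluate $H(\alpha_1,v,0)$ on Teichm\"uller elements directly via Remark~\ref{R:why stable}, since those are stable.) Thus $\tilde{\gamma} \in \mu^{-1}(\tilde{\alpha})$ restricts to $\gamma$.

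Now, given $\beta \geq \gamma$ in $\mu^{-1}(\alpha)$, fix such a $\tilde{\gamma}$ and put $\tilde{\beta} = H(\tilde{\gamma}, r(\beta))$. By Theorem~\ref{T:final homotopy}(c), $\mu(\tilde{\beta}) = \mu(\tilde{\gamma}) = \tilde{\alpha}$, so $\tilde{\beta} \in \mu^{-1}(\tilde{\alpha})$; by Theorem~\ref{T:final homotopy}(a) together with monotonicity of $H$ (Remark~\ref{R:monotonicity}), $\tilde{\beta} = H(\tilde{\gamma}, r(\beta)) \geq H(\tilde{\gamma}, 0) = \tilde{\gamma}$. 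Finally, the formation of $H(\cdot, t)$ is compatible with restriction along bounded homomorphisms (Definition~\ref{D:deformation}), so $\tilde{\beta}$ restricts on $W(\gotho)$ to $H(\gamma, r(\beta))$, which equals $\beta$ by Theorem~\ref{T:tree} (applicable since $\beta \geq \gamma$ and $\mu(\beta) = \mu(\gamma)$). Hence $\tilde{\beta}, \tilde{\gamma}$ have the required properties.

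The main obstacle is the first step — representing an arbitrary $\gamma \in \mu^{-1}(\alpha)$ as a restriction of some $H(\alpha_1, v, 0)$ with $\Frac \gotho_1$ algebraically closed — which forces one to reuse, rather than re-derive, the results of \S\ref{sec:restriction} (and \S\ref{sec:Newton polygons}). One should check carefully that the construction of Definition~\ref{D:deformation} does yield such a presentation (it identifies $\gamma$ with a seminorm of the form $H(\cdot, \overline{T}, 0)$ on a Witt ring over the perfect closure of a polynomial extension, whose image under $\mu$ is a norm), and that enlarging $\gotho_1$ to have algebraically closed fraction field preserves the relation $\gamma = H(\alpha_1, v, 0)|_{W(\gotho)}$, using compatibility of $H(\cdot, u, 0)$ with isometric extensions. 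Everything downstream is a routine transcription of the polynomial-ring argument, with Theorem~\ref{T:tree} and Remark~\ref{R:monotonicity} playing the roles of Theorem~\ref{T:dominate} and the evident monotonicity used in Corollary~\ref{C:compatible extension}.
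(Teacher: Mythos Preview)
Your proof is correct and follows the same approach as the paper: the paper's proof reads in its entirety ``Extend $\gamma$ as in the proof of Theorem~\ref{T:final homotopy}(d), then put $\tilde{\beta} = H(\tilde{\gamma}, r(\beta))$; this restricts to $\beta$ by Theorem~\ref{T:tree},'' and you have simply unpacked what that extension step entails (the construction from Definition~\ref{D:deformation} and Lemma~\ref{L:dominate2}, passing through an algebraically closed $\Frac \gotho_1 \supseteq \Frac \tilde{\gotho}$) and supplied the routine verifications the paper leaves implicit.
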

\begin{proof}
Extend $\gamma$ as in the proof of Theorem~\ref{T:final homotopy}(d),
then put $\tilde{\beta} = H(\tilde{\gamma}, r(\beta))$; this restricts
to $\beta$ by Theorem~\ref{T:tree}.
\end{proof}

To obtain an analogue of Corollary~\ref{C:disc}, we must make the 
function $d(u,u')$ from Remark~\ref{R:ultrametric} more explicit.
\begin{lemma} \label{L:compute metric}
Consider $u,u' \in \tilde{\gotho}$ with 
$\tilde{\alpha}(u), \tilde{\alpha}(u') \leq p^{-1}$.
\begin{enumerate}
\item[(a)]
If $u=0$ or $\tilde{\alpha}(u'-u) > p^{-p/(p-1)} \max\{\tilde{\alpha}(u), \tilde{\alpha}(u')\}$, then
$d(u,u') = p\tilde{\alpha}(u'-u)$.
\item[(b)]
If $u \neq 0$ and there exists a nonnegative integer $i$ for which
$\tilde{\alpha}(u'/u-1) \in (p^{-p^{i+1}/(p-1)}, p^{-p^i/(p-1)})$, then
\[
d(u,u') = p^{1-i} \tilde{\alpha}(u)
\tilde{\alpha}(u'/u-1)^{p^{-i}} \in (p^{-i-1/(p-1)} \tilde{\alpha}(u), p^{-(i-1)-1/(p-1)} \tilde{\alpha}(u)).
\]
\item[(c)]
If $u \neq 0$ and there exists a positive integer $i$ for which
$\tilde{\alpha}(u'/u-1) = p^{-p^{i}/(p-1)}$, then
$d(u,u') \leq p^{-(i-1)-1/(p-1)} \tilde{\alpha}(u)$, with equality unless 
$\tilde{\alpha}(u) = p^{-1}$ and $\tilde{\alpha}(1 - u^{p^{i}}(u'/u-1)^{1-p}) < 1$.
\end{enumerate}
\end{lemma}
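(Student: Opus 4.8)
The plan is to reduce the whole statement to a single quotient‑norm computation, $H(\tilde\alpha,u,0)(p-[u'])$, and then to evaluate that norm by factoring $[u]-[u']$ into a stable part times a product of cyclotomic‑type factors, exploiting the characteristic‑$p$ Frobenius. First I would note that by the symmetry of $d$ (Remark~\ref{R:ultrametric}) together with Lemma~\ref{L:compute linear} at $t=0$ one has $d(u,u')=p\,H(\tilde\alpha,u,0)(p-[u'])$; since $p\equiv[u]\pmod{p-[u]}$ and $H(\tilde\alpha,u,0)$ is multiplicative (Theorem~\ref{T:construct Gauss}(a)), this equals $p\,H(\tilde\alpha,u,0)([u]-[u'])$, and because Teichm\"uller elements are stable, $H(\tilde\alpha,u,0)([\overline y])=\tilde\alpha(\overline y)$ (Remark~\ref{R:why stable}). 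When $u=0$ this already gives $d(0,u')=p\tilde\alpha(u')$. More generally, in the situation of (a) one checks that $[u]-[u']$ is itself $\tilde\alpha$‑stable: writing $[u]-[u']=\sum_k p^k[\overline{z_k}]$, Remark~\ref{R:addition formula} makes $\overline{z_k}$ homogeneous of degree $p^k$ in $u^{1/p^k},(u')^{1/p^k}$ and divisible by $u^{1/p^k}-(u')^{1/p^k}=(u-u')^{1/p^k}$, and the inequality $\tilde\alpha(u'-u)>p^{-p/(p-1)}\max\{\tilde\alpha(u),\tilde\alpha(u')\}$ (given $\tilde\alpha(u),\tilde\alpha(u')\le p^{-1}$) is exactly what forces the $k=0$ term to dominate; hence $H(\tilde\alpha,u,0)([u]-[u'])=\lambda(\tilde\alpha)([u]-[u'])=\tilde\alpha(u-u')$ by Remark~\ref{R:why stable}, which is (a). For $u\neq0$ in general, write $[u]-[u']=[u]\bigl(1-[1+\overline x]\bigr)$ with $\overline x=u'/u-1$, so that by multiplicativity $d(u,u')=p\,\tilde\alpha(u)\,H(\tilde\alpha,u,0)\bigl([1+\overline x]-1\bigr)$, and it remains to evaluate $H(\tilde\alpha,u,0)\bigl([1+\overline x]-1\bigr)$ in terms of $\tilde\alpha(\overline x)=\tilde\alpha(u'/u-1)$.

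For the unstable cases (b) and (c) I would use that $\tilde\gotho$ is perfect of characteristic $p$, so $(1+\overline x)^{1/p^m}=1+\overline x^{1/p^m}$ and hence $[1+\overline x]=[1+\overline x^{1/p^n}]^{p^n}$; combining this with the factorization of $Z^{p^n}-1$ over $\ZZ$ gives, in $W(\tilde\gotho)$,
\[
[1+\overline x]-1=\rho\cdot\prod_{m=0}^{n-1}\Sigma_m,\qquad
\rho=[1+\overline x^{1/p^n}]-1,\quad \Sigma_m=\sum_{l=0}^{p-1}[1+\overline x^{1/p^{\,n-m}}]^{l},
\]
for a suitable $n$ depending on the regime (it is the index $i$ of the statement in case (b)). By Lemma~\ref{L:Witt explicit}, the $p^0$‑coefficient of $[1+\overline x^{1/p^n}]-1$ is $\overline x^{1/p^n}$ and the higher ones are smaller, and the hypothesis of (b) (resp.\ (c)) is precisely what makes $\rho$ be $\tilde\alpha$‑stable, so $H(\tilde\alpha,u,0)(\rho)=\tilde\alpha(\overline x)^{1/p^n}$ by Remark~\ref{R:why stable}.

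Next, for each $m$, writing $\rho_m=[1+\overline x^{1/p^{\,n-m}}]-1$, the identity $(1+\rho)^p-1=\rho^p+p\rho+\binom p2\rho^2+\cdots$ gives $\Sigma_m=\rho_m^{\,p-1}+p\,v(\rho_m)$ with $v\in\ZZ[T]$ and $v(0)=1$. Evaluating $H(\tilde\alpha,u,0)(\Sigma_m)$ is then a two‑sided estimate—carried out exactly as in the proofs of Lemma~\ref{L:lambda} and Theorem~\ref{T:construct Gauss}(a), i.e.\ by showing that away from finitely many boundary values a unique monomial achieves the maximum—which balances the Teichm\"uller contribution $H(\tilde\alpha,u,0)(\rho_m)^{p-1}$ against the contribution of the factor of $p$. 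Assembling the $n+1$ factors and substituting $\overline x=u'/u-1$ yields the closed form in (b), while the interval membership assertions in (b), and the sign of $d$ at the boundary in (c), come out of the elementary estimates (Bernoulli's inequality $p^j\ge1+j(p-1)$) that govern which $p$‑power coefficient dominates in the expansions above.

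The step I expect to be the main obstacle is precisely this evaluation of $H(\tilde\alpha,u,0)(\Sigma_m)$ in the boundary case (c): when $\tilde\alpha(\overline x)=p^{-p^i/(p-1)}$ the Teichm\"uller term $\rho_m^{\,p-1}$ and the $p$‑term in one of the $\Sigma_m$ acquire equal norm, so their top‑degree parts may cancel, and deciding whether they do is equivalent to deciding whether the residue of $1-u^{p^i}(u'/u-1)^{1-p}$ is a unit in $\kappa_{\tilde\gotho}$—which is exactly the exceptional condition recorded in the statement of (c). This is the one place where something beyond the multiplicativity bookkeeping already established is genuinely needed; everything else is Newton‑polygon–type estimation of the kind used repeatedly earlier in the paper.
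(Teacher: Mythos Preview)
Your reduction to computing $\tilde\beta_{u,0}([1+\overline x]-1)$ with $\overline x=u'/u-1$ is exactly the paper's, and your treatment of (a) matches. For (b) and (c), however, you take a genuinely different route. The paper does \emph{not} factor $[1+\overline x]-1$ multiplicatively; instead it applies Lemma~\ref{L:Witt explicit} directly to the whole element, writing
\[
[1+\overline x]-1=\sum_{j\ge 0}p^{j}\bigl[P_j(\overline x^{\,p^{-j}})\bigr],\qquad P_j(T)\equiv T\pmod{T^2},
\]
so that the $j$-th Teichm\"uller coefficient has $\tilde\alpha$-norm exactly $\tilde\alpha(\overline x)^{p^{-j}}$. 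The hypotheses in (a),(b),(c) are then read off as the cases where the maximum of $p^{-j}\tilde\alpha(\overline x)^{p^{-j}}$ is achieved uniquely at $j=0$, uniquely at $j=i$, or jointly at $j=i-1,i$. When the maximum is unique, the dominating term is $p^{j}$ times a Teichm\"uller element, so it also dominates under $\tilde\beta_{u,0}$, and the formula drops out in one line. For (c) the two tied terms are combined by trading a factor of $p$ for $[u]$ (using $p\equiv[u]$ mod $(p-[u])$), which produces the residue condition $\tilde\alpha(1-u^{p^i}\overline x^{1-p})<1$ directly.

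Your cyclotomic factorization $[1+\overline x]-1=\rho\prod_{m}\Sigma_m$ can be made to work, but it costs more than it buys. To evaluate each $\Sigma_m=\rho_m^{\,p-1}+p\,v(\rho_m)$ you need $\tilde\beta_{u,0}(\rho_m)$, and for $m\ge 1$ the $\rho_m$ are \emph{not} $\tilde\alpha$-stable (their own Witt expansion has dominant index $j=m$, not $j=0$), so you cannot simply quote Remark~\ref{R:why stable}. You can still get through by bounding $\tilde\beta_{u,0}(\rho_m)\le\lambda(\tilde\alpha)(\rho_m)$ and checking that $\lambda(\tilde\alpha)(\rho_m)^{p-1}<\tilde\alpha(u)$ for $m\ge 1$, so that the $p$-term wins and $\tilde\beta_{u,0}(\Sigma_m)=\tilde\alpha(u)$; but this is essentially re-deriving, factor by factor, what the single application of Lemma~\ref{L:Witt explicit} to $[1+\overline x]-1$ gives at once. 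In short: you already invoke Lemma~\ref{L:Witt explicit} to handle $\rho$; applying it instead to $[1+\overline x]-1$ itself eliminates the factorization step, the $\Sigma_m$ analysis, and the ``main obstacle'' you flag in (c).
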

\begin{proof}
Part (a) is clear when $u=0$, so we may assume $u \neq 0$ throughout.
Write
\begin{equation} \label{eq:compute distance1}
[u'/u] - 1 = \sum_{j=0}^\infty p^j [P_j((u'/u - 1)^{p^{-j}})]
\end{equation}
for $P_j(T) \in \Fp[T]$ as in Lemma~\ref{L:Witt explicit}.
By Lemma~\ref{L:Witt explicit}, $P_j(T)$ is divisible by $T$ but not
by $T^2$; consequently, if $\tilde{\alpha}(u'/u - 1) < 1$, 
then 
\[
\tilde{\alpha}(P_j((u'/u-1)^{p^{-j}})) = \tilde{\alpha}(u'/u - 1)^{p^{-j}}
\]
and so
\begin{equation} \label{eq:compute distance}
\lambda(\tilde{\alpha})([u'] - [u]) = \max_j \{p^{-j} \tilde{\alpha}(u)
\tilde{\alpha}(u'/u-1)^{p^{-j}} \}.
\end{equation}
In case (a), the maximum in \eqref{eq:compute distance} is achieved only by the
index $j=0$; in case (b), the maximum is achieved only by $j=i$.
In these cases, the right side of \eqref{eq:compute distance1} is dominated under $\lambda(\tilde{\alpha})$
by a single term which is a power of $p$ times a Teichm\"uller element, so this term also dominates
under $\tilde{\beta}_{u,0}$. This yields the desired results in these cases.

In case (c), the maximum in
\eqref{eq:compute distance} is achieved only by the indices $j=i-1,i$.
We modify the presentation of
$[u'/u]-1$ by replacing $p^{i} [P_{i}((u'/u - 1)^{p^{-i}})]$
with $p^{i-1} [u P_{i}((u'/u - 1)^{p^{-i}})]$. We then observe that
$d(u,u') \leq p^{-(i-1)-1/(p-1)} \tilde{\alpha}(u)$
with equality
unless
\[
\tilde{\alpha}((u'/u-1)^{p^{-i}} - u (u'/u-1)^{p^{-i+1}}) <
p^{-1/(p-1)},
\]
which yields the desired result.
\end{proof}
\begin{cor} \label{C:many dominated}
For $u \in \tilde{\gotho}$ 
with $\tilde{\alpha}(u) \leq p^{-1}$ and $0 < s < t \leq 1$,
there are infinitely many points of $\mu^{-1}(\tilde{\alpha})$ of the form
$\tilde{\beta}_{u',s}$ which are dominated by $\tilde{\beta}_{u,t}$.
\end{cor}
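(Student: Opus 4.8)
The plan is to imitate the proofs of Lemma~\ref{L:disc} and Corollary~\ref{C:disc}, producing an explicit infinite family of centres $u'$ for which the seminorms $\tilde{\beta}_{u',s}$ are pairwise distinct and all dominated by $\tilde{\beta}_{u,t}$. As in the polynomial case, I assume the norm $\alpha$ is nontrivial (when $\alpha$ is trivial the only admissible centre is $u=0$, so $\tilde{\beta}_{0,s}$ is the only point of the given form and the statement fails; this hypothesis is implicit, matching Lemma~\ref{L:disc}). Then the value group of $\tilde{\alpha}$, being the divisible hull of a nontrivial subgroup of $(0,+\infty)$, is dense, so every nonempty open interval contains infinitely many of its elements, and the same holds after extracting $p^i$-th roots. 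Throughout I work with the pseudo-ultrametric $d(u',u'')=p\tilde{\beta}_{u'',0}(p-[u'])$ of Remark~\ref{R:ultrametric}, recalling from Lemma~\ref{L:sup norm extended} that for admissible centres and $r\in(0,1]$ one has $\tilde{\beta}_{u',r}=\tilde{\beta}_{u'',r}$ if and only if $d(u',u'')\le r$.

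First I would record the reduction: it suffices to produce infinitely many $u'\in\tilde{\gotho}$ with $\tilde{\alpha}(u')\le p^{-1}$ such that $d(u,u')\le t$ for each, and $d(u',u'')>s$ for each pair. Indeed $d(u,u')\le t$ gives $\tilde{\beta}_{u',t}=\tilde{\beta}_{u,t}$ by Lemma~\ref{L:sup norm extended}, whence $\tilde{\beta}_{u',s}\le\tilde{\beta}_{u',t}=\tilde{\beta}_{u,t}$ since $H(\tilde{\alpha},u',\cdot)$ is nondecreasing in the radius; and $d(u',u'')>s$ gives $\tilde{\beta}_{u',s}\ne\tilde{\beta}_{u'',s}$ by Lemma~\ref{L:sup norm extended} again. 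Each such seminorm lies in $\mu^{-1}(\tilde{\alpha})$ by construction. By the ultrametric inequality, once all $d(u,u')\le t$ hold, automatically $d(u',u'')\le t$; so what is needed is an infinite subset of the $d$-ball of radius $t$ about $u$ on which $d$ takes only values in $(s,t]$.

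Next I would build such a subset from Lemma~\ref{L:compute metric}, in two regimes. If $t>p^{-1/(p-1)}\tilde{\alpha}(u)$ (in particular whenever $u=0$), then $p^{-p/(p-1)}\tilde{\alpha}(u)<t/p$; choose infinitely many distinct $\theta_n$ in the value group of $\tilde{\alpha}$ lying in $(\max\{s/p,\,p^{-p/(p-1)}\tilde{\alpha}(u)\},\,t/p)$, pick $v_n$ with $\tilde{\alpha}(v_n)=\theta_n$, and set $u_n=u+v_n$. One checks $\tilde{\alpha}(u_n)\le p^{-1}$, and since the relevant differences have norms $\theta_n$ or $\max\{\theta_n,\theta_m\}$, each exceeding $p^{-p/(p-1)}$ times the relevant maximum, Lemma~\ref{L:compute metric}(a) gives $d(u,u_n)=p\theta_n\le t$ and $d(u_n,u_m)=p\max\{\theta_n,\theta_m\}>s$. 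If instead $t\le p^{-1/(p-1)}\tilde{\alpha}(u)$ (so $u\ne0$), the pertinent distances lie in the intervals $I_i=(p^{-i-1/(p-1)}\tilde{\alpha}(u),\,p^{-(i-1)-1/(p-1)}\tilde{\alpha}(u))$ for $i\ge1$, whose closures cover $(0,p^{-1/(p-1)}\tilde{\alpha}(u)]$; fix $i$ with $I_i\cap(s,t)\ne\emptyset$, observe that $\theta\mapsto p^{1-i}\tilde{\alpha}(u)\theta^{p^{-i}}$ is an increasing bijection from $(p^{-p^{i+1}/(p-1)},p^{-p^i/(p-1)})$ onto $I_i$, choose infinitely many distinct $\theta_n$ in the value group with $p^{1-i}\tilde{\alpha}(u)\theta_n^{p^{-i}}\in(s,t)$, pick $v_n$ with $\tilde{\alpha}(v_n)=\theta_n$, and set $u_n=u(1+v_n)$. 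Then $\tilde{\alpha}(u_n)=\tilde{\alpha}(u)\le p^{-1}$, the ratios $u_n/u-1$ and $u_n/u_m-1$ have norms $\theta_n$ and $\max\{\theta_n,\theta_m\}$ in $(p^{-p^{i+1}/(p-1)},p^{-p^i/(p-1)})$, and Lemma~\ref{L:compute metric}(b) yields $d(u,u_n)=p^{1-i}\tilde{\alpha}(u)\theta_n^{p^{-i}}\in(s,t)$ and $d(u_n,u_m)=p^{1-i}\tilde{\alpha}(u)\max\{\theta_n,\theta_m\}^{p^{-i}}\in(s,t)$. In either regime the $u_n$ satisfy the requirements of the reduction, and there are infinitely many of them, so the corollary follows.

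The main obstacle is the bookkeeping around Lemma~\ref{L:compute metric}: verifying that each $u_n$ is an admissible centre, that the hypotheses of parts (a) or (b) genuinely apply to every pair $(u,u_n)$ and $(u_n,u_m)$ (which needs the elementary ultrametric estimates for the norms of $u_n-u_m$, $u_n$, and $u_n/u_m-1$), and that the value group of $\tilde{\alpha}$ meets the prescribed open interval — for which only its density is used, not membership of any particular power of $p$. Part (c) of Lemma~\ref{L:compute metric}, governing the boundary values of $d$, is avoided entirely by keeping every chosen norm strictly interior to the relevant interval.
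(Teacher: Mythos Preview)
Your proof is correct and follows essentially the same strategy as the paper: reduce via Lemma~\ref{L:sup norm extended} to producing infinitely many centres with $d(u,u')\le t$ and pairwise $d>s$, then build these explicitly from Lemma~\ref{L:compute metric}. The only real difference is that the paper fixes a single target value for $d$ and exploits the infinite residue field $\kappa_{\tilde{\gotho}}$ to find infinitely many elements of the same norm, whereas you instead exploit the density of the value group to pick infinitely many distinct norms $\theta_n$; your case split ($t>p^{-1/(p-1)}\tilde{\alpha}(u)$ versus not, using parts (a) and (b) of Lemma~\ref{L:compute metric} respectively) is a clean alternative to the paper's $u=0$ versus $u\neq 0$ split, and your explicit remark that $\alpha$ must be nontrivial is a point the paper leaves implicit.
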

\begin{proof}
Suppose first that $u=0$; then for any $u' \in \tilde{\gotho}$,
we have
$\tilde{\beta}_{u,0}(p - [u']) = \tilde{\alpha}(u')$
because $\tilde{\beta}_{u,0}(p) = 0$.
Choose $s' \in (s,t)$
for which $s'/p$ occurs as the norm of some element of $\tilde{\gotho}$.
As in the proof of Lemma~\ref{L:disc}, 
we can find
an infinite subset $S$ of $\tilde{\gotho}$ such that
$\tilde{\alpha}(u') = \tilde{\alpha}(u'-u'') = s'/p$ for all distinct
$u',u'' \in S$. We then have $\tilde{\beta}_{u,t} \geq \tilde{\beta}_{u',s}$
for all $u' \in S$ by Lemma~\ref{L:sup norm extended}. 
Moreover, for $u', u'' \in S$ distinct, 
$[u'] - [u'']$ is stable, so $\tilde{\beta}_{u',0}(p - [u''])
= \tilde{\beta}_{u',0}([u'] - [u'']) = s'/p$ and hence
$\tilde{\beta}_{u',s} \neq \tilde{\beta}_{u'',s}$
by Lemma~\ref{L:sup norm extended} again.

Suppose next that $u \neq 0$. Choose $c \in (p^{-p/(p-1)}, p^{-1/(p-1)})$ 
occurring as the norm of an element of $\tilde{\gotho}$
and such that $p^{-i+1} \tilde{\alpha}(u) c \in (s,t)$ for some
nonnegative integer $i$.
Again as in Lemma~\ref{L:disc},
we choose an infinite subset
$S$ of $\tilde{\gotho}$ such that
$\tilde{\alpha}(u'/u-1) = \tilde{\alpha}(u'/u-u''/u) = c^{p^i}$ for all distinct
$u',u'' \in S$. By Lemma~\ref{L:compute metric}, we have
$\tilde{\beta}_{u,0}(p - [u']) = \tilde{\beta}_{u',0}(p - [u'']) = 
p^{-i} \tilde{\alpha}(u) c$ for all distinct $u',u'' \in S$.
By Lemma~\ref{L:sup norm extended},
$\tilde{\beta}_{u,t} \geq \tilde{\beta}_{u',s}$
for all $u' \in S$, and 
$\tilde{\beta}_{u',s} \neq \tilde{\beta}_{u'',s}$ for all distinct
$u', u'' \in S$.
\end{proof}

We can now derive an analogue of Lemma~\ref{L:disc}.
\begin{lemma}
For $u \in \tilde{\gotho}$ 
with $\tilde{\alpha}(u) \leq p^{-1}$ and $t \in [0,1]$, 
let $D(u,t)$ be the set of
$\beta_{v,0} \in \mu^{-1}(\alpha)$ dominated by $\beta_{u,t}$.
Then for $s,t \in [0,1]$, $D(u,s) = D(u,t)$ if and only if $s=t$.
\end{lemma}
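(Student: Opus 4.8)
The plan is to mimic the proof of Lemma~\ref{L:disc}, with the pseudometric $d$ of Remark~\ref{R:ultrametric} playing the role of the distance function on $K$ and Corollary~\ref{C:many dominated} playing the role of the infinitude of $\kappa_\CC$. The ``if'' direction is trivial, so I would assume $D(u,s) = D(u,t)$ with $s \neq t$, say $s < t$, and derive a contradiction.

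First I would dispose of the case $s = 0$: since $\beta_{u,s'} \geq \beta_{u,s''}$ whenever $s' \geq s''$ (the evident half of Lemma~\ref{L:sup norm same center2}), the assignment $s' \mapsto D(u,s')$ is nondecreasing, so for any $s' \in (0,t)$ we get $D(u,0) \subseteq D(u,s') \subseteq D(u,t)$, and $D(u,0)=D(u,t)$ forces $D(u,s')=D(u,t)$ with $0 < s' < t$. It therefore suffices to treat $s>0$. In that case I would invoke Corollary~\ref{C:nearby2} to produce $u_0 \in \tilde\gotho$ integral over $\gotho$ with $\tilde\alpha(u_0)\leq p^{-1}$, $\lambda(\tilde\alpha)([u]-[u_0]) < s/p$, and $\tilde\beta_{u,s} = \tilde\beta_{u_0,s}$; since $s \leq t$, Corollary~\ref{C:nearby} also gives $\tilde\beta_{u,t} = \tilde\beta_{u_0,t}$. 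Restricting to $W(\gotho)$ yields $\beta_{u,s}=\beta_{u_0,s}$ and $\beta_{u,t}=\beta_{u_0,t}$, hence $D(u_0,s)=D(u,s)=D(u,t)=D(u_0,t)$, and I may replace $u$ by $u_0$; so henceforth $u$ is integral over $\gotho$.

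Let $u_1,\dots,u_m \in \tilde\gotho$ be the $\Aut(\tilde\gotho/\gotho)$-orbit of $u$, which is finite because $\gotho$ is the valuation ring of a perfect field (so algebraic extensions of $\Frac\gotho$ are separable and Galois theory applies), and note $\tilde\alpha(u_i) = \tilde\alpha(u) \leq p^{-1}$ since $\tilde\alpha\circ\tau = \tilde\alpha$ by uniqueness of the extension. The key claim is: every $v \in \tilde\gotho$ with $\tilde\alpha(v)\leq p^{-1}$ and $\tilde\beta_{u,t} \geq \tilde\beta_{v,s}$ satisfies $\tilde\beta_{v,s} = \tilde\beta_{u_i,s}$ for some $i$. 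Indeed $\tilde\beta_{u,t} \geq \tilde\beta_{v,s} \geq \tilde\beta_{v,0}$ (monotonicity in the radius parameter, Corollary~\ref{C:convex}), so restricting to $W(\gotho)$ gives $\beta_{v,0} \in D(u,t) = D(u,s)$, i.e.\ $\beta_{u,s} \geq \beta_{v,0}$; then Lemma~\ref{L:sup norm same radius2}((c)$\Rightarrow$(d)) produces $\tau \in \Aut(\tilde\gotho/\gotho)$ with $s/p \geq \tilde\beta_{v,0}(p-[\tau(u)])$, and Lemma~\ref{L:sup norm extended}((d)$\Rightarrow$(a)) gives $\tilde\beta_{v,s} = \tilde\beta_{\tau(u),s}$ with $\tau(u) \in \{u_1,\dots,u_m\}$. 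Finally, since $0 < s < t \leq 1$, Corollary~\ref{C:many dominated} furnishes infinitely many distinct points of $\mu^{-1}(\tilde\alpha)$ of the form $\tilde\beta_{v,s}$ dominated by $\tilde\beta_{u,t}$, contradicting the claim, which confines them to the finite set $\{\tilde\beta_{u_1,s},\dots,\tilde\beta_{u_m,s}\}$.

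The genuinely substantive input is Corollary~\ref{C:many dominated} (and behind it the explicit computation of $d$ in Lemma~\ref{L:compute metric}), together with the Galois-descent equivalences of Lemma~\ref{L:sup norm same radius2} and Lemma~\ref{L:sup norm extended}; everything else is transcription from Lemma~\ref{L:disc}. Accordingly, the main obstacle I anticipate is not conceptual but bookkeeping: unlike in the polynomial case one must constantly pass between seminorms $\tilde\beta_{\bullet,\bullet}$ on $W(\tilde\gotho)$ and their restrictions $\beta_{\bullet,\bullet}$ to $W(\gotho)$ (where the sets $D(u,t)$ actually live), and one must apply the ``$=$'', ``$\geq$'', ``$\geq \tilde\beta_{\bullet,0}$'', and pseudometric versions of those lemmas with the correct radius parameters; the reduction to $u$ integral and the separate treatment of $s=0$ are there precisely to make that bookkeeping clean.
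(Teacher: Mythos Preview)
Your proposal is correct and follows essentially the same route as the paper's own proof: reduce to $0<s<t$, replace $u$ by an element integral over $\gotho$ via Corollary~\ref{C:nearby2}, use Lemma~\ref{L:sup norm same radius2} together with Lemma~\ref{L:sup norm extended} to trap every $\tilde{\beta}_{v,s}$ dominated by $\tilde{\beta}_{u,t}$ inside the finite set $\{\tilde{\beta}_{\tau(u),s}: \tau \in \Aut(\tilde{\gotho}/\gotho)\}$, and then contradict Corollary~\ref{C:many dominated}. Your write-up is in fact more explicit than the paper's in justifying the reduction to $s>0$ and in spelling out how the finiteness of the Galois orbit bounds the number of $\tilde{\beta}_{v,s}$; the only cosmetic point is that your citation of Corollary~\ref{C:convex} for the inequality $\tilde{\beta}_{v,s} \geq \tilde{\beta}_{v,0}$, while correct, could equally well be replaced by the trivial half of Lemma~\ref{L:sup norm same center2} or by the monotonicity of the quotient-norm definition of $H(\alpha,u,t)$.
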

\begin{proof}
It suffices to deduce a contradiction under the assumption that
$D(u,s) = D(u,t)$ for some $t>s>0$. By Corollary~\ref{C:nearby2},
we can find $u' \in \tilde{\gotho}$ integral over $\gotho$ for which
$\tilde{\beta}_{u,s} = \tilde{\beta}_{u',s}$, so that $D(u,t) = D(u',t)$
and $D(u,s) = D(u',s)$. Since $D(u,t) = D(u,s)$, for any $\beta_{v,0}
\in D(u,t)$, we have $\beta_{u',s} \geq \beta_{v,0}$
and hence (by Lemma~\ref{L:sup norm same radius2})
$\tilde{\beta}_{v,0}(p - [\tau(u')]) \leq s/p$ for some $\tau \in 
\Aut(\tilde{\gotho}/\gotho)$. Consequently, there are only finitely
many points in $\mu^{-1}(\tilde{\alpha})$ of the form $\tilde{\beta}_{v,s}$
which are dominated by $\tilde{\beta}_{u,t}$; however, 
this would contradict Corollary~\ref{C:many dominated}. This contradiction
establishes the desired result.
\end{proof}

We also derive the following analogue of Theorem~\ref{T:classify1}.
\begin{theorem} \label{T:classify2}
Each element of $\mu^{-1}(\alpha)$ is of 
exactly one of the following four types.
\begin{enumerate}
\item[(i)]
A point of the form $\beta_{u,0}$ for some $u \in \tilde{\gotho}$ 
with $\tilde{\alpha}(u) \leq p^{-1}$. Such a point has radius $0$ and is minimal.
\item[(ii)]
A point of the form $\beta_{u,t}$ for some $u \in \tilde{\gotho}$ 
with $\tilde{\alpha}(u) \leq p^{-1}$
and some $t \in (0,1)$ such that $t/p$ is the norm of an element of $\tilde{\gotho}$.
Such a point has radius $t$ and is not minimal.
\item[(iii)]
A point of the form $\beta_{u,t}$ for some $u \in \tilde{\gotho}$ 
with $\tilde{\alpha}(u) \leq p^{-1}$
and some $t \in (0,1)$ such that $t/p$ is not the norm of an element of 
$\tilde{\gotho}$.
Such a point has radius $t$ and is not minimal.
\item[(iv)]
The infimum of a 
sequence $\beta_{u_i,t_i}$ for which the sequence $D(u_i,t_i)$
is decreasing with empty intersection. 
Such a point has radius $\inf_i \{t_i\} > 0$
and is minimal.
\end{enumerate}
\end{theorem}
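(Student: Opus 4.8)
The plan is to follow the proof of Theorem~\ref{T:classify1} almost line by line, substituting the Witt-vector counterpart of each ingredient: Corollary~\ref{C:radius2} for the radius of $\beta_{u,t}$, Lemma~\ref{L:dominate2} for writing $H(\beta,s)$ in the form $\beta_{u,s}$, Lemma~\ref{L:sup norm same radius2} (with its criterion (d)) for comparing two seminorms of the same radius and the same center-orbit, and Theorem~\ref{T:tree} together with Corollary~\ref{C:tree} for the tree structure. First I would check that the four types are mutually exclusive: since $r(\beta_{u,t}) = t$ by Corollary~\ref{C:radius2}, the radius separates type (i) ($t=0$) from types (ii) and (iii) ($t>0$), and whether $t/p$ lies in the value group of $\tilde{\alpha}$ separates (ii) from (iii); moreover no $\beta_{u,t}$ is of type (iv), since $\beta_{u_i,t_i} \geq \beta_{u,t} \geq \beta_{u,0}$ (the last inequality by Lemma~\ref{L:sup norm same center2}) would force $\beta_{u,0} \in D(u_i,t_i)$ for all $i$, contradicting emptiness of the intersection. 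Hence no point lies in more than one type.

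Next I would take $\beta \in \mu^{-1}(\alpha)$ not of the form $\beta_{u,t}$ and exhibit it as type (iv). After disposing of the boundary case $r(\beta) = 1$ (where $\beta = (\lambda\circ\mu)(\beta) = \lambda(\alpha) = \beta_{0,1}$ by Theorem~\ref{T:final homotopy}(b) and Theorem~\ref{T:construct Gauss}(d), so this case does not arise), I would choose a strictly decreasing sequence $t_1 > t_2 > \cdots$ in $(r(\beta),1]$ with infimum $r(\beta)$, set $\beta_{u_i,t_i} := H(\beta,t_i)$ using Lemma~\ref{L:dominate2}, observe that this sequence is decreasing by Remark~\ref{R:monotonicity} with pointwise infimum $H(\beta,r(\beta)) = \beta$ by continuity of $H$ (Theorem~\ref{T:final homotopy}), and note that $D(u_i,t_i)$ is therefore also decreasing. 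The key point is emptiness of $\bigcap_i D(u_i,t_i)$: if some $\beta_{u,0}$ lay in all of them, then $\beta_{u_i,t_i} \geq \beta_{u,0}$ for every $i$, so Lemma~\ref{L:sup norm same radius2} gives $\beta_{u_i,t_i} = \beta_{u,t_i}$ for every $i$, and letting $i \to \infty$ while using the continuity of $t \mapsto \beta_{u,t}$ (Theorem~\ref{T:continuous1}) yields $\beta = \beta_{u,r(\beta)}$, contrary to hypothesis. Thus $\beta$ is of type (iv).

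Then I would establish the radius and minimality claims for a type (iv) point $\beta = \inf_i \beta_{u_i,t_i}$ with $D(u_i,t_i)$ decreasing with empty intersection. Writing $t = \inf_i t_i$, the inequality $r(\beta) \leq t$ comes from Corollary~\ref{C:tree} (as $\beta \leq \beta_{u_i,t_i}$ and $r(\beta_{u_i,t_i}) = t_i$), while $r(\beta) \geq t$ follows because for $s < t$ one has $H(\beta,s) \leq H(\beta_{u_i,t_i},s) = \beta_{u_i,t_i}$ by Lemma~\ref{L:same homotopy}, hence $H(\beta,s) \leq \beta$ and so $H(\beta,s) = \beta$; thus $r(\beta) = t$. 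For minimality, given $\gamma \in \mu^{-1}(\alpha)$ with $\gamma \leq \beta$, Theorem~\ref{T:tree} gives $\beta_{u_i,t_i} = H(\gamma,t_i)$, so $\beta = H(\gamma,t)$; if $t > r(\gamma)$ then $\beta = H(\gamma,t) = \beta_{v,t}$ for some $v$ by Lemma~\ref{L:dominate2}, impossible for a type (iv) point, so $t \leq r(\gamma) \leq r(\beta) = t$ by Corollary~\ref{C:tree}, whence $r(\gamma) = r(\beta)$ and therefore $\gamma = \beta$, again by Corollary~\ref{C:tree}. For contrast I would note that $\beta_{u,0}$ is minimal, since $\gamma \leq \beta_{u,0}$ forces $\gamma = H(\gamma,0) = \beta_{u,0}$ by Theorem~\ref{T:tree}, whereas $\beta_{u,t}$ with $t>0$ is not minimal because $\beta_{u,t} > \beta_{u,0}$ by Lemma~\ref{L:sup norm same center2}.

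The hard part will be showing $t = \inf_i t_i > 0$ for a type (iv) point, the exact analogue of the remark ``this forces $\inf_i\{r_i\} > 0$ because $\gotho$ is complete'' in the proof of Theorem~\ref{T:classify1}. I would argue by contradiction: if $t = 0$, then since $\beta_{u_i,t_i} \geq \beta_{u_j,0}$ for $j > i$, Lemma~\ref{L:sup norm same radius2}(d) supplies $\tau_{ij} \in \Aut(\tilde{\gotho}/\gotho)$ with $p\,\tilde{\beta}_{u_j,0}(p-[\tau_{ij}(u_i)]) \leq t_i$; then, using the pseudometric $d(u,u') = p\,\tilde{\beta}_{u',0}(p-[u])$ of Remark~\ref{R:ultrametric} together with its strong triangle inequality, the comparison of $d$ with $\tilde{\alpha}$ in Lemma~\ref{L:compute metric}, the completeness of $\tilde{\gotho}$, and the compactness of $\Aut(\tilde{\gotho}/\gotho)$, one extracts a limit $u \in \tilde{\gotho}$ with $\tilde{\alpha}(u) \leq p^{-1}$ whose Teichm\"uller point $\beta_{u,0}$ lies (up to an automorphism of $\tilde{\gotho}$ over $\gotho$) in every $D(u_i,t_i)$, contradicting the empty intersection; hence $t > 0$. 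Modulo this completeness argument, everything else is bookkeeping with the tree structure and the facts already recorded for the $\beta_{u,t}$.
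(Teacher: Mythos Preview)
Your proposal is correct and follows essentially the same route as the paper: mutual exclusivity via Corollary~\ref{C:radius2}, the type (iv) representation via Lemma~\ref{L:dominate2} and Lemma~\ref{L:sup norm same radius2}, and minimality/radius via Theorem~\ref{T:tree} and Corollary~\ref{C:tree}. The one place where the paper is more concrete than your sketch is the ``hard part'' $\inf_i t_i > 0$: rather than invoking compactness of $\Aut(\tilde{\gotho}/\gotho)$ directly, the paper first replaces the $u_i$ by elements integral over $\gotho$ (Corollary~\ref{C:nearby2}), so that each $u_i$ has only finitely many conjugates; Lemma~\ref{L:sup norm same radius2}(d) then confines the later $u_j$ to finitely many $d$-discs, a pigeonhole/diagonalization produces a $d$-Cauchy subsequence, and a separate lemma (Lemma~\ref{L:complete pseudometric}, proved from Lemma~\ref{L:compute metric} and completeness of $\tilde{\gotho}$) supplies the limit $u$ with $\beta = \beta_{u,0}$. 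Your compactness formulation can be made to work along the same lines, but the integrality trick is what turns the vague extraction into a clean finite pigeonhole at each scale.
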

\begin{proof}
By Corollary~\ref{C:radius2}, $r(\beta_{u,t}) = t$. Consequently,
types (i), (ii), (iii) are mutually exclusive. Moreover, 
$\beta_{u,t}$ cannot be of type (iv), as otherwise
$\beta_{u,0}$ would belong to the empty intersection $\cap_i D(u_i,t_i)$.
Consequently, no point can be of more than one type.

It remains to check that any point $\beta \in \mu^{-1}(\alpha)$
not of the form $\beta_{u,t}$
is of type (iv) and is minimal of the claimed radius.
Choose a sequence $1 \geq t_1 > t_2 > \cdots$ with infimum $r(\beta)$.
By Lemma~\ref{L:dominate2}, for each $i$, we have
$H(\beta,t_i) = \beta_{u_i,t_i}$ for some $u_i \in \tilde{\gotho}$.
The sequence $\beta_{u_1,t_1}, \beta_{u_2,t_2}, \dots$ is decreasing
with infimum $\beta$; the sequence $D(u_i,t_i)$ is also decreasing.
For each $u \in \tilde{\gotho}$, there exists $i$ for which
for which $\beta_{u,t_i} \neq \beta_{u_i, t_i}$;
for such $i$ we have $\beta_{u,0} \notin D(u_i,t_i)$ by
Lemma~\ref{L:sup norm same radius2}.
Hence the $D(u_i,t_i)$ have empty intersection. 
Hence $\beta$ is of type (iv); it is minimal by
Theorem~\ref{T:tree} plus Lemma~\ref{L:dominate2}.
Since $\beta = \inf_i\{\beta_{u_i,t_i}\}$
and $r(\beta_{u_i,t_i}) = t_i$ by Corollary~\ref{C:radius2},
we have $r(\beta) \geq \inf_i \{t_i\}$;
the reverse inequality also holds because $t_i = r(\beta_{u_i,t_i}) \geq r(\beta)$
by Theorem~\ref{T:tree}.

Suppose by way of contradiction that $r(\beta) =0$.
By Corollary~\ref{C:nearby2}, we may choose the $u_i$ to be integral over $\gotho$.
Let $U_0$ denote the original sequence $u_1, u_2,\dots$.
For $h=1,2,\dots$, we construct a subsequence $U_h$ of $U_{h-1}$ such that
any two terms $v_1, v_2$ of $U_h$ satisfy $d(v_1, v_2) \leq p^{-h}$,
as follows. Given $U_{h-1}$, for $i$ sufficiently large, whenever $u_i \in U_{h-1}$,
we have $t_i \leq p^{-h}$ and $\beta_{u_i,t_i} = \beta_{u_j,t_i}$ for all $j \geq i$
with $u_j \in U_{h-1}$. By Lemma~\ref{L:sup norm same center2} and the integrality of $u_i$
over $\gotho$, this limits the 
$u_j$ to finitely many closed discs of radius $t_i$ under $d$. One of these discs
then contains infinitely many elements of $U_{h-1}$; choose these to form the subsequence
$U_h$.

By diagonalizing (i.e., choosing a subsequence of $U_0$ whose $i$-th term belongs to $U_i$
for each $i$), we obtain a Cauchy sequence in $\tilde{\gotho}$ with respect to $d$.
By Lemma~\ref{L:complete pseudometric} below, this sequence admits a limit $u$ with respect to $d$,
which then satisfies $\beta = \beta_{u,0}$, a contradiction. We conclude that
$r(\beta) > 0$ as desired.
\end{proof}

\begin{lemma} \label{L:complete pseudometric}
The pseudometric $d$ on the set $\{u \in \tilde{\gotho}: \tilde{\alpha}(u) \leq p^{-1}\}$ 
is complete. That is, for every sequence $u_0,u_1,\dots$
with $\lim_{i,j \to \infty} d(u_i,u_j) = 0$, there exists $u$ for which
$\lim_{i \to \infty} d(u,u_i) = 0$.
\end{lemma}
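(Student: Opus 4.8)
The plan is to avoid trying to show directly that a $d$-Cauchy sequence is $\tilde{\alpha}$-Cauchy (which would require wading through the case analysis behind Lemma~\ref{L:compute metric}), and instead to work inside the fibre $\mu^{-1}(\tilde{\alpha}) \subseteq \calM(W(\tilde{\gotho}))$, reducing everything to the structure theory of \S\ref{sec:fibres}. Throughout I use that $(\tilde{\gotho}, \tilde{\alpha})$ itself satisfies Hypothesis~\ref{H:Newton polygons} and that $\Frac \tilde{\gotho}$ is already complete and algebraically closed, so the results of \S\ref{sec:fibres} (in particular Lemma~\ref{L:sup norm extended}, Lemma~\ref{L:sup norm same center2}, Corollary~\ref{C:radius2}, Corollary~\ref{C:tree}, and Theorem~\ref{T:classify2}) apply verbatim with $\gotho$ replaced by $\tilde{\gotho}$; in that relativized form $\tilde{\beta}_{u,t} = \beta_{u,t} = H(\tilde{\alpha},u,t)$.

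Let $u_0, u_1, \dots$ satisfy $\lim_{i,j \to \infty} d(u_i,u_j) = 0$. First I would set $e_i = \sup_{j, j' \geq i} d(u_j, u_{j'})$. Using the strong triangle inequality (Remark~\ref{R:ultrametric}) one sees $e_i = \sup_{j \geq i} d(u_i, u_j)$, that $(e_i)$ is nonincreasing, and that $e_i \to 0$; also $d(u,u') = p\,\tilde{\beta}_{u',0}(p-[u]) = p\,\tilde{\beta}_{u',0}([u']-[u]) \leq p\,\lambda(\tilde{\alpha})([u']-[u]) \leq 1$, so $e_i \in [0,1]$. If $e_N = 0$ for some $N$, then $d(u_N, u_j) = 0$ for all $j \geq N$ and $u = u_N$ works; so I may assume $e_i \in (0,1]$ for all $i$. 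Now put $\gamma_i = H(\tilde{\alpha}, u_i, e_i) = \tilde{\beta}_{u_i, e_i} \in \mu^{-1}(\tilde{\alpha})$. For $i' \geq i$ we have $d(u_i, u_{i'}) \leq e_i$, so Lemma~\ref{L:sup norm extended} gives $\tilde{\beta}_{u_{i'}, e_i} = \tilde{\beta}_{u_i, e_i} = \gamma_i$, and then $e_{i'} \leq e_i$ together with Lemma~\ref{L:sup norm same center2} gives $\gamma_{i'} \leq \gamma_i$. Thus $(\gamma_i)$ is nonincreasing, and its pointwise infimum $\beta$ is again a multiplicative seminorm bounded by $\lambda(\tilde{\alpha})$ (for each $x$, $(\gamma_i(x))$ is a nonincreasing sequence, so the infimum is compatible with products and with the ultrametric inequality); since $\gamma_i([\overline{x}]) = \tilde{\alpha}(\overline{x})$ for all $i$, also $\mu(\beta) = \tilde{\alpha}$, so $\beta \in \mu^{-1}(\tilde{\alpha})$.

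It remains to identify $\beta$. By Corollary~\ref{C:radius2}, $r(\gamma_i) = e_i$; by Corollary~\ref{C:tree}, from $\gamma_i \geq \beta$ and $\mu(\gamma_i) = \mu(\beta)$ we get $r(\gamma_i) \geq r(\beta)$, so $r(\beta) \leq \inf_i e_i = 0$. Then Theorem~\ref{T:classify2} shows $\beta$ must be of type (i), say $\beta = \tilde{\beta}_{u,0}$ with $u \in \tilde{\gotho}$, $\tilde{\alpha}(u) \leq p^{-1}$, since types (ii)--(iv) all have positive radius. Finally $\gamma_i = \tilde{\beta}_{u_i,e_i} \geq \tilde{\beta}_{u,0}$, so by the equivalence (c)$\Leftrightarrow$(d) of Lemma~\ref{L:sup norm extended}, $d(u, u_i) = p\,\tilde{\beta}_{u,0}(p - [u_i]) \leq e_i \to 0$, which is the desired conclusion. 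The only steps requiring real care are the verification that the infimum $\beta$ genuinely lies in $\mu^{-1}(\tilde{\alpha})$ (rather than merely in $\calM(W(\tilde{\gotho}))$) and the legitimacy of applying \S\ref{sec:fibres} with $\gotho$ replaced by $\tilde{\gotho}$; the rest is bookkeeping with Lemma~\ref{L:sup norm extended}, and in particular this route never touches the explicit formulas of Lemma~\ref{L:compute metric}.
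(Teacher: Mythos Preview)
Your argument is circular. The decisive step is the claim that $r(\beta)=0$ forces $\beta$ to be of type (i) in Theorem~\ref{T:classify2}, i.e., $\beta=\tilde{\beta}_{u,0}$ for some $u$. But look at how Theorem~\ref{T:classify2} is proved: the assertion that a type~(iv) point has strictly positive radius is established by assuming $r(\beta)=0$, extracting a $d$-Cauchy sequence of centers, and then invoking Lemma~\ref{L:complete pseudometric} to produce a $d$-limit $u$ with $\beta=\beta_{u,0}$, contradicting the hypothesis that $\beta$ is not of the form $\beta_{u,t}$. So the exact implication you need from Theorem~\ref{T:classify2} is the one whose proof rests on the lemma you are proving.

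Put differently, your $\beta=\inf_i \tilde{\beta}_{u_i,e_i}$ with $e_i\to 0$ is, a priori, either some $\tilde{\beta}_{u,0}$ (which is what you want) or a type~(iv) point arising from the empty intersection $\cap_i D(u_i,e_i)=\varnothing$; ruling out the latter is precisely the completeness of $d$. The paper's proof avoids this by working directly with $\tilde{\alpha}$: it uses the explicit estimates of Lemma~\ref{L:compute metric} to refine the $d$-Cauchy sequence into a sequence that is Cauchy for $\tilde{\alpha}$, then appeals to the ordinary completeness of $\tilde{\gotho}$. You cannot bypass Lemma~\ref{L:compute metric} (or some substitute relating $d$ to $\tilde{\alpha}$) here; the structure theory of \S\ref{sec:fibres} alone does not give you an independent handle on the existence of $u$.
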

\begin{proof}
Let $U_0$ denote the original sequence. For $h=0,1,\dots$, we produce an infinite subsequence
$U_{h+1}$ of $U_h$ such that any two elements $v_1,v_2$ of $U_{h+1}$
satisfy $\tilde{\alpha}(v_1 - v_2) \leq p^{-p^{h+1}/(p-1)}$.
To produce $U_1$, apply Lemma~\ref{L:compute metric}(a).
Given $U_h$ for some $h>0$, by Lemma~\ref{L:compute metric}(c),
$U_h$ falls into $p$ residue classes
modulo elements of $\tilde{\gotho}$ of norm less than $p^{-p^h/(p-1)}$.
In particular, one of these residue classes contains infinitely many terms of $U_h$;
by Lemma~\ref{L:compute metric}(b), all but finitely many such terms
are pairwise congruent modulo elements of $\tilde{\gotho}$ of norm at most $p^{-p^{h+1}/(p-1)}$.
We can thus choose these to constitute $U_{h+1}$.

By diagonalizing, we obtain a Cauchy sequence in $\tilde{\gotho}$ with respect to $\tilde{\alpha}$, which 
then has a limit $u$.
With respect to $d$, the original sequence is Cauchy and $u$ is a limit of a subsequence,
so it is also a limit of the entire sequence.
\end{proof}

In the manner of Corollary~\ref{C:residual}, we can describe
the residual extensions and norm groups
of points in $\mu^{-1}(\alpha)$. 
\begin{cor} \label{C:residual2}
Let $\beta$ be a point of $\mu^{-1}(\alpha)$,
classified according to Theorem~\ref{T:classify2}.
Let $|\alpha^\times|,|\beta^\times|$ denote the groups of nonzero values assumed
by $\alpha,\beta$, respectively.
Put $K = \Frac(\gotho)$.
\begin{enumerate}
\item[(i)]
For $\beta$ of type (i),
$\kappa_{\calH(\beta)}$ is algebraic over $\kappa_K$,
and $|\beta^\times|/|\alpha^\times|$ is a torsion group.
\item[(ii)]
For $\beta$ of type (ii),
$\kappa_{\calH(\beta)}$ is finitely generated over $\kappa_K$
of transcendence degree $1$,
and $|\beta^\times|/|\alpha^\times|$ is a finite group.
\item[(iii)]
For $\beta$ of type (iii),
$\kappa_{\calH(\beta)}$ is a finite extension of $\kappa_K$,
and $|\beta^\times|/|\alpha^\times|$ is a finitely generated abelian
group of rank $1$.
\item[(iv)]
For $\beta$ of type (iv),
$\kappa_{\calH(\beta)}$ is algebraic over $\kappa_K$,
and $|\beta^\times|/|\alpha^\times|$ is a torsion group.
\end{enumerate}
\end{cor}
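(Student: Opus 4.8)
The plan is to follow the proof of Corollary~\ref{C:residual} essentially line for line, replacing $\calM(K[T])$ by the fibre $\mu^{-1}(\alpha)$, the points $\beta_{z,r}$ by the seminorms $\beta_{u,t}=H(\tilde\alpha,u,t)$, and the expansion of a polynomial in powers of $T-z$ by a stable presentation in powers of $\pi=p-[u]$ (Theorem~\ref{T:construct Gauss}(b)). I first dispose of the reductions. Ostrowski's inequality \eqref{eq:ostrowski} shows that the properties ``finitely generated over $\kappa_K$ of transcendence degree $1$'' and ``finite group'' (type (ii)), and ``finite extension of $\kappa_K$'' and ``finitely generated abelian of rank $1$'' (type (iii)), are insensitive to replacing $K$ by a finite extension; likewise ``algebraic over $\kappa_K$'' and ``torsion group'' (types (i), (iv)) survive passage all the way to $\tilde\gotho$. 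Hence in cases (ii), (iii) I may use Corollary~\ref{C:nearby2}, together with enlarging $\gotho$ inside $\tilde\gotho$ by a finite extension (and, in case (ii), a further finite extension making $t/p$ the norm of some $w\in\gotho$), to assume $\beta=H(\alpha,u,t)$ with $u\in\gotho$; in cases (i), (iv) I may assume $\gotho=\tilde\gotho$, so that $\Frac\gotho$ is algebraically closed and $u$ (resp.\ the centres $u_i$ appearing below) lies in $\gotho$.

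For the main computation, suppose $\beta=H(\alpha,u,t)$ with $u\in\gotho$ and $t>0$. Given $x\in W(\gotho)$ with stable presentation $x=\sum_i x_i\pi^i$ (Lemma~\ref{L:stable presentation}), Theorem~\ref{T:construct Gauss}(b) and Remark~\ref{R:why stable} give $\beta(x)=\max_i\{(t/p)^i\lambda(\alpha)(x_i)\}=\max_i\{(t/p)^i\alpha(\overline{x_i})\}$. In particular $\beta(\pi)=t/p$, and $\beta$ takes values in $\bigcup_{i\geq 0}(t/p)^i|\alpha^\times|$, so after inverting, $|\beta^\times|=\langle |\alpha^\times|,t/p\rangle$: this is $|\alpha^\times|$ in case (ii) and has $|\beta^\times|/|\alpha^\times|$ infinite cyclic on $t/p$ in case (iii) (using that, in case (iii), $t/p$ and its powers avoid $|\alpha^\times|$, so the sets $(t/p)^i|\alpha^\times|$ are disjoint and the maximum above is attained at a unique index). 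For the residue extension I note that $\pi$ is a unit of norm $t/p$ in $\calH(\beta)$; in case (iii), disjointness of the $(t/p)^i|\alpha^\times|$ forces, for $y,z\in W(\gotho)$ with $\beta(y/z)=1$, a single dominant term on each side at the same index, so $y/z$ reduces into $\kappa_\gotho=\kappa_K$, whence $\kappa_{\calH(\beta)}=\kappa_K$. In case (ii), fix $w\in\gotho$ with $\alpha(w)=t/p$ and set $v=\pi[w]^{-1}$, so $\beta(v)=1$; for any monic $Q\in\gotho[Z]$ the element $[w]^{\deg Q}Q(v)=\sum_j[c_jw^{\deg Q-j}]\pi^j$ is a stable presentation with leading term of norm exactly $(t/p)^{\deg Q}$, so $\beta(Q(v))=1$, which shows the reduction $\bar v$ is transcendental over $\kappa_K$; combining this with the stable‑presentation formula to see that $\kappa_{\calH(\beta)}$ is generated over $\kappa_K$ by $\bar v$ yields $\kappa_{\calH(\beta)}\cong\kappa_K(\bar v)$. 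The remaining degenerate case $t=0$ (type (i), after the reduction $\gotho=\tilde\gotho$) is handled as in Remark~\ref{R:radius 0}: $\beta=H(\alpha,u,0)$ is the quotient of $\lambda(\alpha)$ by $(\pi)$, reduction modulo $p$ followed by $\gotho\to\kappa_\gotho$ identifies $\kappa_{\calH(\beta)}=\kappa_\gotho$, and Lemma~\ref{L:stable residue} gives $|\beta^\times|=|\alpha^\times|$; restricting from $\tilde\gotho$ back to the original $\gotho$ turns these into the stated (weaker) conclusions, since $\kappa_{\tilde\gotho}/\kappa_K$ is algebraic and $|\tilde\alpha^\times|/|\alpha^\times|$ is torsion.

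For type (iv) I argue exactly as in Corollary~\ref{C:residual}, using Corollary~\ref{C:infimum2} in place of Corollary~\ref{C:infimum}: for each $y\in W(\gotho)$ and all sufficiently small $s\in(r(\beta),1]$ one has $\beta(y)=H(\beta,s)(y)$ with $H(\beta,s)=\beta_{u_s,s}$ for some $u_s\in\tilde\gotho$ (Lemma~\ref{L:dominate2}). Taking $s$ with $s/p\in|\tilde\alpha^\times|$ puts $\beta_{u_s,s}$ in type (ii), so $\beta(y)\in|\tilde\alpha^\times|$ and hence $|\beta^\times|/|\alpha^\times|$ is torsion; taking $s$ with $s/p\notin|\tilde\alpha^\times|$ puts $\beta_{u_s,s}$ in type (iii), so for $z\in W(\gotho)$ with $\beta(z)\le\beta(y)$ the type (iii) result produces $\lambda\in K$ with $H(\beta,s)(z-\lambda y)<H(\beta,s)(y)$, hence $\beta(z-\lambda y)<\beta(y)$, so $z/y$ and $\lambda$ have the same image in $\kappa_{\calH(\beta)}$ and $\kappa_{\calH(\beta)}=\kappa_K$. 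Restricting back to the original $\gotho$ gives the stated conclusions. I expect the main obstacle to be the residue‑field computation in cases (ii) and (iii): in the polynomial setting the expansion $\sum f_j(T-z)^j$ makes $\kappa_{\calH(\beta)}$ and $|\beta^\times|$ transparent, whereas here the ``coefficients'' $x_i$ of a stable presentation are themselves nonconstant Witt vectors for which $\beta(x_i)$ is computed by a single term only when $x_i$ is stable, so the bookkeeping needed to pin down $\kappa_{\calH(\beta)}$ exactly (generation by the single class $\bar v$, and the precise transcendence/rank count) must be run through the same dominant‑term arguments used in Lemma~\ref{L:compute linear}, Theorem~\ref{T:construct Gauss}(b), and the Newton‑polygon factorization of Theorem~\ref{T:factor}.
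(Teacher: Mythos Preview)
Your proposal is correct and follows essentially the same approach as the paper's proof: reduce via Ostrowski to $u\in\gotho$ (finite extension in cases (ii), (iii); algebraic closure in cases (i), (iv)), compute $|\beta^\times|$ and $\kappa_{\calH(\beta)}$ from the stable-presentation formula $\beta(x)=\max_i\{(t/p)^i\alpha(\overline{x_i})\}$ of Theorem~\ref{T:construct Gauss}(b), and handle type (iv) by approximating with $H(\beta,s)$ via Corollary~\ref{C:infimum2}. Your case (ii) treatment is slightly more explicit than the paper's (you verify transcendence of $\bar v$ by hand, whereas the paper simply asserts $\gotho_R/\gothm_R\cong\kappa_K[z]$), and your self-identified ``main obstacle'' --- pinning down generation of $\kappa_{\calH(\beta)}$ by $\bar v$ --- is indeed where both arguments are tersest, but the dominant-term bookkeeping you outline suffices.
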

\begin{proof}
By Ostrowski's theorem again (see \eqref{eq:ostrowski}),
in cases (ii) and (iii), it is enough to check the claims after
replacing $K$ by a finite
extension; in cases (i) and (iv), 
we may replace $K$ by a completed algebraic closure.
We make these assumptions hereafter.

In cases (i), (ii), (iii),
we have $\beta = \beta_{u,t}$ with $u \in \gotho$ and $\alpha(u) \leq p^{-1}$. 
For each $x \in W(\gotho)$,
by Lemma~\ref{L:stable presentation}, in $W(\tilde{\gotho})$
there exists a stable presentation
$x_0,x_1,\dots$ of $x$ with respect to $u$.
Let $\overline{x_i} \in \tilde{\gotho}$ denote the reduction modulo $p$
of $x_i$.
By Theorem~\ref{T:construct Gauss}(b),
\begin{equation} \label{eq:norm formula}
\beta_{u,t}(x) = \max_i \{(t/p)^i \tilde{\alpha}(\overline{x_i})\}.
\end{equation}
Consequently, in cases 
(i) and (ii), $|\beta^\times|/|\alpha^\times|$ is trivial;
in case (iii), $|\beta^\times|/|\alpha^\times|$ is freely generated by 
$t/p$.

In case (i), $\kappa_{\calH(\beta)}$ may be identified with the quotient
of $W(\gotho)$ by the ideal $(p, [u])$, so $\kappa_{\calH(\beta)} = \kappa_K$.
In case (ii), pick $v \in \gotho$ with $\alpha(v) = t/p$. For
$R = W(\gotho)[[v]^{-1}]$, we have $\gotho_R/\gothm_R \cong \kappa_K[z]$
for $z$ the class of $[v]^{-1}(p - [u])$. Consequently, 
$\kappa_{\calH(\beta)} = \kappa_K(z)$.
In case (iii), if $x \neq 0$,
then the maximum in \eqref{eq:norm formula} is only achieved by a single index
$i$. For this $i$, we have $\beta_{u,t}(x - (p - [u])^i [\overline{x_i}]) < \beta_{u,t}(x)$;
it follows that each element of $\kappa_{\calH(\beta)}$ is represented by
a Teichm\"uller element. Consequently, $\kappa_{\calH(\beta)} = \kappa_K$.

In case (iv), 
by Corollary~\ref{C:infimum2}, for each $y \in W(\gotho)$,
any sufficiently small $s \in (r(\beta),1]$ satisfies
$H(\beta,s)(y) = \beta(y)$.
If we choose $s \in |\alpha^\times|$, 
we deduce that $|\beta^\times|/|\alpha^\times|$ is trivial.
If we choose $s \notin |\alpha^\times|$, then
for any $z \in W(\gotho)$ with $\beta(z) \leq \beta(y)$, by case (iii),
there must exist $\lambda \in \gotho$ for which $H(\beta,s)(z
- [\lambda] y) < H(\beta,s)(y)$.
This implies 
\[
\beta(z -[\lambda] y) \leq H(\beta,s)(z - [\lambda] y)
< H(\beta,s)(y) = \beta(y),
\]
so $z/y$ and $[\lambda]$ have the same
image in $\kappa_{\calH(\beta)}$. Hence $\kappa_{\calH(\beta)} = \kappa_K$.
\end{proof}

\begin{remark}
One could also consider points of $\mu^{-1}(\alpha)$ obtained
by restricting points of $\mu^{-1}(\tilde{\alpha})$ of the form
 $H(\tilde{\alpha},\pi,t)$ for $\pi \in W(\tilde{\gotho})$ 
as in Hypothesis~\ref{H:primitive},
i.e., $\pi = \sum_{i=0}^\infty p^i [\overline{\pi_i}]$ with
$\tilde{\alpha}(\overline{\pi_0}) \leq p^{-1}$ and
$\tilde{\alpha}(\overline{\pi_1}) = 1$.
However, by Theorem~\ref{T:factor},
any such $\pi$ generates the same ideal as $p - [u]$ for some $u \in 
\tilde{\gotho}$, so $H(\tilde{\alpha},\pi,t) = H(\tilde{\alpha},u,t)$
for $t \in [0,1]$. Consequently, these points are again of types
(i), (ii), (iii) in Theorem~\ref{T:classify2}, and not type (iv).
\end{remark}

\end{document}